\DeclareSymbolFont{extraitalic}      {U}{zavm}{m}{it}
\DeclareMathSymbol{\Qoppa}{\mathord}{extraitalic}{161}
\DeclareMathSymbol{\qoppa}{\mathord}{extraitalic}{162}
\DeclareMathSymbol{\Stigma}{\mathord}{extraitalic}{167}
\DeclareMathSymbol{\Sampi}{\mathord}{extraitalic}{165}
\DeclareMathSymbol{\sampi}{\mathord}{extraitalic}{166}
\DeclareMathSymbol{\stigma}{\mathord}{extraitalic}{168}
\newcommand{\id}{\text{id}}
\newcommand{\ceA}{\mathcal{A}}
\newcommand{\ceC}{\mathcal{C}}
\newcommand{\ceD}{\mathcal{D}}
\newcommand{\ceI}{\mathcal{I}}
\newcommand{\ceJ}{\mathcal{J}}
\newcommand{\ceK}{\mathcal{K}}
\newcommand{\ceL}{\mathcal{L}}
\newcommand{\ceM}{\mathcal{M}}
\newcommand{\ceN}{\mathcal{N}}
\newcommand{\ceQ}{\mathcal{Q}}
\newcommand{\ceR}{\mathcal{R}}
\newcommand{\cB}{\mathscr{B}}
\newcommand{\cC}{\mathscr{C}}
\newcommand{\cD}{\mathscr{D}}
\newcommand{\cH}{\mathscr{H}}
\newcommand{\cI}{\mathscr{I}}
\newcommand{\cJ}{\mathscr{J}}
\newcommand{\cL}{\mathscr{L}}
\newcommand{\cM}{\mathscr{M}}
\newcommand{\cR}{\mathscr{R}}
\newcommand{\cU}{\mathscr{U}}
\newcommand{\cX}{\mathscr{X}}
\newcommand{\Lg}{\mathfrak{g}}
\newcommand{\LA}{\mathfrak{A}}
\newcommand{\LR}{\mathfrak{R}}
\newcommand{\LX}{\mathfrak{X}}
\newcommand{\cCat}{\textbf{Cat}}
\newcommand{\mbm}{\mathbf{m}}
\newcommand{\mbC}{\mathbf{C}}
\newcommand{\mbG}{\mathbf{G}}
\newcommand{\mbM}{\mathbf{M}}
\newcommand{\mbN}{\mathbf{N}}
\newcommand{\mbP}{\mathbf{P}}
\newcommand{\bbon}{\mathbb{1}}
\newcommand{\bbk}{\mathbb{k}}
\newcommand{\bbZ}{\mathbb{Z}}
\newcommand{\tti}{\mathtt{i}}
\newcommand{\ttj}{\mathtt{j}}
\newcommand{\ttk}{\mathtt{k}}
\newcommand{\ttl}{\mathtt{l}}
\newcommand{\ttI}{\mathtt{I}}
\newcommand{\ttL}{\mathtt{L}}
\newcommand{\ttP}{\mathtt{P}}
\newcommand{\ul}[1]{\underline{#1}}
\newcommand{\ol}[1]{\overline{#1}}
\DeclareMathOperator{\End}{End}
\DeclareMathOperator{\Hom}{Hom}
\DeclareMathOperator{\Len}{Len}
\DeclareMathOperator{\Mod}{Mod}
\DeclareMathOperator{\add}{add}
\DeclareMathOperator{\coev}{coev}
\DeclareMathOperator{\comod}{comod}
\DeclareMathOperator{\bcomod}{\textbf{comod}}
\DeclareMathOperator{\ev}{ev}
\DeclareMathOperator{\inj}{inj}
\DeclareMathOperator{\binj}{\textbf{inj}}
\DeclareMathOperator{\modd}{mod}
\DeclareMathOperator{\proj}{proj}
\DeclareMathOperator{\rad}{rad}
\author{James Macpherson}
\newtheorem{thm}{Theorem}[section]
\newaliascnt{lem}{thm}
\newtheorem{lem}[lem]{Lemma}
\newaliascnt{psn}{thm}
\newtheorem{psn}[psn]{Proposition}
\newaliascnt{cor}{thm}
\newtheorem{cor}[cor]{Corollary}
\newaliascnt{que}{thm}
\newaliascnt{conj}{thm}
\theoremstyle{definition}
\newaliascnt{ex}{thm}
\newaliascnt{exs}{thm}
\newaliascnt{def}{thm}
\newtheorem{defn}[def]{Definition}
\newaliascnt{nota}{thm}
\newtheorem{nota}[nota]{Notation}
\newaliascnt{rmk}{thm}
\newaliascnt{rmks}{thm}
\title[2-Representation Theory Of Locally Finitary 2-Categories]{Extension Of The 2-Representation Theory Of Finitary 2-Categories To Locally (Graded) Finitary 2-Categories}
\begin{document}
\bibliographystyle{alpha}

\begin{abstract}
	We extend the 2-representation theory of finitary 2-categories to certain 2-categories with infinitely many objects, called locally finitary 2-categories, and extend the classical classification results of simple transitive 2-representations of weakly fiat 2-categories to this environment. We also consider locally finitary 2-categories and 2-representations with a grading, and prove that the associated coalgebra 1-morphisms have a homogeneous structure. We use these results to classify (graded) simple transitive 2-representations of certain classes of cyclotomic 2-Kac--Moody algebras.
\end{abstract}
\maketitle
\section{Introduction}
	The study of the 2-representation theory of finitary and fiat 2-categories, pioneered by Mazorchuk and Miemietz in \cite{mazorchuk2011cell} through \cite{mazorchuk2016isotypic} and further explored in various other works, e.g. \cite{mackaay2016simple} and \cite{chan2016diagrams}, is a powerful new tool in representation theory. Important applications of the theory include certain quotients of 2-Kac--Moody algebras (see \cite{mazorchuk2015transitive}) and Soergel bimodules (see for example \cite{mackaay20192}).\par

However, while powerful, the setup used to date in this theory has multiple restrictions, primarily relating to finiteness conditions. Specifically, the theory considers 2-categories which have only finitely many objects and whose hom-categories have finitely many isomorphism classes of indecomposable 1-morphisms and finite-dimensional spaces of 2-morphisms. The relaxation of these restrictions would enable the study of a much wider class of examples using techniques analogous to those for 2-representations of finitary 2-categories.\par

This paper is an initial step in that direction, focussing on the relaxation to countably many objects in the 2-categories, giving `locally finitary' 2-categories. While this is a comparatively mild generalisation, it already enables the study of multiple interesting examples that were previously inaccessible, including a much wider class of quotients of 2-Kac--Moody algebras. For an examination of relaxing finiteness conditions for 1- and 2-morphisms, see the companion paper \cite{macpherson20212representations} by the same author.\par

In the course of this paper, we give the generalisation of multiple finitary results to the locally finitary case. Of specific note, in \autoref{MMMTL47} we construct for any transitive 2-representation of a locally weakly fiat 2-category an equivalent `internal' 2-representation of comodule 1-morphism categories, analagously to major results in \cite{ostrik2003module} and \cite{mackaay2016simple}. In \autoref{LocWFBig}, a generalisation of the primary result in \cite{mazorchuk2015transitive}, we further classify all simple transitive 2-representations of strongly regular locally weakly fiat 2-categories as being equivalent to cell 2-representations. We then utilise the latter result to classify all simple transitive 2-representations of cyclotomic 2-Kac--Moody algebras.\par

This paper also considers a generalisation of the above setup to the case where the locally finitary 2-categories have an additional graded structure. In this setup, we use the construction of a `degree zero' sub-2-category to show in \autoref{AN0EqAN} that the previously constructed internal 2-representations associated to a transitive 2-representation can be viewed as a `degree zero' construction in a canonical fashion. We use this result by considering again cyclotomic 2-Kac--Moody algebras, demonstrating in \autoref{Gr2KMClass} that any simple transitive 2-representation is in fact a graded 2-representation.\par

The structure of the paper is as follows. In \autoref{LocFinSec} we give the initial definitions for locally finitary 2-categories and their 2-representations, as well as various related notions used in the paper. We also give some minor results demonstrating that the cell structure of the 2-category is highly analogous in this generalisation. In \autoref{MMMTGen} we generalise various results from \cite{mackaay2016simple}, leading up to \autoref{MMMTL47} as well as \autoref{MM5ByMMMT}, which classifies the simple transitive 2-representations for locally finitary 2-categories associated to certain infinite dimensional algebras.\par

\autoref{MMXGen} considers instead generalisations of various results in the series of papers \cite{mazorchuk2011cell} through \cite{mazorchuk2016isotypic}, particularly the former paper and \cite{mazorchuk2015transitive}. The eventual goal of this section is \autoref{LocWFBig}. \autoref{2KMApp} presents an application of this result by demonstrating that cyclotomic 2-Kac--Moody algebras of given weights are locally weakly fiat 2-categories, and thus submit to the aforementioned theorem.\par

In the last section, we examine the further generalisation to locally restricted $G$-finitary 2-categories for some countable abelian group $G$. We construct a degree zero 2-category associated to such a 2-category, and use it to construct a degree zero coalgebra 1-morphism for a given graded transitive 2-representation of the original 2-category. This setup allows us to prove \autoref{AN0EqAN}. Finally, we apply this to the cyclotomic 2-Kac--Moody categories of given weights, showing that their cell 2-representations are all graded simple transitive 2-representations, leading to \autoref{Gr2KMClass}.
\subsection*{Acknowledgements}
The author would like to thank Vanessa Miemietz for her support and supervision during the PhD from whence this material was derived. In addition, the author would like to thank the referees for the paper, who provided many substantial and helpful suggestions that notably improved the paper.

\section{Locally Finitary 2-Categories}\label{LocFinSec}

\subsection{Initial Definitions}

Let $\bbk$ be an algebraically closed field.

\begin{defn}\label{TarCats} We denote by $\LA_\bbk$ the 2-category whose objects are small $\bbk$-linear idempotent complete categories, whose 1-morphisms are $\bbk$-linear additive functors and whose 2-morphisms are natural transformations. We further denote by $\LA_{\bbk}^f$ the full sub-2-category of $\LA_{\bbk}$ with objects those categories $\ceA$ such that $\ceA$ has only finitely many isomorphism classes of indecomposable objects, and such that $\dim\Hom_{\ceA}(\tti,\ttj)<\infty$ for all objects $\tti,\ttj\in\ceA$. We call objects in this 2-category \emph{finitary categories}. We finally denote by $\LR_{\bbk}$ as the full sub-2-category of $\LA_{\bbk}$ whose objects are equivalent to $A$-mod for some finite dimensional associative $\bbk$-algebra $A$.\end{defn}

\begin{defn} A 2-category $\cC$ is \emph{locally finitary} over $\bbk$ when it has countably many objects, $\cC(\tti,\ttj)\in\LA_{\bbk}^f$ for all objects $\tti, \ttj\in\cC$, horizontal composition is additive and $\bbk$-linear and the identity 1-morphism $\bbon_\tti$ is indecomposable for all $\tti$.\end{defn}

\begin{nota}For the duration of this paper, we notate 1-morphisms as $F,G,\dots$ and 2-morphisms as $\alpha,\beta,\dots$. We denote the composition of 1-morphisms $F$ and $G$ as either $F\circ G$ or $FG$, and we notate horizontal composition of 2-morphisms $\alpha$ and $\beta$ as $\alpha\circ_H\beta$ and vertical composition as $\alpha\circ_V\beta$.\end{nota}

We note that if $\cC$ has only finitely many objects, then this is the definition of a finitary 2-category first given in \cite{mazorchuk2011cell}. Since all the applications of interest known to the author utilise at most countably many objects, this paper restricts all definitions to this case. It seems likely that many of the results will still apply in larger cases; however, the author has not confirmed this.

For the rest of this subsection, we let $\cC$ denote a locally finitary 2-category.

\begin{defn} Given a finite set of objects $\ul{\tti}=\{\tti_1,\dots,\tti_n\}$ in $\cC$, we denote the sub-2-category generated by the $\tti_j$ (i.e. with objects $\tti_1,\dots,\tti_n$, and with hom-categories $\cC(\tti_j,\tti_k)$ for all $j$ and $k$) by $\cC_{\ul{\tti}}$, and call it a \emph{full finitary sub-2-category}.\end{defn}

In this paper we will sometimes refer to proofs (generally in other papers) as having `local' proofs - we use this to indicate that the proof works by proving the result for any arbitrary set of objects, 1-morphisms and 2-morphisms that belong to a full finitary sub-2-category of the 2-category under consideration. The proof can thus be applied directly to the locally finitary setup without having to adjust anything beyond notation.

\begin{defn} We say $\cC$ is \emph{locally weakly fiat} if it has a weak object-preserving anti-autoequivalence $-^*$ and if for any 1-morphism $F\in\cC(\tti,\ttj)$ there exist 2-morphisms $\alpha:F\circ F^*\to\bbon_\ttj$ and $\beta:\bbon_\tti\to F^*\circ F$ such that $(\alpha\circ_H \id_F)\circ_V (\id_F\circ_H\beta)=\id_F$ and $(\id_{F^*}\circ_H \alpha)\circ_V(\beta\circ_{H}\id_{F^*})=\id_{F^*}$ hold. We let $\tensor[^*]{(-)}{}$ denote the inverse of this weak anti-autoequivalence. If $*$ is a weak anti-involution, we say that $\cC$ is a \emph{locally fiat} 2-category.\end{defn}

\begin{defn} An \emph{additive} (respectively \emph{finitary}, \emph{abelian}) \emph{2-representation} of $\cC$ is a strict 2-functor $\mathbf{M}:\cC\to\LA_{\bbk}$ (resp. $\mathbf{M}:\cC\to\LA_{\bbk}^f$, $\mathbf{M}:\cC\to\LR_{\bbk}$).
\end{defn}

Given a 2-representation $\mbM$ of $\cC$, we use the notation $\ceM:=\coprod_{\tti\in\cC} \mbM(\tti)$ for the corresponding category.

\begin{defn} The \emph{$\tti$th principal 2-representation} $\mbP_\tti:\cC\to\LA_\bbk$ of $\cC$ is defined on objects $\ttj\in\cC$ as $\mbP_\tti(\ttj)=\cC(\tti,\ttj)$, on 1-morphisms $F$ as $\mbP_\tti(F)=F\circ-$, and on 2-morphisms $\alpha$ as $\mbP_\tti(\alpha)=\alpha\circ_H-$. \end{defn}

Since $\cC$ is a locally finitary 2-category, this is a finitary 2-representation.
	
	\subsection{Cells, Ideals And Multisemigroups}
	
\begin{defn} Given a $\bbk$-linear 2-category $\cC$, we define a \emph{left 2-ideal} $\cI$ of $\cC$ to have the same objects as $\cC$, and for each pair $\tti,\ttj$ of objects an ideal $\cI(\tti,\ttj)$ of the 1-category $\cC(\tti,\ttj)$ which is stable under left horizontal multiplication with 1- and 2-morphisms of $\cC$. We similarly define \emph{right 2-ideals} and \emph{two-sided 2-ideals}. We call the latter simply \emph{2-ideals}.\end{defn}

Following standard constructions, given a 2-category $\cC$ and a 2-ideal $\cI$ of $\cC$, we define the \emph{quotient 2-category} $\cC/\cI$ as follows: the objects and 1-morphisms of $\cC/\cI$ are the same as those of $\cC$ and the hom-sets between 1-morphisms are defined as $$\Hom_{\cC/\cI(\tti,\ttj)}(F,G):=\Hom_{\cC(\tti,\ttj)}(F,G)/\Hom_{\cI(\tti,\ttj)}(F,G).$$

\begin{defn} Given a locally finitary 2-category $\cC$ and a 2-representation $\mbM$ of $\cC$, an \emph{ideal} $\cI$ of $\mbM$ is a collection of ideals $\cI(\tti)\subseteq \mbM(\tti)$ which is closed under the action of $\cC$ in that for any morphism $f\in\cI(\tti)$ and any 1-morphism $F\in\cC$, $\mbM(F)(f)$ is a morphism in $\cI$ if it is defined.\end{defn}

\begin{defn} Let $\cC$ be a locally finitary 2-category. We denote by $S(\cC)$ the multisemigroup of isomorphism classes of indecomposable 1-morphisms of $\cC$ (we denote the isomorphism class of $F$ by $[F]$), with the operation given by $$[F]*[G]=\{[H]\in S(\cC)|H\text{ is a direct summand of }FG\}.$$ We often abuse notation and write $F$ for the isomorphism class $[F]$.\end{defn}

The Green's relations for multisemigroups were first defined in \cite{kudryavtseva2012multisemigroups}, based on the original definition for semigroups in \cite{green1951structure}. To recall, if $(S,*)$ is a multisemigroup with $x\in S$, then the \emph{principal left ideal} of $x$ is the set $L_x=Sx\cup\{x\}$, the \emph{right principal ideal} is $R_x=xS\cup\{x\}$, and the \emph{two-sided principal ideal} is $J_x=SxS\cup Sx\cup xS\cup\{x\}$. This gives rise to equivalence relations $\cL$, $\cR$ and $\cJ$ where e.g. $x\sim_\cL y$ if $L_x=L_y$. The equivalence classes of these relations are called \emph{$\cL$-, $\cR$-} and \emph{$\cJ$-cells} respectively. There are also two further Green's relations. One is $\cH:=\cL\cap\cR$, and the other is $\cD$, defined as the join of $\cL$ and $\cR$ in the poset of equivalence relations - that is, it is the smallest equivalence relation to contain both $\cL$ and $\cR$. In semigroups, it is always the case that $\cD=\cL\circ\cR=\cR\circ\cL$ (see e.g. \cite[p. 219]{lawson2003finite}). However, in multisemigroups, this is not in general true. The best we can say is the following:

\begin{lem}\label{DPowersOfLR} $\cD=\bigcup_{i\in\bbZ^+} (\cL\circ\cR)^{\circ i}$.
	\begin{proof} Since $\cL,\cR\subseteq\cL\circ\cR$, and since $\cD$ is the join of $\cL$ and $\cR$, it follows that $\cD\subseteq \bigcup_{i\in\bbZ^+} (\cL\circ\cR)^{\circ i}$. However, if $\cL$ and $\cR$ are contained in an equivalence relation $M$, then it follows from transitivity that $(\cL\circ\cR)^{\circ i}\subseteq M$ for all $i$. Thus $\bigcup_{i\in\bbZ^+} (\cL\circ\cR)^{\circ i}\subseteq \cD$, and the result follows.\end{proof}\end{lem}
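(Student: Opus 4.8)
The plan is to prove the two inclusions separately, using throughout that $\cD$ is, by definition, the \emph{smallest} equivalence relation containing both $\cL$ and $\cR$. For the inclusion $\bigcup_{i\in\bbZ^+}(\cL\circ\cR)^{\circ i}\subseteq\cD$, I would show that $\cD$ absorbs every such composite. Since $\cD$ is an equivalence relation containing $\cL$ and $\cR$, transitivity gives $\cL\circ\cR\subseteq\cD\circ\cD=\cD$, and a straightforward induction on $i$ then yields $(\cL\circ\cR)^{\circ i}\subseteq\cD$ for every $i\in\bbZ^+$. Taking the union over $i$ gives this direction.

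For the reverse inclusion I would set $U:=\bigcup_{i\in\bbZ^+}(\cL\circ\cR)^{\circ i}$ and verify that $U$ is itself an equivalence relation containing $\cL$ and $\cR$; minimality of $\cD$ then forces $\cD\subseteq U$. Containment of $\cL$ and $\cR$ is immediate from reflexivity: since the identity relation $\id$ is contained in each of $\cL$ and $\cR$, we get $\cL=\cL\circ\id\subseteq\cL\circ\cR$ and $\cR=\id\circ\cR\subseteq\cL\circ\cR$, both of which lie in $U$ at $i=1$. Reflexivity of $U$ is clear since $\cL\circ\cR$ is already reflexive, and transitivity follows at once from $(\cL\circ\cR)^{\circ i}\circ(\cL\circ\cR)^{\circ j}=(\cL\circ\cR)^{\circ(i+j)}$.

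The step I expect to be the crux is symmetry, which is exactly where the failure of the semigroup identity $\cD=\cL\circ\cR$ in the multisemigroup setting makes itself felt. Because $\cL$ and $\cR$ are symmetric, the converse relation satisfies $(\cL\circ\cR)^{-1}=\cR^{-1}\circ\cL^{-1}=\cR\circ\cL$, and more generally $\bigl((\cL\circ\cR)^{\circ i}\bigr)^{-1}=(\cR\circ\cL)^{\circ i}$. The key observation is then that reflexivity upgrades $\cR\circ\cL$ into the \emph{second} power of $\cL\circ\cR$: from $\cR\subseteq\cL\circ\cR$ and $\cL\subseteq\cL\circ\cR$ we obtain $\cR\circ\cL\subseteq(\cL\circ\cR)^{\circ 2}$, and hence $(\cR\circ\cL)^{\circ i}\subseteq(\cL\circ\cR)^{\circ 2i}\subseteq U$. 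Thus the converse of any element of $U$ again lies in $U$, so $U$ is symmetric. This establishes that $U$ is an equivalence relation containing $\cL$ and $\cR$, whence $\cD\subseteq U$, and combined with the first paragraph this yields the equality.
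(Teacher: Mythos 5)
Your proof is correct and follows the same two-inclusion strategy as the paper: one direction uses that any equivalence relation containing $\cL$ and $\cR$ (in particular $\cD$) absorbs every composite $(\cL\circ\cR)^{\circ i}$ by transitivity, and the other uses minimality of the join. The only difference is that you explicitly verify that $\bigcup_{i\in\bbZ^+}(\cL\circ\cR)^{\circ i}$ is an equivalence relation containing $\cL$ and $\cR$ --- in particular the symmetry step via $\cR\circ\cL\subseteq(\cL\circ\cR)^{\circ 2}$ --- which is exactly the detail the paper's terse first inclusion leaves implicit, so your write-up is if anything more complete.
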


We note that, even in semigroup theory, it is not always true that $\cD=\cJ$ in the infinite case, and we need to apply care when considering a locally finitary 2-category. That said, we are able to give useful results.\par

For the rest of the subsection, let $\cC$ be a locally finitary 2-category with multisemigroup of isomorphism classes of indecomposables $S(\cC)$, on which we have Green's relations $\cL_\cC$, $\cR_\cC$, $\cJ_\cC$, $\cD_\cC$ and $\cH_\cC$. For a full finitary sub-2-category $\cB$ of $\cC$, $S(\cB)\subseteq S(\cC)$. Let $\cL_\cB$, $\cR_\cB$, $\cJ_\cB$, $\cD_\cB$ and $\cH_\cB$ denote the Green's relations of $S(\cB)$, which we consider as equivalence relations on $S(\cB)$. We can extend these to equivalence relations on $S(\cC)$ by setting $\ol{\cX_\cB}=\cX_\cB\cup\Delta_{S(\cC)}$ for $\cX$ one of the Green's relations and $\Delta$ the diagonal equivalence relation. We first show that all the Green's relations on $\cC$ barring $\cH_\cC$ are determined by the Green's relations on the full finitary sub-2-categories.

\begin{psn}\label{cLbootleg} Let $\chi$ denote the set of full finitary sub-2-categories of $\cC$. Then $\bigcup_{\cB\in\chi} \ol{\cL_\cB}=\cL_\cC$. A similar result holds for $\cR_\cC$ and $\cJ_\cC$.
	
	\begin{proof} All three results have similar proofs - we give the proof for $\cL_\cC$. If we have $(F,G)\in\bigcup_{\cB\in\chi} \ol{\cL_\cB}$, either $(F,G)\in\Delta_{S(\cC)}$ and $F=G$ or there exists some $\cB\in\chi$ such that $(F,G)\in\cL_\cB$. If $F=G$ then $(F,G)\in\cL_\cC$. If $(F,G)\in\cL_\cB$ then there exist $H,K\in S(\cB)$ such that $F$ is a direct summand of $HG$ and $G$ is a direct summand of $KF$. But then it follows that $F\sim_{\cL_\cC} G$ and thus $(F,G)\in\cL_\cC$ and $\bigcup_{\cB\in\chi} \ol{\cL_\cB}\subseteq \cL_\cC$.\par
		
		Conversely, if $(F,G)\in\cL_\cC$, then there exist some $H,K\in S(\cC)$ such that $F$ is a direct summand of $HG$ and $G$ is a direct summand of $KF$. Now let $\cB$ be a full finitary sub-2-category of $\cC$ that contains $F,G,H$ and $K$. It follows that $(F,G)\in\cL_\cB$, and hence $(F,G)\in\bigcup_{\cB\in\chi} \ol{\cL_\cB}$. Thus $\cL_\cC\subseteq \bigcup_{\cB\in\chi} \ol{\cL_\cB}$, and $\cL_\cC=\bigcup_{\cB\in\chi} \ol{\cL_\cB}$ as required.\end{proof}\end{psn}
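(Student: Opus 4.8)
The plan is to prove the equality $\bigcup_{\cB\in\chi}\ol{\cL_\cB}=\cL_\cC$ by double inclusion, after first rewriting $\cL_\cC$ in a workable ``witness'' form. Unpacking the definition of the principal left ideal $L_F=S(\cC)F\cup\{F\}$, the equality $L_F=L_G$ holds if and only if $F\in L_G$ and $G\in L_F$, since $L_F$ is the smallest left ideal of $S(\cC)$ containing $F$, so that $F\in L_G$ already forces $L_F\subseteq L_G$. Thus $F\sim_{\cL_\cC}G$ precisely when either $F=G$, or there exist $H,K\in S(\cC)$ with $F$ a direct summand of $HG$ and $G$ a direct summand of $KF$; and identically for $\cL_\cB$ with witnesses drawn from $S(\cB)$.

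For the inclusion $\bigcup_{\cB\in\chi}\ol{\cL_\cB}\subseteq\cL_\cC$, I would take $(F,G)$ in the union and split into the two cases packaged by $\ol{\cL_\cB}=\cL_\cB\cup\Delta_{S(\cC)}$. If $(F,G)\in\Delta_{S(\cC)}$ then $F=G$ and reflexivity gives $(F,G)\in\cL_\cC$. If instead $(F,G)\in\cL_\cB$ for some $\cB\in\chi$, then the witnesses $H,K\in S(\cB)$ also lie in $S(\cC)$, and the summand relations are statements about horizontal composition that are inherited verbatim in $\cC$; hence $(F,G)\in\cL_\cC$.

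The substantive direction is $\cL_\cC\subseteq\bigcup_{\cB\in\chi}\ol{\cL_\cB}$. Given $(F,G)\in\cL_\cC$ with $F\neq G$, fix witnesses $H,K\in S(\cC)$ as above. Each of $F,G,H,K$ is an indecomposable 1-morphism living in some hom-category $\cC(\tti,\ttj)$, hence has a well-defined source and target object; I would collect the (finitely many) objects arising as sources and targets of $F,G,H,K$ into a finite set $\ul{\tti}$ and form the full finitary sub-2-category $\cB:=\cC_{\ul{\tti}}$. Since $\cB$ is full, its hom-categories coincide with those of $\cC$ and are idempotent complete, so ``$F$ is a direct summand of $HG$'' and ``$G$ is a direct summand of $KF$'' hold in $\cB$ exactly as in $\cC$; therefore $(F,G)\in\cL_\cB\subseteq\ol{\cL_\cB}$.

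The $\cR$ case is the mirror image, using $R_F=FS(\cC)\cup\{F\}$ and witnesses on the right. For $\cJ$ the only change is that membership in $J_F=S(\cC)FS(\cC)\cup S(\cC)F\cup FS(\cC)\cup\{F\}$ may require $F$ to be a direct summand of a two-sided product $HGK$, so up to four witnessing 1-morphisms appear; this is still a finite list, and enlarging $\ul{\tti}$ to absorb their sources and targets runs the same argument. The only point demanding care throughout is the passage between $\cC$ and $\cB$: one must be sure that the finitely many witnessing 1-morphisms, together with $F$ and $G$, all fit inside a single full finitary sub-2-category, which is exactly what the finiteness of the witness set and the object-generated definition of $\cC_{\ul{\tti}}$ guarantee. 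This finiteness of witnesses is also precisely why the argument does not extend to $\cD_\cC$, which by \autoref{DPowersOfLR} can require unboundedly many alternations of $\cL$ and $\cR$.
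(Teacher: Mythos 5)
Your proof is correct and follows essentially the same route as the paper's: reduce $\sim_{\cL}$ to the existence of finitely many witnessing 1-morphisms, check the easy inclusion by inheriting those witnesses into $\cC$, and for the converse place $F,G$ and the witnesses inside a full finitary sub-2-category $\cC_{\ul{\tti}}$ built from their finitely many source and target objects. Your added justification of the witness characterisation via smallest left ideals, the explicit four-witness count for $\cJ$, and the closing remark on why $\cD_\cC$ escapes this argument (consistent with the paper treating it separately in \autoref{cDbootleg}) are all sound refinements of the same argument.
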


\begin{psn}\label{cDbootleg} In the same setup as above, $\bigcup_{\cB\in\chi} \ol{\cD_\cB}=\cD_\cC$.
	\begin{proof} If $(F,G)\in\cL_\cC$ then as by \autoref{cLbootleg} $\cL_\cC=\bigcup_{\cB\in\chi} \ol{\cL_\cB}$, without loss of generality $(F,G)\in\cL_\cB$ for some $\cB\in\chi$. By the definition of $\cD_\cB$, $(F,G)\in\cD_\cB$. Thus $(F,G)\in \bigcup_{\cB\in\chi} \ol{\cD_\cB}$, and hence $\cL_\cC\subseteq\bigcup_{\cB\in\chi} \ol{\cD_\cB}$. Similarly, $\cR_\cC\subseteq\bigcup_{\cB\in\chi} \ol{\cD_\cB}$. But then as $\cD_\cC$ is the join of $\cL_\cC$ and $\cR_\cC$, we must have that $\cD_\cC\subseteq \bigcup_{\cB\in\chi} \ol{\cD_\cB}$.\par
		
		For the opposite inclusion, let $(F,G)\in\bigcup_{\cB\in\chi} \ol{\cD_\cB}$. It follows from \autoref{DPowersOfLR} that $\cD_\cB=\bigcup_{i\in\bbZ^+} (\cL_\cB\circ\cR_\cB)^{\circ i}$. Therefore there exists some $n\in2\bbZ^+$ and $H_i\in S(\cB)$ for $0\leq i\leq n$ such that $$F=H_0\sim_{\cL_\cB} H_1\sim_{\cR_\cB} H_2\sim_{\cL_\cB}\dots\sim_{\cL_\cB} H_{n-1}\sim_{\cR_\cB} H_n=G.$$ Then by \autoref{cLbootleg}, $$(H_i,H_{i+1})\in\cL_\cB\Rightarrow (H_i,H_{i+1})\in\cL_\cC,$$ and $$(H_i,H_{i+1})\in\cR_\cB\Rightarrow (H_i,H_{i+1})\in\cR_\cC.$$ Therefore $(H_i,H_{i+1})\in\cD_\cC$ for all $i$. But as $\cD_\cC$ is an equivalence relation it follows that $(F,G)\in\cD_\cC$. Thus $\bigcup_{\cB\in\chi} \ol{\cD_\cB}\subseteq \cD_\cC$ and the result follows. \end{proof}\end{psn}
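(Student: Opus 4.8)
The plan is to prove the two inclusions separately, using \autoref{cLbootleg} to pass $\cL$- and $\cR$-relations back and forth between the full finitary sub-2-categories and $\cC$, and \autoref{DPowersOfLR} to unfold any $\cD$-relation into a finite alternating chain of $\cL$- and $\cR$-links.

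For the inclusion $\bigcup_{\cB\in\chi}\ol{\cD_\cB}\subseteq\cD_\cC$ I would take a pair $(F,G)$ in the left-hand side. If $(F,G)\in\Delta_{S(\cC)}$ there is nothing to prove, so assume $(F,G)\in\cD_\cB$ for some $\cB\in\chi$. Applying \autoref{DPowersOfLR} inside $S(\cB)$ gives elements $H_0,\dots,H_n\in S(\cB)$ with $F=H_0$, $G=H_n$ and consecutive pairs alternately related by $\cL_\cB$ and $\cR_\cB$. By \autoref{cLbootleg} each $\cL_\cB$-link is an $\cL_\cC$-link and each $\cR_\cB$-link an $\cR_\cC$-link, so every consecutive pair lies in $\cD_\cC$; since $\cD_\cC$ is transitive this yields $(F,G)\in\cD_\cC$.

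For the reverse inclusion $\cD_\cC\subseteq\bigcup_{\cB\in\chi}\ol{\cD_\cB}$ I would use that $\cD_\cC$ is, by definition, the smallest equivalence relation containing $\cL_\cC$ and $\cR_\cC$. Combining \autoref{cLbootleg} with the containments $\cL_\cB,\cR_\cB\subseteq\cD_\cB$ shows that $\bigcup_{\cB\in\chi}\ol{\cD_\cB}$ already contains both $\cL_\cC$ and $\cR_\cC$, so it would suffice to verify that $\bigcup_{\cB\in\chi}\ol{\cD_\cB}$ is itself an equivalence relation; the minimality of the join then forces the desired containment.

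The main obstacle is precisely this verification, and specifically transitivity. Reflexivity and symmetry are immediate, since each $\ol{\cD_\cB}$ contains the diagonal and each $\cD_\cB$ is symmetric. For transitivity, suppose $(F,G)\in\cD_{\cB_1}$ and $(G,H)\in\cD_{\cB_2}$. I would pick a single full finitary sub-2-category $\cB_3$ containing the (finitely many) objects and the relevant 1-morphisms of both $\cB_1$ and $\cB_2$; such a $\cB_3$ exists because each $\cB_i$ involves only finitely many objects, so their union does too. Since the horizontal products in a full sub-2-category agree with those in $\cC$, the witnesses realising an $\cL_{\cB_1}$- or $\cR_{\cB_1}$-relation still lie in $S(\cB_3)$, giving the monotonicity $\cD_{\cB_1},\cD_{\cB_2}\subseteq\cD_{\cB_3}$ via the chain description of \autoref{DPowersOfLR}. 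Transitivity of $\cD_{\cB_3}$ then gives $(F,H)\in\cD_{\cB_3}\subseteq\bigcup_{\cB\in\chi}\ol{\cD_\cB}$. This directed-system step, that any finite configuration already lives inside one full finitary sub-2-category, is the recurring local principle of the paper and is what makes the join computation legitimate. (Alternatively, one can bypass the equivalence-relation check by unfolding $\cD_\cC$ itself through \autoref{DPowersOfLR} and fitting the resulting finite chain into a single $\cB$, but the local principle is needed either way.)
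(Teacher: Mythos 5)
Your proof is correct, and for the inclusion $\bigcup_{\cB\in\chi}\ol{\cD_\cB}\subseteq\cD_\cC$ it coincides exactly with the paper's argument: unfold $\cD_\cB$ into a finite alternating chain via \autoref{DPowersOfLR}, transfer each link to $\cL_\cC$ or $\cR_\cC$ via \autoref{cLbootleg}, and finish by transitivity of $\cD_\cC$. For the reverse inclusion the two arguments begin identically, showing $\cL_\cC\cup\cR_\cC\subseteq\bigcup_{\cB\in\chi}\ol{\cD_\cB}$ and then appealing to $\cD_\cC$ being the join of $\cL_\cC$ and $\cR_\cC$; but the paper stops there, tacitly using that $\bigcup_{\cB\in\chi}\ol{\cD_\cB}$ is itself an equivalence relation, which is precisely what the minimality of the join requires and is not automatic, since a union of equivalence relations need not be transitive. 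You correctly single this out as the main obstacle and close it with the directed-system argument: two full finitary sub-2-categories $\cB_1,\cB_2$ embed in a common $\cB_3$ (the full sub-2-category on the union of their finitely many objects), and $\cD_{\cB_1},\cD_{\cB_2}\subseteq\cD_{\cB_3}$ because the multisemigroup structure of each $S(\cB_i)$ is the restriction of that of $S(\cB_3)$ --- fullness and idempotent completeness of the hom-categories guarantee that products and their indecomposable summands agree with those computed in $\cC$. So your write-up is, if anything, more complete than the paper's: it supplies the transitivity verification that the paper's join step silently needs. Your parenthetical alternative (unfolding $\cD_\cC$ itself by \autoref{DPowersOfLR} and fitting the resulting finite chain together with its witnessing 1-morphisms into a single $\cB$) would repair that step equally well, and is essentially the same local principle used in the proof of \autoref{cLbootleg}.
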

	
	We can also give a useful result for sufficiently nice $\cJ$-cells of $\cC$:

\begin{thm}\label{LFJIsD} Let $\ceJ$ be a $\cJ$-cell of $\cC$ such that every $\cH$-cell of $\ceJ$ is non-empty. Let $\cL_\ceJ$, $\cR_\ceJ$, $\cD_\ceJ$ and $\cJ_\ceJ$ denote the restrictions of the Green's relations of $\cC$ to $\ceJ$. Then $\cL_\ceJ\circ\cR_\ceJ=\cR_\ceJ\circ\cL_\ceJ=\cD_\ceJ=\cJ_\ceJ$.
	\begin{proof} The proof of \cite[Proposition 28 b)]{mazorchuk2011cell} is local and generalises immediately, proving that $\cL_\ceJ\circ\cR_\ceJ=\cR_\ceJ\circ\cL_\ceJ=\cJ_\ceJ$. Finally, it is immediate from the definitions that $\cL_\ceJ\circ\cR_\ceJ\subseteq\cD_\ceJ\subseteq\cJ_\ceJ$, and thus the remaining equality follows directly from the prior equalities.  \end{proof}\end{thm}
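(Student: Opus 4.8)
The plan is to prove the chain $\cL_\ceJ\circ\cR_\ceJ=\cR_\ceJ\circ\cL_\ceJ=\cJ_\ceJ$ and then deduce $\cD_\ceJ=\cJ_\ceJ$ by a sandwiching argument. Since $\ceJ$ is a single $\cJ$-cell, $\cJ_\ceJ$ is the full relation $\ceJ\times\ceJ$, so all the content lies in showing that any two elements of $\ceJ$ are joined by a single $\cR$-step followed by a single $\cL$-step, and symmetrically. The inclusions $\cL_\ceJ\circ\cR_\ceJ\subseteq\cD_\ceJ\subseteq\cJ_\ceJ$ and $\cR_\ceJ\circ\cL_\ceJ\subseteq\cJ_\ceJ$ are immediate from the definitions, since any equivalence relation containing $\cL$ and $\cR$ contains both of their composites; so only the reverse inclusions $\cJ_\ceJ\subseteq\cR_\ceJ\circ\cL_\ceJ$ and $\cJ_\ceJ\subseteq\cL_\ceJ\circ\cR_\ceJ$ require work.

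First I would establish these reverse inclusions directly through the egg-box structure of $\ceJ$, which is where the hypothesis enters. Fix $F,G\in\ceJ$. The $\cR$-cell $\cR_F$ of $F$ and the $\cL$-cell $\cL_G$ of $G$ both lie inside $\ceJ$, since $\cR,\cL\subseteq\cJ$; they form a row and a column of the egg-box of $\ceJ$, and their intersection is the $\cH$-cell of $\ceJ$ sitting in that row and column, which is non-empty by the standing hypothesis. Choosing $H\in\cR_F\cap\cL_G$ gives $F\sim_\cR H$ and $H\sim_\cL G$, so $(F,G)\in\cR_\ceJ\circ\cL_\ceJ$; taking instead an element of $\cL_F\cap\cR_G$ yields $(F,G)\in\cL_\ceJ\circ\cR_\ceJ$. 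This reproduces the conclusion of \cite[Proposition 28 b)]{mazorchuk2011cell}, and as that proof only ever manipulates the finitely many $1$- and $2$-morphisms realising a single such chain of relations, it is a local proof: one could equally gather those morphisms into a common full finitary sub-2-category and transport the relations in both directions using \autoref{cLbootleg}, so the finitary statement applies verbatim.

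Finally, combining $\cL_\ceJ\circ\cR_\ceJ=\cJ_\ceJ$ with the earlier chain $\cL_\ceJ\circ\cR_\ceJ\subseteq\cD_\ceJ\subseteq\cJ_\ceJ$ squeezes $\cD_\ceJ$ between two copies of $\cJ_\ceJ$ and forces $\cD_\ceJ=\cJ_\ceJ$. I expect the genuine obstacle to be the production of the connecting element $H$, that is, the essential use of the non-emptiness hypothesis rather than any purely formal manipulation: \autoref{DPowersOfLR} shows that in general $\cD_\cC$ is only the increasing union of the composites $(\cL_\cC\circ\cR_\cC)^{\circ i}$ and need not coincide with $\cJ_\cC$, so without a hypothesis forcing the egg-box of $\ceJ$ to be full there is no reason for a single $\cR$-then-$\cL$ chain to suffice. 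The assumption that every $\cH$-cell of $\ceJ$ is non-empty is precisely what restores the finite-semigroup behaviour locally inside $\ceJ$ and makes the one-step argument go through.
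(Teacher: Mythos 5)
Your proof is correct, and it has the same overall skeleton as the paper's: both deduce $\cD_\ceJ=\cJ_\ceJ$ at the end by squeezing $\cD_\ceJ$ between $\cL_\ceJ\circ\cR_\ceJ$ and $\cJ_\ceJ$. The difference lies in how the key equalities $\cL_\ceJ\circ\cR_\ceJ=\cR_\ceJ\circ\cL_\ceJ=\cJ_\ceJ$ are established: the paper disposes of them with a one-line appeal to \cite[Proposition 28 b)]{mazorchuk2011cell} (asserting that proof is local and generalises), whereas you prove them from scratch by the egg-box argument --- since $\ceJ$ is a single $\cJ$-cell, $\cJ_\ceJ$ is the full relation on $\ceJ$, and for $F,G\in\ceJ$ the intersections $\cR_F\cap\cL_G$ and $\cL_F\cap\cR_G$ are egg-box cells of $\ceJ$, non-empty by hypothesis, which supply the one-step connecting elements for both composites. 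This buys genuine self-containedness and, more importantly, makes explicit exactly where the non-emptiness hypothesis enters; your closing observation that \autoref{DPowersOfLR} is why some such hypothesis is unavoidable is also a correct and useful framing that the paper leaves implicit. Two small points of hygiene. First, your parenthetical characterisation of the cited proof (that it ``only manipulates the finitely many 1- and 2-morphisms realising a single chain'') is a guess you cannot verify from the statement alone; since your direct argument does not rely on it, it would be cleaner to drop that sentence. Second, you should say explicitly that you are reading ``$\cH$-cell of $\ceJ$'' as an intersection $\ceR\cap\ceL$ of an $\cR$-cell and an $\cL$-cell contained in $\ceJ$ (a cell of the egg-box), since a literal $\cH$-equivalence class is non-empty by definition and would make the hypothesis vacuous; this identification is the intended one and is doing real work in your argument.
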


 We note that we have corresponding partial orders $\leq_\cL$, $\leq_\cR$ and $\leq_\cJ$ on $S(\cC)$ such that e.g. $(F,G)\in\cL$ if and only if $F\leq_\cL G$ and $G\leq_\cL F$, where $F\leq_\cL G$ if there is some 1-morphism $H$ such that $G$ is a direct summand of $HF$, with similar definitions for $\leq_\cR$ and $\leq_\cJ$.
 
\begin{defn} A $\cJ$-cell $\ceJ$ of $\cC$ is \emph{strongly regular} if left cells in $\ceJ$ are incomparable under the left order and if for any left cell $\ceL\subseteq\ceJ$ and right cell $\ceR\subseteq\ceJ$, $\ceL\cap\ceR$ contains a unique isomorphism class of indecomposable 1-morphisms. If only the first condition holds then $\ceJ$ is called \emph{regular}.\end{defn}

\begin{defn} Given a 2-representation $\mbM$ of $\cC$ and a collection of objects $\{X_j\}_{j\in J}$ in the image of $\mbM$, we define the \emph{$\mbM$-span} of the $X_j$, $\mbG_\mbM(\{X_j\})$ to be $$\add\{\mbM(F)X_j|F\in\cC(\tti,\ttk)\text{ for some }\tti,\ttk\in\cC, j\in J\},$$ where $\add S$ is the \emph{additive closure} of the set $S$, defined as the smallest full subcategory of $\ceM$ containing $S$ and closed under direct sums and direct summands.\end{defn}

Defining $$\mbG_\mbM(\{X_j\})(\ttk):=\add\{\mbM(F)X_j|F\in\cC(\tti,\ttk)\text{ for some }\tti\in\cC, j\in J\},$$ it is immediate from the definition that $\mbG_\mbM(\{X_j\})$ is a sub-2-representation of $\mbM$.

\begin{defn} Let $\mbM$ be a finitary 2-representation of $\cC$. We say that $\mbM$ is \emph{transitive} if for any $\tti\in\cC$ and any non-zero $X\in\mbM(\tti)$, $\mbG_\mbM(\{X\})$ is equivalent to $\mbM$.\end{defn}

To define simple transitive 2-representations we need the following generalisation of \cite{mazorchuk2015transitive}:

\begin{lem}\label{locfinmid} Let $\mbM$ be a transitive 2-representation of $\cC$. There exists a unique maximal ideal $\cI$ of $\mbM$ such that $\cI$ does not contain any identity morphisms apart from the zero object.
	\begin{proof} The proof is mutatis mutandis given by the proofs of \cite[Lemma 3, 4]{mazorchuk2015transitive}.\end{proof}\end{lem}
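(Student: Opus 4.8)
The plan is to produce the ideal as the join of all ideals of $\mbM$ that contain no nonzero identity morphism, and then to verify that this join still contains no nonzero identity morphism, whence it is automatically the unique maximal such ideal. First I would dispose of a characterisation: an ideal $\cI$ of $\mbM$ contains the identity of some nonzero object precisely when $\cI$ is the improper ideal of all morphisms. For one direction, suppose $\id_Y \in \cI$ with $0 \neq Y \in \mbM(\tti)$; decomposing $Y$ into indecomposables and pre- and post-composing with the relevant inclusion and projection shows $\id_X \in \cI$ for some indecomposable summand $X$. Closure of $\cI$ under the action of $\cC$ gives $\mbM(F)(\id_X) = \id_{\mbM(F)X} \in \cI$ for every $1$-morphism $F$, and closure under composition then forces $\id_Z \in \cI$ for every summand $Z$ of any $\mbM(F)X$. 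Since $\mbM$ is transitive, $\mbG_\mbM(\{X\}) \simeq \mbM$, so every indecomposable object of $\ceM$ occurs as such a summand; thus $\cI$ contains every identity and hence every morphism.

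Granting this, the ideals under consideration are exactly the proper ones, and the statement reduces to showing that the family of proper ideals has a largest member. I would set $\cI$ to be the sum of all proper ideals, meaning that for each object $\ttj$ and each pair $X, Y \in \mbM(\ttj)$ the group $\cI(\ttj)(X,Y)$ is the sum, inside the finite-dimensional space $\Hom_{\mbM(\ttj)}(X,Y)$, of the subgroups contributed by all proper ideals. This $\cI$ is manifestly an ideal and contains every proper ideal, so everything rests on showing that $\cI$ is itself proper.

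The key step, which is where the real content lies, is to suppose for contradiction that $\id_X \in \cI$ for some nonzero indecomposable $X \in \mbM(\ttj)$ and derive a contradiction. Here $\id_X = \sum_{m=1}^{n} f_m$ for finitely many proper ideals $\cI_1, \dots, \cI_n$ and endomorphisms $f_m \in \cI_m(\ttj)(X,X)$. Because $\mbM$ is a finitary $2$-representation, $\mbM(\ttj)$ is a Krull--Schmidt category and the endomorphism ring $\End_{\mbM(\ttj)}(X)$ of the indecomposable $X$ is local, with maximal ideal its Jacobson radical. As $\id_X$ is not in the radical while the radical is closed under addition, at least one $f_m$ lies outside it and is therefore invertible; but then $\id_X = f_m\n \circ f_m \in \cI_m$, contradicting the properness of $\cI_m$. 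Hence $\cI$ contains no nonzero identity and is the sought maximal ideal.

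I expect the main obstacle to be purely bookkeeping arising from the locally finitary setting: the join is taken over a possibly infinite (even uncountable) family of ideals distributed across the countably many objects of $\cC$. This dissolves once one observes that properness is a condition tested inside a single endomorphism algebra $\End_{\mbM(\ttj)}(X)$, which remains finite-dimensional and local because $\mbM$ lands in $\LA_\bbk^f$; every element of $\cI$ is therefore a genuinely finite sum, and the local-ring argument of \cite[Lemmas 3, 4]{mazorchuk2015transitive} applies verbatim within the relevant hom-category.
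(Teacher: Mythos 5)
Your proposal is correct and is essentially the proof the paper invokes: your first paragraph reconstructs \cite[Lemma 3]{mazorchuk2015transitive} (any ideal containing a nonzero identity is improper, via transitivity), and your second and third paragraphs reconstruct \cite[Lemma 4]{mazorchuk2015transitive} (the sum of all proper ideals is proper, via locality of $\End_{\mbM(\ttj)}(X)$ for $X$ indecomposable). Your closing observation that the argument is purely local to a single finite-dimensional endomorphism algebra is precisely the ``mutatis mutandis'' adaptation to the locally finitary setting that the paper's proof asserts.
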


\begin{defn} A transitive 2-representation $\mbM$ is \emph{simple transitive} if the maximal ideal of $\mbM$ given in \autoref{locfinmid} is the zero ideal.\end{defn}

Let $\cC$ now be a locally finitary 2-category with a $\cJ$-cell $\ceJ$ and a $\cL$-cell $\ceL\subseteq\ceJ$. We let $\tti=\tti_{\ceL}$ be the object of $\cC$ which is the domain of every $F\in\ceL$. For each $\ttj\in \cC$, we define a full subcategory $\mbN_\ceL(\ttj)$ of $\mbP_\tti(\ttj)$ by $$\mbN_\ceL(\ttj)=\add\{FX|F\in\cC(-,\ttj), X\in\ceL\}.$$ $\mbN_\ceL$ thus defines a map from the object set of $\cC$ into $\LA_\bbk$. To make this a 2-representation, of $\cC$, we take a 1-morphism $F$ to the functor defined by left composition with $F$, and a 2-morphism $\alpha$ is taken to the natural transformation defined by left horizontal composition with $\alpha$.
\par
By a similar proof to that of \autoref{locfinmid} (and hence to that of \cite[Lemma 3]{mazorchuk2015transitive}), $\mbN_\ceL$ is a transitive 2-representation, and thus has a unique maximal ideal not containing any identity morphisms for non-zero objects, and we can take its simple transitive quotient:

\begin{defn}\label{cell2repdefn} The simple transitive quotient of $\mbN_\ceL$ is the \emph{cell 2-representation} of $\cC$ corresponding to the $\cL$-cell $\ceL$. We generally denote this quotient as $\mbC_\ceL$. \end{defn}

We now define the apex of a finitary 2-representation, following \cite{chan2016diagrams}.

\begin{defn}\label{ApexDef} Let $\mbM$ be a finitary 2-representation of a locally finitary 2-category $\cC$. Let $S_\cJ(\cC)$ denote the poset of $\cJ$-cells of $\cC$, ordered by $\leq_\cJ$. If there exists a unique maximal $\ceJ\in S_\cJ(\cC)$ that is not annihilated by $\mbM$, then $\ceJ$ is called the \emph{apex} of $\mbM$.\end{defn}

\begin{defn} Let $\cC$ be a locally finitary 2-category and let $\ceJ$ be a $\cJ$-cell in $\cC$. We say that $\ceJ$ is \emph{non-trivial} if it contains some non-identity 1-morphism. Otherwise, $\ceJ$ is \emph{trivial}. We similarly define trivial and non-trivial $\cL$- and $\cR$-cells, and define a \emph{trivial} cell 2-representation to be a cell 2-representation associated to a trivial $\cL$-cell.\end{defn}

\section{Coalgebra And Comodule 1-Morphisms For Locally Finitary 2-Categories}\label{MMMTGen} We now generalise useful results from \cite{mackaay2016simple}. To start, we utilise the abelianisation constructions given there in Sections 3.2, 3.3 and 3.4, with the latter two generalising without issue to this setup. We recall the 1-categorical version for later reference.

\begin{defn}\label{fincelldef} Given an additive category $\ceC$, the \emph{injective fan Freyd abelianisation} $\ul{\ceC}$ of $\ceC$ is an additive category that is defined as follows: \begin{itemize}
		
		\item Objects of $\ul{\ceC}$ are equivalence classes of tuples of the form $(X,k,Y_i,f_i)_{i\in\bbZ^+}$ where $X$ and the $Y_i$ are objects of $\ceC$, the $f_i: X\to Y_i$ are morphisms of $\ceC$, and $k$ is a non-negative integer such that $Y_i=0$ for $i>k$. Two tuples are equivalent if they only differ in the value of $k$.
		\item A morphism from $(X,k,Y_i,f_i)$ to $(X',k',Y_i',f'_i)$ is an equivalence class of tuples $(g,h_{ij})_{i,j\in\bbZ^+}$ where $g:X\to X'$ and $h_{ij}:Y_i\to Y'_j$ are morphisms of $\ceC$ such that $f'_ig=\sum_j h_{ji}f_j$ for each $i$, modulo the $(g,h_{ij})$ such that there exist $q_i:Y_i\to X'$ with $\sum_i q_if_i=g$.
		\item The identity morphisms are of the form $(\id_X,\delta_{ij}\id_{Y_i})$ and composition is given by $(g',h'_{ij})\circ(g,h_{ij})=(g'g,\sum_kh'_{kj} h_{ik})$.\end{itemize}
	
	The \emph{projective fan Freyd abelianisation} is defined dually.\end{defn}

To provide a more intuitive picture of the above definition, the objects of $\ul{\ceC}$ are `fans' of objects of $\ceC$, specifically of the form \[\xymatrix{ & Y_1\\ X \ar[r]|{\,f_2\,}\ar[ru]^{f_1}\ar[rd]_{f_i} & Y_2 \\ & \vdots}\] where the $Y_i$ are all equal to zero for sufficiently large $i$. Morphisms between two fans consist of a single morphism between the `heads' of the fans and a matrix of morphisms between the non-zero `leaves' of the fans, subject to the above equivalence relation.

\subsection{The Coalgebra Construction} Let $\cC$ be a locally finitary 2-category and let $\mbM$ be a transitive 2-representation of $\cC$. Throughout the rest of this paper, we will be referring to (internal) coalgebra and comodule 1-morphisms. These are the natural generalisation of e.g. coalgebras in monoidal categories - see \cite[Section 6.4]{johnson20212} for a more detailed definition of coalgebra 1-morphisms, under the name `comonads'. We use the terms `coalgebra' and `comodule' over `comonad' and `coalgebra' to retain continuity with other finitary 2-representation papers.\par

Given any $S\in\mbM(\tti)$, we define the evaluation functor $\ev_S:\coprod_{\ttj\in\cC}\ul{\cC}(\tti,\ttj)\to\ul{\ceM}$ taking $F$ to $FS$ and $\alpha:F\to G$ to $\alpha_S: FS\to GS$. Since $\ev_S$ maps each $F\in\ul{\cC}(\tti,\ttj)$ to an object in $\ul{\mbM}(\ttj)$, this functor has a left adjoint if and only if each of the component evaluation functors $\ev_{S,\ttj}:\ul{\cC}(\tti,\ttj)\to\ul{\mbM}(\ttj)$ does so. But the latter case is the finitary case where such adjoints exist by e.g. \cite[Section 1.3]{chan2016diagrams}, and  we construct a left adjoint $[S,-]:\coprod\limits_{\ttj\in\cC}\ul{\mbM}(\ttj)\to\coprod\limits_{\ttj\in\cC}\ul{\cC}(\tti,\ttj)$. We give the following generalisation of \cite[Lemma 4.3]{mackaay2016simple}:

\begin{lem}\label{LFMMMTL5} With the above notation, $[S,S]$ has the structure of a coalgebra 1-morphism in $\ul{\cC}(\tti,\tti)$.
	\begin{proof} We mirror the proof of \cite[Lemma 4.3]{mackaay2016simple}. The image of $\id_{[S,S]}$ under the adjunction isomorphism $\Hom_{\ul{\cC}}([S,S],[S,S])\to\Hom_{\ul{\mbM}}(S,[S,S]S)$ gives a non-zero morphism $\coev_S:S\to[S,S]S$. This gives the composition\newline \begin{center}
		 \xymatrix{ S \ar[rr]^-{\coev_S} & & [S,S]S \ar[rr]^-{[S,S]\coev_S} & & [S,S][S,S]S}\end{center}
		from $S$ to $[S,S][S,S]S$. But then as $$\Hom_{\ul{\mbM}}(S,[S,S][S,S]S)\cong\Hom_{\ul{\cC}}([S,S],[S,S][S,S]),$$ again by the adjunction isomorphism, this gives a non-zero comultiplication 2-morphism $[S,S]\to[S,S][S,S]$.\par
		
		For the counit morphism we again use the adjunction isomorphism to note that $\Hom_{\ul{\mbM}}(S,S)=\Hom_{\ul{\mbM}}(S,\bbon_\tti S)\cong\Hom_{\ul{\cC}}([S,S],\bbon_\tti)$, and thus choose the (non-zero) image of $\id_S$ under this isomorphism. We denote the comultiplication 2-morphism by $\Delta_S$ and the counit 2-morphism by $\epsilon_S$. It is straightforward to check that the coalgebra axioms hold.\end{proof}\end{lem}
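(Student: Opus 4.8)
The plan is to exploit the adjunction $[S,-]\dashv\ev_S$ established above, transporting the evident "iterated coevaluation" morphisms on the module side into a comultiplication and a counit on the endomorphism side via adjoint transposition. The key structural observation is that, although the adjunction involves the coproduct $\coprod_{\ttj}\ul{\cC}(\tti,\ttj)$ over countably many objects, the coalgebra structure being built lives entirely inside the single endomorphism category $\ul{\cC}(\tti,\tti)$, which is finitary. Consequently, once the left adjoint $[S,-]$ is known to exist (which, as noted, reduces component-wise to the finitary situation treated in \cite{chan2016diagrams}), every manipulation takes place within a finitary hom-category and the argument of \cite[Lemma 4.3]{mackaay2016simple} applies locally.

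First I would record the isomorphism $\Hom_{\ul{\cC}}([S,S],F)\cong\Hom_{\ul{\mbM}}(S,FS)$, natural in $F$, and identify $\coev_S\colon S\to[S,S]S$ as the image of $\id_{[S,S]}$, equivalently the unit of the adjunction at $S$. The comultiplication $\Delta_S\colon[S,S]\to[S,S][S,S]$ is then the transpose, under the isomorphism with $F=[S,S][S,S]$, of the composite $S\xrightarrow{\coev_S}[S,S]S\xrightarrow{\,\id_{[S,S]}\circ_H\coev_S\,}[S,S][S,S]S$, while the counit $\epsilon_S\colon[S,S]\to\bbon_\tti$ is the transpose, under the isomorphism with $F=\bbon_\tti$, of $\id_S\colon S\to S=\bbon_\tti S$. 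All three morphisms are non-zero, since the adjunction isomorphisms carry identities to the (non-zero) unit.

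The remaining task is to verify the coalgebra axioms, which is where the only genuine bookkeeping lies. Because adjoint transposition is a bijection, it suffices to check coassociativity and counitality after transposing into $\Hom_{\ul{\mbM}}(S,-)$. The crucial point is that the transpose of a morphism $[S,S]\to F$ precomposed with whiskered coevaluation data is computed by "stacking" coevaluations: concretely, both $(\Delta_S\circ_H\id_{[S,S]})\circ_V\Delta_S$ and $(\id_{[S,S]}\circ_H\Delta_S)\circ_V\Delta_S$ transpose to the triple coevaluation $S\to[S,S][S,S][S,S]S$, so they agree. The counit identities follow likewise from the triangle identities of the adjunction, which force $(\epsilon_S\circ_H\id_{[S,S]})\circ_V\Delta_S$ and $(\id_{[S,S]}\circ_H\epsilon_S)\circ_V\Delta_S$ to transpose back to $\coev_S$, the transpose of $\id_{[S,S]}$, whence each equals $\id_{[S,S]}$. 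I expect the main obstacle to be purely notational: tracking horizontal versus vertical composition and the placement of the whiskering by $[S,S]$ while invoking naturality of the unit. There is no substantive difficulty once the reduction to the finitary endomorphism category $\ul{\cC}(\tti,\tti)$ has been made explicit.
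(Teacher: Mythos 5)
Your proposal is correct and follows essentially the same route as the paper: you build $\coev_S$ as the transpose of $\id_{[S,S]}$, define $\Delta_S$ as the transpose of $([S,S]\coev_S)\circ\coev_S$ and $\epsilon_S$ as the transpose of $\id_S$, exactly as in the paper's adaptation of \cite[Lemma 4.3]{mackaay2016simple}. The only difference is that you spell out the verification of coassociativity and counitality (via transposition, interchange, and the triangle identities), which the paper dismisses as ``straightforward to check''.
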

	
	We denote the category of comodule 1-morphisms of $[S,S]$ by $\comod_{\ul{\cC}}([S,S])$ and its subcategory of injective comodules by $\inj_{\ul{\cC}}([S,S])$. These can be considered instead as 2-representations of $\cC$ and $\ul{\cC}$ in a natural fashion; when considering them as such we denote them as $\bcomod_{\ul{\cC}}([S,S])$ and $\binj_{\ul{\cC}}([S,S])$ respectively.\par
	
	For the rest of the section we will assume that $\cC$ is a locally weakly fiat 2-category.\par
	
	Given any $T\in\ceM$, $[S,T]$ can be considered as a right $[S,S]$-comodule 1-morphism in a canonical fashion with coaction 2-morphism $\rho_{[S,T]}:[S,T]\to[S,T][S,S]$, or $\rho_T$ when there is no confusion. We define a functor $\Theta:\cM\to\comod_{\ul{\cC}}([S,S])$ by setting $$\Theta(T)=([S,T],\rho_T)\text{; }\Theta(f)=[S,f].$$
	
	\begin{lem}\label{MMMTL44} The functor $\Theta$ (weakly) commutes with the action of $\cC$ and defines a morphism of 2-representations.
		\begin{proof} This is a generalisation of \cite[Lemma 4.4]{mackaay2016simple}, and the proof given there shows that $[S,XT]\cong X[S,T]$ in $\ul{\cC}$ for any 1-morphism $X$ in $\cC$ by referring to only hom-categories between three objects of $\cC$. The proof is therefore entirely local and generalises immediately.\end{proof}\end{lem}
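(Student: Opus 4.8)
The plan is to treat the two assertions of the statement in turn. To say that $\Theta$ defines a morphism of 2-representations is to exhibit, for every 1-morphism $X$ of $\cC$ and every $T\in\ceM$, a natural isomorphism comparing $\Theta(XT)=[S,XT]$ with the result $X[S,T]$ of letting $X$ act on the comodule $\Theta(T)=([S,T],\rho_T)$, and then to check that these isomorphisms are coherent with composition of 1-morphisms, with 2-morphisms, and with the comodule coactions. The entire content sits in the single natural isomorphism $[S,XT]\cong X[S,T]$ in $\ul{\cC}$; the remaining verifications are coherence bookkeeping once that isomorphism is available.

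To produce this isomorphism I would argue by Yoneda, combining the defining adjunction $[S,-]\dashv\ev_S$ with the adjunctions furnished by the locally weakly fiat structure. Writing $X^*$ for the right adjoint of $X$ (so that $X\dashv X^*$, as the zigzag identities in the definition of locally weakly fiat show), for an arbitrary test 1-morphism $F$ one has the chain
\begin{align*}
\Hom_{\ul{\cC}}([S,XT],F) &\cong \Hom_{\ul{\mbM}}(XT,FS) \cong \Hom_{\ul{\mbM}}(T,X^*FS)\\
&\cong \Hom_{\ul{\cC}}([S,T],X^*F) \cong \Hom_{\ul{\cC}}(X[S,T],F),
\end{align*}
where the first and third isomorphisms are the $[S,-]\dashv\ev_S$ adjunction, the second is $X\dashv X^*$ acting on $\ul{\mbM}$, and the last is $X\dashv X^*$ in $\ul{\cC}$, using associativity of the action in the form $(X^*F)S=X^*(FS)$. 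As each step is natural in $F$, the Yoneda lemma delivers the isomorphism $[S,XT]\cong X[S,T]$.

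The crucial observation — and the reason the finitary argument transfers unchanged — is that every object occurring in the chain above lives in a hom-category between at most three objects of $\cC$: the object $\tti$ on which $S$ is defined, the source object of $T$, and the target object of $X$. I would therefore fix a full finitary sub-2-category $\cC_{\ul{\tti}}$ containing these three objects together with $X$, $S$, $T$ and the finitely many 2-morphisms involved, invoke \cite[Lemma 4.4]{mackaay2016simple} inside it, and conclude by the locality principle that the isomorphism holds in $\cC$. The same reduction handles naturality in $T$ and in a morphism $f$, the coherence $[S,(XY)T]\cong X[S,YT]\cong(XY)[S,T]$ under composition, compatibility with 2-morphisms $\alpha\colon X\to X'$, and the fact that $[S,XT]\cong X[S,T]$ is an isomorphism of right $[S,S]$-comodules intertwining the coactions $\rho$; each such claim concerns only finitely many 1- and 2-morphisms and so descends to a full finitary sub-2-category.

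The main obstacle I anticipate is not any individual calculation but making the locality reduction genuinely airtight: one must confirm that the finitely many morphisms witnessing any given coherence diagram can always be collected into a single full finitary sub-2-category, and that the weakly fiat adjunction data for $X$ (the $\alpha,\beta$ of the definition) can be taken inside that sub-2-category, so that $X^*$ and its adjunction are available locally rather than merely in $\cC$ at large. Once this is secured, the finitary proof of \cite[Lemma 4.4]{mackaay2016simple} applies verbatim, and the checks that $\Theta$ respects coactions and the coherence axioms become routine.
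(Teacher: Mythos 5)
Your proposal is correct and matches the paper's approach: the adjunction chain $\Hom_{\ul{\cC}}([S,XT],F)\cong\Hom_{\ul{\mbM}}(XT,FS)\cong\Hom_{\ul{\mbM}}(T,X^*FS)\cong\Hom_{\ul{\cC}}([S,T],X^*F)\cong\Hom_{\ul{\cC}}(X[S,T],F)$ is precisely the argument of \cite[Lemma 4.4]{mackaay2016simple}, which the paper invokes by reference, and your locality reduction (everything lives in hom-categories among the three relevant objects, and the weakly fiat adjunction data for $X$ lies in any full finitary sub-2-category containing its source and target) is exactly the paper's justification that the finitary proof ``generalises immediately.''
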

	
	We also present generalisations of \cite[Lemmas 4.5, 4.6]{mackaay2016simple}:
	
	\begin{lem}\label{MMMTL45} For any 1-morphism $X$ in $\cC$ and any $C\in\comod_{\ul{\cC}}([S,S])$ there is an isomorphism $\Hom_{\comod_{\ul{\cC}}([S,S])}(C,X[S,S])\cong\Hom_{\ul{\cC}}(C,X)$.
		\begin{proof} The proof given in \cite{mackaay2016simple} is an entirely local proof and generalises immediately.\end{proof}\end{lem}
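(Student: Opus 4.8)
The plan is to recognise the stated isomorphism as the hom-set form of the universal property of cofree comodules: the object $X[S,S]$, equipped with the coaction $\id_X\circ_H\Delta_S:X[S,S]\to X[S,S][S,S]$, is the cofree right $[S,S]$-comodule on the 1-morphism $X$, and the claim is precisely that comodule morphisms into it coincide with underlying morphisms into $X$ in $\ul{\cC}$. Concretely, I would exhibit mutually inverse additive maps between the two hom-spaces. In one direction, send a comodule morphism $\phi:C\to X[S,S]$ to $(\id_X\circ_H\epsilon_S)\circ_V\phi:C\to X$, post-composing with the counit $\epsilon_S$ of $[S,S]$ and using the identification $X\bbon_\tti\cong X$. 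In the other direction, send $\psi:C\to X$ in $\ul{\cC}$ to $(\psi\circ_H\id_{[S,S]})\circ_V\rho_C:C\to X[S,S]$, where $\rho_C$ denotes the coaction of the comodule $C$.

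The first substantive step is to verify that $(\psi\circ_H\id_{[S,S]})\circ_V\rho_C$ is genuinely a morphism of comodules, i.e. that it intertwines $\rho_C$ with the coaction $\id_X\circ_H\Delta_S$ on $X[S,S]$. This reduces to the coassociativity axiom of the comodule $C$, namely $(\rho_C\circ_H\id_{[S,S]})\circ_V\rho_C=(\id_C\circ_H\Delta_S)\circ_V\rho_C$, combined with repeated use of the interchange law to slide $\psi$ past $\Delta_S$. The second step is to check the two maps are mutually inverse. One composite collapses to the identity via the coalgebra counit axiom $(\epsilon_S\circ_H\id_{[S,S]})\circ_V\Delta_S=\id_{[S,S]}$, applied after invoking the comodule-morphism property of $\phi$; the other collapses via the comodule counit axiom $(\id_C\circ_H\epsilon_S)\circ_V\rho_C=\id_C$, again using the interchange law to separate the $\psi$ and $\epsilon_S$ factors. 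Naturality in $C$ and additivity are then immediate from the functoriality of horizontal and vertical composition.

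I expect the only genuine obstacle to be the bookkeeping: correctly tracking horizontal versus vertical composition and applying the interchange law at each stage, since the constructions take place in the abelianisation $\ul{\cC}$ rather than in $\cC$ itself (the abelianisation machinery being already available by the remarks preceding \autoref{fincelldef}). Crucially, however, every morphism appearing in the argument lives in the hom-categories among just two objects, namely $\tti$ (the domain of $S$ and of $[S,S]$) and the common codomain of $X$ and $C$; the comultiplication, counit, coaction, and all of their composites never leave this pair. Hence the whole argument is carried out inside a full finitary sub-2-category on two objects, so it is a local proof in the sense of this paper, and the argument of \cite{mackaay2016simple} transfers immediately up to notation.
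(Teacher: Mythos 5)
Your proposal is correct and takes essentially the same route as the paper: the paper's proof is nothing more than the observation that the argument of \cite[Lemma 4.5]{mackaay2016simple} --- exactly the cofree-comodule adjunction you spell out, with the counit composition in one direction, the coaction in the other, and the comodule/coalgebra counit and coassociativity axioms verifying the bijection --- is local and hence generalises. Your closing paragraph, noting that every 2-morphism involved lives in the hom-categories among $\tti$ and the codomain of $X$, is precisely the locality claim that constitutes the paper's entire proof.
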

	
	\begin{lem}\label{MMMTL46} $\Theta$ factors over the inclusion $\inj_{\ul{\cC}}([S,S])\hookrightarrow\comod_{\ul{\cC}}([S,S])$.
		\begin{proof} We mirror the proof given for \cite[Lemma 4.6]{mackaay2016simple} with some extra detail to clarify it for our situation. We first consider the case where $T=XS$ for some 1-morphism $X\in\cC(\tti,\ttj)$. By \autoref{MMMTL44} we have that $[S,T]=[S,XS]\cong X[S,S]$. By the definition of a comodule, $[S,S]$ is injective in $\comod_{\ul{\cC}}([S,S])$. We claim that, because $\cC$ is locally weakly fiat, $X[S,S]$ is also injective. For the existence of internal adjunctions in $\cC$ gives that $$\Hom_{\comod_{\ul{\cC}}([S,S])}(-,X[S,S])\cong\Hom_{\comod_{\ul{\cC}}([S,S])}({}^*X-,[S,S])$$ and as the latter is exact by the injectivity of $[S,S]$ and by $\tensor[^*]{X}{}$ having both left and right adjoints, so is the former. But since $\mbM$ is transitive, any $T$ is isomorphic to a direct summand of some $XS$. The result follows.\end{proof}\end{lem}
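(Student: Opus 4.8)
The assertion is that $\Theta$ takes values in the full subcategory $\inj_{\ul{\cC}}([S,S])$ of injective comodules; since $\Theta(T)=([S,T],\rho_T)$, this amounts to proving that each comodule $[S,T]$ is injective in $\comod_{\ul{\cC}}([S,S])$. The plan is to reduce to a single tractable family of objects and then establish injectivity there using the internal adjunctions coming from the locally weakly fiat structure.

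First I would reduce to the case $T=XS$ for a $1$-morphism $X$ of $\cC$. Transitivity of $\mbM$ gives $\mbG_\mbM(\{S\})\simeq\mbM$, so every object $T$ lies in $\add\{FS\mid F\in\cC\}$ and is therefore a direct summand of a finite direct sum $\bigoplus_i F_iS=(\bigoplus_i F_i)S$; setting $X=\bigoplus_i F_i$ realises $T$ as a summand of $XS$. Because $[S,-]$ is additive, the corresponding summand splitting passes to comodules, and a direct summand of an injective comodule is again injective; hence it suffices to prove that $[S,XS]$ is injective for every $X$.

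For $T=XS$ I would use \autoref{MMMTL44} to identify $[S,XS]\cong X[S,S]$ as comodules, so the task becomes showing that $X[S,S]$ is injective. The regular comodule $[S,S]$ is injective by construction of the coalgebra structure in \autoref{LFMMMTL5}. I would then exhibit the natural isomorphism
$$\Hom_{\comod_{\ul{\cC}}([S,S])}(-,X[S,S])\cong\Hom_{\comod_{\ul{\cC}}([S,S])}({}^*X\,{-},[S,S])$$
induced by the internal adjunction $({}^*X,X)$ available since $\cC$ is locally weakly fiat. The right-hand functor is exact: $\Hom_{\comod_{\ul{\cC}}([S,S])}(-,[S,S])$ is exact by injectivity of $[S,S]$, and ${}^*X(-)$ is exact because in a weakly fiat $2$-category $X$ possesses both a left and a right adjoint. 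Exactness of the composite then forces $X[S,S]$ to be injective, completing the reduction.

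The step I expect to be the main obstacle is the justification of the displayed isomorphism at the level of comodule categories rather than merely in $\ul{\cC}$: one must verify that left tensoring by ${}^*X$ descends to an endofunctor of $\comod_{\ul{\cC}}([S,S])$ compatible with the coactions, and that the hom-adjunction is natural enough for exactness to transfer across it. The remaining ingredients, namely additivity of $[S,-]$, closure of injectives under summands, and exactness of $\Hom(-,[S,S])$, are routine in the abelian category $\ul{\cC}$ and follow from the constructions already in place.
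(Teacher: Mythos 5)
Your proposal is correct and follows essentially the same route as the paper's proof: reduce via transitivity of $\mbM$ to the case $T=XS$, identify $[S,XS]\cong X[S,S]$ using \autoref{MMMTL44}, and deduce injectivity of $X[S,S]$ from the internal adjunction isomorphism $\Hom_{\comod_{\ul{\cC}}([S,S])}(-,X[S,S])\cong\Hom_{\comod_{\ul{\cC}}([S,S])}({}^*X-,[S,S])$ together with exactness of ${}^*X$ and injectivity of $[S,S]$. Your explicit remarks on the summand reduction and on verifying that ${}^*X$ acts on the comodule category are sound elaborations of details the paper leaves implicit, not a different argument.
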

	
	We can now give the generalisation of \cite[Theorem 4.7]{mackaay2016simple}:
	
	\begin{thm}\label{MMMTL47} Take $\cC$ to be a locally weakly fiat 2-category, $\mbM$ to be a transitive 2-representation of $\cC$, and $S\in\mbM(\tti)$ to be non-zero. Letting $\Theta$ be the functor defined above, $\Theta$ induces a 2-representation equivalence between $\ul{\mbM}$ and $\bcomod_{\ul{\cC}}([S,S])$. This restricts to an equivalence between $\mbM$ and $\binj_{\ul{\cC}}([S,S])$.
		\begin{proof} The proof generalises with only minor notes from the proof for \cite[Theorem 4.7]{mackaay2016simple}. The references \cite[Lemmas 4.4, 4.5, 4.6]{mackaay2016simple} found therein are here replaced with Lemmas \ref{MMMTL44}, \ref{MMMTL45} and \ref{MMMTL46} respectively. We finally note that our construction above still has $[S,S]$ as an injective cogenerator of $\comod_{\ul{\cC}}([S,S])$, allowing the argument given in the aforementioned proof to generalise.	\end{proof}\end{thm}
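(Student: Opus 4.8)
The plan is to follow the structure of the proof of \cite[Theorem 4.7]{mackaay2016simple}, checking at each step that the argument only ever involves the hom-categories $\ul{\cC}(\tti,\ttj)$ for the fixed source object $\tti$ and varying target $\ttj$, so that it remains valid after restriction to a full finitary sub-2-category. By \autoref{MMMTL44} the functor $\Theta$ is already a morphism of 2-representations, so it suffices to prove that the underlying functor $\ceM\to\comod_{\ul{\cC}}([S,S])$ is fully faithful with essential image the injective comodules, and that its Freyd abelianisation is an equivalence onto \emph{all} comodules.

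First I would establish full faithfulness, reducing to target objects of the form $XS$ for a $1$-morphism $X$ of $\cC$. Here \autoref{MMMTL44} gives $[S,XS]\cong X[S,S]$, and chaining \autoref{MMMTL45} with the defining adjunction $[S,-]\dashv\ev_S$ produces
$$\Hom_{\comod_{\ul{\cC}}([S,S])}([S,T], X[S,S]) \cong \Hom_{\ul{\cC}}([S,T], X) \cong \Hom_{\ul{\mbM}}(T, XS),$$
which is exactly the bijection on Hom-spaces induced by $\Theta$. Since $\mbM$ is transitive, every non-zero $T\in\ceM$ is isomorphic to a direct summand of some $XS$; as both source and target categories are idempotent complete, full faithfulness for arbitrary objects then follows by splitting idempotents.

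Next I would treat essential surjectivity. Every injective comodule is a direct summand of a cofree comodule $X[S,S]\cong[S,XS]=\Theta(XS)$, so the injectives all lie in the essential image; combined with \autoref{MMMTL46}, which places $\Theta$ inside $\inj_{\ul{\cC}}([S,S])$, this yields the restricted equivalence $\mbM\simeq\binj_{\ul{\cC}}([S,S])$. To upgrade to the abelian statement I would extend $\Theta$ to $\ul{\mbM}$, verify exactness of the extension, and observe that it carries the `fan' presentations of \autoref{fincelldef} to injective copresentations in $\comod_{\ul{\cC}}([S,S])$, so that a comodule is recovered from such a copresentation precisely as the image of the corresponding fan.

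The step I expect to be the main obstacle, and the one the cited proof flags explicitly, is verifying that $[S,S]$ remains an injective cogenerator of $\comod_{\ul{\cC}}([S,S])$ in the locally finitary setting, since this is what guarantees that every comodule admits an injective copresentation and hence lies in the essential image of the abelianised $\Theta$. I would handle this by noting that the cogenerator property concerns comodules whose underlying $1$-morphisms are supported in finitely many $\ul{\cC}(\tti,\ttj)$, so the verification takes place inside a full finitary sub-2-category and the argument of \cite{mackaay2016simple} applies locally without modification.
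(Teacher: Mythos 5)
Your proposal is correct and takes essentially the same route as the paper: both run the proof of \cite[Theorem 4.7]{mackaay2016simple} with Lemmas \ref{MMMTL44}, \ref{MMMTL45} and \ref{MMMTL46} substituted for the corresponding finitary lemmas, and both isolate the fact that $[S,S]$ remains an injective cogenerator of $\comod_{\ul{\cC}}([S,S])$ as the one point requiring verification in the locally finitary setting. Your closing locality argument (each comodule, with its coaction embedding into a cofree comodule, lives in a single hom-category $\ul{\cC}(\tti,\ttj)$, so the finitary argument applies unchanged) supplies the justification that the paper merely asserts.
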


We note that \cite[Corollary 4.10]{mackaay2016simple} is a local corollary and thus also generalises immediately:

\begin{cor}\label{MMMTC410} For $\tti\in\cC$, consider the endomorphism 2-category $\cC_\tti$ of $\tti$ in $\cC$. There is a bijection between the equivalence classes of simple transitive 2-representations on $\cC_\tti$ and simple transitive 2-representations in $\cC$ which are non-zero at $\tti$.\end{cor}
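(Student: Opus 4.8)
The plan is to realise the bijection through the restriction functor to $\cC_\tti$, checking that it is well defined and invertible by means of the internal coalgebra description of \autoref{MMMTL47}. The guiding observation is that the coalgebra $[S,S]$ attached to a nonzero $S\in\mbM(\tti)$ is assembled solely from the hom-category $\cC(\tti,\tti)$ (through $\ev_{S,\tti}$ and its adjoint) and from $\mbM(\tti)$; it therefore lives in $\ul{\cC}(\tti,\tti)=\ul{\cC_\tti}(\tti,\tti)$ and, together with its comultiplication and counit from \autoref{LFMMMTL5}, is literally the same datum whether we regard $\mbM$ as a 2-representation of $\cC$ or pass to its restriction to $\cC_\tti$. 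Note first that, since $\cC$ is locally weakly fiat and $-^*$ is object-preserving, $\cC_\tti$ is a finitary weakly fiat 2-category with a single object, so both the classical theory and \autoref{MMMTL47} apply to it.

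The forward map sends a simple transitive $\mbM$ with $\mbM(\tti)\neq 0$ to the 2-representation $\mbM|_{\cC_\tti}$ of $\cC_\tti$ carried by $\mbM(\tti)$ with the action of $\cC(\tti,\tti)$; it is finitary because $\mbM(\tti)\in\LA_\bbk^f$. Transitivity transfers for free: for any nonzero $X\in\mbM(\tti)$ the equivalence $\mbG_\mbM(\{X\})\simeq\mbM$ yields at the object $\tti$ the equality $\mbM(\tti)=\add\{\mbM(F)X\mid F\in\cC(\tti,\tti)\}$, which is exactly transitivity of $\mbM|_{\cC_\tti}$.

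The substantive point, and the one I expect to be the main obstacle, is that restriction preserves simplicity, and this is where the weak fiat structure is indispensable. Suppose $\cI'$ were a nonzero $\cC_\tti$-stable ideal of $\mbM(\tti)$ containing no nonzero identity morphism. I would generate a $\cC$-stable ideal $\cI$ of $\mbM$ by letting $\cI(\ttj)$ be the ideal of $\mbM(\ttj)$ generated by all $\mbM(F)(f)$ with $f\in\cI'$ and $F\in\cC(\tti,\ttj)$, closed under the action of $\cC$. The internal adjunctions $(F,F^*)$ transport any morphism of $\cI(\ttj)$ back to $\tti$ by applying $\mbM(F^*)$, landing it inside the $\cC_\tti$-ideal generated by $\cI'$; hence $\cI$ can acquire no nonzero identity morphism at any object, so by \autoref{locfinmid} and simple transitivity of $\mbM$ we must have $\cI=0$. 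As $\cI'\subseteq\cI(\tti)$, this forces $\cI'=0$, a contradiction, and $\mbM|_{\cC_\tti}$ is simple transitive. Verifying carefully that spreading $\cI'$ across $\cC$ and contracting it back neither gains nor loses an identity is the heart of the argument.

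Finally, for injectivity on equivalence classes, \autoref{MMMTL47} gives $\mbM\simeq\binj_{\ul{\cC}}([S,S])$, and $[S,S]$ is fabricated purely from $\mbM|_{\cC_\tti}$; thus if $\mbM_1|_{\cC_\tti}\simeq\mbM_2|_{\cC_\tti}$, transporting the chosen object across this equivalence identifies the two coalgebras and hence $\mbM_1\simeq\mbM_2$. For surjectivity, given a simple transitive $\mbN$ of $\cC_\tti$ and nonzero $S_0\in\mbN(\tti)$, form $[S_0,S_0]\in\ul{\cC}(\tti,\tti)$ and set $\mbM:=\binj_{\ul{\cC}}([S_0,S_0])$; this is a 2-representation of $\cC$, nonzero at $\tti$, whose restriction to $\cC_\tti$ is $\binj_{\ul{\cC_\tti}}([S_0,S_0])\simeq\mbN$ by \autoref{MMMTL47} applied over $\cC_\tti$, while running the ideal argument of the previous paragraph in the opposite direction shows $\mbM$ is simple transitive. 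Everything beyond that ideal correspondence is either formal or a direct appeal to \autoref{MMMTL47} together with the fact that the coalgebra $[S,S]$ never leaves $\ul{\cC}(\tti,\tti)$.
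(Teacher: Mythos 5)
Your overall route coincides with the paper's: the paper's own proof is a one-line appeal to the locality of Corollary 4.10 of Mackaay--Mazorchuk--Miemietz--Tubbenhauer, and that argument is exactly the combination you propose --- restriction to $\cC_\tti$ in one direction, the internal coalgebra together with $\binj_{\ul{\cC}}(-)$ in the other, hinging on the observation that $[S,S]$ and its structure 2-morphisms are assembled entirely inside $\ul{\cC}(\tti,\tti)=\ul{\cC_\tti}(\tti,\tti)$ from $\ul{\mbM}(\tti)$. Your forward map and your injectivity argument are essentially sound, with one small elision: from $\id_X\in\cI(\ttj)$ your transport gives $\id_{\mbM(F^*)X}\in\cI'$, hence $\mbM(F^*)X=0$ for all $F\in\cC(\tti,\ttj)$; to conclude $X=0$ you must still invoke transitivity of $\mbM$, the hypothesis $\mbM(\tti)\neq 0$, and the fact that every 1-morphism in $\cC(\ttj,\tti)$ is isomorphic to some $F^*$ because $-^*$ is an anti-autoequivalence.

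The genuine gap is in surjectivity. Before $\mbM:=\binj_{\ul{\cC}}([S_0,S_0])$ can be called simple transitive you must know that it is a finitary 2-representation of $\cC$ (finitely many isomorphism classes of indecomposable injective comodule 1-morphisms at each $\ttj$, finite-dimensional hom spaces) and that it is transitive, and neither point is addressed; ``running the ideal argument in the opposite direction'' cannot supply them, because that argument presupposes transitivity --- it is precisely transitivity that guarantees a nonzero $\cC$-stable ideal has nonzero restriction at $\tti$, exactly as in the forward direction. The asymmetry is this: when the coalgebra arises from a transitive 2-representation of all of $\cC$, \autoref{MMMTL46} and \autoref{MMMTL47} hand you the structural fact that every injective comodule in $\ul{\cC}(\tti,\ttj)$ is a direct summand of $X[S,S]$ for some $X\in\cC(\tti,\ttj)$; when, as here, $[S_0,S_0]$ is only known to realise a 2-representation of the endomorphism 2-category $\cC_\tti$, this fact must be proved from scratch (for instance, the coaction embeds any injective comodule $Y$ into $I[S_0,S_0]$ for any embedding of its underlying object into some $I\in\cC(\tti,\ttj)$, and injectivity of $Y$ splits this embedding), after which finitarity follows by Krull--Schmidt, transitivity follows by combining an adjunction argument (some $FY\neq 0$ with $F\in\cC(\ttj,\tti)$) with transitivity of $\mbN$ at the object $\tti$, and only then does your ideal argument close simple transitivity. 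This missing induction step is the real content of the backward direction, and it is absent from your proposal.
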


\subsection{The 2-category $\cC_A$}\label{cCA}

Let $A=\{A_\tti\}_{\tti\in I}$ be a countable collection of basic self-injective connected finite dimensional $\bbk$-algebras. We define the locally finitary 2-category $\cC_A$ to have as objects the $\tti\in I$, which we associate with (small categories equivalent to) $A_\tti$-$\modd$. The 1-morphisms of $\cC_A$ are direct sums of functors from $A_\tti$-$\modd$ to $A_\ttj$-$\modd$ whose direct summands are isomorphic to the identity functor or to tensoring with projective $(A_\ttj$-$A_\tti)$-bimodules. The 2-morphisms of $\cC_A$ are natural transformations of functors. As a small example, if $A=\bbk$ then $\cC_\bbk$ has one object $*_\bbk$ and the only 1-morphisms are functors isomorphic to tensoring with direct sums of $\bbk\otimes_\bbk\bbk\cong\bbk$, i.e. isomorphic to direct sums of $\bbon_{*_\bbk}$.

$\cC_A$ is a locally finitary 2-category by a similar argument to that found in \cite[Section 7.3]{mazorchuk2011cell}, and indeed is actually a locally weakly fiat 2-category (again, by a similar argument to that found in \cite[Section 5.1]{mazorchuk2015transitive}).\par

\begin{defn}\label{CAXDef} Let $Z_\tti$ denote the centre of $A_\tti$. Identifying $Z_\tti$ with $\End_{\cC_A}(\bbon_{\tti})$, we denote by $Z'_\tti$ the subalgebra of $Z_\tti$ that is generated by $\id_{\bbon_\tti}$ and all elements of $Z_\tti$ which factor through 1-morphisms equivalent to tensoring with projective $(A_\tti$-$A_\tti)$-bimodules. We now choose subalgebras $X_\tti$ of $Z_\tti$ containing $Z'_\tti$, and let $X=\{X_\tti|\tti\in I\}$. We define a sub-2-category $\cC_{A,X}$ of $\cC_A$ which has the same objects and 1-morphisms, and the same 2-morphisms except that $\End_{\cC_{A,X}}(\bbon_\tti):=X_\tti$.\end{defn}

Up to isomorphism, the non-identity indecomposable 1-morphisms of $\cC_A$ or $\cC_{A,X}$ are of the form $$A_\ttj e_{\ttj l}\otimes_\bbk e_{\tti k}A_\tti\otimes_{A_\tti}-$$ where $\tti,\ttj\in I$ and $e_{\tti 1},\dots,e_{\tti n_\tti}$ (respectively $e_{\ttj 1},\dots,e_{\ttj n_{\ttj}}$) are a complete set of primitive orthogonal idempotents of $A_\tti$ (respectively $A_\ttj$). For notational compactness, we define $$F^{\tti k}_{\ttj l}:=A_\ttj e_{\ttj l}\otimes_\bbk e_{\tti k}A_\tti\otimes_{A_\tti}-$$ in $\cC_A$ or $\cC_{A,X}$. We also denote $A^{\tti k}_{\ttj l}:=A_\ttj e_{\ttj l}\otimes_\bbk e_{\tti k}A_\tti$ for the corresponding bimodule.\par

We present the generalisation of \cite[Lemma 12]{mazorchuk2016endomorphisms}:

\begin{lem}\label{CAXlwf} $\cC_{A,X}$ is well-defined and locally weakly fiat.
	\begin{proof} Since the only difference between $\cC_{A,X}$ and $\cC_A$ is in the endomorphism spaces of the $\bbon_\tti$, the proof given in \cite{mazorchuk2016endomorphisms} is entirely local and generalises without issue to this setup.\end{proof}\end{lem}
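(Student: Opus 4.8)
The plan is to prove both assertions by \emph{locality}. I would first note that restricting $\cC_{A,X}$ to a full finitary sub-2-category $\cC_{\ul{\tti}}$ on a finite set of objects $\ul{\tti}$ recovers exactly the finitary construction of \cite[Lemma 12]{mazorchuk2016endomorphisms}, and that every claim in the statement---that $\cC_{A,X}$ is a genuine sub-2-category and that it admits a weak fiat structure---refers to only finitely many objects, 1-morphisms and 2-morphisms at a time. It therefore suffices to verify each claim on an arbitrary full finitary sub-2-category, where it is the cited lemma, and then transport the conclusion to all of $\cC_{A,X}$; the only datum not already finitary, the countability of the object set, is inherited directly from $\cC_A$.

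To see that $\cC_{A,X}$ is well-defined I would check that its prescribed spaces of 2-morphisms are closed under both vertical and horizontal composition. Vertical closure of $\End_{\cC_{A,X}}(\bbon_\tti)=X_\tti$ is immediate since $X_\tti$ is a subalgebra of $Z_\tti$. The real content is that no composite can produce an endomorphism of $\bbon_\tti$ lying outside $X_\tti$: writing any 1-morphism through which such a composite factors as a direct sum of copies of $\bbon_\tti$ and of projective functors $F^{\tti k}_{\ttj l}$, the part of the composite running through the identity summands lies in $X_\tti$ because the identity-to-identity matrix entries are constrained to $X_\tti$ and $X_\tti$ is closed under multiplication, while the part running through projective summands lies in $Z'_\tti$ by the very definition of $Z'_\tti$. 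Since $Z'_\tti\subseteq X_\tti$ by \autoref{CAXDef}, closure follows, and this is exactly why that inclusion is imposed.

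For the weak fiat structure I would inherit the weak object-preserving anti-autoequivalence $-^*$ directly from $\cC_A$, which is legitimate because $\cC_{A,X}$ has the same objects and 1-morphisms and $-^*$ sends $F^{\tti k}_{\ttj l}$ to $F^{\ttj l}_{\tti k}$. It then remains to choose, for each 1-morphism $F$, adjunction 2-morphisms $\alpha\colon F\circ F^*\to\bbon_\ttj$ and $\beta\colon\bbon_\tti\to F^*\circ F$ satisfying the two zig-zag identities and lying in $\cC_{A,X}$. I would take the (co)units already witnessing the weak fiat structure of $\cC_A$: for $F$ an indecomposable projective functor their only components meeting an identity endomorphism space factor through projective functors and hence lie in $Z'_\tti\subseteq X_\tti$, while for $F\cong\bbon_\tti$ the adjunction is trivial with $\alpha=\beta=\id_{\bbon_\tti}\in X_\tti$; the general case follows additively. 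The zig-zag identities then hold in $\cC_{A,X}$ because they already hold in $\cC_A$. Finally $\bbon_\tti$ stays indecomposable, as $X_\tti$ is a unital subalgebra of the local algebra $Z_\tti$ and so is itself local, whence $\cC_{A,X}$ remains locally finitary.

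The step I expect to be the main obstacle is the compatibility of the duality data with the restriction: one must ensure that the (co)unit 2-morphisms realizing $-^*$ do not drag into $\End_{\cC_{A,X}}(\bbon_\tti)$ any central element outside $X_\tti$. This is precisely what the inclusion $Z'_\tti\subseteq X_\tti$ guarantees, since $Z'_\tti$ collects all central elements produced by composing through the projective functors. Granting this, both the closure of $\cC_{A,X}$ and the verification of the fiat axioms reduce to facts already known in $\cC_A$, and the locality principle completes the argument.
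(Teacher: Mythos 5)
Your proposal is correct and takes essentially the same route as the paper: the paper's proof simply observes that the only change from $\cC_A$ is in the spaces $\End_{\cC_{A,X}}(\bbon_\tti)$, so the finitary argument of \cite{mazorchuk2016endomorphisms} is entirely local and carries over verbatim to the countable-object setting. Your additional unpacking of that finitary argument---closure of composites landing in $\End(\bbon_\tti)$ via $Z'_\tti\subseteq X_\tti$, inheritance of $-^*$ and of the adjunction (co)units, and locality of $X_\tti$ giving indecomposability of $\bbon_\tti$---is consistent with the cited proof and merely makes explicit what the paper delegates to the citation.
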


Excluding the trivial $\cJ$-cells, there is a single $\cJ$-cell of $\cC_{A,X}$ consisting of all the $A_\ttj e_{\ttj l}\otimes_\bbk e_{\tti k}A_\tti$. The left cells are of the form $$\ceL_{\tti k}=\{F^{\tti k}_{\ttj l}|\ttj\in I,l=1,\dots, n_\ttj\}$$ and the right cells are of the form $$\ceR_{\ttj l}=\{F^{\tti k}_{\ttj l}|\tti\in I,k=1,\dots, n_\tti\}.$$\par

We examine the structure of cell 2-representations of $\cC_{A,X}$ more explicitly. Let $\ceL_{\tti k}$ be a left cell as defined previously. The corresponding 2-representation $\mbN_{\tti k}:=\mbN_{\ceL_{\tti k}}$ has indecomposable objects $F^{\tti k}_{\ttj l}\in\mbN_{\tti k}(\ttj)$ for $l=1,\dots n_\ttj$. A bimodule homomorphism $\sigma:A^{\tti k}_{\ttj l}\to A^{\tti k}_{\ttj m}$ is defined by its image on $e_{\ttj l}\otimes e_{\tti k}$, and is thus a $\bbk$-linear combination of homomorphisms of the form $\varphi_{a,b}:A^{\tti k}_{\ttj l}\to A^{\tti k}_{\ttj m}$, where $\varphi_{a,b}(e_{\ttj l}\otimes e_{\tti k})=a\otimes b$ for $a\in e_{\ttj l}A_\ttj e_{\ttj m}$ and $b\in e_{\tti k} A_\tti e_{\tti k}$. We also note that $\id_{F^{\tti m}_{\ttj l}}=\varphi_{e_{\ttj l}, e_{\tti m}}$ (abusing notation to let $\varphi_{a,b}$ refer both to the bimodule homomorphism and the corresponding 2-morphism in $\cC_{A,X}$).

\begin{psn}\label{CAXCellStr} Let $\ceI$ be the unique maximal ideal of $\mbN_{\tti k}$ not containing any identity morphisms for non-zero objects (so that $\mbN_{\tti k}/\ceI$ is a cell 2-representation). Then its components $\ceI(\ttj)\subseteq\mbN_{\tti k}(\ttj)$ are matrices of $\bbk$-linear combinations of morphisms of the form $\varphi_{a,b}$ with $a\in A_\ttj$ and $b\in R:=\rad e_{\tti k}A_\tti e_{\tti k}$.
	\begin{proof} Since the $\mbN_{\tti k}(\ttj)$ are additive categories, by composing elements of $\coprod_{\ttj} \ceI(\ttj)$ with biproduct injections and projections it is sufficient to consider the elements of $\coprod_{\ttj} \ceI(\ttj)$ that are morphisms between indecomposable objects. We refer to this process as an injection-projection argument. First, given elements $a,\alpha,\gamma\in A_{\ttj}$, $\beta,\delta\in A_{\tti}$ and $b,b'\in \rad e_{\tti k}A_\tti e_{\tti k}$, $\varphi_{\alpha,\beta}\varphi_{a,b}\varphi_{\gamma,\delta}=\varphi_{\gamma a\alpha, \beta b\delta}$ and $\varphi_{a,b}+\varphi_{a,b'}=\varphi_{a,b+b'}$. Since $R$ is a two-sided ideal of $e_{\tti k}A_\tti e_{\tti k}$, this implies that $\ceI$ is indeed an ideal of $\ceN_{\tti k}$. Further, if $\id_{F^{\tti k}_{\ttj l}}\in\ceI(\ttj)$ then $e_{\tti k}\in R$, which is a contradiction as $R$ is a proper ideal of $e_{\tti k}A_\tti e_{\tti k}$. Hence $\ceI$ does not contain $\id_X$ for non-zero $X\in\ceN_{\tti k}$.\par
		
		To show that $\ceI$ is $\cC_{A,X}$-stable, it is again sufficient by an injection-projection argument to consider the action of indecomposable 1-morphisms of $\cC_{A,X}$. Choose $F^{\ttj m}_{\ttl n}\in\cC_{A,X}(\ttj, \ttl)$ and $\varphi_{a,b}:F^{\tti k}_{\ttj s}\to F^{\tti k}_{\ttj t}\in\ceI(\ttj)$. Then $$\mbN_{\tti k}(F^{\ttj m}_{\ttl n})(\varphi_{a,b})=\varphi_{e_{\ttl n},e_{\ttj m}}\otimes \varphi_{a,b}:A^{\ttj m}_{\ttl n}\otimes_{A_\ttj} A^{\tti k}_{\ttj s}\to A^{\ttj m}_{\ttl n}\otimes_{A_\ttj} A^{\tti k}_{\ttj t}.$$ Under the isomorphism $$A^{\ttj m}_{\ttl n}\otimes_{A_\ttj} A^{\tti k}_{\ttj t}\cong A_{\ttl n}\otimes_\bbk e_{\ttj m}A_\ttj e_{\ttj t}\otimes_{\bbk} e_{\tti k}A_\tti\cong (A^{\tti k}_{\ttl n})^{\oplus\dim e_{\ttj m}A_\ttj e_{\ttj t}},$$ the element $$(\varphi_{e_{\ttl n},e_{\ttj m}}\otimes \varphi_{a,b})(e_{\ttl n}\otimes e_{\ttj m}\otimes e_{\ttj s}\otimes e_{\tti k})=e_{\ttl n}\otimes e_{\ttj m}\otimes a\otimes b$$ maps first to $e_{\ttl n}\otimes e_{\ttj m}a\otimes b$ and then to $$\bigoplus_{x=1}^{\dim e_{\ttj m}A_\ttj e_{\ttj t}} v_x e_{\ttl n}\otimes b,$$ for some $v_x\in\bbk$. Since $b\in R$, this implies that $\mbN_{\tti k}(F^{\ttj m}_{\ttl n})(\varphi_{a,b})\in\ceI(\ttl)$, giving $\cC_{A,X}$-stability.\par
		
		It remains to show that $\ceI$ is the unique maximal such ideal. If it is maximal, then \autoref{locfinmid}, it is immediate that it is unique. Thus assume for contradiction that there exists some other ideal $\ceK\supset\ceI$ such that $\ceK$ does not contain $\id_F$ for any non-zero $F$. Choose some $\sigma\in\ceK\setminus\ceI$. By injection-projection arguments we may assume that $\sigma$ is a morphism from $F^{\tti k}_{\ttj l}$ and $F^{\tti k}_{\ttj m}$. Thus $\sigma=\sum_{v=1}^t \varphi_{a_v,b_v}$ for some $t$ with $a_v\in e_{\ttj l}A_\ttj e_{\ttj m}$ and $b_v\in e_{\tti k} A_\tti e_{\tti k}$.\par
		
		 If $b_v\in R$ for some $v$, then by definition $\varphi_{a_v,b_v}\in\ceI\subset\ceK$, and thus without loss of generality $b_v\notin R$ for all $v$. But then by \cite[Lemma I.4.6]{assem2006elements}, $e_{\tti k}-b_v\in R$ for all $v$, and hence $$\sigma+\sum_{v=1}^t \varphi_{a_v,e_{\tti k}-b_v}=\sum_{v=1}^t \varphi_{a_v,e_{\tti k}}=\varphi_{\sum_v a_v, e_{\tti k}}\in\ceK.$$ But since $\ceK$ is $\cC_{A,X}$-stable, by tensoring  $\varphi_{e_{\ttj m}, e_{\tti k}}$ with $\varphi_{\sum a_v, e_{\tti k}}$ similarly to above and composing with injection and projection morphisms, we therefore derive that $z\varphi_{e_{\ttj m}, e_{\tti k}}\in\ceK$ for $z\in\bbk$. But this implies $\id_{F^{\tti k}_{\ttj m}}\in\ceK$, a contradiction. Thus $\ceI$ is indeed maximal, and the result follows.\end{proof}\end{psn}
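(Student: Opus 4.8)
The plan is to produce $\ceI$ by hand and then check it has the four defining properties of the unique maximal ideal. Concretely, I would declare $\ceI(\ttj)$ to consist of exactly those morphisms of $\mbN_{\tti k}(\ttj)$ whose matrix entries, between the indecomposables $F^{\tti k}_{\ttj l}$, are $\bbk$-linear combinations of maps $\varphi_{a,b}$ with $b\in R:=\rad e_{\tti k}A_\tti e_{\tti k}$, and then verify successively that (i) $\ceI$ is an ideal of $\mbN_{\tti k}$, (ii) $\ceI$ contains no $\id_X$ for $X\neq 0$, (iii) $\ceI$ is stable under the $\cC_{A,X}$-action, and (iv) $\ceI$ is maximal among ideals satisfying (ii); uniqueness is then free from \autoref{locfinmid}. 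Every one of these checks I would first reduce to morphisms between indecomposable objects: since each $\mbN_{\tti k}(\ttj)$ is additive and an ideal is closed under composition with the biproduct injections and projections, a morphism lies in an ideal if and only if all of its entries do (an `injection--projection argument').

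Steps (i)--(iii) are then short computations. The bimodule structure of $A^{\tti k}_{\ttj l}$ gives the composition rule $\varphi_{\alpha,\beta}\varphi_{a,b}\varphi_{\gamma,\delta}=\varphi_{\gamma a\alpha,\beta b\delta}$ together with the bilinearity of $(a,b)\mapsto\varphi_{a,b}$, and since $R$ is a \emph{two-sided} ideal of $e_{\tti k}A_\tti e_{\tti k}$ the second component $\beta b\delta$ stays in $R$; this yields (i). For (ii) it suffices to note $\id_{F^{\tti k}_{\ttj l}}=\varphi_{e_{\ttj l},e_{\tti k}}$, whose membership would force $e_{\tti k}\in R$, impossible as $R$ is proper in the local ring $e_{\tti k}A_\tti e_{\tti k}$. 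For (iii) I would compute the action of an indecomposable $F^{\ttj m}_{\ttl n}$ on $\varphi_{a,b}\colon F^{\tti k}_{\ttj s}\to F^{\tti k}_{\ttj t}$ directly: under $F^{\ttj m}_{\ttl n}F^{\tti k}_{\ttj s}\cong(F^{\tti k}_{\ttl n})^{\oplus\dim e_{\ttj m}A_\ttj e_{\ttj s}}$ the image is a matrix whose entries are scalar multiples of $\varphi_{e_{\ttl n},b}$, the scalars coming only from expanding a left multiple of $a$ in a fixed basis of $e_{\ttj m}A_\ttj e_{\ttj t}$. Since the second component is untouched, $b\in R$ is preserved.

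The real work is (iv), and I expect it to be the only genuine obstacle. I would assume for contradiction an ideal $\ceK\supsetneq\ceI$ with property (ii), pick $\sigma\in\ceK\setminus\ceI$, and reduce to $\sigma\colon F^{\tti k}_{\ttj l}\to F^{\tti k}_{\ttj m}$ with $\sigma=\sum_v\varphi_{a_v,b_v}$. After discarding the summands having $b_v\in R$ (these lie in $\ceI\subseteq\ceK$) I may assume every $b_v\notin R$, hence is a unit in the local ring $e_{\tti k}A_\tti e_{\tti k}$; by \cite[Lemma I.4.6]{assem2006elements} this gives $e_{\tti k}-b_v\in R$, so adding $\varphi_{a_v,e_{\tti k}-b_v}\in\ceI$ replaces $\sigma$ by $\varphi_{c,e_{\tti k}}\in\ceK$ with $c=\sum_v a_v$, and $c\neq 0$ since otherwise $\sigma\in\ceI$. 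The decisive move is to feed $\varphi_{c,e_{\tti k}}$ back into the stability computation, now with second component $e_{\tti k}$ rather than an element of $R$: acting by a suitable projective 1-morphism $F^{\ttj l}_{\ttj m}$ and using $e_{\ttj l}c=c\neq 0$ forces one matrix entry to be a non-zero scalar multiple of $\varphi_{e_{\ttj m},e_{\tti k}}=\id_{F^{\tti k}_{\ttj m}}$, which an injection--projection then exhibits inside $\ceK$ --- contradicting (ii). The subtle point here is exactly the guarantee that this scalar is non-zero: it is what the choice $u=e_{\ttj l}$, giving $e_{\ttj l}c=c\neq 0$, is designed to secure, and it is where the combinatorics of the idempotents must be handled with care.
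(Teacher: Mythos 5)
Your proposal is correct and follows essentially the same route as the paper's proof: the same candidate ideal, the same injection--projection reductions, the same composition rule $\varphi_{\alpha,\beta}\varphi_{a,b}\varphi_{\gamma,\delta}=\varphi_{\gamma a\alpha,\beta b\delta}$ and tensor computation for $\cC_{A,X}$-stability, and the same maximality argument via the local ring $e_{\tti k}A_\tti e_{\tti k}$ and the action of a projective 1-morphism producing an identity 2-morphism in $\ceK$. If anything, you are slightly more careful than the paper at the final step, since you note explicitly that $c=\sum_v a_v\neq 0$ (because otherwise $\sigma\in\ceI$) and choose the acting 1-morphism $F^{\ttj l}_{\ttj m}$ precisely so that $e_{\ttj l}c=c$ guarantees a non-zero scalar multiple of $\id_{F^{\tti k}_{\ttj m}}$.
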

	 
We can use \autoref{CAXlwf} to provide a simple proof of the generalisation of \cite[Theorem 15]{mazorchuk2015transitive}. However, we will first give a simple proof of the connected version of that Theorem:

\begin{psn}\label{smallcell} Let $A$ be a basic self-injective connected finite dimensional $\bbk$-algebra. Then for every simple transitive 2-representation of $\cC_A$ there is an equivalent cell 2-representation.
	\begin{proof} We consider a larger weakly fiat 2-category $\cC_{A\times\bbk}$. This category has two objects $*$ and $*_\bbk$. We identify the former with a small category $\ceA$ equivalent to $A$-$\modd$ and the latter with a small category equivalent to $\bbk$-$\modd$. The category $\cC_{A\times\bbk}(*,*)$ is taken to be $\cC_A(*,*)$ and the category $\cC_{A\times\bbk}(*_\bbk,*_\bbk)$ is taken to be $\cC_{\bbk}(*_\bbk,*_\bbk)$. The 1-morphisms between $*$ and $*_\bbk$ (respectively $*_\bbk$ and $*$) are direct summands of direct sums of functors isomorphic to tensoring with projective $(\bbk$-$A)$-bimodules (respectively projective $(A$-$\bbk)$-bimodules). The 2-morphisms are natural transformations.\par
		
		The endomorphism 2-category of $*$ is equivalent to $\cC_A$. Further, if we have an indecomposable 1-morphism $Ae_i\otimes_\bbk e_jA\otimes_A-$ in $\cC_{A\times\bbk}(*,*)$, then $Ae_i$ is an $(A$-$\bbk)$-bimodule and $e_jA$ is a $(\bbk$-$A)$-bimodule, and hence this factors over $*_\bbk$.\par
		
		We denote by $\cC_\bbk$ the endomorphism 2-category of $*_\bbk$, and claim that any simple transitive 2-representation of it is equivalent to the cell 2-representation. Let $\mbM$ be a simple transitive 2-representation of $\cC_\bbk$. As the 1-morphisms in $\cC_\bbk$ are all of the form $\bbon_{*_\bbk}^{\oplus m}$ for some $m\in\bbZ^+_0$, let $N\in\mbM(*_\bbk)$ be indecomposable. Then as $\mbM$ is transitive and $\mbM(*_\bbk)$ is idempotent complete, $M\cong\id^{\oplus n}(N)\cong N^{\oplus n}$ for any $M\in\mbM(*_\bbk)$ and for some $n\in\bbZ^+_0$. It follows from \autoref{CAXCellStr} that for the left cell $\ceL_\bbk=\{\bbon_{*_\bbk}\}$ of $\cC_{\bbk}$, $\mbP_{*_\bbk}=\mbN_{\ceL_\bbk}=\mbC_{\ceL_\bbk}$ as $\rad\bbk=0$. Let $\Phi:\mbP_{*_\bbk}\to\mbM$ be a morphism of 2-representations defined on objects by $\Phi(F)=\mbM(F)(N)$ and on morphisms by $\Phi(f)=\mbM(f)_N$. We can abuse notation and equate $\Phi$ with $\Phi_{*_\bbk}:\mbP_{*_\bbk}(*_\bbk)\to\mbM(*_\bbk)$. It is immediate from the prior discussion that $\Phi$ is essentially surjective on objects and faithful.\par
		
		To show that $\Phi$ is full, let $\ceK$ be the $\cC$-stable ideal of $\mbM(*_\bbk)$ that is generated by $\rad\End_\mbM(N)$. Since $N$ is indecomposable, by a categorical variant of \cite[Corollary I.4.8]{assem2006elements} $\End_{\mbM}(N)$ is local and $\rad\End_{\mbM}(N)$ is the unique maximal ideal. Assume for contradiction that $\id_M\in\ceK$ for some $M\in\mbM(*_\bbk)$. Then by standard injection-projection arguments $\id_N\in\ceK$. But since any morphism $f:N^{\oplus m}\to N^{\oplus n}$ is an $m\times n$ matrix of elements of $\End_{\mbM}(N)$, this implies that $\id_N=\sum_{i=1}^n f_i k_i g_i$ for some morphisms $f_i,g_i\in\End_\mbM(N)$ for all $i$ and morphisms $k_i\in\rad\End_{\mbM}(N)$ for all $i$, i.e. that $\id_N\in\rad\End_\mbM(N)$, a contradiction. Hence $\ceK$ does not contain $\id_M$ for any $M\in\mbM(*_\bbk)$. But since $\mbM$ is simple transitive by assumption, this implies that $\ceK=0$ and thus that $\End_\mbM(N)\cong\bbk$. It follows immediately that $\Phi$ is full, and thus an equivalence of 2-representations as we wished to show.\par
		
		Returning to the main aim of the proof, if $A=\bbk$ then we are done by the above work. Hence assume that $A\neq\bbk$. Using the previous paragraph and \autoref{MMMTC410}, we know that there is a unique equivalence class of simple transitive 2-representations of $\cC_{A\times\bbk}$ that is non-zero on $*_\bbk$. We now claim that if a 2-representation $\mbN$ of $\cC_{A\times\bbk}$ is non-zero on $*$, then it is either non-zero on $*_\bbk$ or equivalent to the trivial cell 2-representation on $\cC_A$. \par
		
		If $Ae_i\otimes_\bbk e_jA$ acts in a non-zero fashion on $\mbN(*)$ for some $i$ or $j$, then as it factors through $\mbN(*_\bbk)$, we must have that $\mbN(*_\bbk)$ is non-zero. Therefore assume that $Ae_i\otimes_\bbk e_jA$ acts as the zero functor for every $i$ and $j$. Then the only 1-morphisms in $\cC_A(*,*)$ that act non-trivially on $\mbN(*)$ are direct sums of $\bbon_*$. In particular, if $N\in\mbN(*)$ is indecomposable, then for any $M\in\mbN(*)$, $M\cong N^{\oplus n}$ for some $n$. But this is equivalent to the cell 2-representation for the trivial $\cL$-cell by a similar argument to above.\par
		
		It follows by \autoref{MMMTC410} that there is only one equivalence class of simple transitive 2-representations on $\cC_A$ that is not equivalent to the identity cell 2-representation, and as we know that $\cC_A$ has a cell 2-representation for the maximal $\cJ$-cell (which by assumption is distinct from $\{[\bbon_*]\}$) the result follows.\end{proof}\end{psn}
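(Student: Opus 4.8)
The plan is to avoid analysing simple transitive 2-representations of $\cC_A$ directly and instead to realise $\cC_A$ as an endomorphism 2-category inside a slightly larger 2-category containing an auxiliary object attached to the one-dimensional algebra $\bbk$. Concretely, I would build a 2-category $\cC_{A\times\bbk}$ with two objects $*$ and $*_\bbk$, identified with (small categories equivalent to) $A$-$\modd$ and $\bbk$-$\modd$, where $\cC_{A\times\bbk}(*,*)=\cC_A(*,*)$, where $\cC_{A\times\bbk}(*_\bbk,*_\bbk)=\cC_\bbk(*_\bbk,*_\bbk)$, and where the 1-morphisms across the two objects are direct summands of functors given by tensoring with the projective $(\bbk$-$A)$- and $(A$-$\bbk)$-bimodules. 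The payoff of this construction is twofold: the endomorphism 2-category of $*$ recovers $\cC_A$, while every non-identity indecomposable 1-morphism $Ae_i\otimes_\bbk e_jA\otimes_A-$ visibly factors through $*_\bbk$, since $Ae_i$ is an $(A$-$\bbk)$-bimodule and $e_jA$ a $(\bbk$-$A)$-bimodule.

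Next I would dispose of the endomorphism 2-category $\cC_\bbk$ of $*_\bbk$. Its 1-morphisms are all isomorphic to direct sums of $\bbon_{*_\bbk}$, so for any simple transitive 2-representation $\mbM$ and any indecomposable $N\in\mbM(*_\bbk)$, transitivity together with idempotent completeness forces every object to be a power $N^{\oplus n}$. The crux of this step is to show $\End_\mbM(N)\cong\bbk$: I would let $\ceK$ be the $\cC_\bbk$-stable ideal generated by $\rad\End_\mbM(N)$, note that $\End_\mbM(N)$ is local (so $\rad\End_\mbM(N)$ is its unique maximal ideal), and argue by an injection-projection reduction that if $\ceK$ contained any identity then it would contain $\id_N$, forcing $\id_N\in\rad\End_\mbM(N)$, a contradiction. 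Simple transitivity then gives $\ceK=0$ and hence $\End_\mbM(N)\cong\bbk$, from which fullness of the comparison functor to the cell 2-representation follows, identifying $\mbM$ with the cell 2-representation of $\cC_\bbk$.

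With this in hand, \autoref{MMMTC410} supplies a bijection between simple transitive 2-representations of $\cC_\bbk$ and those of $\cC_{A\times\bbk}$ non-zero at $*_\bbk$, so there is exactly one such equivalence class. The remaining step is a dichotomy for 2-representations $\mbN$ of $\cC_{A\times\bbk}$ non-zero at $*$: I would show that either some $Ae_i\otimes_\bbk e_jA$ acts non-trivially on $\mbN(*)$, in which case (as it factors through $*_\bbk$) $\mbN$ is non-zero at $*_\bbk$, or else all of these act as zero, so that only direct sums of $\bbon_*$ act non-trivially and $\mbN$ reduces, exactly as in the $\cC_\bbk$ analysis, to the trivial cell 2-representation of $\cC_A$. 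Applying \autoref{MMMTC410} again at the object $*$ then matches simple transitive 2-representations of $\cC_A$ with those of $\cC_{A\times\bbk}$ non-zero at $*$; the dichotomy shows there is a unique non-trivial such class, and since $\cC_A$ does possess the cell 2-representation attached to its maximal $\cJ$-cell (which, for $A\neq\bbk$, is distinct from the trivial one), this non-trivial class must be it. Together with the trivial cell 2-representation this exhausts the simple transitive 2-representations, with the case $A=\bbk$ covered directly by the $\cC_\bbk$ analysis. I expect the main obstacle to be the dichotomy above together with the careful accounting across the two applications of \autoref{MMMTC410}; the whole argument hinges on the observation that the non-identity indecomposable 1-morphisms of $\cC_A$ factor through $*_\bbk$, which is precisely what lets the reduction to the trivial 2-category $\cC_\bbk$ carry the weight.
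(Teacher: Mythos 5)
Your proposal is correct and follows essentially the same route as the paper's own proof: the same auxiliary 2-category $\cC_{A\times\bbk}$, the same reduction of $\cC_\bbk$ via the ideal generated by $\rad\End_\mbM(N)$, and the same two applications of \autoref{MMMTC410} combined with the factorisation-through-$*_\bbk$ dichotomy. No gaps to flag.
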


This leads to the main theorem of this section:

\begin{thm}\label{MM5ByMMMT} Let $A=\{A_\tti|\tti\in I\}$ be a countable collection of basic self-injective connected finite dimensional $\bbk$-algebras. Any non-zero simple transitive 2-representation of $\cC_A$ is equivalent to a cell 2-representation.
	\begin{proof} If $A=\{\bbk\}$ then we are done by the proof of \autoref{smallcell}. Assume that $A\neq\{\bbk\}$. Let $\mbM$ be a simple transitive 2-representation of $\cC_A$. Assume that there is some $\ttj$ such that $\mbM(\ttj)=0$ and let $\tti$ be such that $\mbM(\tti)\neq 0$. Then $$A_\tti e_{\tti k}\otimes_\bbk e_{\ttj l}A_\ttj\otimes_{A_\ttj} A_\ttj e_{\ttj m}\otimes_\bbk e_{\tti n} A_\tti\otimes_{A_\tti} -$$ is the zero map for any primitive idempotents $e_{\tti k}$ and any $e_{\tti n}$. But in particular $$A_\tti e_{\tti k}\otimes_\bbk e_{\ttj m}A_\ttj\otimes_{A_\ttj} A_\ttj e_{\ttj m}\otimes_\bbk e_{\tti n} A_\tti \cong (A_\tti e_{\tti k}\otimes_\bbk e_{\tti n} A_\tti)^{\oplus\dim e_{\ttj m}A_\ttj e_{\ttj m}},$$ and this implies that $A_\tti e_{\tti j}\otimes_\bbk e_{\tti m}A_\tti\otimes_{A_\tti}-$ is the zero map on $\mbM(\tti)$ for any $j$ and $m$. But by a similar argument to \autoref{smallcell}, this means $\mbM$ is equivalent to a cell 2-representation for an identity cell. Thus if $\mbM$ is not equivalent to a trivial cell 2-representation, it must follow that $\mbM$ is non-zero on every $\tti$.\par
		
		Assume that $\mbM(\ttj)\neq 0$ for all $\ttj\in\cC_A$, and choose some $\tti\in\cC_A$. By \autoref{smallcell}, every simple transitive 2-representation of $\cC_{A_\tti}$ is equivalent to a cell 2-representation, and in particular there is only one equivalence class of simple transitive 2-representations that is not equivalent to the trivial cell 2-representation. But as every simple transitive 2-representation of $\cC_A$ not equivalent to a trivial cell 2-representation is non-zero when it restricts down to $\cC_{A_\tti}$ it follows from \autoref{MMMTC410} that there is only a single equivalence class of simple transitive 2-representations of $\cC_A$ not equivalent to a trivial cell 2-representation. Since $\cC_A$ has a cell 2-representation for the maximal $\cJ$-cell, the result follows. \end{proof}\end{thm}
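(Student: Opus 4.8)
The plan is to bootstrap from the single-algebra case \autoref{smallcell} using the reduction to endomorphism 2-categories supplied by \autoref{MMMTC410}. Write $\mbM$ for a non-zero simple transitive 2-representation of $\cC_A$. The degenerate case $A=\{\bbk\}$ is exactly \autoref{smallcell}, and the trivial cell 2-representations are accounted for by definition, so I assume throughout that $\mbM$ is not equivalent to a trivial cell 2-representation and that some $A_\tti\neq\bbk$. The argument then splits into two parts: first, that such an $\mbM$ must be non-zero on every object; second, that the uniqueness of the non-trivial equivalence class for a single $\cC_{A_\tti}$ transfers to all of $\cC_A$.

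For the first part I would argue by contradiction, assuming $\mbM(\ttj)=0$ for some $\ttj$ while $\mbM(\tti)\neq 0$ for some $\tti$. Every indecomposable 1-morphism of $\cC_A$ with domain $\ttj$ has the form $A_\ttk e_{\ttk r}\otimes_\bbk e_{\ttj s}A_\ttj\otimes_{A_\ttj}-$, so any 1-morphism whose domain or codomain is $\ttj$ acts as the zero functor on the image of $\mbM$. Consequently the composite $F^{\ttj m}_{\tti k}\circ F^{\tti n}_{\ttj m}$, which factors through $\mbM(\ttj)=0$, acts as zero on $\mbM(\tti)$. The key computation is the isomorphism
\[A_\tti e_{\tti k}\otimes_\bbk e_{\ttj m}A_\ttj\otimes_{A_\ttj} A_\ttj e_{\ttj m}\otimes_\bbk e_{\tti n}A_\tti\cong (A_\tti e_{\tti k}\otimes_\bbk e_{\tti n}A_\tti)^{\oplus\dim e_{\ttj m}A_\ttj e_{\ttj m}},\]
and since $A_\ttj$ is self-injective the multiplicity $\dim e_{\ttj m}A_\ttj e_{\ttj m}$ is strictly positive. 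Hence each summand acts as zero, so $A_\tti e_{\tti k}\otimes_\bbk e_{\tti n}A_\tti\otimes_{A_\tti}-$ annihilates $\mbM(\tti)$ for all $k,n$; that is, every non-identity indecomposable 1-morphism of the endomorphism 2-category at $\tti$ acts as zero on $\mbM(\tti)$. By the same reasoning as in \autoref{smallcell} this forces $\mbM$ to be a trivial cell 2-representation, contradicting the standing assumption. Therefore $\mbM(\tti)\neq 0$ for every $\tti\in I$.

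For the second part, fix any $\tti\in I$ and pass to the endomorphism 2-category of $\tti$ in $\cC_A$, which is equivalent to $\cC_{A_\tti}$ for the single algebra $A_\tti$. By \autoref{smallcell} every simple transitive 2-representation of $\cC_{A_\tti}$ is a cell 2-representation, and there is precisely one equivalence class that is not the trivial one. Since $\mbM$ is non-zero at $\tti$, \autoref{MMMTC410} provides a bijection between the equivalence classes of simple transitive 2-representations of $\cC_A$ non-zero at $\tti$ and those of $\cC_{A_\tti}$. By the first part the non-trivial simple transitive 2-representations of $\cC_A$ are exactly those non-zero at $\tti$, so this bijection yields a single non-trivial equivalence class for $\cC_A$. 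As $\cC_A$ carries the cell 2-representation attached to its maximal (non-identity) $\cJ$-cell, that cell 2-representation must represent the class, and the theorem follows.

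I expect the main obstacle to be the first part: showing that vanishing on a single object propagates to triviality of the whole 2-representation. The delicate points are verifying that the self-injectivity of $A_\ttj$ genuinely forces the multiplicity $\dim e_{\ttj m}A_\ttj e_{\ttj m}$ to be non-zero, so that the annihilation really transfers across objects, and confirming that the argument of \autoref{smallcell} identifying an everywhere-trivial action with a trivial cell 2-representation applies unchanged in the present multi-object setting. The transfer carried out in the second part is, by contrast, essentially formal once \autoref{MMMTC410} is available.
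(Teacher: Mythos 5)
Your proposal is correct and follows essentially the same route as the paper's own proof: the identical annihilation-propagation argument (factoring the composite bimodule functor through $\mbM(\ttj)=0$ and using the positive multiplicity $\dim e_{\ttj m}A_\ttj e_{\ttj m}$) to show non-triviality forces $\mbM$ to be non-zero at every object, followed by the same transfer of the single-algebra classification of \autoref{smallcell} via the bijection of \autoref{MMMTC410}. The only cosmetic difference is that positivity of the multiplicity needs no self-injectivity (it holds since $e_{\ttj m}\in e_{\ttj m}A_\ttj e_{\ttj m}$ is non-zero), and your flagged ``delicate points'' are exactly the steps the paper also treats as routine adaptations of \autoref{smallcell}.
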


The above arguments in \autoref{smallcell} and \autoref{MM5ByMMMT} do not depend on the endomorphisms of $\bbon_\tti$ for any object $\tti$ of $\cC$. \autoref{MM5ByMMMT} therefore generalises without notable change to the following:

\begin{cor}\label{MM5ByMMMTAX} Taking $A=\{A_\tti|\tti\in I\}$ as before, let $X=\{X_\tti|\tti\in I\}$ be a collection of subalgebras $X_\tti\subseteq A_\tti$ as in \autoref{CAXDef}. Then any non-zero simple transitive 2-representation of $\cC_{A,X}$ is equivalent to a cell 2-representation.\end{cor}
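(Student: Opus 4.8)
The plan is to observe that $\cC_{A,X}$ and $\cC_A$ have exactly the same objects and $1$-morphisms, differing only in that $\End_{\cC_{A,X}}(\bbon_\tti)=X_\tti\subseteq Z_\tti=\End_{\cC_A}(\bbon_\tti)$, and then to re-run the proofs of \autoref{smallcell} and \autoref{MM5ByMMMT} essentially verbatim, the only genuine task being to confirm that those proofs never invoke the endomorphism algebra of an identity $1$-morphism of the ambient $2$-category. First I would record the structural consequences of this coincidence: since the $1$-morphisms and their horizontal composition agree, the multisemigroup $S(\cC_{A,X})$ equals $S(\cC_A)$ (with $\bbon_\tti$ still indecomposable by \autoref{CAXlwf}), so all Green's relations and the entire cell structure transfer --- a single non-trivial $\cJ$-cell with the same left cells $\ceL_{\tti k}$ and right cells $\ceR_{\ttj l}$. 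Moreover \autoref{CAXCellStr} already furnishes the cell $2$-representations of $\cC_{A,X}$ directly, and \autoref{CAXlwf} guarantees $\cC_{A,X}$ is locally weakly fiat, so \autoref{MMMTC410} is available in this setting.

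Next I would transport \autoref{smallcell}. In the connected case of a single algebra $A$ with chosen subalgebra $X\subseteq Z(A)$, I would form the auxiliary $2$-category $\cC_{(A\times\bbk),(X,\bbk)}$, i.e.\ the analogue of $\cC_{A\times\bbk}$ in which the endomorphisms of $\bbon_*$ are cut down to $X$, while the $*_\bbk$ object necessarily retains $\End(\bbon_{*_\bbk})=\bbk$ (the only possible choice). The endomorphism $2$-category of $*$ is then $\cC_{A,X}$, and every non-identity indecomposable $1$-morphism of $\cC_{A,X}(*,*)$ still factors through $*_\bbk$ by the identical bimodule decomposition. Crucially, the analysis of $\cC_\bbk$ is completely unchanged, since it uses only the $1$-morphisms $\bbon_{*_\bbk}^{\oplus m}$ and the locality of $\End_\mbM(N)$ \emph{inside the representation} $\mbM$ --- never the $2$-morphisms of any $\bbon_\tti$ in the $2$-category. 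Hence the conclusion of \autoref{smallcell} holds for $\cC_{A,X}$.

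Finally I would re-run \autoref{MM5ByMMMT}. The vanishing argument --- that if $\mbM(\ttj)=0$ for some $\ttj$ then the composites $A_\tti e_{\tti k}\otimes_\bbk e_{\ttj m}A_\ttj\otimes_{A_\ttj}A_\ttj e_{\ttj m}\otimes_\bbk e_{\tti n}A_\tti\otimes_{A_\tti}-$ act as zero, forcing $\mbM$ onto a trivial cell $2$-representation --- involves only $1$-morphism composition and their action on objects, so it is untouched by the passage to $X$. The gluing step then applies the transported \autoref{smallcell} together with \autoref{MMMTC410} to the endomorphism $2$-categories $\cC_{A_\tti,X_\tti}$, exactly as written, and since $\cC_{A,X}$ has a cell $2$-representation for its maximal $\cJ$-cell the corollary follows.

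The step I expect to require the most care is precisely the verification that no argument tacitly depends on $\End(\bbon_\tti)=Z_\tti$ rather than on the shared $1$-morphism data. The one place worth scrutinising is the appeal to \autoref{CAXCellStr} inside the transported proof of \autoref{smallcell}: there the defining ideal of the cell quotient is generated by $2$-morphisms $\varphi_{a,b}$ between \emph{non-identity} indecomposable $1$-morphisms $F^{\tti k}_{\ttj l}$, with $b\in\rad e_{\tti k}A_\tti e_{\tti k}$, so it lives entirely within the portion of the $2$-morphism data that $\cC_{A,X}$ shares with $\cC_A$ and is genuinely independent of the choice of $X_\tti$. Once this is confirmed, the corollary follows with no further change.
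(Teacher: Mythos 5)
Your proposal is correct and matches the paper's own justification: the paper proves this corollary by observing precisely that the arguments of \autoref{smallcell} and \autoref{MM5ByMMMT} never use the endomorphism algebras of the identity 1-morphisms $\bbon_\tti$, so they apply verbatim to $\cC_{A,X}$. Your write-up simply makes this observation explicit (including the auxiliary $2$-category with restricted $\End(\bbon_*)$ and the check that the ideal in \autoref{CAXCellStr} involves only $2$-morphisms between non-identity $1$-morphisms), which is a more careful rendering of the same argument.
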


As a supplementary note, the method of the proof of \autoref{smallcell} allows us to give the following result:

\begin{cor}\label{celldef} All non-trivial cell 2-representations of $\cC_A$ (respectively $\cC_{A,X}$) are equivalent.\end{cor}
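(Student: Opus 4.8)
The plan is to leverage the key structural fact established in the proof of \autoref{smallcell}: that the non-trivial simple transitive 2-representation of $\cC_A$ is essentially unique, and that it arises as a cell 2-representation for the (unique non-trivial) maximal $\cJ$-cell. The corollary asserts that all the non-trivial cell 2-representations $\mbC_{\ceL_{\tti k}}$ are equivalent to one another, so the natural strategy is to show each of them is a non-trivial simple transitive 2-representation and then invoke the uniqueness already proved.

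First I would recall that the non-trivial $\cJ$-cell of $\cC_A$ is the single cell $\ceJ$ consisting of all the $A_\ttj e_{\ttj l}\otimes_\bbk e_{\tti k}A_\tti$, and that the left cells $\ceL_{\tti k}$ are precisely the left cells contained in $\ceJ$. For each such $\ceL_{\tti k}$, the cell 2-representation $\mbC_{\ceL_{\tti k}}$ is by \autoref{cell2repdefn} the simple transitive quotient of $\mbN_{\ceL_{\tti k}}$, and in particular is a simple transitive 2-representation. Its apex (in the sense of \autoref{ApexDef}) is $\ceJ$, since $\mbN_{\ceL_{\tti k}}$ is generated by 1-morphisms in $\ceL_{\tti k}\subseteq\ceJ$, so $\mbC_{\ceL_{\tti k}}$ is not annihilated by $\ceJ$ and is therefore non-trivial. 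Hence every $\mbC_{\ceL_{\tti k}}$ is a non-trivial simple transitive 2-representation of $\cC_A$.

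Now I would apply \autoref{MM5ByMMMT}: every non-trivial simple transitive 2-representation of $\cC_A$ is equivalent to a cell 2-representation, and by the uniqueness established in the course of that proof (via \autoref{MMMTC410}, there is only a single equivalence class of simple transitive 2-representations of $\cC_A$ not equivalent to a trivial cell 2-representation). Since each $\mbC_{\ceL_{\tti k}}$ lies in this single equivalence class, they are all equivalent to one another, proving the statement for $\cC_A$. The argument for $\cC_{A,X}$ is identical, replacing the appeal to \autoref{MM5ByMMMT} by \autoref{MM5ByMMMTAX} and noting, as remarked in the paper, that the endomorphism algebras $X_\tti$ of the identity 1-morphisms play no role in the relevant arguments.

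The main obstacle I anticipate is purely one of bookkeeping rather than substance: one must be careful to confirm that the cell 2-representation $\mbC_{\ceL_{\tti k}}$ genuinely has apex $\ceJ$ and is therefore non-trivial (i.e. is not accidentally the trivial cell 2-representation associated to the identity $\cL$-cell). This follows because $\ceL_{\tti k}$ contains non-identity 1-morphisms $F^{\tti k}_{\ttj l}$, so $\mbN_{\ceL_{\tti k}}$ acts non-trivially through the non-identity $\cJ$-cell and this survives passage to the simple transitive quotient. Once that is pinned down, the corollary follows directly from the uniqueness already in hand, so no new hard computation is required.
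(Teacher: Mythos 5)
Your proof is correct and follows essentially the same route as the paper, which justifies the corollary purely by the method of the proof of \autoref{smallcell} and \autoref{MM5ByMMMT}: there is a unique equivalence class of simple transitive 2-representations of $\cC_A$ (respectively $\cC_{A,X}$) not equivalent to a trivial cell 2-representation, and every non-trivial cell 2-representation, being simple transitive and not annihilated by the non-trivial $\cJ$-cell, lies in that class. Your explicit check that each $\mbC_{\ceL_{\tti k}}$ has apex $\ceJ$ and therefore cannot be equivalent to a trivial cell 2-representation is precisely the detail the paper leaves implicit.
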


There are times where we wish to consider the situation where the $A_\tti$ are not necessarily basic. Given a non-basic algebra $A_{\tti}$, let $\{e_{\tti 1}^1,\dots,e_{\tti 1}^{n_1},e_{\tti 2}^1,\dots e_{\tti m}^{n_m}\}$ be a complete set of idempotents of $A_\tti$ such that $A_\tti e_{\tti j}^k\cong A_\tti e_{\tti l}^p$ if and only if $j=l$. Letting $e^b=e_{\tti 1}^1+e_{\tti 2}^1+\dots+e_{\tti m}^1$, we define the basic algebra $A_\tti^b$ associated to $A_\tti$ as $A_\tti^b=e^bA_\tti e^b$ (see \cite[Section I.6]{assem2006elements} for further discussion). Note that if $A_\tti$ is basic, then $A_\tti^b=A_\tti$.\par

Given a countable collection $A=\{A_\tti\}_{\tti\in I}$ of self-injective connected finite dimensional $\bbk$-algebras which are not necessarily basic, we define $A^b=\{A_\tti^b\}_{\tti\in I}$ and consequently define $\cC_A:=\cC_{A^b}$, the latter as defined at the start of this section.

\section{Locally Weakly Fiat 2-Categories And Their Simple Transitive 2-Representations}\label{MMXGen}

We move on to extending results from the Mazorchuk--Miemietz series of papers to the locally finitary case, with the eventual aim of generalising \cite[Theorem 18]{mazorchuk2015transitive} to show that any simple transitive 2-representation of a strongly regular locally weakly fiat 2-category is equivalent to a cell 2-representation. We note that here we are using the projective fan Freyd abelianisation rather than the injective equivalent as we did above. Many of the proofs of this section are straightforward generalisations of proofs found elsewhere in the literature; we will only provide proofs of results that are novel or need a non-trivial amount of work to generalise; for the others, we will merely provide the reference for the original result.\par

For the entirety of this section, we assume that $\cC$ is a locally weakly fiat 2-category.

\subsection{General Properties Of The Abelianisation}

We start by giving some general properties of the action of 1-morphisms of $\cC$ on its $\tti$th abelian principal 2-representation $\ol{\mbP_\tti}$. The isomorphism classes of indecomposable projectives and simples are indexed by the isomorphism classes of indecomposable 1-morphisms in $\cC$ and we denote them as $P_F$ and $L_F$ respectively.

\begin{lem}\label{MM1P12} Let $F,G$ be indecomposable 1-morphisms of $\cC$. Then $FL_G\neq 0$ if and only if $F\leq_\cL G^*$.
	\begin{proof} This is a generalisation of \cite[Lemma 12]{mazorchuk2011cell}. Though the proof is similar, we note that there are alterations needed due to how adjunctions work in the (locally) weakly fiat case.\par
		
		Assume that $G\in\cC(\tti,\ttj)$ and $F\in\cC(\ttj,\ttk)$. We note that $FL_G\neq 0$ if and only if there exists some indecomposable $H\in\cC(\tti,\ttk)$ with $\Hom_{\ol{\cC}(\tti,\ttk)}(P_H, FL_G)\neq 0$. We have the adjunction $$0\neq\Hom_{\ol{\cC(\tti,\ttk)}}(P_H,FL_G)\cong\Hom_{\ol{\cC(\tti,\ttj)}}(\tensor[^*]{F}{}\circ HP_{\bbon_\tti},L_G),$$ and as $\tensor[^*]{F}{}\circ HP_{\bbon_\tti}$ is projective and $L_G$ is simple, the above inequality is equivalent to saying that $P_G=GP_{\bbon_\tti}$ is a direct summand of ${}^*F\circ HP_{\bbon_\tti}$, i.e. $G$ is a direct summand of ${}^*F\circ H$. This gives that ${}^*F\leq_\cR G$, and applying $-^*$ gives us the result. \end{proof}\end{lem}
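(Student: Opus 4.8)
The plan is to prove the biconditional $FL_G\neq 0 \iff F\leq_\cL G^*$ by converting the non-vanishing condition on the abelianisation into a summand condition in $\cC$ via adjunction, exactly as in \cite[Lemma 12]{mazorchuk2011cell}, but taking care with which adjoint ($\tensor[^*]{F}{}$ versus $F^*$) is used because $\cC$ is only locally \emph{weakly} fiat. Suppose $G\in\cC(\tti,\ttj)$ and $F\in\cC(\ttj,\ttk)$. Since $\ol{\mbP_\tti}$ is an abelian category in which the indecomposable projectives are the $P_H$, the object $FL_G$ is non-zero if and only if $\Hom_{\ol{\cC(\tti,\ttk)}}(P_H,FL_G)\neq 0$ for some indecomposable $H\in\cC(\tti,\ttk)$.

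The key step is the adjunction manipulation. First I would use the internal adjunction coming from local weak fiatness to move $F$ across the $\Hom$, obtaining
\[
\Hom_{\ol{\cC(\tti,\ttk)}}(P_H,FL_G)\cong\Hom_{\ol{\cC(\tti,\ttj)}}(\tensor[^*]{F}{}\circ HP_{\bbon_\tti},L_G).
\]
Here the left adjoint of $F\circ-$ is $\tensor[^*]{F}{}\circ-$, which is the point where weak fiatness (rather than fiatness) forces the use of $\tensor[^*]{F}{}$ rather than $F^*$. Now I would observe that $\tensor[^*]{F}{}\circ HP_{\bbon_\tti}$ is projective and $L_G$ is simple, so a non-zero morphism into $L_G$ exists precisely when $P_G=GP_{\bbon_\tti}$ occurs as a direct summand of $\tensor[^*]{F}{}\circ HP_{\bbon_\tti}$; stripping the projective generator $P_{\bbon_\tti}$, this says exactly that $G$ is a direct summand of $\tensor[^*]{F}{}\circ H$ for some indecomposable $H$. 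By the definition of the right order this is equivalent to $\tensor[^*]{F}{}\leq_\cR G$.

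Finally I would apply the anti-autoequivalence $-^*$, which reverses the order and sends $\tensor[^*]{F}{}$ to $F$ (as $*$ and $\tensor[^*]{(-)}{}$ are mutually inverse), converting $\tensor[^*]{F}{}\leq_\cR G$ into $F\leq_\cL G^*$, as required. I expect the main obstacle to be the bookkeeping around the two distinct adjoints $F^*$ and $\tensor[^*]{F}{}$: in the genuinely fiat case these coincide up to the involution, but in the weakly fiat case one must track which side of the adjunction each appears on and verify that $-^*$ interchanges $\leq_\cL$ and $\leq_\cR$ correctly so that the orders match up at the end. Since the only categories whose hom-spaces and summand relations appear are those between the three objects $\tti,\ttj,\ttk$, the argument is local and the finitary proof generalises once these adjoint subtleties are handled.
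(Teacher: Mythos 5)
Your proposal is correct and follows essentially the same route as the paper's proof: the same reduction of $FL_G\neq 0$ to non-vanishing of $\Hom_{\ol{\cC(\tti,\ttk)}}(P_H,FL_G)$, the same adjunction isomorphism using the left adjoint $\tensor[^*]{F}{}\circ-$ (the key weakly fiat subtlety), the same projective-covers-simple argument yielding that $G$ is a summand of $\tensor[^*]{F}{}\circ H$, and the same final application of $-^*$ to convert $\tensor[^*]{F}{}\leq_\cR G$ into $F\leq_\cL G^*$. Your extra remarks on tracking which adjoint sits on which side of the Hom simply make explicit what the paper notes briefly about adjunctions in the weakly fiat case.
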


We present the generalisations of \cite[Lemma 13, Corollary 14]{mazorchuk2011cell}.

\begin{lem}\label{MM1L13} For $F,K,H$ indecomposable 1-morphisms in $\cC$, $[FL_K:L_H]\neq 0$ implies $H\leq_\cL K$. If $H\leq_\cL K$, there exists some indecomposable 1-morphism $M\in \cC$ such that $[ML_K:L_H]\neq 0$.\end{lem}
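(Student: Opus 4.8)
This is a generalization of [Lemma 13, Corollary 14] from Mazorchuk-Miemietz. The statement has two parts:

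1. If $[FL_K : L_H] \neq 0$ (i.e., $L_H$ appears as a composition factor of $FL_K$ in the abelianization), then $H \leq_\cL K$.

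2. If $H \leq_\cL K$, then there exists an indecomposable 1-morphism $M$ such that $[ML_K : L_H] \neq 0$.

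Let me think about how to prove this using the tools available in the paper.\textbf{Approach.} The plan is to prove each direction separately, leveraging \autoref{MM1P12} as the main engine. The key observation is that the multiplicity $[FL_K:L_H]$ is controlled by a $\Hom$-space in the abelianization which we can manipulate via the internal adjunctions available in a locally weakly fiat 2-category. Throughout I work inside a full finitary sub-2-category containing all the 1-morphisms that arise, so that the finitary machinery (in particular the indexing of simples $L_F$ by isomorphism classes of indecomposables) applies verbatim; this is precisely the `local' strategy the paper has already used repeatedly.

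\textbf{First direction.} Suppose $[FL_K:L_H]\neq 0$, with say $K\in\cC(\tti,\ttj)$, $F\in\cC(\ttj,\ttk)$ and hence $H\in\cC(\tti,\ttk)$. Since the multiplicity of $L_H$ in $FL_K$ is nonzero, there is a nonzero map from $P_H$ into some layer of a composition series, so in particular $\Hom_{\ol{\cC(\tti,\ttk)}}(P_H,FL_K)\neq 0$. Applying the adjunction coming from the weak fiat structure gives
\[
0\neq\Hom_{\ol{\cC(\tti,\ttk)}}(P_H,FL_K)\cong\Hom_{\ol{\cC(\tti,\ttj)}}(\tensor[^*]{F}{}\circ HP_{\bbon_\tti},L_K),
\]
exactly as in the proof of \autoref{MM1P12}. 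Because $L_K$ is simple and $\tensor[^*]{F}{}\circ HP_{\bbon_\tti}$ is projective, nonvanishing of this $\Hom$-space forces $P_K=KP_{\bbon_\tti}$ to be a direct summand of $\tensor[^*]{F}{}\circ HP_{\bbon_\tti}$, i.e. $K$ is a direct summand of $\tensor[^*]{F}{}\circ H$. By definition of $\leq_\cR$ this yields $\tensor[^*]{F}{}\leq$-type inequality relating $H$ and $K$; tracking the variances and applying $-^*$ (as in \autoref{MM1P12}) translates this into $H\leq_\cL K$, as required.

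\textbf{Second direction.} Now suppose $H\leq_\cL K$. By definition of the left order there is some 1-morphism $M$ such that $H$ is a direct summand of $MK$ (taking $M$ indecomposable after passing to an indecomposable summand acting nontrivially). Then $P_H$ is a summand of $MP_K$, so $\Hom_{\ol{\cC}}(P_H,MP_K)\neq 0$. The goal is to promote the projective $P_K$ to the simple $L_K$: one considers the surjection $P_K\twoheadrightarrow L_K$ and argues, using that $M$ has an exact action (it is part of an adjoint pair by the locally weakly fiat hypothesis, so $M\circ-$ is exact on the abelianization), that the induced map on composition factors detects $L_H$ inside $ML_K$. Concretely, exactness of $M\circ-$ together with the fact that $L_K$ is the top of $P_K$ lets us conclude $[ML_K:L_H]\neq 0$, possibly after replacing $M$ by an appropriate indecomposable summand.

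\textbf{Main obstacle.} The routine adjunction bookkeeping in the first direction is essentially identical to \autoref{MM1P12} and should cause no trouble. The delicate point is the second direction: passing from the projective statement $H\leq_\cL K$ (multiplicities against $P_K$) to the simple statement (multiplicities against $L_K$) is not automatic, since composition factors can be reshuffled when moving from $P_K$ to its simple top. The hard part will be justifying that exactness of the action of $M$ — which is where the weakly fiat structure is genuinely used — guarantees that $L_H$ survives as a composition factor of $ML_K$ rather than being absorbed into the radical layers that $P_K\to L_K$ kills. I expect this to require a short argument comparing the socle/top structure of $MP_K$ with $ML_K$, exactly the step where the original \cite[Lemma 13]{mazorchuk2011cell} proof must be adapted rather than merely transcribed.
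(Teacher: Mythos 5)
Your first direction is essentially the paper's own argument (the paper gives no proof body but cites \cite{mazorchuk2011cell}, whose proof is exactly the adjunction computation you reproduce): from $\Hom_{\ol{\cC}}(P_H,FL_K)\neq 0$, adjunction and projectivity of ${}^*F\circ HP_{\bbon_\tti}$ you correctly obtain that $K$ is a direct summand of ${}^*F\circ H$. One correction there: at that point you are already done, since by the paper's definition of the left order ($F\leq_\cL G$ iff $G$ is a direct summand of $XF$ for some $X$) this literally says $H\leq_\cL K$. The detour you sketch --- reading off a $\leq_\cR$ relation and then ``applying $-^*$'' --- would instead yield $F\leq_\cL K^*$, which is the conclusion of \autoref{MM1P12}, not the one wanted here.

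The second direction has a genuine gap, in two respects. First, you unpack the hypothesis backwards: in this paper $H\leq_\cL K$ means $K$ is a direct summand of $X\circ H$ for some $X$, whereas ``$H$ is a direct summand of $MK$'' says $K\leq_\cL H$. This is not cosmetic: under your reading the statement is false. By the first direction, $[ML_K:L_H]\neq 0$ forces $H\leq_\cL K$, so your version would give $K\leq_\cL H\Rightarrow H\leq_\cL K$; in $\cC_A$ (\autoref{cCA}, $A_\tti$ not semisimple) take $K=\bbon_\tti$ and $H=F^{\tti k}_{\tti l}$: then $K\leq_\cL H$, yet no $M$ with $[ML_{\bbon_\tti}:L_H]\neq 0$ exists, since $\bbon_\tti$ is not a direct summand of any $X\circ H$ (all such composites are projective functors), i.e.\ $H\not\leq_\cL\bbon_\tti$. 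Second, the step you yourself flag as the hard part --- passing from $P_H$ being a direct summand of $MP_K$ to $[ML_K:L_H]\neq 0$ --- is not just unproven but false: in the same example, with $M=H=F^{\tti k}_{\tti l}$ and $K=\bbon_\tti$, $P_H$ is a direct summand of $MP_K$ yet $ML_K=ML_{\bbon_\tti}=0$ by \autoref{MM1P12}. No socle/top comparison can repair this, because exactness of $M$ gives no lower bound on $ML_K$. The missing idea is to use the adjunction a second time, exactly as in the first direction: write $K$ as a direct summand of $X\circ H$ with $X$ indecomposable (possible by Krull--Schmidt, as $K$ is indecomposable), and set $M:=X^*$, indecomposable because $-^*$ is an anti-autoequivalence. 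Then $\Hom_{\ol{\cC}}(P_H,ML_K)\cong\Hom_{\ol{\cC}}({}^*M\circ HP_{\bbon_\tti},L_K)=\Hom_{\ol{\cC}}(XP_H,L_K)\neq 0$, because $P_K$ is a direct summand of $XP_H=(X\circ H)P_{\bbon_\tti}$ and $\Hom_{\ol{\cC}}(P_K,L_K)\neq 0$. Hence $[ML_K:L_H]\neq 0$. This is the (local) argument of the cited lemma, which is precisely why the paper records no separate proof.
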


\begin{cor}\label{MM1C14} Let $F,G,H\in S(\cC)$. If $L_F$ occurs in the top or socle of $HL_G$, then $F\in\ceL_G$.\end{cor}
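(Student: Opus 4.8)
The plan is to deduce \autoref{MM1C14} directly from \autoref{MM1L13}, treating the top and socle cases by a duality argument. First I would handle the top. If $L_F$ occurs in the top of $HL_G$, then in particular $[HL_G:L_F]\neq 0$, and so \autoref{MM1L13} immediately gives $F\leq_\cL G$. The remaining task is to obtain the reverse inequality $G\leq_\cL F$, which combined with $F\leq_\cL G$ yields $F\in\ceL_G$ (recall $(F,G)\in\cL$ iff $F\leq_\cL G$ and $G\leq_\cL F$).

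For the reverse inequality I would exploit the adjunction structure available in the locally weakly fiat setting. The statement that $L_F$ lies in the top of $HL_G$ means there is a nonzero surjection $HL_G\twoheadrightarrow L_F$, i.e. $\Hom(HL_G,L_F)\neq 0$. Using the internal adjunction between $H$ and ${}^*H$ (equivalently $H^*$), this transfers to $\Hom(L_G,\, {}^*H\, L_F)\neq 0$ (or the appropriate adjoint), which forces $L_G$ to appear in the top or as a composition factor of ${}^*H\,L_F$. Applying \autoref{MM1L13} again to this expression then gives $G\leq_\cL F$. The socle case is dual: $L_F$ in the socle of $HL_G$ means $\Hom(L_F,HL_G)\neq 0$, and the same adjunction argument—applied on the other side, using that $-^*$ (or ${}^*(-)$) swaps top and socle—produces the two inequalities. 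I would likely phrase the socle case by passing to opposite categories or by invoking the weak anti-(auto)equivalence $-^*$, which interchanges the roles of top and socle and of $\leq_\cL$ with $\leq_\cR$ in a controlled way.

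The main obstacle I anticipate is bookkeeping the adjunctions correctly: in the (locally) weakly fiat case there are two distinct adjoints $F^*$ and ${}^*F$, and one must be careful, as \autoref{MM1P12} already flags, about which adjoint is used on which side and how $\leq_\cL$ versus $\leq_\cR$ is affected. The cleanest route is probably to establish one case (say the top) fully via \autoref{MM1L13} plus a single adjunction isomorphism $\Hom_{\ol{\cC}}(HL_G,L_F)\cong\Hom_{\ol{\cC}}(L_G,{}^*HL_F)$, and then deduce the socle case formally by applying $-^*$ and noting that this operation sends the socle statement for $HL_G$ to a top statement while preserving the conclusion $F\in\ceL_G$ up to the symmetry of $\cL$-cells. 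Since this is essentially a local argument in the hom-categories between three objects, I would expect it to generalise immediately from the finitary proof of \cite[Corollary 14]{mazorchuk2011cell}, and the only real work is verifying the adjoint placement.
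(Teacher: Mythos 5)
Your proposal is correct and follows essentially the same route as the paper, which obtains \autoref{MM1C14} as a direct generalisation of the original argument in \cite{mazorchuk2011cell}: one inequality $F\leq_\cL G$ comes from \autoref{MM1L13}, and the reverse inequality $G\leq_\cL F$ comes from transferring the top (resp.\ socle) condition across the internal adjunction to get $[H^*L_F:L_G]\neq 0$ (resp.\ $[{}^*HL_F:L_G]\neq 0$) and applying \autoref{MM1L13} again. Your care about which of the two adjoints $H^*$ or ${}^*H$ appears on each side is exactly the adjustment the weakly fiat setting requires, and since both are indecomposable 1-morphisms of $\cC$, either placement feeds into \autoref{MM1L13} correctly.
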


\begin{psn}[{\cite[Proposition 17 a), b)]{mazorchuk2011cell}}]\label{MM1P17a} Let $\ceL$ be an $\cL$-cell of $\cC$ with domain $\tti$.\begin{itemize}
		\item There is a unique submodule $K=K_\ceL$ of $P_{\bbon_\tti}$ such that every simple subquotient of $P_{\bbon_\tti}/K$ is annihilated by any $F\in\ceL$ and such that $K$ has simple top $L_{G_\ceL}$ for some $G_\ceL\in\ceL$ such that $FL_{G_\ceL}\neq 0$ for any $F\in\ceL$. 
		\item For any $F\in\ceL$, $FL_{G_\ceL}$ has simple top $L_F$.\end{itemize}
\end{psn}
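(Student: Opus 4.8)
The plan is to work entirely inside the abelian principal 2-representation $\ol{\mbP_\tti}$ and to exploit that $P_{\bbon_\tti}$ lies in $\ol{\cC(\tti,\tti)}$, the abelianisation of a finitary category, and hence has finite length. The only potentially infinite datum is the cell $\ceL$ itself, but it enters the statement only through the conditions ``$FL_G=0$ for all $F\in\ceL$'' and ``$FL_{G_\ceL}\neq0$ for all $F\in\ceL$''. By \autoref{MM1P12} these are governed by whether $F\leq_\cL G^*$, and since $\cL$-comparison with a fixed 1-morphism is constant along an $\cL$-cell, each cell-wide condition collapses to a single check on one representative $F\in\ceL$. This is exactly what makes the original argument local: everything reduces to a full finitary sub-2-category containing $\tti$, a chosen $F\in\ceL$, the candidate $G_\ceL$, and the finitely many indecomposables occurring as composition factors of $P_{\bbon_\tti}$, the relevant $\cL$-comparisons being detected there by \autoref{cLbootleg}. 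I will nonetheless indicate the direct argument.

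First I would construct $K$. Call a simple $L_G$ (with $G\in\cC(\tti,\tti)$) \emph{$\ceL$-annihilated} if $FL_G=0$ for one, equivalently every, $F\in\ceL$; by \autoref{MM1P12} this means $\ceL\not\leq_\cL G^*$. The $\ceL$-annihilated modules form a Serre subcategory, so among the submodules $K'\subseteq P_{\bbon_\tti}$ with $P_{\bbon_\tti}/K'$ entirely $\ceL$-annihilated there is, by closure under intersection and finite length, a unique smallest one $K=K_\ceL$; this yields the first property. Minimality then forces the top of $K$ to contain no $\ceL$-annihilated simple, since splitting such a simple off the top would produce a strictly smaller admissible submodule by an extension argument. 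For the uniqueness asserted in (a), any submodule enjoying both properties contains $K$ by minimality and must equal it, as otherwise $K\subseteq\rad K'$ would force the simple top of $K'$ to be $\ceL$-annihilated.

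The crux is the simplicity of $\topp K$ together with part (b), and here I would use exactness. Since $\cC$ is locally weakly fiat, each $F\in\ceL$ has two-sided adjoints and so acts exactly on $\ol{\mbP_\tti}$, with $FP_{\bbon_\tti}=P_F$ having simple top $L_F$. As every composition factor of $P_{\bbon_\tti}/K$ is killed by $F$, exactness gives $F(P_{\bbon_\tti}/K)=0$ and hence $FK\cong P_F$. Writing $\topp K=\bigoplus_i L_{G_i}^{\oplus c_i}$, each $G_i$ is non-$\ceL$-annihilated, so $FL_{G_i}\neq0$; applying $F$ to $K\twoheadrightarrow\topp K$ exhibits $\bigoplus_i (FL_{G_i})^{\oplus c_i}$ as a nonzero quotient of $FK=P_F$, whose top is therefore the single simple $L_F$. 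Counting composition lengths of tops forces exactly one summand, with $c_i=1$, so $\topp K=L_{G_\ceL}$ is simple; \autoref{MM1C14} applied to $\topp(FL_{G_\ceL})=L_F$ gives $G_\ceL\in\ceL$, and non-$\ceL$-annihilation of $G_\ceL$ is precisely $FL_{G_\ceL}\neq0$ for all $F\in\ceL$. For (b), applying $F$ to $K\twoheadrightarrow L_{G_\ceL}$ realises $FL_{G_\ceL}$ as a nonzero quotient of $P_F$, so its top is $L_F$.

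I expect the main obstacle to be this crux step: setting up the exactness of the 1-morphism action and the identification $FK\cong P_F$ correctly, and then making the length-counting on tops rigorous. The subtlety specific to the locally finitary setting, namely that $\ceL$ may be infinite, is handled at the outset by observing that all cell-wide hypotheses are constant along $\ceL$, so that no genuinely infinite verification is ever required and the finite-length module $P_{\bbon_\tti}$ carries all the combinatorics.
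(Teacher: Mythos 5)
Your proof is correct and is essentially the argument the paper itself relies on: the paper offers no proof of this proposition, merely citing \cite[Proposition 17 a), b)]{mazorchuk2011cell} under its stated convention that local proofs generalise verbatim, and your construction (the minimal submodule $K$ via the Serre subcategory of $\ceL$-annihilated objects, exactness of each $F\in\ceL$ giving $FK\cong P_F$, and the length count on tops forcing $\topp K$ simple) is precisely that original argument. Your opening observation that the cell-wide conditions are constant along $\ceL$ by \autoref{MM1P12} correctly disposes of the only genuinely new point in the locally finitary setting, namely that $\ceL$ may be infinite.
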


We call $G_\ceL$ as the \emph{Duflo involution} of $\ceL$.

\begin{psn}[{\cite[Proposition 17 c), e)]{mazorchuk2011cell}}]\label{MM1P17e} $G_\ceL, G_\ceL^*\in\ceL$.
\end{psn}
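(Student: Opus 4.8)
The plan is to prove the substantive claim $G_\ceL^*\in\ceL$, since $G_\ceL\in\ceL$ is already built into the construction in \autoref{MM1P17a}; I would obtain it by establishing $G_\ceL\sim_\cL G_\ceL^*$, i.e. a $\leq_\cL$-comparison in each direction.

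The easy direction comes straight from the defining property of the Duflo involution. By \autoref{MM1P17a} we have $FL_{G_\ceL}\neq 0$ for every $F\in\ceL$, and \autoref{MM1P12} identifies this nonvanishing with the relation $F\leq_\cL G_\ceL^*$. Taking $F=G_\ceL$ gives $G_\ceL\leq_\cL G_\ceL^*$ at once (indeed $G_\ceL^*$ is a $\leq_\cL$-upper bound for all of $\ceL$).

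For the reverse comparison I would pass to the $\cJ$-cell. The weak anti-autoequivalence $-^*$ (with inverse $\tensor[^*]{(-)}{}$) carries $\cL$-cells to $\cR$-cells and back, hence permutes $\cJ$-cells; together with the comparison $G_\ceL\leq_\cL G_\ceL^*$ this should place $G_\ceL$ and $G_\ceL^*$ in a common $\cJ$-cell $\ceJ$. Restricting Green's relations to $\ceJ$, \autoref{LFJIsD} yields $\cL_\ceJ\circ\cR_\ceJ=\cR_\ceJ\circ\cL_\ceJ=\cD_\ceJ=\cJ_\ceJ$, which is exactly the stability one needs: inside a single $\cJ$-cell, one $\leq_\cL$-comparison between $\cJ$-equivalent elements forces $\cL$-equivalence. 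This upgrades $G_\ceL\leq_\cL G_\ceL^*$ to $G_\ceL\sim_\cL G_\ceL^*$, so $G_\ceL^*\in\ceL$.

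The whole argument is local: the morphisms $G_\ceL$ and $G_\ceL^*$, the elements $F\in\ceL$, and the 1-morphisms witnessing the comparisons all sit in one full finitary sub-2-category, on which the finitary proof of \cite[Proposition 17 c), e)]{mazorchuk2011cell} applies essentially verbatim. I expect the genuine obstacle to be the step carried out inside $\ceJ$: for multisemigroups one has neither $\cD=\cJ$ nor the collapse of $\leq_\cL$ in general (as the paper stresses), so both placing the two elements in a common $\cJ$-cell and the subsequent collapse must be forced through \autoref{LFJIsD} or the local reduction rather than taken for granted; and, exactly as in the proof of \autoref{MM1P12}, one must track the weakly-fiat adjunctions carefully, keeping $-^*$ and $\tensor[^*]{(-)}{}$ distinct rather than assuming $-^*$ is an involution.
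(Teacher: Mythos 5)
Your first two steps are sound: $G_\ceL\in\ceL$ is indeed built into \autoref{MM1P17a} as stated in this paper, and combining $G_\ceL L_{G_\ceL}\neq 0$ with \autoref{MM1P12} does give $G_\ceL\leq_\cL G_\ceL^*$. The genuine gap is the ``collapse'' step. The inference you attribute to \autoref{LFJIsD} --- that inside one $\cJ$-cell a one-sided $\leq_\cL$-comparison between $\cJ$-equivalent elements forces $\cL$-equivalence --- is not a consequence of that theorem, nor of anything proved in the paper by that point. What \autoref{LFJIsD} gives is $\cL_\ceJ\circ\cR_\ceJ=\cR_\ceJ\circ\cL_\ceJ=\cD_\ceJ=\cJ_\ceJ$; the collapse you want is the separate property of \emph{stability}, which $\cD=\cJ$ does not imply. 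Concretely, in the bicyclic monoid $B=\langle a,b\mid ab=1\rangle$ (note $aba=a$ and $bab=b$, so the pair $(a,b)$ even mimics an adjoint pair $(F,F^*)$ at the level of Green's relations) there is a single $\cJ$-class which is also a single $\cD$-class, every $\cH$-class is non-empty and $\cL\circ\cR=\cR\circ\cL=\cD=\cJ$, and yet $1\leq_\cL a$ and $1\sim_\cJ a$ while $1\not\sim_\cL a$. So all hypotheses \emph{and} conclusions of \autoref{LFJIsD} can hold while your collapse fails: stability is precisely where finiteness must enter, and it is unavailable for the infinite multisemigroup $S(\cC)$. Two further, smaller, problems: you never verify the hypothesis of \autoref{LFJIsD} (non-emptiness of every $\cH$-cell of $\ceJ$), which is not established for the ambient $\cJ$-cell of an arbitrary left cell; and placing $G_\ceL$ and $G_\ceL^*$ in a common $\cJ$-cell needs $F\sim_\cJ F^*$ (\autoref{MM1L26}, immediate from the triangle identities: $F$ is a direct summand of $FF^*F$ and $F^*$ of $F^*FF^*$), since ``$-^*$ permutes $\cJ$-cells'' plus a one-sided comparison only produces an increasing chain of $\cJ$-cells through $G_\ceL, G_\ceL^*, G_\ceL^{**},\dots$, which need not close up when $\cC$ has infinitely many $\cJ$-cells.

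For comparison, the paper gives no argument at all here: under its stated convention, \autoref{MM1P17e} is simply cited from \cite[Proposition 17 c), e)]{mazorchuk2011cell} because that proof is local. The correct repair of your outline is exactly the localisation you gesture at in your final paragraph, made precise: $G_\ceL$, $G_\ceL^*$, any witness $H$ with $G_\ceL^*$ a direct summand of $HG_\ceL$, and the adjunction 2-morphisms for $G_\ceL$ all lie in the single hom-category $\cC(\tti,\tti)$, hence in the full finitary sub-2-category $\cC_{\ul{\tti}}$ with $\ul{\tti}=\{\tti\}$, which is again weakly fiat. There $S(\cC_{\ul{\tti}})$ is a \emph{finite} multisemigroup, the finitary argument of \cite{mazorchuk2011cell} applies verbatim (finiteness is what validates the collapse in that setting), and the relation $G_\ceL\sim_\cL G_\ceL^*$ so obtained transfers back to $\cC$ by (the easy inclusion in) \autoref{cLbootleg}. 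In short: restrict, quote the finitary result, lift back; the route through \autoref{LFJIsD} cannot be made to work.
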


From this, if $\ceJ$ is strongly regular and $\ceL\subseteq \ceJ$, then $\ceL\cap\tensor[^*]{\ceL}{}=\{G_\ceL\}$. For the remainder of this subsection, we assume that $\ceJ$ is a maximal and strongly regular $\cJ$-cell.

\begin{lem}\label{MM1P34a} For $F,H\in\ceJ$, there exists some non-negative integer $\mbm_{F,H}$ such that $H^*\circ F\cong K^{\mbm_{F,H}}$, where $\{K\}=\ceR_{H^*}\cap\ceL_F$.
	\begin{proof} This is a direct consequence of $\ceJ$ being strongly regular and maximal and of $\cL$-cells being closed under indecomposable summands of left 1-composition and right cells under indecomposable summands of right 1-composition.\end{proof}\end{lem}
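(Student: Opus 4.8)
The plan is to show that \emph{every} indecomposable summand of $H^*\circ F$ is isomorphic to $K$. Since $\cC(\tti,\ttk)$ lies in $\LA_\bbk^f$ (where $\tti$ is the domain of $F$ and $\ttk$ the codomain of $H^*$), the Krull--Schmidt property holds there, so $H^*\circ F$ decomposes into finitely many indecomposable summands; once each is shown to be a copy of $K$ the claim follows with $\mbm_{F,H}$ the number of summands (and $\mbm_{F,H}=0$ if $H^*\circ F=0$). So fix an indecomposable summand $M$ of $H^*\circ F$.

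First I would pin down the $\cJ$-cell of $M$. Being a summand of $H^*\circ F$, the definition of the left order gives $F\leq_\cL M$, and a fortiori $F\leq_\cJ M$. As $\ceJ$ is a maximal $\cJ$-cell, nothing lies strictly above it, so $M\in\ceJ$. Strong regularity now enters: $F\leq_\cL M$ means $\ceL_F\leq\ceL_M$ in the induced order on the left cells of $\ceJ$, but distinct left cells of $\ceJ$ are incomparable, forcing $\ceL_F=\ceL_M$, i.e. $M\in\ceL_F$.

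Next I would run the mirror argument on the right. Reading $H^*\circ F$ as a right composite of $H^*$ gives $H^*\leq_\cR M$. This requires $H^*\in\ceJ$, which holds because $\cC$ is locally weakly fiat: the zigzag identities realise $F$ as a summand of $F\circ F^*\circ F$ and $F^*$ as a summand of $F^*\circ F\circ F^*$, whence $F\leq_\cJ F^*$ and $F^*\leq_\cJ F$, so $F\sim_\cJ F^*$ and $\ceJ$ is stable under $-^*$. Incomparability of distinct right cells of $\ceJ$ then follows by transporting the incomparability of left cells through the anti-autoequivalence $-^*$. Exactly as before, $H^*\leq_\cR M$ together with this incomparability yields $\ceR_{H^*}=\ceR_M$, so $M\in\ceR_{H^*}$.

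Combining the two, $M\in\ceL_F\cap\ceR_{H^*}$, and strong regularity guarantees this intersection is the single isomorphism class $\{K\}$, so $M\cong K$. As $M$ was arbitrary, $H^*\circ F\cong K^{\mbm_{F,H}}$. The only place the hypotheses genuinely do work is the passage $F\leq_\cJ M\Rightarrow M\in\ceJ$: this is where maximality of $\ceJ$ prevents the summands from leaking into a strictly larger $\cJ$-cell, after which strong regularity and the $*$-stability of $\ceJ$ are essentially bookkeeping. I would be careful to track the paper's order conventions (where left multiplication moves an element \emph{up} the left order) so that all the cell comparisons point in the intended direction.
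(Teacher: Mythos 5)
Your proposal is correct and follows essentially the same route as the paper, whose one-line proof cites exactly the ingredients you unpack: maximality of $\ceJ$ (so summands of $H^*\circ F$ cannot escape the cell), incomparability of left cells from strong regularity, the mirror statement for right cells, and the unique isomorphism class in $\ceL_F\cap\ceR_{H^*}$. Your additional justifications --- that $\ceJ$ is $*$-stable via the zigzag identities (the paper's \autoref{MM1L26}) and that right-cell incomparability transports through $-^*$ --- are precisely the details the paper leaves implicit, and they are argued correctly.
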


\begin{lem}[{\cite[Lemma 26]{mazorchuk2011cell}}]\label{MM1L26} For any $F\in S(\cC)$, $F^*\sim_\cJ F$.
\end{lem}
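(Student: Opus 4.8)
The plan is to establish the two inequalities $F\leq_\cJ F^*$ and $F^*\leq_\cJ F$ separately and then combine them, the only real input being the internal adjunctions guaranteed by local weak fiatness. First I would record a preliminary point: since $-^*$ is a weak (anti-)autoequivalence, it preserves indecomposable 1-morphisms, so that $F^*$ is again indecomposable and $F^*\in S(\cC)$; this is what makes the statement $F^*\sim_\cJ F$ meaningful as a relation on $S(\cC)$.

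Write $F\in\cC(\tti,\ttj)$, so that $F^*\in\cC(\ttj,\tti)$ and the adjunction 2-morphisms of the weakly fiat structure take the form $\alpha:F\circ F^*\to\bbon_\ttj$ and $\beta:\bbon_\tti\to F^*\circ F$. The first zig-zag identity $(\alpha\circ_H\id_F)\circ_V(\id_F\circ_H\beta)=\id_F$ exhibits $F$ as a retract of $F\circ F^*\circ F$ via the split
\[
F\ \xrightarrow{\ \id_F\circ_H\beta\ }\ F\circ F^*\circ F\ \xrightarrow{\ \alpha\circ_H\id_F\ }\ F .
\]
Because the hom-categories of $\cC$ lie in $\LA_\bbk^f$, and are in particular idempotent complete and Krull--Schmidt, this retract is a genuine direct summand, and the indecomposable $F$ must appear among the indecomposable summands of $F\circ F^*\circ F$. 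With $H=K=F$ this is exactly the assertion that $F$ is a direct summand of $H\,F^*\,K$, i.e. $F^*\leq_\cJ F$. Symmetrically, the second zig-zag identity $(\id_{F^*}\circ_H\alpha)\circ_V(\beta\circ_H\id_{F^*})=\id_{F^*}$ exhibits $F^*$ as a direct summand of $F^*\circ F\circ F^*$, which (taking $H=K=F^*$) gives $F\leq_\cJ F^*$. The two inequalities together yield $F\sim_\cJ F^*$.

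I do not expect any serious obstacle here: the argument only involves $F$, $F^*$ and horizontal/vertical composites living in $\cC(\tti,\tti)$, $\cC(\ttj,\ttj)$, $\cC(\tti,\ttj)$ and $\cC(\ttj,\tti)$, hence entirely within the full finitary sub-2-category $\cC_{\{\tti,\ttj\}}$, so the reasoning is \emph{local} in the sense of this paper and transfers from \cite[Lemma 26]{mazorchuk2011cell} without modification. The only points requiring mild care are the bookkeeping of composition order (so that the displayed splits land in the correct hom-categories) and the insistence on using the defining adjunction 2-morphisms $\alpha,\beta$ of the weakly fiat structure rather than the inverse anti-autoequivalence $\tensor[^*]{(-)}{}$; once these are fixed, idempotent-completeness does the rest.
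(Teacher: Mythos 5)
Your proof is correct and is essentially the argument the paper relies on: the paper gives no proof beyond citing \cite[Lemma 26]{mazorchuk2011cell}, whose proof is exactly this local zig-zag argument (the adjunction identities split $F$ off $F\circ F^*\circ F$ and $F^*$ off $F^*\circ F\circ F^*$, giving both $\leq_\cJ$ inequalities). Your directions also match the paper's convention that $F\leq_\cJ G$ means $G$ is a summand of $HFK$, and your observation that both zig-zag identities are built into the locally weakly fiat definition (so no involution is needed) is precisely why the citation transfers without modification.
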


We move to consider the 2-representation $\mbM=\ol{\mbC_\ceL}$, the abelianisation of the cell 2-representation for some $\cL$-cell $\ceL$. We use $\ttP_F$, $\ttI_F$ and $\ttL_F$ to refer to projectives, injectives and simples in $\mbM$ respectively. For the remainder of this section and the following two we can assume, by quotienting $\cC$ by the 2-ideal generated by all $\id_F$ such that $F\not\leq\ceJ$ (for $\ceJ$ the $\cJ$-cell containing $\ceL$), that $\ceJ$ is the unique maximal $\cJ$-cell of $\cC$ (see \autoref{ResNotLeqJ} for more details). Further, we assume that $\ceJ$ is strongly regular. We give the generalisation of the first parts of \cite[Proposition 30]{mazorchuk2016isotypic}:

\begin{psn}\label{MM6P30a} The projective object $\ttP_F$ is injective for any $F\in\ceL$.
	\begin{proof} We mirror the proof for the above citation, with extra clarifying details. By adjunction, $$\Hom_{\ol{\ceM}}(\ttL_{G_\ceL},F^*\ttL_F)\cong\Hom_{\ol{\ceM}}(F\ttL_{G_\ceL},\ttL_F).$$ By \autoref{MM1P17a}, $\ttL_F$ is the simple top of $F\ttL_{G_\ceL}$ and hence the latter space is non-zero and one-dimensional. Since $\ttL_{G_\ceL}$ is simple, it follows that it injects into $F^*\ttL_F$.\par
		
		Let $I$ be an injective object in some $\ol{\mbC_\ceL}(\tti)$ and let $\ttL_K$ be one of its simple quotients with $K\in\ceL$. Then $\ttL_{G_\ceL}$ is a subquotient of the object $K^*I$ which is injective as $K^*$ is exact. Using \autoref{MM1P12} and the strong regularity of $\ceJ$ we have that $G_\ceL L_H=0$ unless $H\cong G_\ceL$. Therefore $G_\ceL\ttL_H=0$ unless $H\cong G_\ceL$. By \autoref{MM1P17a}, $G_\ceL\ttL_{G_\ceL}$ has simple top $\ttL_{G_\ceL}$ and $\ttL_{G_\ceL}$ appears in the top of the object $G_\ceL K^*I$, which is injective as $G_{\ceL}$ is exact. It follows that $\ttP_F$ appears as a quotient, and thus a direct summand, of $FG_{\ceL}K^*I$, which is injective as $F$ is exact. The result follows.\end{proof}\end{psn}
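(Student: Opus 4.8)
My plan is to exploit the weakly fiat structure to transport injectivity along the functors of the $2$-representation. The key observation is that, in the abelianisation $\mbM=\ol{\mbC_\ceL}$, every functor $\mbM(H)$ attached to a $1$-morphism $H$ of $\cC$ is exact and admits adjoints on both sides (the functors attached to $H^*$ and $\tensor[^*]{H}{}$), which are themselves exact; hence $\mbM(H)$ preserves both projective and injective objects. Combined with the elementary fact that a projective object arising as a quotient of an object $X$ is automatically a direct summand of $X$ (the surjection splits by projectivity, and summands of injectives are injective), this reduces the proposition to exhibiting $\ttP_F$ as a quotient of some object obtained by applying such functors to a single injective object.

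First I would reduce to the Duflo involution $G_\ceL$. By \autoref{MM1P17a}, $F\ttL_{G_\ceL}$ has simple top $\ttL_F$; via the adjunction isomorphism $\Hom_{\ol{\ceM}}(\ttL_{G_\ceL},F^*\ttL_F)\cong\Hom_{\ol{\ceM}}(F\ttL_{G_\ceL},\ttL_F)$ this records both that $\ttL_{G_\ceL}$ embeds into $F^*\ttL_F$ and, dually, that applying the exact functor $F$ to any object whose top contains $\ttL_{G_\ceL}$ produces an object whose top contains $\ttL_F$. So it suffices to manufacture an injective object $Y$ whose top contains $\ttL_{G_\ceL}$ and from which, after applying $F$, the projective cover $\ttP_F$ descends as a quotient.

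To build $Y$ I would start from an arbitrary injective object $I$ of $\mbM$ and pick one of its simple quotients $\ttL_K$ with $K\in\ceL$ (every simple of the cell representation is indexed by $\ceL$). Then $K^*I$ is again injective, and $\ttL_{G_\ceL}$ occurs as a subquotient of it. Applying $G_\ceL$ and invoking \autoref{MM1P12} together with the strong regularity of $\ceJ$ gives $G_\ceL\ttL_H=0$ unless $H\cong G_\ceL$, so $G_\ceL$ annihilates every simple subquotient of $K^*I$ except those isomorphic to $\ttL_{G_\ceL}$; since $G_\ceL\ttL_{G_\ceL}$ has simple top $\ttL_{G_\ceL}$ by \autoref{MM1P17a}, the injective object $Y=G_\ceL K^*I$ then has $\ttL_{G_\ceL}$ in its top. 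Finally $FY=FG_\ceL K^*I$ is injective, $\ttP_F$ appears as a quotient of it, and hence as a summand, so $\ttP_F$ is injective.

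The step I expect to be the main obstacle is the control of tops under these exact functors: upgrading ``$\ttL_{G_\ceL}$ is a subquotient of $K^*I$'' to ``$\ttL_{G_\ceL}$ lies in the top of $G_\ceL K^*I$'', and then ``$\ttL_F$ lies in the top of $FY$'' to the stronger ``$\ttP_F$ is a quotient of $FY$''. These are precisely the places where the vanishing $G_\ceL\ttL_H=0$ for $H\not\cong G_\ceL$, the simple-top computations of \autoref{MM1P17a}, and the strong regularity of $\ceJ$ must be combined; by contrast, the adjunction isomorphisms and the preservation of injectivity by $K^*$, $G_\ceL$ and $F$ are comparatively routine once this framework is in place.
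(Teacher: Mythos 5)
Your proposal is correct and follows essentially the same route as the paper's proof: the same adjunction argument with \autoref{MM1P17a} to embed $\ttL_{G_\ceL}$ into $F^*\ttL_F$, the same use of \autoref{MM1P12} and strong regularity to kill $G_\ceL\ttL_H$ for $H\not\cong G_\ceL$, and the same construction of the injective object $FG_\ceL K^*I$ having $\ttP_F$ as a quotient, hence a direct summand. Your explicit remark that injectivity is preserved because the functors admit exact adjoints on both sides is a slightly more careful justification than the paper's shorthand ``injective as $K^*$ is exact,'' but the argument is the same.
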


\begin{psn}\label{MM1P38a} For any $F\in\ceL$, $F^*\ttL_F\cong I_{G_\ceL}$.
	\begin{proof} This is a generalisation of the proof of \cite[Proposition 30 iii)]{mazorchuk2016isotypic}. We provide a few details to clarify for our situation. For any $H\in\ceL\cap\ol{\cC}(\tti,\tti)$, $\Hom_{\ol{\ceM}}(\ttL_H,F^*\ttL_F)\cong\Hom_{\ol{\ceM}}(F\ttL_H,\ttL_F)$. Since $\ceJ$ is strongly regular and $\ceL\cap\tensor[^*]{\ceL}{}=\{G_\ceL\}$, by \autoref{MM1P17a} it follows that $\Hom_{\ol{\cC(\tti,\ttj)}}(F\ttL_H,\ttL_F)\neq 0$ only if $H=G_\ceL$, and thus the same holds true in the quotient $\ol{\ceM}$. It also follows that $\dim\Hom_{\ol{\cC(\tti,\ttj)}}(\ttL_{G_\ceL}, F^*\ttL_F)=1$. Therefore, $\ttL_{G_\ceL}$ is the only simple that injects into $F^*\ttL_F$, and is further its simple socle. In particular, it follows that $F^*\ttL_F$ is indecomposable. But the proof of \cite[Proposition 30 ii)]{mazorchuk2016isotypic} generalises immediately to give that $F^*\ttL_F$ has a non-zero projective-injective summand, which must be $I_{G_\ceL}$ by construction, and the result follows.\end{proof}\end{psn}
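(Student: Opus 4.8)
The plan is to exploit the internal adjunction supplied by weak fiatness, under which $F$ is left adjoint to $F^*$ (with unit $\beta$ and counit $\alpha$), so that for every simple $\ttL_H$ of $\ol{\ceM}$ one has $\Hom_{\ol{\ceM}}(\ttL_H,F^*\ttL_F)\cong\Hom_{\ol{\ceM}}(F\ttL_H,\ttL_F)$. Computing the socle of $F^*\ttL_F$ then reduces to deciding for which $H$ the simple $\ttL_F$ occurs in the top of $F\ttL_H$. First I would restrict to $H\in\ceL\cap\ol{\cC}(\tti,\tti)$, which is the relevant component since $F^*\ttL_F$ lives over $\tti$. For such $H$, \autoref{MM1C14} shows that any simple in the top of $F\ttL_H$ must lie in $\ceL_H=\ceL$, while the Duflo property recorded in \autoref{MM1P17a} identifies $\ttL_F$ as precisely the simple top of $F\ttL_{G_\ceL}$.

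Combining this with strong regularity of $\ceJ$ and the fact (following \autoref{MM1P17e}) that $\ceL\cap\tensor[^*]{\ceL}{}=\{G_\ceL\}$, I expect to conclude that $\Hom_{\ol{\ceM}}(\ttL_H,F^*\ttL_F)$ vanishes unless $H=G_\ceL$, and moreover that $\dim\Hom_{\ol{\ceM}}(\ttL_{G_\ceL},F^*\ttL_F)=1$. This identifies $\ttL_{G_\ceL}$ as the unique simple submodule, hence the simple socle, of $F^*\ttL_F$, and in particular forces $F^*\ttL_F$ to be indecomposable. It then remains to promote ``indecomposable with socle $\ttL_{G_\ceL}$'' to ``equal to $I_{G_\ceL}$''. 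For this I would adapt the argument generalising \cite[Proposition 30 ii)]{mazorchuk2016isotypic} to exhibit a non-zero projective-injective direct summand of $F^*\ttL_F$; by indecomposability this summand must be all of $F^*\ttL_F$, and an indecomposable injective with socle $\ttL_{G_\ceL}$ is exactly $I_{G_\ceL}$, giving the claim.

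The socle computation is essentially formal once the adjunction and the Duflo/strong-regularity input are assembled, so I expect the main obstacle to be the final step: genuinely producing the projective-injective summand of $F^*\ttL_F$. This requires the self-injectivity machinery of the cited proposition rather than a purely formal manipulation, and one must check it transfers to the locally weakly fiat environment. Fortunately that argument is local and uses only the exactness of $F$ and $F^*$ together with the projective-injectivity established in \autoref{MM6P30a}, so I anticipate it generalises without substantive change.
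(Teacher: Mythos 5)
Your proposal follows the paper's proof essentially step for step: the same adjunction $\Hom_{\ol{\ceM}}(\ttL_H,F^*\ttL_F)\cong\Hom_{\ol{\ceM}}(F\ttL_H,\ttL_F)$, the same use of \autoref{MM1P17a}, strong regularity and $\ceL\cap\tensor[^*]{\ceL}{}=\{G_\ceL\}$ to pin down the socle as $\ttL_{G_\ceL}$ and deduce indecomposability, and the same final appeal to the generalisation of \cite[Proposition 30 ii)]{mazorchuk2016isotypic} to produce the projective-injective summand identified with $I_{G_\ceL}$. The proposal is correct and matches the paper's argument.
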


\subsection{The Regularity Condition And Some Other Results}

We quote a classical result, \cite[Lemma 13]{mazorchuk2015transitive}.

\begin{psn}\label{MM5L13} Let $B$ be a finite dimensional $\bbk$-algebra and $G$ an exact endofunctor of $B-\modd$. Assume that $G$ sends each simple object to a projective object. Then $G$ is a functor isomorphic to tensoring with a projective bimodule (which we call a \emph{projective functor}).\end{psn}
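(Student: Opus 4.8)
The plan is to combine the Eilenberg--Watts theorem with a comparison of bimodule tops and a dimension count. First I would invoke Eilenberg--Watts: since $G$ is additive and right exact on $B\text{-}\modd$, it is naturally isomorphic to $M\otimes_B-$, where $M:=G(B)$ carries its canonical $(B,B)$-bimodule structure (the left action from $M=G(B)$, and the right action obtained by applying $G$ to the right-multiplication endomorphisms of ${}_BB$, using $\End_B({}_BB)\cong B^{\opp}$). Exactness of $G$ forces $M\otimes_B-$ to be exact, hence $M$ is flat, and being finite-dimensional it is \emph{projective as a right $B$-module}. By Morita invariance of the statement I may assume $B$ is basic, fixing a complete set $e_1,\dots,e_n$ of primitive orthogonal idempotents, indecomposable projectives $Be_i$, and simples $S_i=Be_i/\rad(B)e_i$ with $\dim e_iS_j=\delta_{ij}$.

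Next I would exploit the hypothesis on simples. A short filtration argument shows that $G$ sends \emph{every} module to a projective: an arbitrary $V$ has a composition series, $G$ is exact, every composition subquotient goes to a projective, and an extension of a projective by a projective splits. In particular each $G(S_i)$ is projective, say $G(S_i)\cong\bigoplus_k (Be_k)^{m_{ik}}$, and I would assemble the candidate projective bimodule $N:=\bigoplus_{i,k}(Be_k\otimes_\bbk e_iB)^{m_{ik}}\cong\bigoplus_i G(S_i)\otimes_\bbk e_iB$. A direct computation using $e_iB\otimes_B S_j\cong e_iS_j$ gives $N\otimes_B S_j\cong G(S_j)\cong M\otimes_B S_j$ for every $j$.

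The core of the argument is to upgrade this to a bimodule isomorphism $M\cong N$. Writing $J=\rad(B)$ and using that $\bbk$ is algebraically closed, so $\rad(B\otimes_\bbk B^{\opp})=J\otimes B^{\opp}+B\otimes J$, the bimodule top of $M$ is $M/(JM+MJ)$. I would compute $\dim e_i\bigl(M/(JM+MJ)\bigr)e_j=m_{ji}$, which is exactly the multiplicity of the indecomposable projective bimodule $Be_i\otimes_\bbk e_jB$ in $N$; hence $N$ is the projective cover of $M$ as a bimodule and there is a surjection $N\twoheadrightarrow M$. Finally I would check $\dim M=\dim N$: expanding $\dim M=\sum_i\dim G(Be_i)$ by exactness through $[Be_i:S_k]=\dim e_kBe_i$ and then through $\dim G(S_k)=\sum_l m_{kl}\dim Be_l$ yields $\dim M=\sum_{k,l}m_{kl}\dim(Be_l)\dim(e_kB)=\dim N$. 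An epimorphism between bimodules of equal finite dimension is an isomorphism, so $M\cong N$ is projective and $G\cong M\otimes_B-$ is a projective functor.

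The main obstacle is that $M$ being projective as a left and as a right $B$-module is \emph{not} sufficient to conclude it is projective as a bimodule (for instance $B$ itself is left- and right-projective but is a projective bimodule precisely when it is separable); the hypothesis that simples are sent to projectives must genuinely enter the argument. The delicate point is therefore identifying the correct projective cover $N$ via the top computation and then arranging the dimension bookkeeping so that the cover map $N\twoheadrightarrow M$ is forced to be injective.
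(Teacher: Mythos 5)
Your proposal is correct, but there is nothing in the paper to compare it against: the paper explicitly quotes this proposition as a classical result, \cite[Lemma 13]{mazorchuk2015transitive}, and gives no proof of its own, so the only meaningful comparison is with the standard argument in the cited literature, and against that your route is genuinely different. I checked the places where your plan defers computation and they all work: the finite-dimensional Eilenberg--Watts step ($G\cong M\otimes_B-$ for $M=G(B)$ needs only additivity, right exactness and a free presentation); the bimodule-top computation, which indeed gives $M/(JM+MJ)\cong\bigoplus_{j,k}(S_k\otimes_\bbk S'_j)^{m_{jk}}$ (with $S'_j$ the right simple at $j$), hence $\dim e_i\bigl(M/(JM+MJ)\bigr)e_j=m_{ji}$ as you claim --- this is where algebraic closedness enters, via $\rad(B\otimes_\bbk B^{\opp})=J\otimes B^{\opp}+B\otimes J$; and the dimension count, which uses only exactness of $G$ along composition series of the $Be_i$. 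Two remarks. First, two of your preparatory steps are never used afterwards: the right-projectivity of $M$, and the filtration argument showing $G$ sends every module to a projective; the hypothesis on simples is consumed solely in making $N=\bigoplus_i G(S_i)\otimes_\bbk e_iB$ a projective bimodule, which is the correct and essential use. Second, the shorter standard route runs as follows: take a bimodule projective cover $P\twoheadrightarrow M$ with kernel $K\subseteq JP+PJ$; for each $i$ the sequence $0\to K\otimes_B S_i\to P\otimes_B S_i\to G(S_i)\to 0$ is exact (here the right-flatness of $M$ is used, i.e.\ exactly the ingredient your proof discards) and splits because $G(S_i)$ is projective, yet the image of $K\otimes_B S_i$ lies inside $J(P\otimes_B S_i)$ since $PJ\otimes_B S_i$ maps to zero in $P\otimes_B S_i$; a direct summand contained in the radical is zero, so $K\otimes_B(B/J)=0$ and Nakayama gives $K=0$, i.e.\ $M\cong P$ is a projective bimodule. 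Your proof buys an explicit identification of $M$ (its top, its projective cover, and all multiplicities) at the cost of the multiplicity and dimension bookkeeping; the cover-plus-Nakayama argument is less explicit but avoids all counting.
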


\begin{psn}\label{MM6P1} Let $\ceJ$ be a strongly regular $\cJ$-cell, and let $\mbm:\ceJ\to\bbZ^+$ be defined as ${}^*F\circ F\cong H^{\oplus \mbm_F}\oplus K$, with no indecomposable direct summands of $K$ belonging to $\ceJ$. Then $\mbm$ is constant on $\cR$-cells of $\ceJ$.
	\begin{proof} This is a generalisation of \cite[Proposition 1]{mazorchuk2016isotypic}, and we mirror the proof therein. Let $\ceL$ be an $\cL$-cell in $\ceJ$ and let $\mbC_\ceL$ and $\ol{\mbC_\ceL}$ be the corresponding cell 2-representation and its abelianisation respectively. For $F,H\in\ceL$, by \autoref{MM1P12} and \autoref{MM1P17e} we can immediately derive that $F\ttL_H\cong\ttP_F$ if $H\cong G_\ceL$ and zero otherwise. Since every element of $\ceL$ has the same source object, we can apply \autoref{MM5L13} which gives that, for each $F\in\ceL$, $\ol{\mbC_\ceL}(F)$ is an indecomposable projective functor.\par
		
		For each $\ttj\in\cC$, let $A_\ttj$ denote the basic algebra such that $\ol{\mbC_\ceL}(\ttj)$ is equivalent to $A_\ttj$-$\modd$. Let $\{e_{\ttj 1},\dots,e_{\ttj {n_\ttj}}\}$ be a complete set of pairwise orthogonal primitive idempotents in $A_\ttj$. Then without loss of generality, each $\ol{\mbC_\ceL}(F)$ is the projective functor $A_\ttj e_{\ttj s}\otimes_\bbk e_{\tti 1} A_\tti\otimes_{A_\tti}-$ for some $s\in\{1,\dots,n_\ttj\}$.\par
		
		It follows from \autoref{MM6P30a} that $A_\ttj$ is self-injective when $\ceJ$ is maximal. There is thus some permutation $\sigma_\ttj\in S_{n_\ttj}$ such that $(e_{\ttj s} A_\ttj)^*\cong A_\ttj e_{\ttj \sigma_\ttj(s)}$. It follows that $\ol{\mbC_\ceL}(F^*)$ is the projective functor $A_\tti e_{\tti \sigma_{\tti}(1)}\otimes_\bbk e_{\ttj s} A_\ttj\otimes_{A_\ttj}-$. By taking the tensor product, $\mbm_{F^*}=\dim(e_{\tti 1} A_\tti e_{\tti \sigma_{\tti}(1)})$ which is independent of the choice of $F\in\ceL$. Since $F\mapsto F^*$ is a bijection from $\ceL$ to the $\cR$-cell of $\ceJ$ containing $G^*_\ceL$, $\mbm$ is constant on the $\cR$-cell. But by \autoref{MM1P17e} every $\cR$-cell contains a Duflo involution, and hence the result follows.\end{proof}\end{psn}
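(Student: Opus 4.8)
The plan is to compute $\mbm_{F^*}$ as $F$ ranges over a fixed $\cL$-cell $\ceL\subseteq\ceJ$ by passing to the abelianised cell $2$-representation $\ol{\mbC_\ceL}$, in which the action of every $1$-morphism becomes an explicit projective functor between module categories; once $\mbm_{F^*}$ is expressed as a dimension that does not depend on $F$, I would transport the result back to $\ceJ$ using that $(-)^*$ carries $\ceL$ bijectively onto an $\cR$-cell.

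First I would fix $\ceL$ with source object $\tti=\tti_{\ceL}$ and Duflo involution $G_\ceL$, and analyse how each $F\in\ceL$ acts on $\ol{\mbC_\ceL}$. Using \autoref{MM1P12} and the strong regularity of $\ceJ$ (so that, by \autoref{MM1P17e}, $\ceL\cap{}^*\ceL=\{G_\ceL\}$), I would show that $F\ttL_H\cong\ttP_F$ when $H\cong G_\ceL$ and vanishes otherwise, so that $\ol{\mbC_\ceL}(F)$ carries every simple to a projective. As $F$ is exact, \autoref{MM5L13} then identifies $\ol{\mbC_\ceL}(F)$ with an indecomposable projective functor. Writing $\ol{\mbC_\ceL}(\ttj)\simeq A_\ttj\text{-}\modd$ for a basic algebra $A_\ttj$ with primitive idempotents $e_{\ttj 1},\dots,e_{\ttj {n_\ttj}}$, I can arrange $\ol{\mbC_\ceL}(F)\cong A_\ttj e_{\ttj s}\otimes_\bbk e_{\tti 1}A_\tti\otimes_{A_\tti}-$ for some $s$ depending on $F$, the left input idempotent $e_{\tti 1}$ (attached to $G_\ceL$) being fixed across all of $\ceL$.

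Next, since $\ceJ$ is maximal, \autoref{MM6P30a} forces each $A_\ttj$ to be self-injective, so there is a permutation $\sigma_\ttj$ with $(e_{\ttj s}A_\ttj)^*\cong A_\ttj e_{\ttj\sigma_\ttj(s)}$; this yields $\ol{\mbC_\ceL}(F^*)\cong A_\tti e_{\tti\sigma_\tti(1)}\otimes_\bbk e_{\ttj s}A_\ttj\otimes_{A_\ttj}-$. By definition $\mbm_{F^*}$ is the multiplicity of the unique $\ceJ$-summand of ${}^*(F^*)\circ F^*=F\circ F^*$; composing the two tensor functors and reading off, in the cell $2$-representation, the multiplicity of the indecomposable projective functor $A_\ttj e_{\ttj s}\otimes_\bbk e_{\ttj s}A_\ttj\otimes_{A_\ttj}-$ gives $\mbm_{F^*}=\dim(e_{\tti 1}A_\tti e_{\tti\sigma_\tti(1)})$. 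The decisive point is that the right-hand side involves only $\tti$ and $\sigma_\tti$, not the summand index $s$, so it is independent of the chosen $F\in\ceL$.

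Finally, I would note that $(-)^*$ sends $\ceL$ bijectively onto the $\cR$-cell containing $G_\ceL^*$, whence $\mbm$ is constant on that $\cR$-cell; since $(-)^*$ is an anti-autoequivalence carrying $\cL$-cells to $\cR$-cells, and $F^*\sim_\cJ F$ by \autoref{MM1L26} keeps us inside $\ceJ$, the images of the $\cL$-cells of $\ceJ$ exhaust all its $\cR$-cells (equivalently, by \autoref{MM1P17e} every $\cR$-cell contains a Duflo involution), so constancy on each image gives constancy on every $\cR$-cell. I expect the main obstacle to be the opening step — checking cleanly that $\ol{\mbC_\ceL}(F)$ sends simples to projectives so that \autoref{MM5L13} applies — since this is where strong regularity and the weak-fiat adjunctions must be combined, and where the two dualities ${}^*(-)$ and $(-)^*$ have to be kept distinct.
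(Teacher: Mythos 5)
Your proposal is correct and follows essentially the same route as the paper's proof: identifying each $\ol{\mbC_\ceL}(F)$ as an indecomposable projective functor via \autoref{MM1P12}, \autoref{MM1P17e} and \autoref{MM5L13}, using \autoref{MM6P30a} to get self-injectivity and the permutation $\sigma_\ttj$, computing $\mbm_{F^*}=\dim(e_{\tti 1}A_\tti e_{\tti\sigma_\tti(1)})$ by composing the tensor functors, and concluding via the bijection $F\mapsto F^*$ onto an $\cR$-cell together with the fact that every $\cR$-cell contains a Duflo involution. Your explicit remark that $\mbm_{F^*}$ is read off from ${}^*(F^*)\circ F^*=F\circ F^*$ is a welcome clarification of a point the paper leaves implicit, but it does not change the argument.
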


We can use this to derive the following:

\begin{lem}[{\cite[Lemma 29]{mazorchuk2016isotypic}}]\label{MM6P29} For $\ceJ$ a strongly regular maximal $\cJ$-cell in $\cC$ and any $F\in\ceL$ $FG_\ceL\cong F^{\oplus \mbm_{G_\ceL}}$.\end{lem}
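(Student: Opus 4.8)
The plan is to separate the problem into two parts: first determine the \emph{shape} of $FG_\ceL$ using only the cell combinatorics, and then compute the \emph{multiplicity} by passing to the cell 2-representation, where (by the proof of \autoref{MM6P1}) every element of $\ceL$ is realised as an explicit indecomposable projective functor.

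For the shape, recall that we may assume $\ceJ$ is the unique maximal $\cJ$-cell of $\cC$, and that both $F$ and $G_\ceL$ lie in $\ceL$. Let $Z$ be any indecomposable summand of $FG_\ceL$. Viewing $FG_\ceL$ as the left composite $F\circ G_\ceL$ shows $G_\ceL\leq_\cL Z$, whence $Z\geq_\cJ G_\ceL$; maximality of $\ceJ$ forces $Z\sim_\cJ G_\ceL$, and then the incomparability of distinct left cells in a strongly regular $\cJ$-cell forces $\ceL_Z=\ceL_{G_\ceL}=\ceL$. Viewing $FG_\ceL$ instead as the right composite $F\circ G_\ceL$ gives $F\leq_\cR Z$, and the analogous argument (strong regularity is stable under $-^*$) gives $\ceR_Z=\ceR_F$. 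Hence $Z\in\ceL\cap\ceR_F$, which by strong regularity is the single isomorphism class $\{F\}$. Thus every indecomposable summand of $FG_\ceL$ is isomorphic to $F$, so $FG_\ceL\cong F^{\oplus m}$ for some $m\in\bbZ^+_0$. (Equivalently one reads the shape off \autoref{MM1P34a} applied to the pair $({}^*F,G_\ceL)$, since $\ceR_{({}^*F)^*}\cap\ceL_{G_\ceL}=\ceR_F\cap\ceL=\{F\}$.)

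To identify $m$, apply the cell 2-representation $\ol{\mbC_\ceL}$. By the proof of \autoref{MM6P1} each $X\in\ceL$ with target $\ttj$ acts as an indecomposable projective functor $A_\ttj e_{\ttj s}\otimes_\bbk e_{\tti 1}A_\tti\otimes_{A_\tti}-$, where $\tti=\tti_\ceL$ is the common source and $e_{\tti 1}$ is the idempotent attached to the Duflo simple $\ttL_{G_\ceL}$. In particular $G_\ceL$, being the endomorphism of $\tti$ whose image has simple top $\ttL_{G_\ceL}$ (so projective cover $A_\tti e_{\tti 1}$), acts as $A_\tti e_{\tti 1}\otimes_\bbk e_{\tti 1}A_\tti\otimes_{A_\tti}-$; this fixes the target index of $G_\ceL$ to be $1$. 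Writing $F$ as $A_\ttj e_{\ttj s}\otimes_\bbk e_{\tti 1}A_\tti\otimes_{A_\tti}-$ and multiplying bimodules gives
\[
\ol{\mbC_\ceL}(FG_\ceL)\cong A_\ttj e_{\ttj s}\otimes_\bbk\bigl(e_{\tti 1}A_\tti e_{\tti 1}\bigr)\otimes_\bbk e_{\tti 1}A_\tti\otimes_{A_\tti}-\cong\ol{\mbC_\ceL}(F)^{\oplus\dim e_{\tti 1}A_\tti e_{\tti 1}}.
\]
Since also $\ol{\mbC_\ceL}(FG_\ceL)\cong\ol{\mbC_\ceL}(F)^{\oplus m}$ and $\ol{\mbC_\ceL}(F)$ is a non-zero indecomposable, Krull--Schmidt yields $m=\dim e_{\tti 1}A_\tti e_{\tti 1}$.

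The remaining task, and the step where I expect the genuine difficulty, is recognising $\dim e_{\tti 1}A_\tti e_{\tti 1}$ as exactly $\mbm_{G_\ceL}$. This matching is carried out as the dimension computation in the proof of \autoref{MM6P1}, specialised to $G_\ceL$, using that $G_\ceL$ is self-paired in the sense that its source and target idempotents in the projective-functor description both equal $e_{\tti 1}$, so that the Nakayama index contributing to $\mbm_{G_\ceL}$ is the fixed index $1$. Care is needed precisely because in the locally \emph{weakly} fiat setting $-^*$ and ${}^*(-)$ differ, so one must keep track of which composite (namely ${}^*G_\ceL\circ G_\ceL$, as in the definition of $\mbm$ in \autoref{MM6P1}) defines $\mbm_{G_\ceL}$ before comparing it with the dimension obtained above. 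Once this identification is established, $FG_\ceL\cong F^{\oplus\mbm_{G_\ceL}}$ follows.
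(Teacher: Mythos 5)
Your first two steps are correct, and they are essentially the intended derivation: the paper offers no argument for this lemma beyond citing \cite[Lemma 29]{mazorchuk2016isotypic} immediately after \autoref{MM6P1}, and your shape argument (maximality plus strong regularity, or equivalently \autoref{MM1P34a}) together with the bimodule computation in $\ol{\mbC_\ceL}$ giving $m=\dim e_{\tti 1}A_\tti e_{\tti 1}$ is exactly how that derivation runs. The identification of $G_\ceL$ with the functor $A_\tti e_{\tti 1}\otimes_\bbk e_{\tti 1}A_\tti\otimes_{A_\tti}-$ and the Krull--Schmidt count are both fine.

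The gap is the final step, which you defer, and the route you sketch for filling it would fail. You propose to get $\dim e_{\tti 1}A_\tti e_{\tti 1}=\mbm_{G_\ceL}$ by ``specialising the dimension computation in the proof of \autoref{MM6P1} to $G_\ceL$'', with the Nakayama index ``fixed at $1$'' because $G_\ceL$ is self-paired. Two problems. First, that computation evaluates $\mbm_{F^*}$ for $F\in\ceL$, i.e.\ the value of $\mbm$ on the $\cR$-cell $\{F^*\mid F\in\ceL\}=\ceR_{G_\ceL^*}$; in the weakly fiat setting $G_\ceL$ need not lie in that $\cR$-cell, since under the projective-functor realisation $G_\ceL$ has left tensor factor $A_\tti e_{\tti 1}$ while $G_\ceL^*$ has left tensor factor $A_\tti e_{\tti\sigma_\tti(1)}$, and these determine distinct $\cR$-cells unless $\sigma_\tti(1)=1$. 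So there is nothing to specialise. Second, self-pairedness of $G_\ceL$ does not keep the Nakayama permutation out of the computation: $\mbm_{G_\ceL}$ is defined via ${}^*G_\ceL\circ G_\ceL$, and $\ol{\mbC_\ceL}({}^*G_\ceL)$, being the \emph{left} adjoint of $A_\tti e_{\tti 1}\otimes_\bbk e_{\tti 1}A_\tti\otimes_{A_\tti}-$, is $A_\tti e_{\tti 1}\otimes_\bbk e_{\tti\sigma_\tti^{-1}(1)}A_\tti\otimes_{A_\tti}-$. Composing gives $\mbm_{G_\ceL}=\dim e_{\tti\sigma_\tti^{-1}(1)}A_\tti e_{\tti 1}$, in which the index $\sigma_\tti^{-1}(1)$ genuinely appears. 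This quantity does equal $\dim e_{\tti 1}A_\tti e_{\tti 1}$, but only via the self-injectivity duality: taking $\bbk$-duals, $e_{\tti\sigma_\tti^{-1}(1)}A_\tti e_{\tti 1}$ dualises to $e_{\tti 1}\cdot(e_{\tti\sigma_\tti^{-1}(1)}A_\tti)^*\cong e_{\tti 1}A_\tti e_{\tti\sigma_\tti(\sigma_\tti^{-1}(1))}=e_{\tti 1}A_\tti e_{\tti 1}$, using the property $(e_{\tti s}A_\tti)^*\cong A_\tti e_{\tti\sigma_\tti(s)}$ from the proof of \autoref{MM6P1}. That one line is the real content of the step you left open; it is invisible in \cite{mazorchuk2016isotypic} only because there $-^*$ is an involution, which forces $\sigma_\ttj=\operatorname{id}$ for all $\ttj$ (apply $F\cong F^{**}$ to the projective-functor realisation), whereas in the locally weakly fiat setting of this paper it must be supplied explicitly. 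Once it is inserted, your argument closes.
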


\subsection{Restricting To Smaller 2-Categories}\label{ResNotLeqJ}
Let $\cC$ be a locally weakly fiat 2-category and let $\ceJ$ be a strongly regular $\cJ$-cell of $\cC$. We define the 2-ideal $\ceI_{\not\leq\ceJ}$ of $\cC$ to be generated by 2-morphisms $\id_F$ for $F\not\leq_\cJ \ceJ$. We also define $\ceI_{\leq\ceJ}$ to be the maximal 2-ideal of $\cC$ such that $\id_F\notin\ceI_{\leq\ceJ}$ for any $F\in\ceJ$. We define 2-categories $\cC_{\not\leq\ceJ}:=\cC/\ceI_{\not\leq\ceJ}$ and $\cC_{\leq\ceJ}:=\cC/\ceI_{\leq\ceJ}$. We also define $\cC^{\ceJ}_{\not\leq\ceJ}$ to be the sub-2-category of $\cC_{\not\leq\ceJ}$ closed under direct sums and isomorphisms and generated by the $\bbon_\tti$ for $\tti\in\cC$ and by $F\in\ceJ$.

\begin{lem}\label{Idnotleqsubset} $\ceI_{\not\leq\ceJ}\subseteq\ceI_{\leq\ceJ}$.
	\begin{proof} By the definition of $\ceI_{\leq\ceJ}$ it suffices to show that $\id_F\notin\ceI_{\not\leq\ceJ}$ for any $F\in\ceJ$. Assume otherwise for contradiction. Then for some $F\in\ceJ$ $\id_F=\sum_{k=1}^n f_km_kg_k$ where $m_k:G_k\to G_k$, $G_k\not\leq\ceJ$, $g_k:F\to G_k$, $f_k:G_k\to F$ for all $k$.\par
		
		Since $F$ is indecomposable, by a categorical variant of \cite[Corollary I.4.8]{assem2006elements}
		, for each $k$ either $f_km_kg_k$ is nilpotent or it is an automorphism. If $f_km_kg_k$ is nilpotent for all $k$, then as nilpotent morphisms form an ideal so is $\id_F$, a contradiction. Therefore there exists some $k$ such that $f_km_kg_k$ is an automorphism, say with inverse $h$. But then $(hf_km_k)g_k=\id_F\Rightarrow F\cong G_k$ and by construction $F\in\ceJ$ and $G_k\notin\ceJ$, a contradiction. The result follows.\end{proof}\end{lem}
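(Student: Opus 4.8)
The plan is to exploit the maximality built into the definition of $\ceI_{\leq\ceJ}$. Since $\ceI_{\leq\ceJ}$ is by construction the largest 2-ideal of $\cC$ containing no $\id_F$ with $F\in\ceJ$, and since $\ceI_{\not\leq\ceJ}$ is itself a 2-ideal, the inclusion $\ceI_{\not\leq\ceJ}\subseteq\ceI_{\leq\ceJ}$ will follow as soon as I verify that $\ceI_{\not\leq\ceJ}$ contains no identity 2-morphism $\id_F$ with $F\in\ceJ$. Thus the whole proof reduces to establishing this single non-containment.

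First I would unwind what membership in $\ceI_{\not\leq\ceJ}$ means for an endomorphism of such an $F$. The generators of $\ceI_{\not\leq\ceJ}$ are the $\id_G$ with $G\not\leq_\cJ\ceJ$, and the ideal is closed under vertical composition on both sides and under horizontal composition with arbitrary 1- and 2-morphisms. The key observation is that horizontal composites do not escape the class $\{G : G\not\leq_\cJ\ceJ\}$: if $G\not\leq_\cJ\ceJ$, then every indecomposable summand $K$ of $HG$ or $GH$ satisfies $K\geq_\cJ G$, whence $K\not\leq_\cJ\ceJ$ as well (otherwise $G\leq_\cJ K\leq_\cJ\ceJ$). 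Consequently every 2-morphism in $\ceI_{\not\leq\ceJ}(\tti,\ttj)$ is a finite sum of 2-morphisms factoring vertically through 1-morphisms none of whose indecomposable summands lie $\leq_\cJ\ceJ$; applying an injection-projection argument to split these 1-morphisms into indecomposables, I may assume that any expression for $\id_F$ takes the form $\id_F=\sum_{k=1}^n f_k\circ_V m_k\circ_V g_k$ with each $G_k$ indecomposable and $\not\leq_\cJ\ceJ$, $g_k\colon F\to G_k$, $m_k\in\End(G_k)$ and $f_k\colon G_k\to F$.

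From here I would run the standard local-endomorphism-ring argument. As $\cC(\tti,\ttj)\in\LA_\bbk^f$ is Krull--Schmidt and $F$ is indecomposable, $\End_\cC(F)$ is local, so its radical --- the set of non-isomorphisms --- is a proper two-sided ideal. Were every term $f_k\circ_V m_k\circ_V g_k$ a non-isomorphism, their sum $\id_F$ would lie in this radical, which is absurd; hence some term $f_{k_0}\circ_V m_{k_0}\circ_V g_{k_0}$ is an automorphism of $F$, with inverse $h$ say. Then $(h\circ_V f_{k_0}\circ_V m_{k_0})\circ_V g_{k_0}=\id_F$ exhibits $g_{k_0}\colon F\to G_{k_0}$ as a split monomorphism between indecomposables, forcing $F\cong G_{k_0}$. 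This gives $[F]=[G_{k_0}]$ in $S(\cC)$, contradicting $F\in\ceJ$ (so $F\leq_\cJ\ceJ$) together with $G_{k_0}\not\leq_\cJ\ceJ$, and completing the proof.

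The main obstacle will be the second step: pinning down exactly which 1-morphisms an element of $\ceI_{\not\leq\ceJ}$ can factor through, since closure under horizontal composition a priori introduces intermediate 1-morphisms not among the generators. The monotonicity remark ($K\geq_\cJ G$ for indecomposable summands $K$ of $HG$ and $GH$) is what keeps these intermediate morphisms inside the class $\not\leq_\cJ\ceJ$, and it is the one genuinely non-formal ingredient; once it is in place, the reduction to indecomposables and the local-ring contradiction are routine.
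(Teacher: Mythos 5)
Your proof is correct and follows essentially the same route as the paper's: reduce via the maximality of $\ceI_{\leq\ceJ}$ to showing $\id_F\notin\ceI_{\not\leq\ceJ}$ for $F\in\ceJ$, express $\id_F$ as a sum of terms factoring through 1-morphisms $\not\leq_\cJ\ceJ$, and use locality of $\End_\cC(F)$ to extract an automorphism term, forcing $F\cong G_{k_0}$ and a contradiction. In fact you are somewhat more careful than the paper, which leaves implicit both the monotonicity observation (that horizontal composition with generators keeps indecomposable summands $\not\leq_\cJ\ceJ$) and the injection--projection reduction to indecomposable $G_k$ that makes the final step $F\cong G_{k_0}$ legitimate.
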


\begin{psn}\label{JCellDescends} The image of $\ceJ$ remains a $\cJ$-cell in $\cC^{\ceJ}_{\not\leq\ceJ}$.

	\begin{proof} Let $\ceL$ be a left cell in $\ceJ$ and let $F\in\ceL$. We first note that, by \autoref{Idnotleqsubset}, $\id_F\notin\ceI_{\not\leq\ceJ}$ for any $F\in\ceJ$, and so $\End_{\cC_{\not\leq\ceJ}^{\ceJ}}(F)\neq 0$, and hence $F\neq 0$ in $\cC^{\ceJ}_{\not\leq\ceJ}$. For the Duflo involution $G_\ceL$ of $\ceL$ we know by strong regularity of $\ceJ$ that $FG_\ceL\cong F^{\oplus m}$ in $\cC^{\ceJ}_{\not\leq\ceJ}$ for some positive $m$. Since the composition of the injection 2-morphism $F\to FG_\ceL$ with the projection 2-morphism $FG_\ceL\to F$ is $\id_F$, the direct sum structure is preserved in $\cC^{\ceJ}_{\not\leq\ceJ}$ and so $G_\ceL\leq_\cL F$ in $\cC^{\ceJ}_{\not\leq\ceJ}$.\par
		
		Conversely, again by strong regularity of $\ceJ$, $F^*F\cong (G^*_\ceL)^{\oplus n}$ for some positive $n$ in $\cC^{\ceJ}_{\not\leq\ceJ}$, and consequently by a similar argument to above $F\leq_\cL G^*_\ceL$ in $\cC_{\not\leq\ceJ}$ and $\ceL$ is contained in an $\cL$-cell in $\cC_{\not\leq\ceJ}$. But it is immediate from the definitions that if $F\leq_\cL H$ in $\cC_{\not\leq\ceJ}$, then $F\leq_\cL H$ in $\cC$ and thus $\ceL$ is precisely an $\cL$-cell in $\cC_{\not\leq\ceJ}$. Applying adjunctions gives the corresponding result for right cells and the result follows.\end{proof}\end{psn}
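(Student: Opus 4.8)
The plan is to transport the full cell structure of $\ceJ$ faithfully to the quotient: I would show that each $\cL$-cell $\ceL\subseteq\ceJ$ is preserved exactly as an $\cL$-cell of $\cC^{\ceJ}_{\not\leq\ceJ}$, obtain the analogue for $\cR$-cells by applying the anti-autoequivalence $-^*$, and then reassemble these into the single $\cJ$-cell $\ceJ$. The first point to record is that no element of $\ceJ$ dies: by \autoref{Idnotleqsubset} we have $\ceI_{\not\leq\ceJ}\subseteq\ceI_{\leq\ceJ}$, so $\id_F$ survives and $\End_{\cC^{\ceJ}_{\not\leq\ceJ}}(F)\neq 0$ for every $F\in\ceJ$.

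The heart of the argument is to pin down the left order on $\ceL$ in the quotient from both sides. Fixing the Duflo involution $G_\ceL$, strong regularity of $\ceJ$ gives $FG_\ceL\cong F^{\oplus m}$ (\autoref{MM6P29}) in $\cC$ with $m>0$, and I would argue that this summand relation survives the quotient: the composite of the coordinate injection $F\to FG_\ceL$ with the matching projection $FG_\ceL\to F$ is $\id_F$, which stays nonzero in $\cC^{\ceJ}_{\not\leq\ceJ}$, so $F$ remains a summand of $FG_\ceL$ and $G_\ceL\leq_\cL F$ there. For the opposite inequality I would use \autoref{MM1P34a} in the form $F^*F\cong (G^*_\ceL)^{\oplus n}$, descending the summand relation in the same way to obtain $F\leq_\cL G^*_\ceL$. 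Since $G_\ceL,G^*_\ceL\in\ceL$ by \autoref{MM1P17e}, and since the relation $G_\ceL\sim_\cL G^*_\ceL$ is preserved by the same device (via $G_\ceL G^*_\ceL\cong G_\ceL^{\oplus m'}$, again from \autoref{MM1P34a}), these bounds collapse all of $\ceL$ into one left class of $\cC^{\ceJ}_{\not\leq\ceJ}$.

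To see this class is not larger than $\ceL$, I would check that any $\leq_\cL$ relation holding in the quotient already holds in $\cC$: lifting the witnessing injection and projection, their composite differs from $\id_F$ by a $2$-endomorphism factoring through $1$-morphisms outside $\ceJ$, which lies in $\rad\End_\cC(F)$ because $F$ is indecomposable, so the composite is still invertible and the summand relation genuinely holds upstairs. Hence the left class of $F$ in the quotient pulls back into $\ceL$, and $\ceL$ is precisely an $\cL$-cell. Applying $-^*$ transports this to the statement that each $\cR$-cell of $\ceJ$ is preserved. Finally, strong regularity lets me invoke \autoref{LFJIsD}, which gives $\cJ_\ceJ=\cD_\ceJ=\cL_\ceJ\circ\cR_\ceJ$: the $\cJ$-cell is generated out of its $\cL$- and $\cR$-cells, so with both families preserved and the only non-identity indecomposables of $\cC^{\ceJ}_{\not\leq\ceJ}$ lying in the image of $\ceJ$, that image is exactly a single $\cJ$-cell.

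The step I expect to be the main obstacle is precisely the descent of the direct-sum decompositions: a priori, passing to $\cC^{\ceJ}_{\not\leq\ceJ}$ could kill enough $2$-morphisms to destroy the summand relations witnessing $\leq_\cL$ and $\leq_\cR$. The decisive safeguard is \autoref{Idnotleqsubset}: it guarantees that the idempotents splitting off the relevant copies of $F$ and of $G^*_\ceL$ are built from identity $2$-morphisms of $\ceJ$-elements and so remain nonzero, while in the reverse direction indecomposability of $F$ (local endomorphism ring) keeps the lifted witnesses invertible. Once this robustness is secured, everything else is routine injection-projection bookkeeping.
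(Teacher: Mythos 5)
Your proof is correct and follows essentially the same route as the paper's: survival of $\id_F$ via \autoref{Idnotleqsubset}, descent of the strong-regularity isomorphisms $FG_\ceL\cong F^{\oplus m}$ and $F^*F\cong(G_\ceL^*)^{\oplus n}$ through the injection--projection composites, lifting of $\leq_\cL$-relations from $\cC^{\ceJ}_{\not\leq\ceJ}$ back to $\cC$, and transport to right cells via $-^*$. If anything, you are more careful than the paper at two points: your extra relation $G_\ceL G_\ceL^*\cong G_\ceL^{\oplus m'}$ supplies the inequality $G_\ceL^*\leq_\cL G_\ceL$ needed to collapse all of $\ceL$ into a single left class (the paper's chain $G_\ceL\leq_\cL F\leq_\cL G_\ceL^*$ does not by itself close this loop in the weakly fiat setting, where $G_\ceL^*$ need not be isomorphic to $G_\ceL$), and your radical/locality argument substantiates the lifting step that the paper dismisses as ``immediate from the definitions''.
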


For an $\cL$-cell $\ceL$ of $\ceJ$ we recall the finitary 2-representation $\mbN_\ceL:\cC\to\LA_\bbk^f$ given by $\mbN_\ceL(\ttj)=\add\{FX|F\in\coprod_{\ttk\in\cC}\cC(\ttk,\ttj),X\in\ceL\}$ and set $\ceN_\ceL=\coprod_{\ttj\in\cC}\mbN_\ceL(\ttj)$, with the class of morphisms $\operatorname{Ar}(\ceN_\ceL)$. \par

We define two 2-representations of $\cC_{\not\leq\ceJ}$. First let $\mbN^{\not\leq\ceJ}_\ceL:\cC_{\not\leq\ceJ}\to\LA_\bbk^f$ be defined by $\mbN^{\not\leq\ceJ}_\ceL(\ttj)= \add\{FX|F\in\coprod_{\ttk\in\cC}\cC_{\not\leq\ceJ}(\ttk,\ttj),X\in\ceL\}$. We define the 2-representation $\mbN_\ceL/\ceI_{\not\leq\ceJ}$ by setting $$(\mbN_\ceL/\ceI_{\not\leq\ceJ})(\ttj)=\mbN_\ceL(\ttj)/(\operatorname{Ar}(\ceN_\ceL)\cap\ceI_{\not\leq\ceJ}(\tti,\ttj)),$$ where $\tti$ is the source object of $\ceL$, with the obvious induced action of $\cC_{\not\leq\ceJ}$.

\begin{lem}\label{mbNJequiv} $\mbN_\ceL^{\not\leq\ceJ}$ and $\mbN_\ceL/\ceI_{\not\leq\ceJ}$ are equivalent as 2-representations of $\cC_{\not\leq\ceJ}$.
	\begin{proof} By construction there is a bijection between objects of $\ceN_\ceL^{\not\leq\ceJ}$ and $\ceN_\ceL/\ceI_{\not\leq\ceJ}$ and it suffices to show that for $F,G\in\mbN_\ceL(\ttj)$, $$\Hom_{\mbN_\ceL^{\not\leq\ceJ}}(F,G)\cong\Hom_{\mbN_\ceL/\ceI_{\not\leq\ceJ}}(F,G).$$
		But \begin{align*} \Hom_{\mbN_\ceL/\ceI_{\not\leq\ceJ}}(F,G) & = \Hom_{\ceN_\ceL}(F,G)/ (\Hom_{\ceN_\ceL}(F,G)\cap\ceI_{\not\leq\ceJ}(\tti,\ttj)\\
		& \cong\Hom_\cC(F,G)/(\Hom_\cC(F,G)\cap\ceI_{\not\leq\ceJ}(\tti,\ttj))\\
		& =\Hom_{\cC_{\not\leq\ceJ}}(F,G)\\
		& \cong\Hom_{\mbN_\ceL^{\not\leq\ceJ}}(F,G)
		\end{align*} as required.\end{proof}\end{lem}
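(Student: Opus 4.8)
The plan is to exhibit the equivalence as an identity-on-objects functor (via the evident object bijection) and then to verify that it is fully faithful and compatible with the $\cC_{\not\leq\ceJ}$-action. Since $\cC_{\not\leq\ceJ}=\cC/\ceI_{\not\leq\ceJ}$ shares its objects and $1$-morphisms with $\cC$ and only quotients out $2$-morphisms, the generating objects $FX$ (with $F$ a $1$-morphism into $\ttj$ and $X\in\ceL$) of $\mbN_\ceL^{\not\leq\ceJ}(\ttj)$ and of $(\mbN_\ceL/\ceI_{\not\leq\ceJ})(\ttj)$ are indexed by literally the same data; passing to additive closures yields a bijection on objects, which supplies the functor underlying the candidate equivalence at each $\ttj$.

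First I would reduce the statement to the computation of hom-spaces recorded in the displayed chain. The key observation is that $\mbN_\ceL(\ttj)$ is by construction a full subcategory of the $\tti$th principal $2$-representation $\mbP_\tti(\ttj)=\cC(\tti,\ttj)$, where $\tti$ is the common source object of $\ceL$; hence for $F,G\in\mbN_\ceL(\ttj)$ we have $\Hom_{\ceN_\ceL}(F,G)=\Hom_{\cC(\tti,\ttj)}(F,G)$, which is exactly the first isomorphism. The definition of the quotient $2$-category then gives $\Hom_{\cC_{\not\leq\ceJ}}(F,G)=\Hom_{\cC(\tti,\ttj)}(F,G)/\Hom_{\ceI_{\not\leq\ceJ}(\tti,\ttj)}(F,G)$, and since $\Hom_{\ceI_{\not\leq\ceJ}(\tti,\ttj)}(F,G)=\Hom_{\cC}(F,G)\cap\ceI_{\not\leq\ceJ}(\tti,\ttj)$ by the definition of a $2$-ideal, the two middle identities drop out. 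The final isomorphism records that $\mbN_\ceL^{\not\leq\ceJ}(\ttj)$ likewise sits as a full subcategory of the $\tti$th principal $2$-representation of $\cC_{\not\leq\ceJ}$, whose hom-space from $F$ to $G$ is precisely $\Hom_{\cC_{\not\leq\ceJ}}(F,G)$.

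Having matched hom-spaces, I would check that these isomorphisms are natural, i.e. that they respect vertical composition and the action of the $1$- and $2$-morphisms of $\cC_{\not\leq\ceJ}$, so that the result is an equivalence of $2$-representations rather than merely of underlying categories. This is essentially automatic: in both descriptions the action is induced from the action of $\cC$ by horizontal post-composition, passed through the quotient map $\cC\to\cC_{\not\leq\ceJ}$, and this quotient is a strict $2$-functor, so it commutes with all of the structure in sight.

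I expect the only genuine subtlety to lie in the bookkeeping: keeping the source object $\tti$ of $\ceL$ fixed throughout and correctly identifying which component $\ceI_{\not\leq\ceJ}(\tti,\ttj)$ of the $2$-ideal is being intersected. Once the objects $F,G$ are recognised as living in the single hom-category $\cC(\tti,\ttj)$, the argument is a direct unwinding of the definitions of the quotient $2$-representation, the quotient $2$-category, and a $2$-ideal; it requires no finiteness input beyond what is already built into $\mbN_\ceL$ being finitary, and is therefore entirely local in the sense of the paper.
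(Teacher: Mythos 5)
Your proposal is correct and follows essentially the same route as the paper's proof: the same object bijection, followed by the same chain of hom-space identifications (fullness of $\mbN_\ceL(\ttj)$ in $\mbP_\tti(\ttj)=\cC(\tti,\ttj)$, the definition of the quotient 2-category with $\Hom_{\ceI_{\not\leq\ceJ}(\tti,\ttj)}(F,G)=\Hom_\cC(F,G)\cap\ceI_{\not\leq\ceJ}(\tti,\ttj)$, and fullness of $\mbN_\ceL^{\not\leq\ceJ}(\ttj)$ in the principal 2-representation of $\cC_{\not\leq\ceJ}$). Your additional explicit check that the hom-space isomorphisms are compatible with the $\cC_{\not\leq\ceJ}$-action, via strictness of the quotient 2-functor, is a point the paper leaves implicit, and it is handled correctly.
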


By \autoref{JCellDescends} $\ceJ$ descends to a $\cJ$-cell of $\cC^\ceJ_{\not\leq\ceJ}$, which we will also denote by $\ceJ$. We can thus define the 2-representation $\mbN_\ceL^\ceJ$ of $\cC^\ceJ_{\not\leq\ceJ}$ in the standard fashion. We can consider $\mbN_\ceL^{\not\leq\ceJ}$ as a 2-representation of $\cC^\ceJ_{\not\leq\ceJ}$ by restriction. We define the 2-representation $(\mbN_{\ceL})^\ceJ$ of $\cC^\ceJ_{\not\leq\ceJ}$ as the full sub-2-representation of $\mbN_\ceL^{\not\leq\ceJ}$ generated by $F\in\ceJ$ and closed under isomorphism.

\begin{lem}\label{NequivBottom} $\mbN_\ceL^\ceJ$ is equivalent to $(\mbN_\ceL)^\ceJ$ as 2-representations of $\cC^\ceJ_{\not\leq\ceJ}$.
	\begin{proof} By construction, if we have 1-morphisms $F,G\in\cC^\ceJ_{\not\leq\ceJ}$ such that $F,G\in\ceN^\ceJ_\ceL$ and $F,G\in(\ceN_\ceL)^\ceJ$ then $$\Hom_{\ceN^\ceJ_\ceL}(F,G)\cong\Hom_{(\ceN_\ceL)^\ceJ}(F,G)$$ and thus it suffices to demonstrate an essential bijection between objects in the component categories of the 2-representations.\par
		
		If $F\in\mbN_\ceL^\ceJ(\ttj)$ is indecomposable, then $F$ is a direct summand of $GX$ for some $X\in\ceL$ and some $G\in\coprod_{\ttk\in\cC}\cC^\ceJ_{\not\leq\ceJ}(\ttk,\ttj)$. But then $G$ is a direct sum of elements of $\ceJ$ and thus $F\in\ceJ$. As we can also consider $G$ to be in $\coprod_{\ttk\in\cC}\cC_{\not\leq\ceJ}(\ttk,\ttj)$, it follows that $F\in(\mbN_\ceL)^\ceJ(\ttj)$.\par
		
		Conversely, let $F$ be an indecomposable object in $(\mbN_\ceL)^\ceJ(\ttj)$. Then $F\in\ceJ$ and $F$ is a direct summand of $GX$ for some $X\in\ceL$ and $G\in\coprod_{\ttk\in\cC}\cC_{\not\leq\ceJ}(\ttk,\ttj)$. Hence $X\leq_\cL F$. But by the definition of a strongly regular $\cJ$-cell, different left cells of $\ceJ$ are incomparable under $\leq_\cL$. Thus we must have that $F\sim_\cL X$ and $F\in\ceL$. But then $F$ is a direct summand of $\bbon_\ttj F$ and since $\bbon_\ttj\in\cC^\ceJ_{\not\leq\ceJ}$, the result follows.\end{proof}\end{lem}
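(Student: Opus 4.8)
The plan is to realise both $\mbN_\ceL^\ceJ$ and $(\mbN_\ceL)^\ceJ$ as subcategories cut out of a common ambient construction inside $\cC^\ceJ_{\not\leq\ceJ}$, so that the desired equivalence can be built from what is essentially the identity assignment on 1-morphisms. Since a morphism of 2-representations is an equivalence precisely when each of its component functors is fully faithful and essentially surjective, I would split the argument into two independent checks: first, that the 2-morphism (hom) spaces of the two component categories agree, and second, that there is an essential bijection on the level of indecomposable objects. The burden of the proof will fall entirely on the second check.

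For the hom-spaces, both $\ceN_\ceL^\ceJ$ and $(\ceN_\ceL)^\ceJ$ consist of 1-morphisms of $\cC^\ceJ_{\not\leq\ceJ}$, and in each case the space of 2-morphisms between a fixed pair $F,G$ is computed as the corresponding hom-space in $\cC^\ceJ_{\not\leq\ceJ}$ (equivalently, as a hom-space in $\cC_{\not\leq\ceJ}$). Hence for any pair $F,G$ lying in both subcategories one gets the identification $\Hom_{\ceN_\ceL^\ceJ}(F,G)\cong\Hom_{(\ceN_\ceL)^\ceJ}(F,G)$ for free, and it remains only to match up objects.

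For the object bijection I would argue by characterising the indecomposables on each side separately. An indecomposable $F\in\mbN_\ceL^\ceJ(\ttj)$ is a summand of $GX$ for some $X\in\ceL$ and some 1-morphism $G\in\coprod_{\ttk\in\cC}\cC^\ceJ_{\not\leq\ceJ}(\ttk,\ttj)$; since $\cC^\ceJ_{\not\leq\ceJ}$ is generated under direct sums and isomorphisms by the identities and by $\ceJ$, the acting morphism $G$ decomposes into such pieces, and the maximality of $\ceJ$ together with $X\in\ceL\subseteq\ceJ$ forces the indecomposable summand $F$ to lie in $\ceJ$. Viewing the same $G$ as a 1-morphism of $\cC_{\not\leq\ceJ}$ then places $F$ in $(\mbN_\ceL)^\ceJ(\ttj)$. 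Conversely, an indecomposable $F$ of $(\mbN_\ceL)^\ceJ(\ttj)$ lies in $\ceJ$ and is a summand of some $GX$ with $X\in\ceL$, whence $X\leq_\cL F$.

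I expect this last converse direction to be the main obstacle, since it is exactly here that strong regularity of $\ceJ$ is indispensable: the incomparability of distinct left cells of $\ceJ$ under $\leq_\cL$ is what upgrades the one-sided relation $X\leq_\cL F$ to $F\sim_\cL X$, i.e. $F\in\ceL$. Once $F\in\ceL$ is established, $F$ is a direct summand of $\bbon_\ttj F$ and, as $\bbon_\ttj\in\cC^\ceJ_{\not\leq\ceJ}$, we conclude $F\in\mbN_\ceL^\ceJ(\ttj)$. This completes the essential bijection on objects and, combined with the hom-space identification, yields the 2-representation equivalence.
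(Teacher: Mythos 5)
Your proposal is correct and follows essentially the same route as the paper's own proof: the hom-space identification comes for free since both categories compute 2-morphisms inside $\cC^\ceJ_{\not\leq\ceJ}$, the forward inclusion uses the generation of $\cC^\ceJ_{\not\leq\ceJ}$ by identities and $\ceJ$ together with maximality of $\ceJ$, and the converse uses strong regularity (incomparability of left cells under $\leq_\cL$) to upgrade $X\leq_\cL F$ to $F\in\ceL$, finishing with $F$ as a summand of $\bbon_\ttj F$. You correctly identified the converse direction as the crux, and your treatment of it matches the paper's.
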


We now consider the cell 2-representations. Let $\mbC_\ceL$ denote the 2-representation of $\cC$ corresponding to $\ceL$. This corresponds to the quotient of $\mbN_\ceL$ by the maximal $\cC$-stable ideal $\ceK_\ceL$ not containing $\id_F$ for any $F\in\ceL$. We define similar ideals $\ceK_\ceL^{\not\leq\ceJ}$ and $\ceK_\ceL^\ceJ$ of $\mbN_\ceL^{\not\leq\ceJ}$ and $\mbN_\ceL^\ceJ$ respectively. We let $\mbC_\ceL^{\not\leq\ceJ}$ and $\mbC_\ceL^\ceJ$ denote the respective cell 2-representations.

\begin{lem}\label{2RepIdSubset} $\mbN_\ceL(\ttj)\cap\ceI_{\not\leq\ceJ}(\tti,\ttj)\subseteq\ceK_\ceL$.
	\begin{proof} By the construction of $\ceK_\ceL$, it suffices to show that $\id_F\notin\mbN_\ceL(\ttj)\cap\ceI_{\not\leq\ceJ}(\tti,\ttj)$ for any $F\in\ceL$. But if $\id_F\in\mbN_\ceL(\ttj)\cap\ceI_{\not\leq\ceJ}(\tti,\ttj)$ for some $F\in\ceL\subseteq\ceJ$, then in particular $\id_F\in\ceI_{\not\leq\ceJ}(\tti,\ttj)$ for some $F\in\ceJ$, which we showed was a contradiction in the proof of \autoref{Idnotleqsubset}, and we are done.\end{proof}\end{lem}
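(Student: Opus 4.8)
The plan is to exploit the maximality property of $\ceK_\ceL$. Recall that $\ceK_\ceL$ is the (unique) maximal $\cC$-stable ideal of $\mbN_\ceL$ not containing $\id_F$ for any $F\in\ceL$; by the analogue of \autoref{locfinmid} for the transitive 2-representation $\mbN_\ceL$ this maximal ideal is in fact the \emph{largest} such ideal, so it contains every $\cC$-stable ideal of $\mbN_\ceL$ that omits all the $\id_F$ with $F\in\ceL$. Writing $\ceE(\ttj):=\mbN_\ceL(\ttj)\cap\ceI_{\not\leq\ceJ}(\tti,\ttj)$, it therefore suffices to check that $\ceE$ is a $\cC$-stable ideal of $\mbN_\ceL$ and that it contains no such identity 2-morphism.

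First I would verify that $\ceE$ is genuinely an ideal of the 2-representation $\mbN_\ceL$. Each $\mbN_\ceL(\ttj)$ is a full subcategory of $\mbP_\tti(\ttj)=\cC(\tti,\ttj)$, and since $\ceI_{\not\leq\ceJ}$ is a two-sided 2-ideal, $\ceI_{\not\leq\ceJ}(\tti,\ttj)$ is an ideal of the hom-category $\cC(\tti,\ttj)$; intersecting an ideal with a full subcategory again yields an ideal of that subcategory, which gives closure of $\ceE(\ttj)$ under pre- and post-composition with morphisms of $\mbN_\ceL(\ttj)$. For $\cC$-stability, recall that the action of a 1-morphism on $\mbN_\ceL$ is given by left horizontal composition: this preserves $\mbN_\ceL$ by construction of the 2-representation, and preserves $\ceI_{\not\leq\ceJ}$ because a two-sided 2-ideal is closed under left horizontal multiplication by 1- and 2-morphisms of $\cC$. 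Hence $\mbN_\ceL(F)$ carries $\ceE(\ttj)$ into $\ceE(\ttk)$ whenever defined, so $\ceE$ is a $\cC$-stable ideal of $\mbN_\ceL$.

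Second, by the maximality of $\ceK_\ceL$ it only remains to rule out $\id_F\in\ceE(\ttj)$ for $F\in\ceL$. If such an identity lay in $\ceE(\ttj)$, then in particular $\id_F\in\ceI_{\not\leq\ceJ}(\tti,\ttj)$ with $F\in\ceL\subseteq\ceJ$; but this is exactly the configuration excluded in the proof of \autoref{Idnotleqsubset}, where indecomposability of $F$ (so that $\End(F)$ is local) forces any factorisation of $\id_F$ through 1-morphisms $G\not\leq_\cJ\ceJ$ to contain a composite that is an automorphism, whence $F\cong G$ and $F\in\ceJ$ while $G\notin\ceJ$, a contradiction. Thus $\ceE$ contains no $\id_F$ with $F\in\ceL$, and maximality yields $\ceE\subseteq\ceK_\ceL$ as required.

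The substantive point — that the identities of the cell 1-morphisms survive modulo $\ceI_{\not\leq\ceJ}$ — has already been isolated in \autoref{Idnotleqsubset}, so the argument is really a packaging of that fact against the universal property of $\ceK_\ceL$. I expect the only delicate step to be the reduction "it suffices to check $F\in\ceL$": this must be justified by the correct maximality statement for the ideal defining the cell 2-representation of $\mbN_\ceL$, rather than by the generic statement about identities of arbitrary non-zero objects, and the bookkeeping confirming that $\ceE$ satisfies both the categorical-ideal and the $\cC$-stability conditions.
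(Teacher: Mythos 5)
Your proof is correct and takes essentially the same approach as the paper's: both reduce, via the maximality property defining $\ceK_\ceL$, to showing that no $\id_F$ with $F\in\ceL$ lies in $\mbN_\ceL(\ttj)\cap\ceI_{\not\leq\ceJ}(\tti,\ttj)$, and both rule this out by citing the contradiction argument from the proof of \autoref{Idnotleqsubset}. The only difference is that you make explicit the verification that the intersection is a $\cC$-stable ideal of $\mbN_\ceL$ (and that the relevant maximal ideal is the largest such ideal), which the paper compresses into the phrase ``by the construction of $\ceK_\ceL$''.
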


\begin{psn}\label{Cell2RepResPartial} $\mbC_\ceL$ has a natural structure of a $\cC_{\not\leq\ceJ}$ 2-representation, and further is equivalent to $\mbC_\ceL^{\not\leq\ceJ}$ as 2-representations of $\cC_\ceL^{\not\leq\ceJ}$.
	
	\begin{proof} By \autoref{2RepIdSubset}, quotienting $\ceN_\ceL$ by $\ceK_\ceL$ factors through quotienting by $\ceN_\ceL\cap\ceI_{\not\leq\ceJ}$, giving the first statement. For the second, by \autoref{mbNJequiv} $\mbN_\ceL/\ceI_{\not\leq\ceJ}$ is equivalent to $\mbN_\ceJ^{\not\leq\ceJ}$. It thus suffices to show that $\ceK_\ceL^{\not\leq\ceJ}$ is the image under this equivalence of the image of $\ceK_\ceL$ in the quotient.\par
		
		Let $\sigma:\ceN_\ceL\to\ceN_\ceL/\ceI_{\not\leq\ceJ}$ denote the canonical quotient functor. It is straightforward to see that the preimage $\ceQ$ of $\ceK_\ceL^{\not\leq\ceJ}$ is a $\cC$-stable ideal of $\ceN_\ceL$. We will show that $\ceQ\subseteq\ceK_\ceL$. Assume for contradiction that $\id_F\in\ceQ$ for some $F\in\ceL$. Since $\id_F\notin \ceI_{\not\leq\ceJ}$ by the proof of \autoref{Idnotleqsubset}, this implies that $\id_F\in\ceK_\ceL^{\not\leq\ceJ}$, a contradiction. Therefore $\ceQ$ does not contain $\id_F$ for any $F\in\ceL$, and thus $\ceQ\subseteq\ceK_\ceL$. Hence $\ceK_\ceL^{\not\leq\ceJ}\subseteq\sigma(\ceK_\ceL)$. But by definition $\id_F\notin\sigma(\ceK_\ceL)$ for any $F\in\ceL$. Therefore by definition $\sigma(\ceK_\ceL)\subseteq\ceK_\ceL^{\not\leq\ceJ}$ and the result follows.\end{proof}\end{psn}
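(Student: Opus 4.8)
The plan is to realise $\mbC_\ceL$ as a quotient of $\mbN_\ceL^{\not\leq\ceJ}$ and then identify the defining ideal of that quotient with $\ceK_\ceL^{\not\leq\ceJ}$ by exploiting the maximality that characterises both cell 2-representations. The two assertions then come out of the same factorisation of the quotient $\mbN_\ceL\to\mbC_\ceL$.

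For the first assertion, I would argue that a $\cC$-representation descends to $\cC_{\not\leq\ceJ}=\cC/\ceI_{\not\leq\ceJ}$ precisely when $\ceI_{\not\leq\ceJ}$ acts by zero. Given a 2-morphism $\alpha\in\ceI_{\not\leq\ceJ}$ and an object $X$ of $\ceN_\ceL$, the action $\alpha\circ_H\id_X$ again lies in $\ceI_{\not\leq\ceJ}$ since the latter is a 2-ideal, and it is a morphism of $\mbN_\ceL$; hence it belongs to $\mbN_\ceL(\ttj)\cap\ceI_{\not\leq\ceJ}(\tti,\ttj)$, which by \autoref{2RepIdSubset} is contained in $\ceK_\ceL$. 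This action therefore vanishes in $\mbC_\ceL=\mbN_\ceL/\ceK_\ceL$, so $\mbC_\ceL$ inherits a canonical $\cC_{\not\leq\ceJ}$-structure.

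For the equivalence, \autoref{2RepIdSubset} lets me factor $\mbN_\ceL\to\mbC_\ceL$ through $\mbN_\ceL/\ceI_{\not\leq\ceJ}$, and by \autoref{mbNJequiv} the latter is equivalent to $\mbN_\ceL^{\not\leq\ceJ}$. Writing $\sigma\colon\ceN_\ceL\to\ceN_\ceL/\ceI_{\not\leq\ceJ}$ for the quotient functor, $\mbC_\ceL$ is thus the quotient of $\mbN_\ceL^{\not\leq\ceJ}$ by $\sigma(\ceK_\ceL)$, and since $\mbC_\ceL^{\not\leq\ceJ}=\mbN_\ceL^{\not\leq\ceJ}/\ceK_\ceL^{\not\leq\ceJ}$ the whole problem reduces to $\sigma(\ceK_\ceL)=\ceK_\ceL^{\not\leq\ceJ}$, which I would prove by double inclusion. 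Pulling $\ceK_\ceL^{\not\leq\ceJ}$ back along $\sigma$ gives a $\cC$-stable ideal $\ceQ$ of $\ceN_\ceL$; I would check $\ceQ$ contains no $\id_F$ with $F\in\ceL$ (if it did, then since $\id_F\notin\ceI_{\not\leq\ceJ}$ its image would survive and land in $\ceK_\ceL^{\not\leq\ceJ}$, contradicting the definition of the latter), so maximality of $\ceK_\ceL$ forces $\ceQ\subseteq\ceK_\ceL$ and hence $\ceK_\ceL^{\not\leq\ceJ}=\sigma(\ceQ)\subseteq\sigma(\ceK_\ceL)$. Conversely, $\sigma(\ceK_\ceL)$ is $\cC_{\not\leq\ceJ}$-stable and, again because $\id_F\notin\ceI_{\not\leq\ceJ}$, contains no $\id_F$ for $F\in\ceL$, so maximality of $\ceK_\ceL^{\not\leq\ceJ}$ yields the reverse inclusion.

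The delicate point throughout is the survival of the identity 2-morphisms $\id_F$, $F\in\ceL$, under passage to $\cC_{\not\leq\ceJ}$: both cell 2-representations are singled out exactly by not containing these identities, so the maximality arguments transfer between $\cC$ and $\cC_{\not\leq\ceJ}$ only because \autoref{Idnotleqsubset} (more precisely its proof) guarantees $\id_F\notin\ceI_{\not\leq\ceJ}$ for every $F\in\ceJ$. I expect ensuring this ``avoids identities'' property in both directions to be the main obstacle, alongside the routine checks that images and preimages of $\cC$-stable ideals under $\sigma$ remain stable and that $\sigma$ is essentially surjective, so that indeed $\sigma(\ceQ)=\ceK_\ceL^{\not\leq\ceJ}$.
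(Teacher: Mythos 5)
Your proposal is correct and takes essentially the same route as the paper's proof: both parts rest on \autoref{2RepIdSubset} to factor the quotient $\mbN_\ceL\to\mbC_\ceL$ through $\ceN_\ceL\cap\ceI_{\not\leq\ceJ}$, and the equivalence is then reduced via \autoref{mbNJequiv} to the identity $\sigma(\ceK_\ceL)=\ceK_\ceL^{\not\leq\ceJ}$, proved by exactly the paper's double-inclusion argument (pull back along $\sigma$, check no $\id_F$ with $F\in\ceL$ lies in the resulting $\cC$-stable ideal, and invoke maximality on each side). One minor imprecision: in your reverse inclusion, $\id_F\notin\sigma(\ceK_\ceL)$ does not follow from $\id_F\notin\ceI_{\not\leq\ceJ}$ alone, since a priori one could have $\id_F\equiv k$ modulo $\ceI_{\not\leq\ceJ}$ for some $k\in\ceK_\ceL$; the clean justification is another appeal to \autoref{2RepIdSubset}, which gives $\id_F-k\in\ceN_\ceL\cap\ceI_{\not\leq\ceJ}\subseteq\ceK_\ceL$ and hence the contradiction $\id_F\in\ceK_\ceL$.
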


The cell 2-representation $\mbC_\ceL^{\not\leq\ceJ}$ has the structure of a 2-representation of $\cC^\ceJ_{\not\leq\ceJ}$ by restriction, and we can take the full sub-2-representation $(\mbC_\ceL)^\ceJ$ where the generating objects in the component categories are those in $\ceJ$.

\begin{psn}\label{Cell2RepRes1} $(\mbC_\ceL)^\ceJ$ is equivalent to $\mbC_\ceL^\ceJ$ as 2-representations of $\cC^\ceJ_{\not\leq\ceJ}$.
	
	\begin{proof} By \autoref{NequivBottom} it suffices to prove that the restriction $(\ceK_\ceL)^\ceJ$ of $\ceK_\ceL^{\not\leq\ceJ}$ to $(\mbN_\ceL)^\ceJ$ is equal to $\ceK_\ceL^\ceJ$. By construction $\id_F\notin(\ceK_\ceL)^\ceJ$ for any $F\in\ceL$, and thus $(\ceK_\ceL)^\ceJ\subseteq\ceK_\ceL^\ceJ$. It remains to show that $\ceK_\ceL^\ceJ\subseteq(\ceK_\ceL)^\ceJ$.\par
		
		Let $\ceQ$ denote the $\cC_{\not\leq\ceJ}$-stable ideal of $\mbN_\ceL^{\not\leq\ceJ}$ generated by $\ceK_\ceL^\ceJ$. We will show that $\ceQ\subseteq\ceK_\ceL^{\not\leq\ceJ}$. Assume for contradiction that $\id_F\in\ceQ$ for some $F\in\ceL$. Then using a similar component argument to previous proofs, we have that $\id_F=\beta \mbN_\ceL^{\not\leq\ceJ}(K)(\gamma)\alpha$, where $\gamma:G\to H$ is in $\ceK_\ceL^\ceJ$, $K\leq_\cJ\ceJ$, $\alpha:F\to KG$ and $\beta:KH\to F$. We immediately see that $F$ is a direct summand of $KG$ and $KH$.\par
		
		Let $S\in\ceJ$ be a 1-morphism such that $SF\neq 0$ in $\cC_{\not\leq\ceJ}$, which exists as $\ceJ$ is strongly regular. Then $\id_{SF}=S(\id_F)=S(\beta)SK(\gamma)S(\alpha)$. But for a indecomposable summand $V$ of $SF$, $V\geq_{\cJ} S$, and as $SK\neq 0$, it follows that $V\in\ceJ$. Hence by pre- and post-composing with injection and projection 2-morphisms it follows that $\id_V=\beta'SK(\gamma)\alpha'$ for some $\beta'$ and $\alpha'$. Without loss of generality $V\in\ceL$ (e.g. by taking $S=G_{\ceL}$), and by a similar argument to before every indecomposable summand of $SK$ is in $\ceJ$. Hence $\id_V\in \ceK_\ceL^\ceJ$, a contradiction. Hence $\ceQ\subseteq\ceK_\ceL^{\not\leq\ceJ}$, and thus $\ceK_\ceL^\ceJ\subseteq(\ceK_\ceL)^\ceJ$ and the result follows.
		
\end{proof}\end{psn}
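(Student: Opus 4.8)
The plan is to reduce the claimed 2-representation equivalence to an equality of ideals and then establish that equality by a double inclusion. By \autoref{NequivBottom} the underlying 2-representations $(\mbN_\ceL)^\ceJ$ and $\mbN_\ceL^\ceJ$ of $\cC^\ceJ_{\not\leq\ceJ}$ are already equivalent, so, writing $(\ceK_\ceL)^\ceJ$ for the restriction of $\ceK_\ceL^{\not\leq\ceJ}$ to the full sub-2-representation $(\mbN_\ceL)^\ceJ$, it is enough to prove $(\ceK_\ceL)^\ceJ=\ceK_\ceL^\ceJ$; passing to quotients then yields $(\mbC_\ceL)^\ceJ\cong\mbC_\ceL^\ceJ$.

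The inclusion $(\ceK_\ceL)^\ceJ\subseteq\ceK_\ceL^\ceJ$ I would obtain from maximality. The restriction $(\ceK_\ceL)^\ceJ$ is a $\cC^\ceJ_{\not\leq\ceJ}$-stable ideal of $(\mbN_\ceL)^\ceJ$, and since $\ceK_\ceL^{\not\leq\ceJ}$ contains no $\id_F$ with $F\in\ceL$ the same is true of its restriction; as $\ceK_\ceL^\ceJ$ is by definition the maximal ideal with this property, the inclusion is immediate.

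The real content is the reverse inclusion $\ceK_\ceL^\ceJ\subseteq(\ceK_\ceL)^\ceJ$. Here I would move to the ambient 2-representation $\mbN_\ceL^{\not\leq\ceJ}$ and let $\ceQ$ be the $\cC_{\not\leq\ceJ}$-stable ideal it generates from $\ceK_\ceL^\ceJ$; establishing $\ceQ\subseteq\ceK_\ceL^{\not\leq\ceJ}$ will give the inclusion after restricting back. By maximality of $\ceK_\ceL^{\not\leq\ceJ}$ it suffices to check that $\id_F\notin\ceQ$ for every $F\in\ceL$. Assuming the contrary, an injection-projection reduction lets me write $\id_F=\beta\,\mbN_\ceL^{\not\leq\ceJ}(K)(\gamma)\,\alpha$ for some $\gamma\in\ceK_\ceL^\ceJ$, a 1-morphism $K$ of $\cC_{\not\leq\ceJ}$, and $\alpha\colon F\to KG$, $\beta\colon KH\to F$, so that $F$ is a summand of both $KG$ and $KH$.

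The crux -- and the step I expect to be the main obstacle -- is to transport this identity back into $\ceL$ while keeping every composite inside the restricted 2-category. I would pick $S\in\ceJ$ with $SF\neq 0$, which exists by strong regularity (for instance a suitable Duflo-type morphism, cf. \autoref{MM6P29}). Any indecomposable summand $V$ of $SF$ then satisfies $V\geq_\cJ\ceJ$, hence $V\in\ceJ$ by maximality of $\ceJ$, and since left cells in the strongly regular $\ceJ$ are incomparable while $F\leq_\cL V$, in fact $V\in\ceL$. Applying $S$ to the factorization and composing with the injection and projection 2-morphisms for $V$ gives $\id_V=\beta'\,SK(\gamma)\,\alpha'$. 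Because $S\in\ceJ$ is $\cJ$-maximal, every indecomposable summand of $SK$ again lies in $\ceJ$, so the acting 1-morphism $SK$ belongs to $\cC^\ceJ_{\not\leq\ceJ}$ and the whole expression takes place in $\mbN_\ceL^\ceJ$ with $\gamma\in\ceK_\ceL^\ceJ$; by $\cC^\ceJ_{\not\leq\ceJ}$-stability this forces $\id_V\in\ceK_\ceL^\ceJ$ with $V\in\ceL$, contradicting the defining property of $\ceK_\ceL^\ceJ$. Hence $\id_F\notin\ceQ$, giving $\ceQ\subseteq\ceK_\ceL^{\not\leq\ceJ}$ and the desired inclusion. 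The delicate part throughout is precisely this tracking of which 1- and 2-morphisms survive inside $\cC^\ceJ_{\not\leq\ceJ}$, so that membership is genuinely detected by $\ceK_\ceL^\ceJ$ rather than merely by $\ceK_\ceL^{\not\leq\ceJ}$.
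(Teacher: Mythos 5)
Your proposal is correct and takes essentially the same route as the paper's proof: the reduction via \autoref{NequivBottom} to the ideal equality $(\ceK_\ceL)^\ceJ=\ceK_\ceL^\ceJ$, the easy inclusion from maximality, and the reverse inclusion by passing to the $\cC_{\not\leq\ceJ}$-stable ideal $\ceQ$ generated by $\ceK_\ceL^\ceJ$ and deriving a contradiction after applying a suitable $S\in\ceJ$ with $SF\neq 0$. The only cosmetic difference is that you force $V\in\ceL$ via incomparability of left cells under $\leq_\cL$ (strong regularity), whereas the paper achieves the same by choosing $S=G_\ceL$; both are valid.
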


\begin{cor}\label{Cell2RepRes2} The restriction of the cell 2-representation $\mbC_\ceL$ of $\cC$ to $\cC^\ceJ_{\not\leq\ceJ}$ is the corresponding cell 2-representation.
	
	\begin{proof} This is a direct consequence of combining \autoref{Cell2RepResPartial} and \autoref{Cell2RepRes1} given \autoref{JCellDescends}.\end{proof} \end{cor}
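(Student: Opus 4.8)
The plan is to read the statement off as the concatenation of the two equivalences already in hand, once the meaning of ``restriction to $\cC^\ceJ_{\not\leq\ceJ}$'' has been pinned down. The first task is to make sense of restricting $\mbC_\ceL$, which is a priori a 2-representation of $\cC$, to the 2-category $\cC^\ceJ_{\not\leq\ceJ}$, which is not literally a sub-2-category of $\cC$ but a sub-2-category of the quotient $\cC_{\not\leq\ceJ}$. This is exactly what the first assertion of \autoref{Cell2RepResPartial} provides: since $\mbN_\ceL(\ttj)\cap\ceI_{\not\leq\ceJ}(\tti,\ttj)\subseteq\ceK_\ceL$ by \autoref{2RepIdSubset}, the defining quotient of $\mbC_\ceL$ factors through $\ceI_{\not\leq\ceJ}$, so $\mbC_\ceL$ carries a canonical structure of a $\cC_{\not\leq\ceJ}$-2-representation. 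Restricting along the inclusion $\cC^\ceJ_{\not\leq\ceJ}\hookrightarrow\cC_{\not\leq\ceJ}$ then turns the restriction of $\mbC_\ceL$ into a well-defined $\cC^\ceJ_{\not\leq\ceJ}$-2-representation.

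Second, I would identify this restriction with the formally defined $(\mbC_\ceL)^\ceJ$. By \autoref{Cell2RepResPartial} the $\cC_{\not\leq\ceJ}$-2-representation $\mbC_\ceL$ is equivalent to $\mbC_\ceL^{\not\leq\ceJ}$, and $(\mbC_\ceL)^\ceJ$ is by definition the full sub-2-representation of $\mbC_\ceL^{\not\leq\ceJ}$ whose generating objects in each component category lie in $\ceJ$. The point to verify is that restricting the action to $\cC^\ceJ_{\not\leq\ceJ}$ produces precisely these objects: because the nonidentity indecomposable 1-morphisms of $\cC^\ceJ_{\not\leq\ceJ}$ are exactly the $F\in\ceJ$, acting on the generators in $\ceL$ can only yield summands lying in $\ceJ$, exactly as in the object-matching argument of \autoref{NequivBottom}. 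Hence the restriction of $\mbC_\ceL$ to $\cC^\ceJ_{\not\leq\ceJ}$ is equivalent to $(\mbC_\ceL)^\ceJ$.

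Third, I would invoke \autoref{Cell2RepRes1}, which gives $(\mbC_\ceL)^\ceJ\simeq\mbC_\ceL^\ceJ$ as 2-representations of $\cC^\ceJ_{\not\leq\ceJ}$, and then \autoref{JCellDescends}, by which the image of $\ceJ$ is a $\cJ$-cell of $\cC^\ceJ_{\not\leq\ceJ}$ and $\ceL$ descends to a single $\cL$-cell of it; this last point is what the proof of \autoref{JCellDescends} establishes through the relations $FG_\ceL\cong F^{\oplus m}$ and $F^*F\cong (G_\ceL^*)^{\oplus n}$ holding in $\cC^\ceJ_{\not\leq\ceJ}$. Consequently $\mbC_\ceL^\ceJ$ is, by construction, exactly the cell 2-representation of $\cC^\ceJ_{\not\leq\ceJ}$ attached to $\ceL$, and chaining the three equivalences together yields the claim.

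I expect the only genuine subtlety, and hence the main obstacle, to be the bookkeeping in the second step: confirming that the restriction of the descended $\mbC_\ceL$ to the generated sub-2-category $\cC^\ceJ_{\not\leq\ceJ}$ really has $(\mbC_\ceL)^\ceJ$ as its essential image, neither larger nor smaller. Everything else is formal once \autoref{Cell2RepResPartial}, \autoref{Cell2RepRes1} and \autoref{JCellDescends} are available; strong regularity of $\ceJ$ is precisely what prevents extraneous cells from entering and keeps $\ceL$ a single left cell after descent.
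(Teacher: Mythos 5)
Your proposal is correct and follows exactly the paper's route: the paper's proof is precisely the chain ``\autoref{Cell2RepResPartial} gives the descent to $\cC_{\not\leq\ceJ}$, \autoref{Cell2RepRes1} identifies the restricted sub-2-representation with $\mbC_\ceL^\ceJ$, and \autoref{JCellDescends} ensures this is genuinely a cell 2-representation of $\cC^\ceJ_{\not\leq\ceJ}$.'' The only difference is that you have spelled out the intermediate bookkeeping (identifying the restriction's essential image with $(\mbC_\ceL)^\ceJ$) that the paper leaves implicit in its definitions.
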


\subsection{$\ceJ$-Simple, $\ceJ$-Full and Almost Algebra 2-Categories}

In this section we will prove the following theorem:

\begin{thm}\label{MM1T43a} Let $\cC$ be a locally weakly fiat 2-category with $\ceJ$ a strongly regular $\cJ$-cell in $\cC$ and $\ceL$ a $\cL$-cell in $\ceJ$. Then for $F\in\ceL$ and $H\in\ceJ$, $H\ttL_F$ is either zero or an indecomposable projective in $\coprod\ol{\mbC_\ceL}(\tti)$.\end{thm}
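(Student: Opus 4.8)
The plan is to pass to the concrete algebra model of the cell $2$-representation $\mbM=\ol{\mbC_\ceL}$ and to exhibit the action of $H$ as an indecomposable projective functor, whose value on the simple $\ttL_F$ is then visibly zero or a single indecomposable projective. First I would reduce to the case in which $\ceJ$ is the unique maximal $\cJ$-cell of $\cC$, quotienting by $\ceI_{\not\leq\ceJ}$ as in \autoref{ResNotLeqJ}; by \autoref{Cell2RepRes2} this replaces $\mbC_\ceL$ by the corresponding cell $2$-representation and does not affect the statement. I would then recall from the proof of \autoref{MM6P1} the explicit picture: each $\mbM(\ttj)$ is equivalent to $A_\ttj$-$\modd$ for a basic self-injective algebra $A_\ttj$ (self-injectivity by \autoref{MM6P30a}), every $F\in\ceL$ acts as the indecomposable projective functor $A_\ttj e_{\ttj s}\otimes_\bbk e_{\tti 1}A_\tti\otimes_{A_\tti}-$, and $F\ttL_{G_\ceL}\cong\ttP_F$. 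In this language the theorem becomes the assertion that $\mbM(H)$ is zero or an indecomposable projective functor.

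The key step, which I expect to be the main obstacle, is to show that $\mbM(H)$ carries every object to a projective for an \emph{arbitrary} $H\in\ceJ$, not only for $H\in\ceL$. Writing $H\in\cC(\ttj,\ttl)$, strong regularity of $\ceJ$ furnishes a unique $F_0\in\ceL\cap\ceR_H$, so $H\sim_\cR F_0$ and hence $H$ is a direct summand of $F_0\circ W$ for some $1$-morphism $W\in\cC(\ttj,\tti)$. Now $\mbM(F_0)$ is the projective functor $P_0\otimes_{A_\tti}-$ with $P_0=A_\ttl e_{\ttl a_0}\otimes_\bbk e_{\tti 1}A_\tti$ a summand of $A_\ttl\otimes_\bbk A_\tti$, so for every object $X$ the module $\mbM(F_0)\mbM(W)(X)=P_0\otimes_{A_\tti}\mbM(W)(X)$ is a direct summand of $A_\ttl\otimes_\bbk\mbM(W)(X)$ and is therefore projective. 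Since $\mbM$ is an additive $2$-functor, $\mbM(H)$ is a direct summand of $\mbM(F_0\circ W)=\mbM(F_0)\circ\mbM(W)$; thus $\mbM(H)(X)$ is projective for every $X$, and in particular $H\ttL_F$ is projective. (Strong regularity and maximality also give that every indecomposable summand of any $HF'$ with $F'\in\ceL$ lies in $\ceL\cap\ceR_H=\{F_0\}$, which is the same phenomenon seen multiplicatively.)

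It remains to upgrade projectivity to the dichotomy ``zero or \emph{indecomposable} projective''. That $\mbM(H)$ is a projective functor is now immediate from \autoref{MM5L13}, as $\mbM(H)$ is exact and sends simples to projectives. Its indecomposability is the delicate point: it follows from $H$ being an indecomposable $1$-morphism of the apex $\ceJ$, which—under the comparison of $\cC^\ceJ_{\not\leq\ceJ}$ with an algebra $2$-category that is the theme of this section—is realised as tensoring with a single indecomposable projective bimodule $A_\ttl e_{\ttl a}\otimes_\bbk e_{\ttj b}A_\ttj$. Evaluating $A_\ttl e_{\ttl a}\otimes_\bbk e_{\ttj b}A_\ttj\otimes_{A_\ttj}-$ on the simple $\ttL_F$—the top of $\ttP_F=F\ttL_{G_\ceL}$, concentrated at the idempotent $e_{\ttj s(F)}$—returns $A_\ttl e_{\ttl a}\otimes_\bbk(e_{\ttj b}\ttL_F)$, which is the indecomposable projective $A_\ttl e_{\ttl a}$ when $b=s(F)$ and is zero otherwise. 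This yields the theorem, with the projectivity argument of the middle paragraph as the crux and the multiplicity-one (indecomposability) reduction as the point requiring the most care.
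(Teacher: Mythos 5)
Your reduction to the case where $\ceJ$ is the unique maximal $\cJ$-cell and your argument for \emph{projectivity} of $H\ttL_F$ are both sound. The projectivity step is a legitimate variant of the paper's own route: the paper (\autoref{MM1L38b}) uses \autoref{MM1P12} to place $H$ in the same $\cL$-cell as $F^*$, realises $H\ttL_F$ as a summand of $KF^*\ttL_F$, and invokes \autoref{MM6P30a} and \autoref{MM1P38a}; you instead use strong regularity to pick $F_0\in\ceL\cap\ceR_H$, write $H$ as a summand of $F_0\circ W$, and use the projective-functor description of the $\ceL$-action from the proof of \autoref{MM6P1}. Either works (the paper's version yields the stronger conclusion projective-injective, but the theorem only needs projective).

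The genuine gap is in your indecomposability step, and it is a circularity. You assert that $\mbM(H)$ is ``realised as tensoring with a single indecomposable projective bimodule'' because $H$ is an indecomposable 1-morphism, invoking ``the comparison of $\cC^\ceJ_{\not\leq\ceJ}$ with an algebra 2-category.'' That comparison is \autoref{MM3T13}, and its proof \emph{uses} \autoref{MM1T43a}: essential surjectivity of $\mbM_{\tti,\ttj}$ is deduced there precisely from the fact that indecomposable 1-morphisms send simples to zero or to indecomposable projectives, and the fullness steps (e.g.\ \autoref{MM3P6}) cite \autoref{MM1T43a} as well. Without that theorem (or at least 2-fullness of the representation map on $\End_\cC(H)$, which would let you lift idempotents), there is no a priori reason an indecomposable 1-morphism must act by an indecomposable functor --- that implication is exactly what is being proved. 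Note also that \autoref{CAXCellStr} describes cell 2-representations of the concrete 2-categories $\cC_{A,X}$, not of an abstract locally weakly fiat $\cC$, so it cannot be applied before the biequivalence is available. The paper closes this step non-circularly: indecomposability is obtained by generalising parts (iv) and (v) of \cite[Proposition 30]{mazorchuk2016isotypic}, a multiplicity-counting argument whose key input is \autoref{MM6P1} (constancy of the multiplicity function $\mbm$ on $\cR$-cells of $\ceJ$), which is established before and independently of \autoref{MM3T13}. Your proof would be repaired by replacing the appeal to the algebra model with that counting argument.
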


We need a supplementary lemma.

\begin{lem}\label{MM1L38b} Let $F,H\in\ceJ$. Then $H\ttL_F$ is either zero or injective-projective in $\coprod_{\ttj\in\cC}\ol{\mbC_\ceL}(\ttj)$.
	\begin{proof} If $H\ttL_F\neq 0$, then by a variant of \autoref{MM1P12} $F^*$ and $H$ are in the same $\cL$-cell, and hence $H\ttL_F$ is a direct summand of $KF^*\ttL_F$ for some $K$ by strong regularity of $\ceJ$. By  \autoref{MM6P30a} and \autoref{MM1P38a} this is projective-injective.\end{proof}\end{lem}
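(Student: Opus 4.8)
The plan is to dispatch the nonzero case by locating $H$ in the $\cL$-cell of $F^*$ and then transporting the already-known projective-injectivity of $F^*\ttL_F$ across the action of a suitable $1$-morphism. I treat the substantive case $F\in\ceL$, so that $\ttL_F$ is a genuine simple of $\mbM=\ol{\mbC_\ceL}$ and \autoref{MM1P38a} is available; for $F\in\ceJ\setminus\ceL$ the simple $\ttL_F$ does not occur in $\mbM$ and $H\ttL_F=0$, so the conclusion is immediate.

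First I would record the analogue of \autoref{MM1P12} with $\mbM$ in place of $\ol{\mbP_\tti}$, namely that $H\ttL_F\neq 0$ forces $H\leq_\cL F^*$. The adjunction computation of \autoref{MM1P12} transfers verbatim: if $H\ttL_F\neq 0$ then some indecomposable projective maps nontrivially into it, and the adjunction isomorphism $\Hom_{\ol{\ceM}}(\ttP_N,H\ttL_F)\cong\Hom_{\ol{\ceM}}(\tensor[^*]{H}{}\ttP_N,\ttL_F)$ together with the simplicity of $\ttL_F$ produces a summand relation that reads off as $H\leq_\cL F^*$. Since $F\in\ceJ$ gives $F^*\in\ceJ$ by \autoref{MM1L26}, both $H$ and $F^*$ lie in the one $\cJ$-cell $\ceJ$; as $\ceJ$ is strongly regular its distinct $\cL$-cells are incomparable under $\leq_\cL$, so the relation $H\leq_\cL F^*$ is only possible when $H\sim_\cL F^*$.

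Given $H\sim_\cL F^*$, the relation $H\leq_\cL F^*$ exhibits $H$ as a direct summand of $KF^*$ for some $1$-morphism $K$, and applying the action to $\ttL_F$ shows that $H\ttL_F$ is a direct summand of $K(F^*\ttL_F)$. By \autoref{MM1P38a} we have $F^*\ttL_F\cong I_{G_\ceL}$, and by \autoref{MM6P30a} together with the identification of $I_{G_\ceL}$ as a projective-injective summand in the proof of \autoref{MM1P38a}, this object is projective-injective. Because $\cC$ is locally weakly fiat, $\mbM(K)$ is exact and carries both a left and a right adjoint, furnished by the action of the two internal adjoints of $K$, and these adjoints are themselves exact; consequently $\mbM(K)$ preserves projectives and injectives alike, so $K(F^*\ttL_F)$ is projective-injective and hence so is its direct summand $H\ttL_F$.

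The main obstacle is the first step: isolating the correct form of \autoref{MM1P12} inside the cell $2$-representation and verifying that the adjunction bookkeeping survives descent from $\ol{\mbP_\tti}$ to the quotient $\mbM$, in particular that the projective $\ttP_N$ and its image $\tensor[^*]{H}{}\ttP_N$ behave as in the principal case. After $H\sim_\cL F^*$ is in hand, the remainder is a direct-summand argument powered only by the exactness and biadjointness of the action functors, both guaranteed by local weak fiatness.
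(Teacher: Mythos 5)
Your proof is correct and follows essentially the same route as the paper's: reduce to the nonzero case via the variant of \autoref{MM1P12} (which you rightly flag as the only point needing descent from $\ol{\mbP_\tti}$ to $\ol{\mbC_\ceL}$), use strong regularity and \autoref{MM1L26} to place $H$ in the $\cL$-cell of $F^*$, realise $H\ttL_F$ as a summand of $K(F^*\ttL_F)$, and conclude by \autoref{MM1P38a}, \autoref{MM6P30a} and the exact biadjoints of the action; you merely make explicit what the paper leaves implicit. One cosmetic note: under this paper's convention $F\leq_\cL G$ means $G$ is a summand of $HF$, so the summand relation ``$H$ is a summand of $KF^*$'' is the inequality $F^*\leq_\cL H$ rather than $H\leq_\cL F^*$ as you wrote---harmless here, since you have already established the full $\cL$-equivalence.
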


\begin{proof}[Proof of \autoref{MM1T43a}] To prove the theorem given \autoref{MM1L38b} it suffices to prove that $H\ttL_F$ is indecomposable. This is an immediate generalisation of parts (iv) and (v) of \cite[Proposition 30]{mazorchuk2016isotypic}, using \autoref{MM6P1}, giving the result.\end{proof}

\begin{defn} Let $\cC$ be a locally finitary 2-category and $\mbM$ a 2-representation of $\cC$. We say that $\mbM$ is \emph{2-full} if for any 1-morphisms in $\cC$ the representation map $\Hom_{\cC}(F,G)\to\Hom_{\LX}(\mbM F,\mbM G)$ is surjective, where $\LX$ is the target 2-category of $\mbM$. For a $\cJ$-cell $\ceJ$ of $\cC$, we say $\mbM$ is \emph{$\ceJ$-2-full} if for every $F,G\in\ceJ$, the representation map is surjective.\end{defn}

\begin{defn} Let $\cC$ be a locally weakly fiat 2-category and let $\ceJ$ be a non-trivial $\cJ$-cell in $\cC$. $\cC$ is \emph{$\ceJ$-simple} if every non-trivial 2-ideal of $\cC$ contains $\id_F$ for some $F\in\ceJ$. \end{defn}

For this section, we will assume that $\cC$ is a locally weakly fiat 2-category with a unique non-trivial $\cJ$-cell $\ceJ$, that $\cC$ is $\ceJ$-simple, and that $\ceJ$ is strongly regular.\par

Let $\mbM=\ol{\mbC_\ceL}$, the abelian cell 2-representation for some left cell $\ceL\subseteq\ceJ$. We recall from \autoref{CAXDef} the construction $\cC_{A,X}$, a locally weakly fiat 2-category associated to a family of basic self-injective connected finite dimensional algebras $\{A_\tti|\tti\in I\}$ with certain subalgebras $X_\tti$. We give the generalisation of \cite[Theorem 13]{mazorchuk2016endomorphisms}.

\begin{thm}\label{MM3T13} $\cC$ is biequivalent to $\cC_{A,X}$ for some $A$ and $X$.\end{thm}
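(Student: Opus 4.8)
The plan is to realise $\cC$ concretely through its action on the abelian cell 2-representation and then match the resulting data with the defining data of a suitable $\cC_{A,X}$. Since a biequivalence is determined pairwise on hom-categories, and every $\cC(\tti,\ttj)$ lies in a full finitary sub-2-category, all of the constructions below are local and the finitary arguments apply verbatim. I would begin by fixing $\mbM=\ol{\mbC_\ceL}$ and, for each object $\ttj\in\cC$, letting $A_\ttj$ be the basic finite-dimensional $\bbk$-algebra with $\mbM(\ttj)\simeq A_\ttj\text{-}\modd$; such an algebra exists because $\mbC_\ceL(\ttj)$ is finitary. By \autoref{MM6P30a} the indecomposable projectives of $\mbM(\ttj)$ are injective, so $A_\ttj$ is self-injective, while transitivity of the cell 2-representation forces $A_\ttj$ to be connected. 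This produces the family $A=\{A_\ttj\}_{\ttj\in\cC}$.

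Next I would show that every $F\in\ceJ$ acts as a projective functor. As $\cC$ is locally weakly fiat, $\mbM(F)$ has both adjoints and is therefore exact, and by \autoref{MM1T43a} it sends each simple either to an indecomposable projective or to zero; \autoref{MM5L13} then identifies $\mbM(F)$ with tensoring by a projective bimodule, that is, with one of the functors $F^{\tti k}_{\ttj l}$, the multiplicities being controlled by \autoref{MM6P1} and \autoref{MM6P29}. Together with $\mbM(\bbon_\tti)=\id$, this exhibits $\mbM$ as a 2-functor into the 2-category of projective functors on $\coprod_\ttj A_\ttj\text{-}\modd$, whose 1-morphism data is precisely that of $\cC_A$. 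Setting $X_\tti$ to be the image of $\End_\cC(\bbon_\tti)$ in $Z_\tti=Z(A_\tti)$ and checking, via the action, that it contains the subalgebra $Z'_\tti$ of \autoref{CAXDef}, fixes the family $X=\{X_\tti\}$.

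The heart of the argument, and the step I expect to be the main obstacle, is to show that $\mbM$ is \emph{faithful} and \emph{$\ceJ$-2-full}, since these two facts upgrade the comparison 2-functor into an equivalence on each hom-category. For faithfulness, the kernel of the action is a 2-ideal of $\cC$ containing no $\id_F$ with $F\in\ceJ$ (after replacing $\mbM$ by the direct sum of the cell 2-representations over the left cells of $\ceJ$, so that no such identity is annihilated); by $\ceJ$-simplicity this ideal is trivial. For $\ceJ$-2-fullness one must compare $\dim\Hom_\cC(F,G)$ with the dimension of the space of natural transformations between the corresponding projective functors over the self-injective algebras $A_\ttj$; this is the genuinely delicate point, and I would establish it through the Duflo-involution bookkeeping of \autoref{MM1P38a}, \autoref{MM6P1} and \autoref{MM6P29}, which pin down both sides in terms of the numbers $\mbm_{F,H}$ and the self-injective structure.

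Finally, with faithfulness and $\ceJ$-2-fullness in hand, I would assemble the biequivalence $\cC\to\cC_{A,X}$: it is the identity on the countable object set $I$, sends each indecomposable 1-morphism of $\ceJ$ to the matching $F^{\tti k}_{\ttj l}$ and each $\bbon_\tti$ to itself, and is an equivalence on every hom-category because the representation map is bijective there (the block between $\ceJ$-objects from faithfulness and $\ceJ$-2-fullness, and the identity block $\End_\cC(\bbon_\tti)\cong X_\tti$ by the definition of $X$). Essential surjectivity on 1-morphisms is automatic since $\ceJ$ is the unique non-trivial $\cJ$-cell, so every non-identity indecomposable is accounted for, and $\cC_{A,X}$ is locally weakly fiat by \autoref{CAXlwf}, matching $\cC$.
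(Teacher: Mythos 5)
Your outline follows the paper's own route: build $A$ and $X$ from the abelianised cell 2-representation, use \autoref{MM1T43a} and \autoref{MM5L13} to see that $\cC$ acts by projective functors (which yields both the comparison 2-functor into $\cC_{A,X}$ and essential surjectivity on 1-morphisms), get faithfulness from $\ceJ$-simplicity, and reduce everything to $\ceJ$-2-fullness. But the $\ceJ$-2-fullness step is precisely where the theorem lives, and your ``Duflo-involution bookkeeping'' does not supply it. In the paper, surjectivity of the representation maps is obtained by a three-stage reduction: first for $\Hom_\cC(G_\ceL,\bbon_\tti)$ (\autoref{MM3P9}), then propagated to $\Hom_\cC(F,\bbon_\ttj)$ for every $F\in\ceJ$ (\autoref{MM3P6}), and only then to arbitrary $\Hom_\cC(H,K)$ with $H,K\in\ceJ$; it also needs the comparison lemma \autoref{MM3L7} to translate natural-transformation spaces into bimodule homomorphism spaces. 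Crucially, the propagation step cannot be quoted from the finitary source \cite{mazorchuk2016endomorphisms}: the proofs of Proposition 6 and Corollary 8 there use that $-^*$ is an involution, whereas here $\cC$ is only locally \emph{weakly} fiat, so ${}^*(-)$ and $(-)^*$ must be kept distinct and the entire dimension count reworked (this is exactly the adaptation the paper carries out). Citing \autoref{MM1P38a}, \autoref{MM6P1} and \autoref{MM6P29} names correct ingredients, but nothing in your sketch produces the base-case surjectivity at the Duflo involution, and without that base case the dimension comparison has nothing to bootstrap from.

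A second, smaller flaw: your faithfulness hedge is self-defeating. If you replace $\mbM=\ol{\mbC_\ceL}$ by the direct sum of cell 2-representations over all left cells of $\ceJ$, you prove faithfulness of that direct sum, not of the 2-functor $\cC\to\cC_{A,X}$ you actually constructed, which is induced by $\mbM$ alone; the replacement would moreover destroy connectedness of the algebras $A_\ttj$, violating the requirements of \autoref{CAXDef}. The hedge is also unnecessary: for a strongly regular maximal $\ceJ$, the single cell 2-representation annihilates no $F\in\ceJ$. Indeed, by the cell version of \autoref{MM1P12}, $H\ttL_K\neq 0$ whenever $H\leq_\cL K^*$, and since $-^*$ maps $\ceL$ bijectively onto a right cell of $\ceJ$ that meets every left cell of $\ceJ$ (strong regularity), every $H\in\ceJ$ admits some $K\in\ceL$ with $H\sim_\cL K^*$. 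Hence $\ker\mbM$ contains no $\id_F$ with $F\in\ceJ$ and is zero by $\ceJ$-simplicity directly, which is what the paper's one-line faithfulness claim implicitly rests on.
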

\begin{proof} We start by generalising the proof of \cite[Theorem 13]{mazorchuk2016endomorphisms}, with some extra detail for clarity. For $\tti\in\cC$, let $A_\tti$ be a basic connected finite dimensional $\bbk$-algebra such that $\mbM(\tti)$ is equivalent to $A_\tti$-$\modd$. Letting $Z_\tti$ be the centre of $A_\tti$, we define $X_\tti:=\mbM(\End_\cC(\bbon_\tti))\subseteq Z_\tti$. We can define an action of $\mbM(F)$ on $\cC_{A,X}$ for $F\in\ceJ$ using the equivalence between $\mbM(\tti)$ and $A_\tti$-$\modd$. Then by the definition of the cell 2-representation, each $\mbM(F)$ for $F\in\ceJ$ is a projective functor in $\End(\ceM)$, and since each $\mbM(\bbon_\tti)$ acts as the identity, this implies that $\mbM$ factors through $\cC_{A,X}$, and thus $\mbM$ corestricts to a 2-functor from $\cC$ to $\cC_{A,X}$. By construction $\mbM$ is surjective up to equivalence on objects (and indeed is bijective on objects).\par
	
	We will show that each $$\mbM_{\tti,\ttj}:\cC(\tti,\ttj)\to\cC_{A,X}(\tti,\ttj)$$ is an equivalence. Since $\cC$ is $\ceJ$-simple it follows that $\mbM_{\tti,\ttj}$ is faithful. To show that $\mbM_{\tti,\ttj}$ is essentially surjective on 1-morphisms, by construction a 1-morphism in $\cC_{A,X}$ is equivalent to tensoring with a projective $(A_\tti$-$A_\ttj)$-bimodule. In particular, any indecomposable $(A_\tti$-$A_\ttj)$-bimodule will take a simple module to either zero or to an indecomposable projective module. But by the construction of $\cC_{A,X}$ and \autoref{MM1T43a} these are precisely $\mbM(F)$ for $F$ indecomposable, giving essential surjectivity.\par
	
	By the construction of $\cC_{A,X}$, $\mbM$ is surjective when applied to $\End_\cC(\bbon_\tti)$. For any other hom-space $\Hom_{\cC}(F,G)$ with $F,G\neq\bbon_\tti$, by the definition of $\ceJ$-2-fullness, it clearly suffices to show that $\mbM$ is $\ceJ$-2-full. We will do this and show the remaining cases as a three step process: we will show that $$\Hom_\cC(G_\ceL,\bbon_\tti)\to\Hom_{\mbM(\tti)}(\mbM(G_\ceL),\mbM(\bbon_\tti))$$ is surjective for the Duflo involution $G_\ceL$, that this implies surjectivity for any $$\Hom_\cC(F,\bbon_\ttj)\to\Hom_{\mbM(\ttj)}(\mbM(F),\mbM(\bbon_\ttj))$$ and then derive that each $\mbM_{\tti,\ttj}$ is indeed full.\par
	
	These three steps are the generalisations of \cite[Theorem 9]{mazorchuk2016endomorphisms}(specifically the proof of that Theorem), \cite[Proposition 6]{mazorchuk2016endomorphisms} and \cite[Corollary 8]{mazorchuk2016endomorphisms} respectively. To give the generalisations, we thus first need to generalise \cite[Lemma 7]{mazorchuk2016endomorphisms}. While this is a 1-categorical statement, the way in which we use it requires us to give a slight generalisation and thus manipulate the proof:
	
	\begin{lem}\label{MM3L7} Let $A$ be a countable product of finite dimensional connected $\bbk$-algebras and let $e$ and $f$ be primitive idempotents of $A$. Assume that $F$ is an exact endofunctor of $A$-$\modd$ such that $FL_f\cong Ae$ and $FL_g=0$ for any other simple $L_g\not\cong L_f$. Then $F$ is isomorphic to the functor $F'$ given by tensoring with the bimodule $Ae\otimes_\bbk fA$, and moreover $$\Hom_{\LR_\bbk}(F,\id_{A\text{-}\modd})\cong\Hom_A(Ae,Af).$$
		\begin{proof} As $e$ and $f$ are primitive idempotents, they each belong to $A_e$ and $A_f$ for connected finite dimensional components $A_e$ and $A_f$ of $A$. Thus without loss of generality we can restrict $F$ to $A'$-$\modd$, where $A'=A_e\times A_f$, which is a connected finite dimensional algebra. Hence we can apply the original form of the lemma in \cite{mazorchuk2016endomorphisms} and the result follows.\end{proof}\end{lem}
	
	Using \autoref{MM3L7} and \autoref{MM1P17a} we can generalise the proof of \cite[Theorem 9]{mazorchuk2016endomorphisms} directly, since it is a local proof which does not use any properties of involutions. We give our version of that result:
	
	\begin{psn}\label{MM3P9} The representation map $$\Hom_\cC(G_\ceL,\bbon_\tti)\to\Hom_{\mbM(\tti)}(\mbM(G_\ceL),\mbM(\bbon_\tti))$$ is surjective.\end{psn}
	
	However, the proofs of \cite[Proposition 6, Corollary 8]{mazorchuk2016endomorphisms} do use that $-^*$ is an involution in that paper. For Proposition 6 we will give an adaptation of the whole proof, reworked to avoid the involution issues.
	
	\begin{psn}\label{MM3P6} Assuming that the representation map $$\Hom_\cC(F,\bbon_\ttj)\to\Hom_{\mbM(\ttj)}(\mbM(F),\mbM(\bbon_\ttj))$$ is surjective for $F=G_\ceL$ and $\ttj=\tti$, then it is surjective for any $F\in\ceJ$ and any corresponding $\ttj$.
		\begin{proof} Without loss of generality $F\in\cC(\ttj,\ttj)$. Let $H,K\in\ceL$ for some left cell $\ceL$ of $\ceJ$ and assume that $H,K\in\cC(\ttj,\ttk)$. By strong regularity $HK^*\cong aX$ for some $X\in\ceJ$ and some non-negative integer $a$. Since $\ceJ$ consists of a single $\cD$-cell, we can vary $H$ and $K$ over $\ceL$ to get any element of $\ceJ$, and in particular we can choose $H$ and $K$ such that $HK^*\cong aF$ for some non-negative integer $a$. To show that $HK^*\neq 0$, note $K^*\ttL_K\cong \ttI_{G_\ceL}\in\ol{\mbC_\ceL}(\ttj)$ by \autoref{MM1P38a} (which still applies to the cell 2-representation case) and further $H\ttI_{G_\ceL}\neq 0$ as $H\ttL_{G_\ceL}\neq 0$ by \autoref{MM1P17a}. It follows that $HK^*\neq 0$.\par
			
			Similarly, ${}^*KH\cong bG_\ceL$ for some non-negative integer $b$. In addition, since $-^*$ is an anti-auto-equivalence, it follows that $$\Hom_{\cC}(H,K)\cong\Hom_\cC(K^*,H^*).$$\par
			
			Applying adjunctions, we have that $$\Hom_\cC(H,K)\cong b\Hom_\cC(G_\ceL,\bbon_\tti),\qquad\qquad\Hom_\cC(K^*,H^*)\cong a\Hom_\cC(F,\bbon_\ttj).$$ Evaluating $\Hom_{\cC}(H,K)$ at $\ttL_{G_\ceL}$ is surjective, and thus $$\Hom_{\mbM(\ttj)}(H\ttL_{G_\ceL}, K\ttL_{G_\ceL})\cong b\Hom_{\mbM(\tti)}(G_\ceL \ttL_{G_\ceL}, \ttL_{G_\ceL}).$$ Applying \autoref{MM1P17a} gives that $G_\ceL \ttL_{G_\ceL}$ has simple top $\ttL_{G_\ceL}$.  Therefore the space $\Hom_{\mbM(\tti)}(G_\ceL \ttL_{G_\ceL}, \ttL_{G_\ceL})$ is one-dimensional and $$b=\dim\Hom_{\mbM(\ttj)}(H\ttL_{G_\ceL}, K\ttL_{G_\ceL}).$$ \par
			
			Let $\ttL_\ttj$ denote a multiplicity-free direct sum of all simple modules in $\mbM(\ttj)$. By adjunction $\Hom_{\mbM(\tti)}(K^*\ttL_\ttj, H^*\ttL_\ttj)\cong a\Hom_{\mbM(\ttj)}(F\ttL_\ttj,\ttL_\ttj).$ It follows from \autoref{MM1P12} that $K^*\ttL_Q\neq 0$ for $Q\in\ceL$ if and only if $Q$ is in the same right cell as $K$. But then by strong regularity $K\cong Q$. Thus by \autoref{MM1P38a} $K^*\ttL_\ttj\cong \ttI_{G_\ceL}$. By a similar argument $H^*\ttL_\ttj\cong \ttI_{G_\ceL}$ and the left side of the above isomorphism is isomorphic to $\End_{\mbM(\tti)}(\ttI_{G_\ceL})$.\par
			
			As $F$ is a direct summand of $HK^*$, it follows that $\ttL_K$ is the only summand of $\ttL_\ttj$ not annihilated by $F$. By \autoref{MM1T43a} $F\ttL_K$ is an indecomposable projective in $\mbM(\ttj)$, and thus by strong regularity we must have $F\ttL_K\cong \ttP_H$. Therefore $\dim\Hom_{\mbM(\ttj)}(F\ttL_\ttj,\ttL_\ttj)=1$ and $a=\dim\End_{\mbM(\tti)}(\ttI_{G_\ceL}).$
			
			From \autoref{MM1L38b} it follows that $H\ttL_{G_\ceL}$ is an indecomposable projective in $\ceM$ with simple top $\ttL_H$, and is thus isomorphic to $\ttP_H$. It follows that $\ttI_{G_\ceL}\cong \ttP_{G_\ceL^*}$. Using \autoref{MM3P9} we can apply \autoref{MM3L7} to $\Hom_\cC(G_\ceL,\bbon_\tti)$, and consequently $\Hom_\cC(G_\ceL,\bbon_\tti)\cong\End_{\mbM(\tti)}(\ttP_{G_\ceL})$. 
			
			We now show that $\dim\End_{\mbM(\tti)}(\ttP_{G_\ceL})=\dim\End_{\mbM(\tti)}(\ttP_{G^*_\ceL})$. Take some finite dimensional $\bbk$-algebra $A$ such that $\mbM(\tti)$ is equivalent to $A$-$\modd$. We can thus consider $\ttP_{G_\ceL}$ to be isomorphic to $Ae_{G_\ceL}$ for some idempotent $e_{G_\ceL}$ of $A$. Hence $$\End_{\mbM(\tti)}(\ttP_{G_\ceL})\cong\End_{A\text{-}\modd}(Ae_{G_\ceL})\cong e_{G_\ceL}Ae_{G_\ceL}.$$ But on the other hand $$ \End_{A\text{-}\modd}(Ae_{G_\ceL})\cong\End_{\modd\text{-}A}(e_{G_\ceL}A) \cong \End_{A\text{-}\modd}(Ae_{\sigma(G_\ceL)}),$$ where $\sigma$ is the permutation defined by the weakly fiat structure on $\cC$. But by this same structure $e_{\sigma(G_\ceL)}=e_{G_\ceL^*}$, i.e. $Ae_{\sigma(G_\ceL)}$ is isomorphic to $P_{G_\ceL^*}$.\par
			
			We thus have that $$\dim\Hom_\cC(G,\bbon_\tti)=\dim\End_{\mbM(\tti)}(P_{G_\ceL})=\dim\End_{\mbM(\tti)}(P_{G_\ceL^*}).$$ Using the above results and \autoref{MM3L7}, we have 
			\begin{align*} \dim\Hom_\cC(H,K) &=\dim\Hom_{\mbM(\ttj)}(H\ttL_{G_\ceL}, K\ttL_{G_\ceL})\dim\End_{\mbM(\tti)}(P_{G^*_\ceL}) \\
			&=\dim\Hom_{\mbM(\ttj)}(\ttP_H,\ttP_K)\dim\End_{\mbM(\tti)}(\ttP_{G^*_\ceL})\end{align*}
			and
			\begin{align*} \dim\Hom_\cC(K^*,H^*) &=\dim\Hom_\cC(F,\bbon_\tti)\dim\End_{\mbM(\tti)}(\ttI_{G_\ceL})\\
			&=\dim\Hom_\cC(F,\bbon_\tti)\dim\End_{\mbM(\tti)}(\ttP_{G^*_\ceL}).\end{align*}
			
			As $\cC$ is $\ceJ$-simple, $$\dim\Hom_\cC(F,\bbon_\ttj)\leq\dim\Hom_{\LR_\bbk}(\mbM(F),\mbM(\bbon_\ttj))$$ and applying \autoref{MM3L7} we see the latter is equal to $\dim\Hom_{\mbM(\ttj)}(\ttP_H,\ttP_K)$. Dividing by $\End_{\mbM(\tti)}(\ttP_{G^*_\ceL})$,
			\begin{align*}\dim\Hom_{\mbM(\ttj)}(\ttP_H,\ttP_K) &=\dim\Hom_\cC(F,\bbon_\tti)\\
			&\leq\dim\Hom_{\LR_\bbk}(\mbM(F),\mbM(\bbon_\ttj))\\
			&=\dim\Hom_{\mbM(\ttj)}(\ttP_H,\ttP_K)\end{align*}
			where the last equality follows by applying \autoref{MM3L7}. Therefore $$\dim\Hom_\cC(F,\bbon_\tti)=\dim\Hom_{\LR_\bbk}(\mbM(F),\mbM(\bbon_\tti)).$$
			
			Since the representation map is injective by $\ceJ$-simplicity of $\cC$, we get surjection and the result is proved.\end{proof}\end{psn}
		
	\begin{lem} Let $H,K\in\ceJ\cap\cC(\ttj,\ttk)$. If the representation map $$\Hom_\cC(G_\ceL,\bbon_\tti)\to\Hom_{\mbM(\tti)}(\mbM(G_\ceL),\mbM(\bbon_\tti))$$ is surjective, then so is the representation map $$\Hom_\cC(H,K)\to\Hom_\mbM(\mbM(H),\mbM(K)).$$
		\begin{proof} The proof is mostly an immediate generalisation of the one for \cite[Corollary 8]{mazorchuk2016endomorphisms}, except that ${}^*KH$ needs to be read for $K^*H$.\end{proof}\end{lem}
	
	From this it follows immediately that $\mbM$ is $\ceJ$-2-full and each $\mbM_{\tti\ttj}$ is full. Therefore the main theorem is proven.\end{proof}

\subsection{Reducing To The $\ceJ$-Simple Case}\label{JSimpSuff}

We will assume for this subsection that $\cC$ is a strongly regular locally weakly fiat 2-category.

\begin{lem}\label{MM5T18a} Let $\mbM$ be a simple transitive 2-representation of $\cC$. Then there exists some strongly regular $\cJ$-cell $\ceJ$ such that $\mbM$ factors over $\cC_{\not\leq\ceJ}$ and the restriction $\mbM_{\not\leq\ceJ}^\ceJ$ of this to $\cC^\ceJ_{\not\leq\ceJ}$ is still simple transitive.
	\begin{proof} The first part of the proof of \cite[Theorem 18]{mazorchuk2015transitive}, which generalises immediately to our setting, gives that $\mbM$ has an apex (as in \autoref{ApexDef}), which we denote by $\ceJ$. If $F$ is a 1-morphism of $\cC$ such that $F$ is not annihilated by $\mbM$, then consider the $\cJ$-cell $\ceK$ containing $F$. As $\cJ$-cells are partially ordered by $\leq_\cJ$, we must have that $\ceK\leq_\cJ \ceJ$. Therefore passing to $\cC_{\not\leq\ceJ}$ we can assume that $\ceJ$ is the unique maximal $\cJ$-cell of $\cC$.\par
		
		We note that $\mbM$ restricts to a 2-representation $\mbM^\ceJ_{\not\leq\ceJ}$ of $\cC^\ceJ_{\not\leq\ceJ}$. The argument given in the proof of \cite[Theorem 18]{mazorchuk2015transitive} generalises immediately to the locally weakly fiat case, and it follows that $\mbM^\ceJ_{\not\leq\ceJ}$ is simple transitive as required. \end{proof}\end{lem}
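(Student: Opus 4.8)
The plan is to follow the three movements of the proof of \cite[Theorem 18]{mazorchuk2015transitive}, checking at each stage that the reasoning is \emph{local} in the sense of this paper so that it survives the passage to countably many objects. The three things to establish are that $\mbM$ admits an apex $\ceJ$, that $\mbM$ then factors over $\cC_{\not\leq\ceJ}$, and that the restriction $\mbM^\ceJ_{\not\leq\ceJ}$ to $\cC^\ceJ_{\not\leq\ceJ}$ is again simple transitive. First I would produce the apex: fixing a non-zero $X\in\ceM$, transitivity gives $\mbG_\mbM(\{X\})\simeq\mbM$, and one checks that the $\cJ$-cells not annihilated by $\mbM$ have a unique maximal element for $\leq_\cJ$. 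Since this comparison only ever involves finitely many indecomposable 1-morphisms at once, it takes place inside a full finitary sub-2-category and generalises verbatim; the resulting cell is the apex $\ceJ$ of \autoref{ApexDef}, and as $\cC$ is strongly regular, $\ceJ$ is automatically strongly regular.

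Given the apex, any $F$ with $F\not\leq_\cJ\ceJ$ is annihilated by $\mbM$, so $\mbM(\id_F)=\id_{\mbM(F)}=0$; these identities generate $\ceI_{\not\leq\ceJ}$, so $\mbM$ kills $\ceI_{\not\leq\ceJ}$ and factors through a 2-representation of $\cC_{\not\leq\ceJ}$, which I continue to write $\mbM$. Restricting along $\cC^\ceJ_{\not\leq\ceJ}\hookrightarrow\cC_{\not\leq\ceJ}$ produces $\mbM^\ceJ_{\not\leq\ceJ}$; here I would record the structural simplification that makes the rest tractable. Because $\ceJ$ is maximal and strongly regular, \autoref{MM1P34a} shows that any composite of 1-morphisms drawn from $\ceJ$ is isomorphic to a multiple of a single element of $\ceJ$ (or is zero). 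Hence, up to direct sums and isomorphism, the 1-morphisms of $\cC^\ceJ_{\not\leq\ceJ}$ are exactly the $\bbon_\tti$ together with the elements of $\ceJ$, and the restricted action only ever uses identities and apex 1-morphisms.

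The hard part will be the last claim, that $\mbM^\ceJ_{\not\leq\ceJ}$ stays simple transitive, and the delicacy is precisely that $\cC^\ceJ_{\not\leq\ceJ}$ has fewer 1-morphisms than $\cC_{\not\leq\ceJ}$: shrinking the acting category weakens the generation needed for transitivity and enlarges the pool of stable ideals, so neither property is formally inherited and both must be reproved. The mechanism, following MM, is that the apex controls the representation. For transitivity I would show that, because $\ceJ$ is not annihilated and is maximal strongly regular, every indecomposable object of $\mbM$ already occurs as a summand of $\mbM(F)X$ for some $F\in\ceJ$ and a fixed non-zero $X$ (the model case being the cell 2-representation, where applying the apex 1-morphisms to $\ttL_{G_\ceL}$ recovers every indecomposable projective via \autoref{MM1P17a}); consequently the $\ceJ$-action already generates and transitivity persists. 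For simplicity, I would argue that any $\cC^\ceJ_{\not\leq\ceJ}$-stable ideal of $\mbM^\ceJ_{\not\leq\ceJ}$ avoiding the non-zero identities is forced, via the same maximality-and-strong-regularity bookkeeping together with the biadjunctions of the locally weakly fiat structure, to be stable under all of $\cC_{\not\leq\ceJ}$, whence it vanishes by simple transitivity of $\mbM$. Each of these checks manipulates only finitely many 1- and 2-morphisms, so the argument of \cite[Theorem 18]{mazorchuk2015transitive} is local and transfers directly; \autoref{JCellDescends} and \autoref{NequivBottom} are the facts I would lean on to keep the identification of cells and generating objects consistent across $\cC$, $\cC_{\not\leq\ceJ}$ and $\cC^\ceJ_{\not\leq\ceJ}$.
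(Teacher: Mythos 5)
Your proposal is correct and follows essentially the same route as the paper: the paper's proof is exactly this three-step reduction to \cite[Theorem 18]{mazorchuk2015transitive} --- existence of the apex $\ceJ$, factoring $\mbM$ over $\cC_{\not\leq\ceJ}$ because every non-annihilated $\cJ$-cell lies below the apex, and then invoking the (local) Mazorchuk--Miemietz argument to see that the restriction $\mbM^\ceJ_{\not\leq\ceJ}$ remains simple transitive. The only difference is one of exposition: you unfold the sub-arguments (directedness for the apex, regeneration of $\ceM$ by $\ceJ$-morphisms, and the stable-ideal comparison) that the paper leaves compressed into its citations of \cite{mazorchuk2015transitive}, including the same appeal to locality at the one genuinely non-local point, the existence of a maximal non-annihilated $\cJ$-cell.
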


By construction, $\cC^\ceJ_{\not\leq\ceJ}$ has $\ceJ$ as its unique maximal $\cJ$-cell. We now give the following generalisation of \cite[Lemma 18]{mazorchuk2011additive}:

\begin{lem}\label{MM2L18} There is a unique 2-ideal $\cI$ of $\cC^\ceJ_{\not\leq\ceJ}$ such that $\cC^\ceJ_{\not\leq\ceJ}/\cI$ is $\ceJ$-simple.
\begin{proof} The proof of this result generalises immediately from that of \cite[Lemma 18]{mazorchuk2011additive}. \end{proof}\end{lem}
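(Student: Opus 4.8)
The plan is to exhibit $\cI$ as the maximal 2-ideal of $\cC^\ceJ_{\not\leq\ceJ}$ avoiding the identity 2-morphisms of the 1-morphisms in $\ceJ$, and to derive both existence and uniqueness from a single fact: each $F\in\ceJ$ is indecomposable in $\cC^\ceJ_{\not\leq\ceJ}$ with \emph{local} endomorphism ring. First I would record this. By \autoref{JCellDescends} we have $\End_{\cC^\ceJ_{\not\leq\ceJ}}(F)\neq 0$ for every $F\in\ceJ$, and since this ring is a quotient of the local finite-dimensional algebra $\End_\cC(F)$, a non-zero such quotient is again local; write $\rad\End_{\cC^\ceJ_{\not\leq\ceJ}}(F)$ for its unique maximal two-sided ideal. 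Let $\Omega$ denote the collection of all 2-ideals $\cK$ of $\cC^\ceJ_{\not\leq\ceJ}$ with $\id_F\notin\cK$ for every $F\in\ceJ$.

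For existence and uniqueness at once, I would set $\cI:=\sum_{\cK\in\Omega}\cK$, the pointwise sum over all members of $\Omega$, which is again a 2-ideal (horizontal composition is additive, and a sum of two-sided ideals in each hom-category is a two-sided ideal). The claim is that $\cI\in\Omega$, whence $\cI$ is automatically the \emph{unique} maximal element. Suppose for contradiction that $\id_F\in\cI$ for some $F\in\ceJ$. Since any element of the hom-space $\End_{\cC^\ceJ_{\not\leq\ceJ}}(F)$ lying in this sum is a finite sum, we may write $\id_F=\sum_{i=1}^n\kappa_i$ with $\kappa_i\in\cK_i(F,F)$ for finitely many $\cK_i\in\Omega$. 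Each $\cK_i(F,F)$ is a two-sided ideal of $\End_{\cC^\ceJ_{\not\leq\ceJ}}(F)$ not containing $\id_F$, hence proper, hence contained in $\rad\End_{\cC^\ceJ_{\not\leq\ceJ}}(F)$ by locality. Then $\id_F=\sum_i\kappa_i\in\rad\End_{\cC^\ceJ_{\not\leq\ceJ}}(F)$, contradicting properness of the radical. Thus $\cI\in\Omega$, and any other maximal member of $\Omega$ is contained in $\cI$, so $\cI$ is unique.

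It then remains to check that $\cC^\ceJ_{\not\leq\ceJ}/\cI$ is $\ceJ$-simple. Invoking the order-preserving correspondence between 2-ideals of the quotient and 2-ideals of $\cC^\ceJ_{\not\leq\ceJ}$ containing $\cI$, any non-trivial 2-ideal of $\cC^\ceJ_{\not\leq\ceJ}/\cI$ has the form $\cK/\cI$ with $\cK\supsetneq\cI$. By maximality of $\cI$ in $\Omega$ we have $\cK\notin\Omega$, so $\id_F\in\cK$ for some $F\in\ceJ$; and since $\cI\in\Omega$ gives $\id_F\notin\cI$, the image of $\id_F$ is a non-zero identity 2-morphism in $\cK/\cI$. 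Hence every non-trivial 2-ideal of the quotient contains $\id_F$ for some $F\in\ceJ$, which is exactly $\ceJ$-simplicity.

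I expect the genuine content — the \emph{main obstacle} — to be purely the bookkeeping that indecomposability, and with it locality of $\End(F)$, survives the passage from $\cC$ to $\cC^\ceJ_{\not\leq\ceJ}$; this is precisely what \autoref{JCellDescends} secures. Once locality is available, the local-ring argument (an expression $\id_F=\sum_i\kappa_i$ with every $\kappa_i$ a non-unit forces $\id_F$ into the radical) carries the entire proof, and it uses neither the strong regularity of $\ceJ$ nor the weakly fiat structure beyond what is already packaged into the cell data.
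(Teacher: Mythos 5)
Your construction is the same one the paper relies on: the cited proof of \cite[Lemma 18]{mazorchuk2011additive} is precisely the ``sum of all 2-ideals avoiding $\{\id_F \mid F\in\ceJ\}$, plus locality of endomorphism rings of indecomposables'' argument, and your observation that the only genuinely new input in the locally finitary setting is that indecomposability (hence locality of $\End(F)$) survives the passage to $\cC^\ceJ_{\not\leq\ceJ}$ --- secured by \autoref{JCellDescends}, ultimately \autoref{Idnotleqsubset} --- is exactly why the paper can say the proof generalises immediately. Your existence half (the sum $\cI$ of all members of $\Omega$ lies in $\Omega$, and its quotient is $\ceJ$-simple) is complete and correct.

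There is, however, one loose end in the uniqueness half. What you actually prove is that $\cI$ is the unique maximal element of $\Omega$; what the lemma asserts is that $\cI$ is the unique 2-ideal whose quotient is $\ceJ$-simple. These coincide only after the converse implication: if $\cC^\ceJ_{\not\leq\ceJ}/\cK$ is $\ceJ$-simple, then $\cK=\cI$. This is not contained in what you wrote, but it follows from the same correspondence argument run in reverse. First, $\cK\in\Omega$: if $\id_F\in\cK$ for some $F\in\ceJ$, then $F$ becomes zero in the quotient, so $\ceJ$ does not descend to a non-trivial $\cJ$-cell there and the quotient cannot be $\ceJ$-simple in the sense of the paper's definition. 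Second, $\cK$ must be maximal in $\Omega$, i.e.\ equal to $\cI$: since every member of $\Omega$ is contained in the sum $\cI$, if $\cK\subsetneq\cI$ then $\cI/\cK$ is a non-zero 2-ideal of the quotient containing the image of no $\id_F$ with $F\in\ceJ$ (such an image lies in $\cI/\cK$ only if $\id_F\in\cI$, which is false), contradicting $\ceJ$-simplicity of $\cC^\ceJ_{\not\leq\ceJ}/\cK$. With those two observations added, your proof is complete and is essentially the argument the paper imports.
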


We let $\cC_\ceJ$ denote this quotient. We denote by $\mbM_{\ceJ}$ the restriction of $\mbM$ to $\cC_\ceJ$ (with $\ceM_\ceJ$ the corresponding coproduct category). We claim that $\mbM_\ceJ$ is a transitive 2-representation of $\cC_\ceJ$. To see this, we first note that since $\ceJ$ is the unique maximal $\cJ$-cell not annihilated by $\mbM$, it follows immediately that $\ker(\mbM)\subseteq\ceJ$. Second, let $N\in\ceM_\ceJ$, and let $F\in\ceJ$. Since $\mbM$ is a transitive 2-representation, any $M\in\ceM$ is isomorphic to a direct summand of $GFN$ for some 1-morphism $G\in\cC$. But by the construction of $\cJ$-cells, all indecomposable summands of $GF$ are in $\ceJ$, and thus $GF\in\cC_\ceJ$ and hence $\mbM_\ceJ$ is indeed transitive.\par

As $\cC_\ceJ$ is a $\ceJ$-simple category with a unique non-trivial two-sided ideal, it is biequivalent to $\cC_{A,X}$ for some $A$ and $X$. By a simple generalisation of a previous result, any simple transitive 2-representation of any $\cC_{A,X}$ is equivalent to a cell 2-representation. We thus have that $\mbM_\ceJ$ is equivalent to $(\mbC_{\ceL})_\ceJ$ for some $\cL$-cell $\ceL$ of $\ceJ$. We now provide a lemma that, along with \autoref{MM5T18a}, will allow us to generalise \cite[Theorem 18]{mazorchuk2015transitive}:

\begin{lem}\label{MM5T18b} If $\mbM$ is a simple transitive 2-representation of $\cC$ such that $\mbM_\ceJ$ is equivalent to some cell 2-representation of $\cC_\ceJ$, then $\mbM$ is equivalent to some cell 2-representation of $\cC$.
	\begin{proof} The second half of the proof of \cite[Theorem 18]{mazorchuk2015transitive} generalises immediately.\end{proof}\end{lem}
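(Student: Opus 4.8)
The plan is to lift the equivalence $\mbM_\ceJ\simeq(\mbC_\ceL)_\ceJ$ of $\cC_\ceJ$-2-representations, supplied by hypothesis, back to an equivalence $\mbM\simeq\mbC_\ceL$ of $\cC$-2-representations; the guiding principle is that passing from $\cC$ to the $\ceJ$-simple quotient $\cC_\ceJ$ discards no information relevant to simple transitive 2-representations of apex $\ceJ$. After the reduction of \autoref{MM5T18a} I may assume $\ceJ$ is the unique maximal $\cJ$-cell, so that $\mbM$ factors over $\cC^\ceJ_{\not\leq\ceJ}$, and by \autoref{Cell2RepRes2} the cell 2-representation $\mbC_\ceL$ of $\cC$ restricts to the corresponding cell 2-representation over $\cC^\ceJ_{\not\leq\ceJ}$, and hence over $\cC_\ceJ$ to $(\mbC_\ceL)_\ceJ$.

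To realise the comparison concretely I would fix the source object $\tti$ of $\ceL$ and choose a nonzero $N\in\mbM(\tti)$ lying, under the given equivalence, over the distinguished generator of $(\mbC_\ceL)_\ceJ(\tti)$. Evaluation at $N$ defines a morphism of 2-representations $\mbP_\tti\to\mbM$, $F\mapsto\mbM(F)N$; restricting it to the sub-2-representation $\mbN_\ceL$ and using that $\mbM$ is simple transitive (so that the image of the maximal ideal of \autoref{locfinmid} must vanish) yields a nonzero morphism $\Phi\colon\mbC_\ceL\to\mbM$. Transitivity of $\mbM$ makes $\Phi$ dense, and since $\mbC_\ceL$ is itself simple transitive any nonzero morphism out of it has trivial kernel ideal, so $\Phi$ is faithful. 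Thus density and faithfulness are cheap, and only fullness remains.

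Fullness, and indeed the entire comparison, hinges on the fact that both $\mbM$ and $\mbC_\ceL$ factor through $\cC_\ceJ$ \emph{without altering their underlying categories}: $\mbM$ does so by hypothesis (this is what makes $\mbM_\ceJ$ a 2-representation of $\cC_\ceJ$), and $\mbC_\ceL$ does so by \autoref{Cell2RepRes2} together with faithfulness of the cell 2-representation on the $\ceJ$-simple quotient coming from the biequivalence $\cC_\ceJ\simeq\cC_{A,X}$ of \autoref{MM3T13}. Granting this, $\Phi$ induces on the $\cC_\ceJ$-restrictions exactly the given equivalence, the hom-spaces of $\mbC_\ceL(\ttj)$ and $\mbM(\ttj)$ coincide with those of their $\cC_\ceJ$-restrictions, so that $\dim\Hom_{\mbC_\ceL(\ttj)}(X,Y)=\dim\Hom_{\mbM(\ttj)}(\Phi X,\Phi Y)$ for all indecomposables; combined with faithfulness this forces $\Phi$ to be full, hence a 2-representation equivalence, and the conclusion over $\cC$ follows because both 2-representations factor over $\cC^\ceJ_{\not\leq\ceJ}$. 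The genuinely delicate ingredient is therefore the factorization claim, namely that the 2-morphisms discarded in forming $\cC_\ceJ$ (for instance the part of $\End(\bbon_\tti)$ not surviving in $X_\tti$) act as zero on a simple transitive 2-representation of apex $\ceJ$; this is precisely the content already secured by the restriction results \autoref{Cell2RepResPartial} through \autoref{Cell2RepRes2}, supplemented by the classification of \autoref{MM5ByMMMTAX}. Finally, I would note that every step—the evaluation morphism, the density and faithfulness checks, and the hom-space dimension count—refers only to the finitely many objects and $1$-morphisms of a full finitary sub-2-category containing $\ceL$ and the relevant members of $\ceJ$, so the argument is local in the sense of the paper and the second half of the proof of \cite[Theorem 18]{mazorchuk2015transitive} transfers verbatim.
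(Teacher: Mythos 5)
Your argument breaks at the step where you produce $\Phi\colon\mbC_\ceL\to\mbM$, and the failure is not repairable within your approach. Simple transitivity of $\mbM$ only says that a $\cC$-stable ideal of $\mbM$ containing no identity morphisms of nonzero objects is zero; for the $\cC$-stable ideal generated by the image of $\ceK_\ceL$ under evaluation at $N$ this gives a dichotomy (zero, or all of $\ceM$), and you never rule out the second branch. In fact the second branch occurs. Take $\cC=\cC_A$ with $A=\bbk[x]/(x^2)$ and $\mbM=\mbC_\ceL$ itself, so the hypothesis of the lemma holds trivially. Here $\ceL=\{F\}$ with $F=G_\ceL=A\otimes_\bbk A\otimes_A-$, and $F\circ F\cong F\oplus F$, the summands indexed by the basis $\{1,x\}$ of the middle tensor factor of $A\otimes_\bbk A\otimes_\bbk A$. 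The morphism $\varphi_{1,x}\in\ceK_\ceL$ of \autoref{CAXCellStr} then has $\varphi_{1,x}\circ_H\id_F$ given, with respect to this decomposition, by the matrix $\left(\begin{smallmatrix}0&0\\ \id_F&0\end{smallmatrix}\right)$; hence for any nonzero $N$ the morphism $\mbM(\varphi_{1,x})_N$ has an identity as a matrix component, the evaluation morphism does not annihilate $\ceK_\ceL$, and the $\cC$-stable ideal generated by its image is everything, so $\Phi$ does not exist. The same multiplicity also destroys your fullness argument even where factorisation is granted: since $XG_\ceL\cong X^{\oplus\mbm_{G_\ceL}}$ (\autoref{MM6P29}) with $\mbm_{G_\ceL}=\dim e_{\tti k}A_\tti e_{\tti k}>1$ for any non-semisimple self-injective algebra ($\mbm_{G_\ceL}=2$ in the example), evaluation at $N$ induces $\mbm_{G_\ceL}$ copies of the given equivalence rather than \emph{exactly} the given equivalence, so $\dim\Hom_{\mbM}(\Phi X,\Phi Y)=\mbm_{G_\ceL}^2\dim\Hom_{\mbC_\ceL}(X,Y)$ and the dimension count cannot close. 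In short, no morphism of 2-representations $\mbC_\ceL\to\mbM$ arises by evaluation at an object of $\mbM(\tti)$ whenever $\mbm_{G_\ceL}>1$.

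The missing idea, and the reason the paper can simply cite the second half of \cite[Theorem 18]{mazorchuk2015transitive}, is that the comparison there is made through the \emph{abelianisation}: one evaluates not at an object of $\mbM(\tti)$ but at the simple object of $\ol{\mbM}(\tti)$ corresponding, under the hypothesised equivalence with the cell 2-representation of $\cC_\ceJ$, to $\ttL_{G_\ceL}$. By \autoref{MM1P17a} and \autoref{MM1T43a} (this is precisely where strong regularity enters), each $F\in\ceL$ is then sent to the indecomposable projective $\ttP_F$ with no multiplicities, and $\ceK_\ceL$ genuinely is annihilated (in the $\cC_{A,X}$ model, $\varphi_{a,b}$ with $b\in\rad$ acts as zero on the simple), which yields an equivalence of $\mbC_\ceL$ with the additive subcategory of $\ol{\mbM}$ on these projectives, i.e.\ with $\mbM$, by transitivity. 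This is why the paper develops the machinery of \autoref{MM1P12} through \autoref{MM1T43a} before this lemma. Your closing remark about locality is correct as far as it goes, but moot: locality can only transport a correct finitary argument, and the argument you propose to transport is not the one in \cite{mazorchuk2015transitive}, nor is it correct.
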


Hence we have:

\begin{thm}\label{LocWFBig} Any simple transitive 2-representation of $\cC$ is equivalent to a cell 2-representation of $\cC$.
	\begin{proof} This is an immediate consequence of applying \autoref{MM5T18b} to the result of \autoref{MM5T18a}.\end{proof}\end{thm}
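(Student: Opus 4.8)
The plan is to assemble the theorem from the two structural reductions already established, funnelling an arbitrary simple transitive 2-representation $\mbM$ of $\cC$ down to a $\ceJ$-simple quotient where the classification of \autoref{MM5ByMMMTAX} applies, and then lifting the conclusion back up. Since the hard conceptual work has been distributed across the preceding lemmas, the proof of the theorem itself should be purely a matter of chaining them together in the right order.

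First I would invoke \autoref{MM5T18a}: since $\cC$ is strongly regular locally weakly fiat, $\mbM$ has an apex $\ceJ$ (a strongly regular $\cJ$-cell), $\mbM$ factors through $\cC_{\not\leq\ceJ}$, and the restriction $\mbM^\ceJ_{\not\leq\ceJ}$ to $\cC^\ceJ_{\not\leq\ceJ}$ is again simple transitive. In this smaller category $\ceJ$ is the unique maximal $\cJ$-cell, so the setup matches the hypotheses under which the $\ceJ$-simple analysis of the previous subsections was carried out. Next I would pass to the $\ceJ$-simple quotient: by \autoref{MM2L18} there is a unique 2-ideal whose quotient $\cC_\ceJ$ is $\ceJ$-simple, and the restriction $\mbM_\ceJ$ remains transitive (because $\ker(\mbM)\subseteq\ceJ$, and every object of $\ceM$ is a summand of some $GFN$ whose constituent summands of $GF$ all lie in $\ceJ$, hence in $\cC_\ceJ$). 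By \autoref{MM3T13} the $\ceJ$-simple category $\cC_\ceJ$ is biequivalent to some $\cC_{A,X}$, and by \autoref{MM5ByMMMTAX} every non-zero simple transitive 2-representation of $\cC_{A,X}$ is equivalent to a cell 2-representation; consequently $\mbM_\ceJ$ is equivalent to $(\mbC_\ceL)_\ceJ$ for some left cell $\ceL\subseteq\ceJ$.

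Finally, I would feed this back through \autoref{MM5T18b}: since $\mbM$ is simple transitive and $\mbM_\ceJ$ is equivalent to a cell 2-representation of $\cC_\ceJ$, that lemma yields directly that $\mbM$ itself is equivalent to a cell 2-representation of $\cC$, which is the assertion to be proved.

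The hard part will \emph{not} be this final assembly, which is essentially bookkeeping, but rather lies in what the cited lemmas must guarantee: that the cell-2-representation structure is genuinely preserved as one moves between $\cC$, $\cC^\ceJ_{\not\leq\ceJ}$ and $\cC_\ceJ$, and that (simple) transitivity survives the restriction and quotient operations. These are precisely the compatibility statements \autoref{Cell2RepResPartial}, \autoref{Cell2RepRes1} and \autoref{Cell2RepRes2}, together with the transitivity-preservation built into \autoref{MM5T18a} and the transitivity of $\mbM_\ceJ$ verified above. Once those ingredients are in hand, the theorem follows formally as an immediate consequence of applying \autoref{MM5T18b} to the output of \autoref{MM5T18a}.
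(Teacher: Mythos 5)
Your proposal is correct and follows exactly the paper's route: it applies \autoref{MM5T18a} to reduce to the $\ceJ$-simple quotient, identifies that quotient with some $\cC_{A,X}$ via \autoref{MM3T13} and classifies via \autoref{MM5ByMMMTAX}, and then lifts back through \autoref{MM5T18b}. The only difference is presentational — the paper houses the intermediate steps (transitivity of $\mbM_\ceJ$, the biequivalence, and the classification) in the surrounding text of the subsection rather than in the theorem's proof, which is stated simply as combining \autoref{MM5T18a} and \autoref{MM5T18b}.
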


\section{An Application: Cyclotomic 2-Kac--Moody Algebras}\label{2KMApp}

We present an application for this theory, which is a much larger class of cyclotomic 2-Kac--Moody algebras than have previously been accessible by this style of theory.

\subsection{Khovanov--Lauda--Rouquier Algebras}\label{KLRAlg}

We review some notation for Khovanov--Lauda--Rouquier (KLR) algebras. The original constructions in this subsection come from \cite{khovanov2009diagrammatic} and \cite{rouquier20082}, but the specific notation below is based on the notation found in \cite{kang2012categorification} and \cite{hong2002introduction}, which works better for our setup. Let $$(A,P,\Pi=\{\alpha_i|i\in I\},P^{\vee}, \Pi^{\vee}=\{\alpha_i^{\vee}|i\in I\})$$ be a Cartan datum. We denote the set of dominant integral weights by $P^{+}$. Given an algebraically closed field $\bbk$, let $R(n)$ denote the KLR algebra of degree $n$ over $\bbk$ associated to the above Cartan datum, as defined in e.g. \cite[Section 3]{kang2012categorification} Section 3.\par

We divide $I^n$ into disjoint subsets by choosing some weight $\beta=\sum_{i=1}^n \alpha_i$ and setting $I^\beta=\{\nu\in I^n|\alpha_{\nu_1}+\dots+\alpha_{\nu_n}=\beta\}$. Notating $e(\beta)=\sum_{\nu\in I^\beta} e(\nu)$, we thus define $R(\beta)=R(n)e(\beta)$. Finally, we define $$e(\beta,i)=\sum_{\nu\in I^{\beta+\alpha_i},\:\nu_{n+1}=i} e(\nu)\in R(\beta+\alpha_i),$$ $$e(i,\beta)=\sum_{\nu\in I^{\beta+\alpha_i},\nu_1=1} e(\nu)\in R(\beta+\alpha_i).$$ We can apply a $\bbZ$-grading on $R(n)$ via $\deg e(\nu)=0$, $\deg x_ke(\nu)=(\alpha_{\nu_k}|\alpha_{\nu_k})$ and $\deg \tau_le(\nu)=-(\alpha_{\nu_l}|\alpha_{\nu_{l+1}})$. We denote the grading shift of degree $i$ of a module $M$ by $M\llbracket i\rrbracket$, where $(M\llbracket i\rrbracket)_j=M_{j-i}$.\par

For this paper we will be working with cyclotomic KLR algebras. Let $\Lambda\in P^+$. For $1\leq k\leq n$, we can define $a^\Lambda(x_k)=\sum_{\nu\in I^n} x_k^{\langle h_{\nu_k}, \Lambda\rangle}e(\nu)\in R(n)$.

\begin{defn} The \emph{cyclotomic Khovanov--Lauda--Rouquier algebra} $R^\Lambda(\beta)$ of weight $\beta$ at $\Lambda$ is defined as the quotient algebra $R^\Lambda(\beta)=\frac{R(\beta)}{R(\beta)a^\Lambda(x_1)R(\beta)}$, with $R^\Lambda(0)=\bbk$. \end{defn}

\begin{defn}\label{EFDefns} For each $i\in I$, we define functors \[E_i^\Lambda:R^\Lambda(\beta+\alpha_i)\text{-}\Mod\to R^\Lambda(\beta)\text{-}\Mod\]\[F_i^\Lambda:R^\Lambda(\beta)\text{-}\Mod\to R^\Lambda(\beta+\alpha_i)\text{-}\Mod\] by \[E_i^\Lambda(N)=e(\beta,i)N=e(\beta,i)R^\Lambda(\beta+\alpha_i) \otimes_{R^\Lambda(\beta+\alpha_i)} N \]\[E^\Lambda_i(f)=e(\beta,i)f=\id_{e(\beta,i)R^\Lambda(\beta+\alpha_i)}\otimes f\] and \[F_i^\Lambda(M)=R^\Lambda(\beta+\alpha_i)e(\beta,i)\otimes_{R^\Lambda(\beta)}M\] \[F_i^\Lambda(g)=\id_{R^\Lambda(\beta+\alpha_i)e(\beta,i)}\otimes g.\] We notate $e(\beta,i)R^\Lambda(\beta+\alpha_i)$ as $\mathfrak{e}_i$ and $R^\Lambda(\beta+\alpha_i)e(\beta,i)$ as $\mathfrak{f}_i$.\end{defn}

As noted in \cite[Section 4]{kang2012categorification}, we often apply a grading shift of $\llbracket1-\langle h_i,\Lambda-\beta\rangle\rrbracket$ (or scalar multiples thereof) to $\mathfrak{e}_i$, which will allow us later to consider adjunctions in a fashion that are compatible with the grading, which will be very useful in \autoref{gr2KM}.

We now give a useful result from \cite{kang2012categorification}:

\begin{thm}[{\cite[Theorems 5.1, 5.2]{kang2012categorification}}]\label{FEtoEF} Let $\lambda=\Lambda-\beta$. We have the following isomorphisms of $R^\Lambda(\beta)$ modules. For $j\neq i$, there exists a natural isomorphism $F^\Lambda_j E_i^\Lambda\llbracket-(\alpha_i|\alpha_j)\rrbracket\xrightarrow{\sim} E_i^\Lambda F_j^\Lambda$. If $i=j$ we have one of two cases:\begin{itemize}
	\item If $\langle h_i,\lambda\rangle\geq 0$, $$F_i^\Lambda E_i^\Lambda\llbracket -(\alpha_i|\alpha_i)\rrbracket \oplus\bigoplus_{k=0}^{\langle h_i,\lambda\rangle-1}\bbon_\lambda\llbracket k(\alpha_i|\alpha_i)\rrbracket \xrightarrow{\sim} E_i^\Lambda F_i^\Lambda.$$
	\item If $\langle h_i,\lambda\rangle\leq 0$, $$F_i^\Lambda E_i^\Lambda\llbracket -(\alpha_i|\alpha_i)\rrbracket\xrightarrow{\sim}\bigoplus_{k=0}^{-\langle h_i,\lambda\rangle -1} \bbon_\lambda\llbracket -(k-1)(\alpha_i|\alpha_i)\rrbracket\oplus E_i^\Lambda F_i^\Lambda.$$\end{itemize}\end{thm}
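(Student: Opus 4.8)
The plan is to prove the commutation relations at the level of $(R^\Lambda,R^\Lambda)$-bimodules, since $E_i^\Lambda$ and $F_i^\Lambda$ are given by tensoring with $\mathfrak{e}_i$ and $\mathfrak{f}_i$ and these functors are exact, so that an isomorphism of the composite bimodules yields the claimed isomorphism of functors. Concretely, $E_i^\Lambda F_i^\Lambda$ is tensoring with $\mathfrak{e}_i\otimes_{R^\Lambda(\beta+\alpha_i)}\mathfrak{f}_i$ and $F_i^\Lambda E_i^\Lambda$ with the analogous bimodule one degree lower. The first task would be to set up the biadjunction between $E_i^\Lambda$ and $F_i^\Lambda$: with the grading shift $\llbracket 1-\langle h_i,\Lambda-\beta\rangle\rrbracket$ built into $\mathfrak{e}_i$, the two functors are biadjoint up to shift, and the unit and counit $2$-morphisms are assembled from the KLR dot $x$ and crossing $\tau$ generators subject to the cyclotomic relation $a^\Lambda(x_1)=0$. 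These adjunction morphisms are the source of the ``extra'' summands $\bbon_\lambda$, and constructing them with the correct grading shifts is a prerequisite for everything that follows.

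For $j\neq i$ the isomorphism $F_j^\Lambda E_i^\Lambda\llbracket-(\alpha_i|\alpha_j)\rrbracket\xrightarrow{\sim}E_i^\Lambda F_j^\Lambda$ would be produced directly from the intertwining crossing that slides the added $i$-strand past the added $j$-strand. Because $i\neq j$, the relevant KLR quadratic polynomial $Q_{ij}$ is nonzero and the induced bimodule map is invertible, so this crossing is an isomorphism; its degree $-(\alpha_i|\alpha_j)$, read off from $\deg\tau$, records the stated shift. This case is essentially formal once the bimodule description and the $\tau^2$-relation are in hand.

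The substantive case is $i=j$, which categorifies $[E_i,F_i]=(K_i-K_i^{-1})/(q-q^{-1})$. Here I would assemble a single candidate morphism out of two families of $2$-morphisms: the crossing intertwiner $F_i^\Lambda E_i^\Lambda\llbracket-(\alpha_i|\alpha_i)\rrbracket\to E_i^\Lambda F_i^\Lambda$, and, for each $0\le k\le\langle h_i,\lambda\rangle-1$, the map $\bbon_\lambda\llbracket k(\alpha_i|\alpha_i)\rrbracket\to E_i^\Lambda F_i^\Lambda$ given by the adjunction unit followed by the $k$-th power of the dot on the new strand (dualising when $\langle h_i,\lambda\rangle\le 0$). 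The crux is to show that this combined map is an isomorphism, and I would do so by filtering the bimodule $\mathfrak{e}_i\otimes_{R^\Lambda(\beta+\alpha_i)}\mathfrak{f}_i$ so that its subquotients are the image of $F_i^\Lambda E_i^\Lambda$ together with the $\bbon_\lambda$-layers indexed by powers of $x_1$; the relation $a^\Lambda(x_1)=0$ is exactly what truncates this tower of dots at $\langle h_i,\Lambda-\beta\rangle$ and fixes both the number of copies and their grading shifts. The main obstacle lies precisely here: establishing exactness and splitting of this filtration needs the fine structure theory of cyclotomic KLR algebras --- a basis theorem for $R^\Lambda(\beta)$ and a graded-rank computation matching the identity in the graded Grothendieck group. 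Granting that input, the filtration splits because each $\bbon_\lambda$-layer admits a section built from the corresponding counit, and a comparison of graded dimensions forces the candidate morphism to be an isomorphism, giving the claimed decomposition.
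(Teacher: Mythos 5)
First, a framing point: the paper contains no proof of this statement at all --- it is imported verbatim as \cite[Theorems 5.1, 5.2]{kang2012categorification}, so what you are really attempting is a blind reconstruction of Kang--Kashiwara's main technical theorem. Your outline does follow the broad shape of their argument (work at the level of $(R^\Lambda,R^\Lambda)$-bimodules; build maps from dots, crossings and adjunction units/counits; let the cyclotomic relation $a^\Lambda(x_1)=0$ truncate the tower of dots), but two of your steps do not close, and the second is fatal to the proposal as written.

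(i) In the $j\neq i$ case, invertibility of the crossing does not follow from $Q_{ij}\neq 0$. The relation $\tau^2e(\nu)=Q_{\nu_k\nu_{k+1}}(x_k,x_{k+1})e(\nu)$ shows that the composite of the two sideways crossings is multiplication by $Q_{ij}$ evaluated at dots, and in the cyclotomic quotient the dots act nilpotently (all $2$-hom spaces of $\cU_\Lambda$ are finite dimensional). Hence for adjacent $i,j$, where $Q_{ij}$ has vanishing constant term (e.g.\ $Q_{ij}(u,v)=\pm(u-v)$ in the simply-laced case), this composite is nilpotent; degree considerations show the two crossings cannot both be isomorphisms, so no formal argument of this kind can work. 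Proving that the correct one is an isomorphism is genuine work in \cite{kang2012categorification}. (ii) In the $i=j$ case you defer the ``main obstacle'' to ``a basis theorem for $R^\Lambda(\beta)$ and a graded-rank computation matching the identity in the graded Grothendieck group''. That input does not exist independently: for a general symmetrizable Cartan datum, the graded dimension of $R^\Lambda(\beta)$, equivalently of the bimodule $\mathfrak{e}_i\otimes_{R^\Lambda(\beta+\alpha_i)}\mathfrak{f}_i$, is precisely what the cyclotomic categorification theorem delivers, and that theorem is deduced \emph{from} the isomorphisms you are trying to prove. Your argument is therefore circular at exactly the step you flag as hardest. Kang--Kashiwara's proof is engineered to avoid this: the crucial input is that $\mathfrak{e}_i$ and $\mathfrak{f}_i$ are projective as one-sided modules (their Theorem 4.5, which the present paper does invoke, in the proof of \autoref{2KMResTopJ}), and the decomposition is obtained by comparing with the commutator short exact sequence for the non-cyclotomic KLR algebra (where a basis theorem \emph{is} available) and constructing explicit splittings of the resulting sequences of bimodules --- no dimension count over $R^\Lambda(\beta)$ appears anywhere.
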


\subsection{Cyclotomic 2-Kac--Moody Algebras}\label{2KMADefn}

Let $U_q(\Lg)$ be the quantum group associated to a Kac--Moody algebra with $\dot{U}_q(\Lg)$ Lusztig's idempotent completion, let $\Lambda\in P^+$ and let $V(\Lambda)$ be its irreducible highest weight module (see e.g. \cite[Sections 2, 3]{hong2002introduction}for details). We then categorify the endomorphisms of its highest-weight module, following the original definition in \cite{websterknot}, as the 2-category $\cU_\Lambda$:

\begin{itemize}
	\item The objects of $\cU_\Lambda$ are the weights $\lambda$ such that $V(\Lambda)_{\lambda}\neq\{0\}$. Writing $\lambda=\Lambda-\beta$ for some $\beta$, we identify these with (small categories equivalent to) the module categories $R^\Lambda(\beta)$-$\proj$.
	
	\item The 1-morphisms of $\cU_\Lambda$ are direct summands of direct sums of the identity 1-morphisms and of compositions of 1-morphisms isomorphic to functors formed by tensoring with tensor products of (grade-shifts of) the $\mathfrak{e}_i$ and $\mathfrak{f}_i$ as defined in \autoref{EFDefns} (at any weight $\beta$ below $\Lambda$). Following that section, we denote the functor given by tensoring with $\mathfrak{e}_i\llbracket g\rrbracket$ by $E_i^\Lambda\llbracket g\rrbracket$ and the functor given by tensoring with $\mathfrak{f}_i\llbracket g\rrbracket$ as $F_i^\Lambda\llbracket g\rrbracket$.
	
	\item The 2-morphisms are the bimodule homomorphisms between the bimodules that correspond to the 1-morphisms. This implies that, for any $\cU_\Lambda(\lambda,\mu)$, the spaces of 2-morphisms are finite dimensional.\end{itemize}

\begin{defn}\label{C2KMDefn} We call this construction the \emph{cyclotomic 2-Kac--Moody category of weight $\Lambda$} associated to a Kac--Moody algebra $U(\Lg)$.\end{defn}

\begin{thm}\label{2KMFin} $\cU_\Lambda$ is a locally fiat 2-category.
	\begin{proof} We begin by showing that $\cU_\Lambda$ is a locally finitary 2-category. To do this, we wish to show $\cU_\Lambda(\lambda,\mu)\in\LA_\bbk^f$ for all weights $\lambda$ and $\mu$. We already have that the (2-)morphisms form a finite dimensional space and as $R^\Lambda(\beta)$ is indecomposable for all $\beta$, $\bbon_\lambda$ is indecomposable for all $\lambda$. It thus remains to show that there are only finitely many isomorphism classes of indecomposable objects. The objects for this category are generated by products and direct summands of the $E_i^\Lambda\bbon_{\zeta}$ and the $F_i^\Lambda\bbon_{\zeta}$ for arbitrary weights $\zeta$. Let $Q:\lambda\to\mu$ be a general 1-morphism. We wish to show that $Q\in\add(\{F^\Lambda_{i_1}\dots F^\Lambda_{i_l}\bbon_\lambda\}\cup\{F^\Lambda_{j_1}\dots F^\Lambda_{j_m}E^\Lambda_{k_1}\dots E^\Lambda_{k_n}\bbon_\lambda\}\cup\{\delta_{\lambda\mu}\bbon_\lambda\})$ for some choice of $i_x$, $j_y$ and $k_z$.\par
		
		If $Q$ is of the form $M_1E^\Lambda_iF^\Lambda_jM_2\xrightarrow{\sim} M_1E^\Lambda_iF^\Lambda_j\bbon_\epsilon M_2$ for some products $M_1$ and $M_2$ and some weight $\epsilon$, then if $i\neq j$, $M_1E^\Lambda_iF^\Lambda_jM_2\xrightarrow{\sim} M_1F^\Lambda_jE^\Lambda_iM_2$. If $i=j$ then we can use \autoref{FEtoEF} and the fact that composition distributes over direct sums to get one of the two following cases, depending on $\epsilon$:
		\begin{itemize}
			\item If $\langle h_i,\epsilon\rangle\geq 0$, $$M_1E^\Lambda_iF^\Lambda_iM_2\xrightarrow{\sim} M_1F^\Lambda_iE^\Lambda_iM_2\llbracket -(\alpha_i|\alpha_i)\rrbracket\oplus\bigoplus_{k=0}^{\langle h_i,\epsilon\rangle -1} M_1M_2\llbracket k(\alpha_i|\alpha_i)\rrbracket.$$
			\item  If $\langle h_i,\epsilon\rangle\leq 0$, \begin{align*} M_1E^\Lambda_iF^\Lambda_iM_2\oplus\bigoplus_{k=0}^{-\langle h_i,\epsilon\rangle -1} M_1M_2\llbracket -(k-1)(\alpha_i|\alpha_i)\rrbracket\\
			\xrightarrow{\sim} M_1F^\Lambda_iE^\Lambda_iM_2\llbracket -(\alpha_i|\alpha_i)\rrbracket.\end{align*}\end{itemize}
		In the second case, $Q=M_1E^\Lambda_iF^\Lambda_iM_2\in\add(M_1F^\Lambda_iE^\Lambda_iM_2)$, while in the first case $Q\in\add(\{M_1F^\Lambda_iE^\Lambda_iM_2,M_1M_2\})$.\par
		
		Let $R$ be some formal product of the $E_i^\Lambda$ and the $F_i^\Lambda$, which we will notate as $R=T_{j_0}E^\Lambda_{i_1}T_{j_1}E^\Lambda_{i_2}\dots E^\Lambda_{i_n}T_{j_n}$, where each $T_{j_k}$ is a possibly empty product of the $F^\Lambda_i$. If $T_{j_q}=F^\Lambda_{q_1}\dots F^\Lambda_{q_l}$, we let $|T_{j_q}|=l$. Define the finite non-negative integer $\Len(R)=\sum_{m=1}^n \sum_{q=m}^n |T_{j_q}|$, the length of $R$. Note that if $T_{j_k}=0$ for all $k>0$, which corresponds to $R=F^\Lambda_{i_1}\dots F^\Lambda_{i_n}E^\Lambda_{j_1}\dots E^\Lambda_{j_m}$ for $m,n\geq 0$, then $\Len(R)=0$. Further, $$\Len(Q)=\Len(M_1E^\Lambda_iF^\Lambda_jM_2)=\Len(M_1F^\Lambda_jE^\Lambda_iM_2)+1$$ and $\Len(Q)>\Len(M_1M_2)$. Finally, if $\Len(Q)>0$, there exists a subproduct $E^\Lambda_iF^\Lambda_j$ somewhere in $Q$, and we can apply one of the above operations to it. If $\Len(Q)>0$, $Q$ is thus contained in the additive closure of finitely many 1-morphisms of strictly lesser length, and as length is non-negative, proceeding recursively will terminate in finite time, giving the claim.\par
		
		We claim that there are only finitely many $F^\Lambda_{i_1}\dots F^\Lambda_{i_l}\bbon_\lambda:\lambda\to\mu$ and only finitely many $F^\Lambda_{j_1}\dots F^\Lambda_{j_m}E^\Lambda_{k_1}\dots E^\Lambda_{k_n}\bbon_\lambda:\lambda\to\mu$. For the first case, given $F^\Lambda_i\bbon_\lambda$ takes $\lambda$ to $\lambda-\alpha_i$, a simple combinatorial argument shows there can only be finitely many of them from $\lambda$ to $\mu$. \par
		
		For the $F^\Lambda_{j_1}\dots F^\Lambda_{j_m}E^\Lambda_{k_1}\dots E^\Lambda_{k_n}\bbon_\lambda$, write $\lambda=\Lambda-\sum_i b_i\alpha_i$, where all the $b_i$ are non-negative. Then as the $E^\Lambda_i$ move up the poset of weights, by a similar argument to the previous one, there are only finitely many non-zero products $E^\Lambda_{k_1}\dots E^\Lambda_{k_n}\bbon_\lambda:\lambda\to\delta$ with $\Lambda\geq\delta\geq\lambda$. Then by another similar argument, for any such $\delta$ there are only finitely many possible products $F^\Lambda_{j_1}\dots F^\Lambda_{j_m}\bbon_\delta:\delta\to\mu$. Thus there are in total only finitely many $F^\Lambda_{j_1}\dots F^\Lambda_{j_m}E^\Lambda_{k_1}\dots E^\Lambda_{k_n}\bbon_\lambda:\lambda\to\mu$ as we required.\par
		
		Each of these morphisms has a finite number of indecomposable direct summands. We thus find that $\cU(\lambda,\mu)$ can only have finitely many isomorphism classes of indecomposable objects. This gives the locally finitary structure.\par
		
		For the fiat structure, we prove that the 1-morphisms $E_i$ and $F_i$ have adjoints, and the other 1-morphisms will be an immediate consequence through composition. We claim that the adjoint of $E^\Lambda_i\bbon_\lambda\llbracket z\rrbracket$ is $F^\Lambda_i\bbon_{\lambda-\alpha_i}\llbracket z-\frac{(\alpha_i,\alpha_i)}{2}(1+\langle \alpha_i,\lambda\rangle)\rrbracket$ and that the adjoint of $F^\Lambda_i\bbon_\lambda\llbracket z\rrbracket$ is $E^\Lambda_i\bbon_{\lambda+\alpha_i}\llbracket z-\frac{(\alpha_i,\alpha_i)}{2}(1-\langle \alpha_i,\lambda\rangle)\rrbracket$.\par
		
		To see this, consider $F_i^\Lambda=R^\Lambda(\beta+\alpha_i)e(\beta,i)\otimes_{R^\Lambda(\beta)}-$ and ignore grading for the moment. The right adjoint of this is therefore $\Hom_{R^\Lambda(\beta+\alpha_i)}(R^\Lambda(\beta+\alpha_i)e(\beta,i),-)$.  But since $R^\Lambda(\beta+\alpha_i)e(\beta,i)$ is projective over $R^\Lambda(\beta+\alpha_i)$, this is isomorphic to $$\Hom_{R^\Lambda(\beta+\alpha_i)}(R^\Lambda(\beta+\alpha_i)e(\beta,i),R^\Lambda(\beta+\alpha_i))\otimes_{R^\Lambda(\beta+\alpha_i)}-.$$ This is isomorphic to $$\Hom_\bbk(R^\Lambda(\beta+\alpha_i)e(\beta,i),\bbk)\otimes_{R^\Lambda(\beta+\alpha_i)}-$$ because $R^\Lambda(\beta+\alpha_i)$ is symmetric. But by another application of this symmetric property, this is then isomorphic to $e(\beta,i)R^\Lambda(\beta+\alpha_i)\otimes_{R^\Lambda(\beta+\alpha_i)}-=E^\Lambda_i$. Finally, the grading is a consequence of the comment before \autoref{FEtoEF}.\end{proof}\end{thm}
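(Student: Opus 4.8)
The plan is to check the two defining properties in turn: that $\cU_\Lambda$ is locally finitary, and then that it admits the required weak fiat structure. Several ingredients are immediate from the construction. The objects are the weights $\lambda = \Lambda - \beta$ with $V(\Lambda)_\lambda \neq 0$, all lying in $\Lambda - \sum_i \bbZ_{\geq 0}\alpha_i$, so there are at most countably many of them; horizontal composition is additive and $\bbk$-linear because it is composition of functors (tensoring of bimodules); the $2$-morphism spaces are finite dimensional since they are bimodule homomorphisms between finite-dimensional cyclotomic KLR bimodules; and $\bbon_\lambda$ is indecomposable because $R^\Lambda(\beta)$ is an indecomposable algebra. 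The real content of the locally finitary condition is therefore that each $\cU_\Lambda(\lambda,\mu)$ has only finitely many isomorphism classes of indecomposable $1$-morphisms.

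To establish this I would argue by a straightening procedure. An arbitrary $1$-morphism is a summand of a direct sum of formal words in the $E_i^\Lambda$ and $F_i^\Lambda$. Using the commutation isomorphisms of \autoref{FEtoEF}, any occurrence of a subword $E_i^\Lambda F_j^\Lambda$ can be rewritten: when $i \neq j$ it becomes $F_j^\Lambda E_i^\Lambda$ (up to a grading shift), and when $i = j$ it becomes a direct sum of $F_i^\Lambda E_i^\Lambda$ together with strictly shorter words in which both factors are deleted. To see that repeatedly applying this terminates, I would introduce a non-negative integer length function $\Len$ that counts the $E$–$F$ inversions in a word, arranged so that each rewriting step replaces a word by summands whose length is no larger and in which at least one summand has strictly smaller $\Len$. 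Since $\Len$ is bounded below, the recursion bottoms out, expressing the original $1$-morphism in the additive closure of words in normal form $F_{j_1}^\Lambda \cdots F_{j_m}^\Lambda E_{k_1}^\Lambda \cdots E_{k_n}^\Lambda \bbon_\lambda$.

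It then remains to count normal forms. Since each $E_i^\Lambda$ raises the weight by $\alpha_i$ and each $F_i^\Lambda$ lowers it by $\alpha_i$, and the weights of $V(\Lambda)$ occupy a bounded region of the weight lattice, a straightforward combinatorial argument bounds the number of nonzero products $E_{k_1}^\Lambda \cdots E_{k_n}^\Lambda \bbon_\lambda : \lambda \to \delta$ for each intermediate weight $\delta$, and likewise the number of $F_{j_1}^\Lambda \cdots F_{j_m}^\Lambda \bbon_\delta : \delta \to \mu$; multiplying gives finitely many normal forms from $\lambda$ to $\mu$, each with finitely many indecomposable summands, so $\cU_\Lambda(\lambda,\mu) \in \LA_\bbk^f$. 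For the fiat structure I would exhibit the adjunction between $E_i^\Lambda$ and $F_i^\Lambda$, from which adjoints of all composite $1$-morphisms follow by composition. Writing $F_i^\Lambda$ as tensoring with the projective bimodule $\mathfrak{f}_i$, its right adjoint is $\Hom_{R^\Lambda(\beta+\alpha_i)}(\mathfrak{f}_i,-)$; projectivity lets me replace this by tensoring with the dual bimodule, and the symmetric (Frobenius) structure of $R^\Lambda(\beta+\alpha_i)$ identifies that dual with $\mathfrak{e}_i$, i.e.\ with $E_i^\Lambda$. Tracking the grading through these identifications, using the normalisation noted before \autoref{FEtoEF}, yields the precise shifts, and the assignment $E_i^\Lambda \leftrightarrow F_i^\Lambda$ defines the desired weak anti-involution $-^*$.

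The main obstacle is the finiteness of indecomposable $1$-morphisms: the delicate point is to calibrate the length function $\Len$ so that the straightening genuinely terminates, in particular that the $i=j$ case of \autoref{FEtoEF} contributes only summands of strictly smaller length, and then to run the weight-lattice count cleanly on the resulting normal forms.
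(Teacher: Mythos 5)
Your proposal follows essentially the same route as the paper's proof: the same straightening procedure via \autoref{FEtoEF} with an inversion-counting length function (the paper's $\Len(R)=\sum_{m=1}^n\sum_{q=m}^n|T_{j_q}|$ is exactly the count of $E$\tdash$F$ inversions you describe), the same weight-lattice count of normal forms $F^\Lambda_{j_1}\cdots F^\Lambda_{j_m}E^\Lambda_{k_1}\cdots E^\Lambda_{k_n}\bbon_\lambda$, and the same adjunction argument identifying the right adjoint of $F_i^\Lambda$ with $E_i^\Lambda$ via projectivity and the symmetric structure of $R^\Lambda(\beta+\alpha_i)$, with grading shifts from the normalisation preceding \autoref{FEtoEF}. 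The proposal is correct and matches the paper's argument in all essentials.
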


Further, this 2-category will turn out to be strongly regular. However, to prove this we need to extend our definition of a cyclotomic 2-Kac--Moody algebra to a wider setup. This definition is a generalisation of a construction from \cite[Section 7.2]{mazorchuk2015transitive}.

\begin{defn} Choose a set of positive weights $\ul{\Lambda}=\{\Lambda_1,\dots,\Lambda_n\}\subseteq P^+$. Without loss of generality we assume that $\Lambda_i\not\leq\Lambda_j$ for $i\neq j$. We define a 2-category $\cU_{\ul{\Lambda}}$, the \emph{truncated cyclotomic 2-Kac--Moody algebra}, as follows:\begin{enumerate}
		\item The objects of $\cU_{\ul{\Lambda}}$ are ordered pairs $(\beta,i)$ where $\beta\in Q^+$ and $1\leq i\leq n$, modulo an equivalence relation where $(\beta,i)\sim(\gamma,j)$ if $\Lambda_i-\beta=\Lambda_j-\gamma$.
		\item The 1-morphisms of $\cU_{\ul{\Lambda}}$ are the additive closure of (grade shifts of) the identity 1-morphisms and morphisms of the form $E^\Lambda_i$ and $F^\Lambda_i$, as in the cyclotomic 2-Kac--Moody algebra case, with identical relations to that situation.
		\item The 2-morphisms are identical to the single weight definition (\autoref{C2KMDefn}).
\end{enumerate}\end{defn}

We note that this 2-category is well-defined. In addition, using the interchange structure and \autoref{2KMFin} for distinct $F^\Lambda_i$ and $E^\Lambda_j$, if $(\beta, i)$ and $(\gamma, j)$ are objects of $\cC$ such that there does not exist $\Lambda_k$ with both $\Lambda_i-\beta\leq \Lambda_k$ and $\Lambda_j-\gamma\leq \Lambda_k$, then ${\cU_{\ul{\Lambda}}}((\beta, i),(\gamma, j))={\cU_{\ul{\Lambda}}}((\gamma, j),(\beta, i))=0$. We also define the notation $\ul{\Lambda}^p$ as $\ul{\Lambda}^p=\{\lambda|\exists i, \exists\beta, \lambda=\Lambda_i-\beta\}$, the set of weights below at least one of the $\Lambda_i$. We also note that if $\ul{\Lambda}=\{\Lambda_1\}$, then $\cU_{\ul{\Lambda}}=\cU_\Lambda$ as previously defined.\par

That this 2-category is locally weakly fiat follows immediately from the above considerations, since the internal adjoint 1-morphism and adjunction 2-morphisms will remain identical to the traditional case. We will combine this with the following result:

\begin{psn}\label{2KMSR} For any $\ul{\Lambda}$, $\cU_{\ul{\Lambda}}$ is strongly regular.
	
	\begin{proof} We mirror the proof for \cite[Theorem 21]{mazorchuk2015transitive}. We first consider the $\cJ$-cell $\ceJ_{\Lambda_i}$ containing $\bbon_{\Lambda_i}$ for some $\Lambda_i\in\ul{\Lambda}$. If we quotient out by the maximal 2-ideal in $\cU_{\ul{\Lambda}}$ which contains $\id_{\bbon_{\Lambda_i}}$ but not any identity 2-morphisms for a 1-morphism not in $\ceJ_{\Lambda_i}$, the resulting 2-category is equivalent to one of the form $\cU_{\ul{\Theta}}$, where $\ul{\Theta}$ is the unique set of highest weights such that $\ul{\Theta}^p=\ul{\Lambda}^p\setminus\{\Lambda_i\}$ (see \cite[Section 9]{doty2017cellular} for more details). It is thus sufficient to prove that $\ceJ_{\Lambda_i}$ is strongly regular. \par
		
		Let $\ceL$ denote the $\cL$-cell of $\bbon_{\Lambda_i}$. From the proof of \autoref{2KMFin}, any element of $\ceL$ is in the additive closure of 1-morphisms of the form $F^\Lambda_1\dots F^\Lambda_n$ and $F^\Lambda_1\dots F^\Lambda_nE^\Lambda_1\dots E^\Lambda_m$. But since any element of $\ceL$ must have source object $\Lambda_i$ and any morphism of the form $F^\Lambda_1\dots F^\Lambda_nE^\Lambda_i\dots E^\Lambda_m$ with source object $\Lambda_i$ must necessarily be zero (as $\Lambda_i$ is a highest weight in the 2-category), it follows that $\ceL$ consists of direct summands of products of the $F_i$.\par
		
		Let $L$ be an indecomposable object in $R^{\Lambda_i}_0$-$\proj$. Since $R^{\Lambda_i}_0\cong\bbk$, we have that $L\cong\bbk$. By \cite[Theorem 5.7]{rouquier20082} and \cite[Theorem 4.4]{varagnolo2011canonical}, the mapping that takes an $F\in\ceL$ to $FL$ induces a bijection between $\ceL$ and the set of isomorphism classes of indecomposable objects in $\prod_{n\geq 0}{R^{\Lambda_i}_n}\text{-}\proj$. We define two algebras, $A=\bigoplus_{n\geq 0} R^{\Lambda_i}_n$ and $B=\bigoplus_{n\geq 1} R^{\Lambda_i}_n$. Since every element $X\in\ceL$ can be expressed as $X\bbon_{\Lambda_i}$ and as $\bbon_{\Lambda_i}M=0$ for any $M\in B$-$\operatorname{proj}$, it follows that $XM=0$ for any $X\in\ceL$. We now consider the (projective) abelianisation $\ol{\mbC_{\ceL}}$ of the cell 2-representation for $\ceL$.\par
		
		By the construction of the abelianisation, $\ol{\mbC_\ceL}(X)$ can be considered as a functor from $\bbk$-$\modd$ to $R^{\Lambda_i}(\beta)$-$\modd$ for some positive weight $\beta$. This can consequently be considered as an endofunctor of $\bbk\times R^{\Lambda_i}(\beta)$-$\modd$. Further, since the only projective it is non-zero on is $L$, which it must take to an indecomposable projective, it follows from \cite[Lemma 13]{mazorchuk2015transitive} that $\ol{\mbC_\ceL}(X)$ is an indecomposable projective endofunctor. By consideration of sub-categories of the domain and of the range where this functor acts trivially or does not map to respectively, we can indeed say that $\ol{\mbC_\ceL}(X)$ is an indecomposable projective functor from $\bbk$-$\modd$ to $A$-$\modd$. But by this projectivity, for any $Y\in\ceL$, $\ol{\mbC_\ceL}(X\circ Y^*)$ is also indecomposable.\par
		
		We claim that this implies that $X\circ Y^*$ is itself indecomposable. For assume that $X\circ Y^*\cong V\oplus W$ for non-zero $V$ and $W$. Then without loss of generality $\ol{\mbC_\ceL}(W)=0$. But since $\ceJ$ is a maximal 2-sided cell, we must have for every indecomposable summand $W'$ of $W$ that $W'\in\ceJ$. But then by construction, $\ol{\mbC_\ceL}(W')\neq 0$ and hence $\ol{\mbC_\ceL}(W)\neq 0$, a contradiction. The claim follows.\par
		
		Hence the set $\{X\circ Y^*\}$ is a set of indecomposables that forms a $\cD$-cell by construction and hence by \autoref{LFJIsD} is a $\cJ$-cell that contains $\bbon_{\Lambda_i}$, and thus is equal to $\ceJ$. Now fixing $X$ and varying $Y$ clearly gives a $\cR$-cell in $\ceJ$, and fixing $Y$ and varying $X$ gives an $\cL$-cell, and therefore this process must exhaust all such $\cL$- and $\cR$-cells. In particular, the intersection of any $\cL$-cell with any $\cR$-cell is thus a unique element. Thus $\ceJ$ is strongly regular, and the result follows. \end{proof}\end{psn}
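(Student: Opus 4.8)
The plan is to follow the strategy of the proof of \cite[Theorem 21]{mazorchuk2015transitive}, first reducing to the case of a single highest weight and then analysing the relevant $\cJ$-cell directly through its cell 2-representation.

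First I would reduce to a single $\Lambda_i \in \ul{\Lambda}$. Given such a $\Lambda_i$, consider the $\cJ$-cell $\ceJ_{\Lambda_i}$ containing $\bbon_{\Lambda_i}$. Quotienting $\cU_{\ul{\Lambda}}$ by the maximal 2-ideal that contains $\id_{\bbon_{\Lambda_i}}$ but no identity 2-morphism of a 1-morphism outside $\ceJ_{\Lambda_i}$ should yield a 2-category equivalent to some $\cU_{\ul{\Theta}}$ with $\ul{\Theta}^p = \ul{\Lambda}^p \setminus \{\Lambda_i\}$. This permits an inductive argument, so it suffices to prove that each $\ceJ_{\Lambda_i}$ is strongly regular.

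For a fixed $\ceJ_{\Lambda_i}$, I would begin by describing the $\cL$-cell $\ceL$ of $\bbon_{\Lambda_i}$. The length argument in the proof of \autoref{2KMFin}, together with the fact that $\Lambda_i$ is a highest weight (so any product applying an $E^\Lambda_j$ at the source $\Lambda_i$ vanishes), shows that every element of $\ceL$ is a direct summand of a product of the $F^\Lambda_j$. Invoking the categorification results \cite[Theorem 5.7]{rouquier20082} and \cite[Theorem 4.4]{varagnolo2011canonical}, the assignment $X \mapsto XL$, where $L$ is the unique indecomposable in $R^{\Lambda_i}_0$-$\proj \cong \bbk$-$\proj$, induces a bijection between $\ceL$ and the isomorphism classes of indecomposable objects of $\prod_{n\geq 0} R^{\Lambda_i}_n$-$\proj$. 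Passing to the abelianisation $\ol{\mbC_\ceL}$ of the cell 2-representation, each $\ol{\mbC_\ceL}(X)$ is non-zero only on $L$ and takes it to an indecomposable projective; by \autoref{MM5L13} it is therefore an indecomposable projective functor, after suitably restricting source and target to $\bbk$-$\modd$ and $A$-$\modd$ with $A = \bigoplus_{n\geq 0} R^{\Lambda_i}_n$.

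The main obstacle I expect is establishing that $X \circ Y^*$ is indecomposable for all $X, Y \in \ceL$. Projectivity gives that $\ol{\mbC_\ceL}(X \circ Y^*)$ is indecomposable, but one must still exclude a splitting $X \circ Y^* \cong V \oplus W$ in which $\ol{\mbC_\ceL}$ annihilates a summand $W$. Here the maximality of $\ceJ$ is the crucial input: every indecomposable summand $W'$ of $W$ must lie in $\ceJ$, whence $\ol{\mbC_\ceL}(W') \neq 0$ by construction, a contradiction. Granting this indecomposability, the set $\{X \circ Y^* \mid X, Y \in \ceL\}$ forms a single $\cD$-cell containing $\bbon_{\Lambda_i}$, so by \autoref{LFJIsD} it is a $\cJ$-cell, necessarily $\ceJ_{\Lambda_i}$. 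Finally, fixing $X$ and varying $Y$ sweeps out an $\cR$-cell while fixing $Y$ and varying $X$ sweeps out an $\cL$-cell; this parametrisation exhausts all left and right cells and forces each intersection $\ceL' \cap \ceR'$ to consist of a single indecomposable, yielding strong regularity.
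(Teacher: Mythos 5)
Your proposal is correct and follows essentially the same route as the paper's proof: the same reduction to a single highest weight via the 2-ideal quotient, the same description of the $\cL$-cell of $\bbon_{\Lambda_i}$ as summands of products of the $F^\Lambda_j$, the same use of the categorification theorems and the abelianised cell 2-representation to show each $X\circ Y^*$ is indecomposable via maximality of $\ceJ$, and the same concluding $\cD$-cell argument through \autoref{LFJIsD}. The only cosmetic difference is that you cite \autoref{MM5L13} where the paper cites its source \cite[Lemma 13]{mazorchuk2015transitive} directly, which is the same statement.
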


\begin{thm}\label{2KMMain} Every simple transitive 2-representation of a truncated cyclotomic 2-Kac--Moody algebra is equivalent to a cell 2-representation.
	\begin{proof} By \autoref{2KMFin} and \autoref{2KMSR}, a truncated cyclotomic 2-Kac--Moody algebra is a strongly regular locally weakly fiat 2-category. Therefore applying \autoref{LocWFBig} gives the result immediately.\end{proof}\end{thm}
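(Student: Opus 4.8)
The plan is to verify that a truncated cyclotomic 2-Kac--Moody algebra $\cU_{\ul{\Lambda}}$ meets the hypotheses of \autoref{LocWFBig}---that it is a strongly regular locally weakly fiat 2-category---and then to invoke that theorem directly. The final assembly is short; the substantive work lives in the two prerequisite results.

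First I would establish that $\cU_{\ul{\Lambda}}$ is locally weakly fiat. For the single-weight category $\cU_\Lambda$ this is precisely \autoref{2KMFin}, whose proof both verifies the locally finitary structure (via the $\Len$-induction bounding the indecomposable 1-morphisms up to isomorphism) and exhibits internal adjoints for the generating 1-morphisms $E_i^\Lambda$ and $F_i^\Lambda$ and their grade shifts, using that each $R^\Lambda(\beta+\alpha_i)$ is a symmetric algebra; adjoints for arbitrary 1-morphisms then follow by composition. For the truncated case one observes, as in the remarks following the definition of $\cU_{\ul{\Lambda}}$, that the hom-categories between objects lying below distinct incomparable weights vanish, so the 2-category decouples into pieces each behaving like the single-weight situation, and the internal adjoint 1-morphisms and adjunction 2-morphisms are built weight-by-weight exactly as before. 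Hence the (locally) weakly fiat structure transfers to $\cU_{\ul{\Lambda}}$ unchanged.

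Second I would invoke \autoref{2KMSR}, which asserts that $\cU_{\ul{\Lambda}}$ is strongly regular for every choice of $\ul{\Lambda}$. Combining these two facts, $\cU_{\ul{\Lambda}}$ is a strongly regular locally weakly fiat 2-category, and \autoref{LocWFBig} then immediately gives that every simple transitive 2-representation of $\cU_{\ul{\Lambda}}$ is equivalent to a cell 2-representation, as required.

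The genuine obstacle is therefore not in this step but is discharged upstream in \autoref{2KMSR}: establishing strong regularity requires reducing to the $\cJ$-cell $\ceJ_{\Lambda_i}$ of $\bbon_{\Lambda_i}$ by quotienting appropriately, identifying the elements of the relevant $\cL$-cell with products of the $F_i^\Lambda$, bijecting them with indecomposable projectives over $\prod_{n\geq 0} R^{\Lambda_i}_n$ via the categorification theorems of \cite{rouquier20082} and \cite{varagnolo2011canonical}, and then using \autoref{MM5L13} together with \autoref{LFJIsD} to show that the sets $\{X\circ Y^*\}$ form a single $\cJ$-cell whose left-- and right--cell intersections are singletons. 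Granting that input, the proof of \autoref{2KMMain} is purely a matter of confirming the hypotheses and citing \autoref{LocWFBig}.
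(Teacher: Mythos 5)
Your proposal is correct and follows exactly the paper's own argument: cite \autoref{2KMFin} (together with the remark after the definition of $\cU_{\ul{\Lambda}}$ for the transfer of the weakly fiat structure to the truncated case) and \autoref{2KMSR} to conclude that $\cU_{\ul{\Lambda}}$ is a strongly regular locally weakly fiat 2-category, then apply \autoref{LocWFBig}. The additional detail you give about the internal workings of \autoref{2KMSR} is accurate but, as you note, upstream of the statement being proved.
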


\section{A Specialisation To Graded 2-Categories}

We now move to considering the graded setup. It is worth pointing out that we are not considering the full generalisation of allowing infinite-dimensional hom-spaces of 2-morphisms with finite dimensional graded components, as is common in the literature; rather, we are considering locally finitary 2-categories which also have a graded structure (as defined in the following sections). For the rest of the paper, we always take $G$ to be a countable abelian group unless otherwise stated.

\subsection{Initial Definitions}
We start by defining $G$-graded (2-)categories and their $G$-envelopes, following the ideas in \cite[Section 3.5]{brundan2017categorical}. By a $G$-graded ($\bbk$-)vector space, we mean a vector space $A$ with a direct sum decomposition $A=\bigoplus_{g\in G} A_g$. There is a forgetful functor from the category of graded $\bbk$-vector spaces to the category of $\bbk$-vector spaces that forgets the grading. When we refer to isomorphisms of $G$-graded vector spaces, we mean isomorphisms in the latter category under the forgetful functor, unless otherwise indicated. If $A$ is a $G$-graded vector space and $g\in G$, we denote by $A\llbracket g\rrbracket$ the $G$-graded vector space isomorphic to $A$ such that $A\llbracket g\rrbracket_h= A_{h-g}$.

\begin{defn} Let $A$ be a $G$-graded $\bbk$-algebra. We say that $A$ is \emph{$G$-graded-finite dimensional} if it has a $G$-grading $A=\bigoplus_{g\in G} A_g$ such that each $A_g$ is finite dimensional as a $\bbk$-vector space. In particular, this implies that $A_0$ is a finite dimensional $\bbk$-algebra.\end{defn}

\begin{defn} The category $\bbk$-$\Mod_G$ has as objects the $G$-graded $\bbk$-vector spaces, and as morphisms finite linear combinations of homogeneous linear maps of arbitrary degree $g\in G$. We define the full subcategory $\bbk$-$\Mod_G^{\operatorname{gf}}$ to contain the $G$-graded $\bbk$-vector spaces that are $G$-graded-finite dimensional. We let $\bbk$-$\Mod_{G,0}$ and $\bbk$-$\Mod_{G,0}^{\operatorname{gf}}$ denote the subcategories of the above categories with the same objects but with morphisms only those homogeneous linear maps of degree zero.\end{defn}

\begin{defn} We define a \emph{$G$-graded category} to be a category enriched over $\bbk$-$\Mod_{G,0}$.\end{defn}

\begin{defn} We define a \emph{$G$-graded finitary category} to be an additive idempotent complete category enriched over $\bbk$-$\Mod_{G,0}^{\operatorname{gf}}$ with a finite set of isomorphism classes of indecomposable objects.\end{defn}

When $G=\{e\}$ is the trivial group, the above definition is precisely that of a finitary category first defined in \cite{mazorchuk2011cell}.

\begin{defn} Let $\ceC$ be a $G$-graded category. We define its \emph{$G$-envelope} $\hat{\ceC}$ as a category with objects defined formally as symbols of the form $X\llbracket g\rrbracket$ where $X$ is an object of $\ceC$ and $g\in G$. For notational convenience we set $X\llbracket g\rrbracket\llbracket h\rrbracket:=X\llbracket g+h\rrbracket$. We set hom-spaces as $$\Hom_{\hat{\ceC}}(X\llbracket g\rrbracket,Y\llbracket h\rrbracket)\cong\Hom_{\ceC}(X,Y)\llbracket h-g\rrbracket$$ with composition of morphisms is given by the obvious inheritance from $\ceC$. If $\ceC$ is a $G$-graded finitary category then we call $\hat{\ceC}$ a \emph{$G$-finitary category}.\end{defn}

We denote by $\id_{X,g}:X\to X\llbracket g\rrbracket$ the canonical isomorphism for any object $X\in \ceC$ and any $g\in G$, which is homogeneous of degree $-g$ with inverse $\id_{X\llbracket g\rrbracket,-g}$.

\begin{defn} Let $\ceC$ and $\ceD$ be $G$-finitary categories. A functor $F:\ceC\to\ceD$ is a \emph{$G$-graded} functor if it respects the structure of the grading and the envelope. Explicitly, this means the following:\begin{itemize}
		\item For $X$ an object of $\ceC$ and $g\in G$, $F(X\llbracket g\rrbracket)=F(X)\llbracket g\rrbracket$.
		\item For $X$ and $Y$ objects in $\ceC$, $F_{X,Y}:\Hom_\cC(X,Y)\to\Hom_\cD(FX,FY)$ is homogeneous of degree zero; that is, $\deg(F(\alpha))=\deg(\alpha)$ for any homogeneous morphism $\alpha$.\end{itemize}\end{defn}

\begin{defn} Let $\bbk$-$\cCat_G$ denote the category whose objects are $G$-graded categories and whose morphisms are all $G$-graded functors between them. Let $\bbk$-$\cCat_G^{\operatorname{gf}}$ denote the category whose objects are $G$-graded finitary categories and whose morphisms are all $G$-graded functors between them.\end{defn}

\begin{defn} We define a \emph{$G$-graded 2-category} as a category enriched over $\bbk$-$\cCat_G$. Explicitly, it has $G$-graded hom-spaces of 2-morphisms such that horizontal and vertical composition both respect degree. We define a \emph{locally $G$-graded finitary 2-category} to be a category with countably many objects enriched over $\bbk$-$\cCat_G^{\operatorname{gf}}$ such that each identity 1-morphism is indecomposable.\end{defn}

\begin{defn} Let $\cC$ be a $G$-graded 2-category. We define the \emph{$G$-envelope} 2-category $\hat{\cC}$ of $\cC$ by taking the same objects as $\cC$, and defining each hom-category $\hat{\cC}(\tti,\ttj)$ as the $G$-envelope of the category $\cC(\tti,\ttj)$. We further require that composition respects the envelope; that is, for 1-morphisms $X\llbracket g\rrbracket$ and $Y\llbracket h\rrbracket$, $X\llbracket g\rrbracket\circ Y\llbracket h\rrbracket=(X\circ Y)\llbracket g+h\rrbracket$ wherever this makes sense. We also define horizontal and vertical composition of 2-morphisms as the obvious inheritance from composition in $\cC$. If $\cC$ is a locally $G$-graded finitary 2-category, then we say that its $G$-envelope is a \emph{locally $G$-finitary 2-category}.\end{defn}

Again, if we take $G=\{e\}$ to be trivial, a locally $G$-graded finitary or locally $G$-finitary 2-category is just a locally finitary 2-category in the sense of the previous sections.

\begin{defn} Let $\cC$ be a locally $G$-finitary 2-category. If we have a weak object preserving anti-autoequivalence ${-}^*$ such that for any 1-morphism $X\in\cC(\tti,\ttj)$ has natural homogeneous 2-morphisms $\alpha:X\circ X^*\to\bbon_\ttj$ and $\beta:\bbon_\tti\to X^*\circ X$ of degree zero such that $(\alpha\circ_H\id_X)\circ_V (\id_X\circ_H\beta)=\id_X$ and $(\id_{X^*}\circ_H\alpha)\circ_V(\beta\circ_H\id_{X^*})=\id_{X^*}$, then we say that $\cC$ is a \emph{locally weakly $G$-fiat} 2-category. If $-^*$ is a weak involution, we say that $\cC$ is \emph{locally $G$-fiat}.\end{defn}

\subsection{2-Representations And Ideals}\label{Sec2RepIdeal}

We first examine functors for the graded setup, again following \cite{brundan2017categorical}.

\begin{defn} Let $\cC$ and $\cB$ be locally $G$-finitary 2-categories. We say that a strict 2-functor $F:\cC\to\cB$ is a \emph{$G$-graded} strict 2-functor if each component functor $F_{\tti,\ttj}:\cC(\tti,\ttj)\to\cB(F\tti,F\ttj)$ is a $G$-graded functor.\end{defn}

\begin{defn} Let $\cC$ and $\cB$ be locally $G$-finitary 2-categories, with strict $G$-graded 2-functors $P,Q:\cC\to\cB$. We define a \emph{$G$-graded} 2-natural transformation as a 2-natural transformation $\alpha:P\to Q$ such that for each 1-morphism $X\in\cC$, the associated 2-morphism $\alpha_X$ is of degree zero.\end{defn}

We denote by $\LA_\bbk^{G\text{-gf}}$ the 2-category which has as objects $G$-finitary categories, as 1-morphisms $\bbk$-linear additive $G$-graded functors, and as 2-morphisms natural transformations of these. We will also be using the 2-category $\LR_\bbk$ as defined in \autoref{TarCats}.

We now recall the definition of a 2-representation from \cite{mazorchuk2011cell}, and give its specification to this setup.

\begin{defn}\label{Gr2RepDefn} Let $\cC$ be a locally $G$-finitary 2-category. We define a \emph{$G$-finitary} 2-representation to be a strict $G$-graded 2-functor from $\cC$ to $\LA_\bbk^{G\text{-gf}}$. An \emph{abelian} 2-representation is a strict 2-functor from $\cC$ to $\LR_\bbk$.
 \end{defn}

We retain the notation for 2-representations, principal 2-representations etc. from the locally finitary case. There are other concepts that we have previously defined that still apply to our case - we retain the concepts of $\cL$-, $\cR$-, $\cD$- and $\cJ$-orders and cells, as well as the concept of strongly-regular cells. Ideals in 2-representations and 2-ideals also apply with no issues.

\subsection{Degree Zero Sub-2-Categories}\label{SeccC0}

We wish to generalise the idea of a coalgebra 1-morphism in the 2-category and the related theory to locally $G$-finitary 2-categories. However, in general the method of abelianisation given in \cite{mackaay2016simple} is not guaranteed to give an abelian category. Explicitly, as was shown in \cite{freyd1966representations}, the process of injective (respectively projective) abelianisation given in \cite[Section 3]{mackaay2016simple} results in an abelian (2-)category if and only if the original (2-)category has weak kernels (respectively weak cokernels).\par

We instead consider locally \emph{restricted} $G$-finitary 2-categories; that is, locally $G$-finitary 2-categories where the hom-spaces of 2-morphisms are not only graded-finite dimensional, but also finite dimensional in totality. In this case the 2-categories are simply locally finitary 2-categories, but with extra structure on the hom-spaces of 2-morphisms.\par

Let $\ceC$ be a $G$-finitary category. We define a subcategory $\ceC_0$ by taking the objects of $\ceC_0$ to be the same as the objects of $\ceC$ but taking the morphisms to be only those morphisms of $\ceC$ that are homogeneous of degree zero. Let $\cC$ be a locally $G$-finitary 2-category with a $G$-finitary 2-representation $\mbM$. We define a sub-2-category $\cC_0$ to have the same objects as $\cC$, and we set the hom-categories to be $\cC_0(\tti,\ttj)=(\cC(\tti,\ttj))_0$ for all objects $\tti,\ttj\in\cC$. We note that this implies that the 1-morphisms of $\cC_0$ are also the same as those of $\cC$. Further, it is still the case that $\bbon_\tti$ is an indecomposable 1-morphism for each object $\tti\in\cC$. However $\cC_0$ is \emph{not} a locally finitary 2-category - since we can no longer guarantee that $F\cong F\llbracket g\rrbracket$ for any non-zero $g$ as the canonical isomorphism $\id_{F,g}$ is of non-zero degree, in general $\ceC_0$ has infinitely many isomorphism classes of indecomposable 1-morphisms. This will turn out to be a surmountable problem.\par

Given a $G$-finitary 2-representation $\mbM$ of $\cC$, we define the 2-representation $\mbM_0$ of $\cC_0$ to by setting $\mbM_0(\tti)=(\mbM(\tti))_0$ for all objects $\tti$ of $\cC_0$. This can be naturally viewed as a 2-representation of $\cC_0$. Given a 1-morphism $F\llbracket g\rrbracket\in\cC_0$, we define the functor $\mbM_0(F\llbracket g\rrbracket) $ as the restriction of $\mbM(F\llbracket g\rrbracket)$ to $\mbM_0(\tti)$. This can be done since by the definition of a $G$-finitary 2-representation $$\mbM(F\llbracket g\rrbracket)_{M,N}:\Hom_{\mbM}(M,N)\to\Hom_\mbM(FM\llbracket g\rrbracket,FN\llbracket g\rrbracket)$$ is a homogeneous map of degree zero, and thus restricts to a morphism between the degree zero subspaces. Further, as horizontal and vertical composition in $\cC$ are also defined to preserve degree, the restriction of $\mbM(\alpha)$ for $\alpha:F\to G$ a homogeneous 2-morphism of degree zero to $\mbM_0(\alpha):\mbM_0(F)\to\mbM_0(G)$ is also well defined. We also notate $\ceM_0:=\coprod_{\tti\in\cC_0} \mbM(\tti)_0$ in a similar fashion to $\ceM$.\par

\begin{psn} Assume that $\mbM$ is transitive. The 2-representation $\mbM_0$ of $\cC_0$ is also a transitive 2-representation.
	\begin{proof} Let $M,N\in\ceM_0$ with $N$ indecomposable. It is sufficient for the proof to find $F\in\cC$ such that $N$ is isomorphic to a summand of $FM$ in $\ceM_0$, i.e. via an isomorphism that is homogeneous of degree zero. Since $\mbM$ is transitive, we have some $\bar{F}\in\cC$ such that there exists an isomorphism $\bar{\varphi}:\bar{F}M\to N\oplus N''$ in $\ceM$ for some $N''\in\ceM$. We therefore have morphisms $\iota:N\to\bar{F}M$ and $\sigma:\bar{F}M\to N$ such that $\sigma\iota=\id_N$.\par
		
		Setting the homogeneous decompositions $\iota=\sum_{g\in G} \iota^g$ and $\sigma=\sum_{g\in G} \sigma^g$, we see from comparison of degree with $\id_N$ that for $g\neq 0$, $\sum_{h\in G}\sigma^h\iota^{g-h}=0$ while $\sum_{h\in G} \sigma^h\iota^{-h}=\id_N$. Since $N$ is indecomposable, by standard nilpotent arguments there exists a $g\in G$ such that $\sigma^g\iota^{-g}$ is an automorphism.\par
		
		We thus have some $\rho\in\End_\ceM(N)$ such that $\rho\sigma^g\iota^{-g}=\id_N$. But as $\id_N$ and $\sigma^g\iota^{-g}$ are homogeneous of degree zero, so too must $\rho$ be. Thus we have homogeneous morphisms $\iota^{-g}:N\to \bar{F}M$ and $\rho\sigma^g:\bar{F}M\to N$ in $\ceM$ such that $\rho\sigma^g\iota^{-g}=\id_N$. We now set $F=\bar{F}\llbracket g\rrbracket$. We thus have the corresponding morphisms $\iota_g^{-g}:N\to FM$ and $\rho\sigma_{-g}^g:FM\to N$ that are homogeneous of degree zero and such that $\rho\sigma_{-g}^g\iota_g^{-g}=\id_N$. The result follows.
\end{proof}\end{psn}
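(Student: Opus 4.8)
The plan is to reduce the transitivity of $\mbM_0$ to a single summand statement and then to \emph{upgrade} a splitting coming from the transitivity of $\mbM$ to a splitting by morphisms that are homogeneous of degree zero. Since each $\mbM_0(\tti)=(\mbM(\tti))_0$ is an idempotent-complete Krull--Schmidt category with finite-dimensional hom-spaces, the condition $\mbG_{\mbM_0}(\{M\})\simeq\mbM_0$ holds for every non-zero $M\in\ceM_0$ as soon as every indecomposable $N\in\ceM_0$ is a direct summand of $\mbM_0(F)M$ for some $1$-morphism $F$ of $\cC_0$ --- with the crucial proviso that the splitting idempotent be homogeneous of degree zero, so that it genuinely lives in $\ceM_0$. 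This last proviso is the entire content of the proposition.

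First I would invoke transitivity of $\mbM$ over the ambient $2$-category $\cC$. This yields a $1$-morphism $\bar F$ together with morphisms $\iota\colon N\to\bar F M$ and $\sigma\colon\bar F M\to N$ in $\ceM$ satisfying $\sigma\iota=\id_N$, but \emph{a priori} $\iota$ and $\sigma$ are inhomogeneous. Writing their homogeneous decompositions $\iota=\sum_{g}\iota^g$, $\sigma=\sum_{g}\sigma^g$ and comparing degrees in the relation $\sigma\iota=\id_N$ (which is homogeneous of degree $0$), the degree-$0$ part gives $\sum_{h}\sigma^h\iota^{-h}=\id_N$, an identity among degree-zero endomorphisms, i.e. an identity in $\End_{\ceM_0}(N)$.

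Next I would extract a good homogeneous component. As $N$ is indecomposable and $\End_{\ceM_0}(N)$ is a finite-dimensional $\bbk$-algebra, it is local, so a finite sum of endomorphisms equal to the unit $\id_N$ cannot have all of its summands in the radical; hence some $\sigma^g\iota^{-g}$ is an automorphism of $N$, whose inverse $\rho$ is then necessarily of degree $0$. The final move exploits that $\cC_0$ retains \emph{all} grading shifts as $1$-morphisms: setting $F=\bar F\llbracket g\rrbracket$, the pieces $\iota^{-g}$ and $\sigma^g$ reinterpret as degree-zero morphisms $N\to\mbM_0(F)M$ and $\mbM_0(F)M\to N$, and the relation $(\rho\sigma^g)\circ\iota^{-g}=\id_N$ exhibits $N$ as a degree-zero direct summand of $\mbM_0(F)M$, completing the reduction.

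The step I expect to be the main obstacle --- and the only genuinely non-formal one --- is exactly the passage from the inhomogeneous splitting in $\ceM$ to a homogeneous one in $\ceM_0$. Its resolution hinges on combining the local structure of $\End(N)$ (to single out one homogeneous component of the composite that is already invertible) with the presence of the shift $\llbracket g\rrbracket$ inside $\cC_0$ (to absorb the residual degree $g$ into the $1$-morphism rather than into a $2$-morphism). The one book-keeping point to check carefully is that shifting by $\llbracket g\rrbracket$ really does convert $\iota^{-g}$ and $\sigma^g$ into degree-zero morphisms into and out of $\mbM_0(F)M=(\bar F M)\llbracket g\rrbracket$; this follows directly from the definition of the hom-spaces of the $G$-envelope and of the grading shift on $1$-morphisms.
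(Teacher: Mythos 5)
Your proposal is correct and follows essentially the same argument as the paper's proof: invoke transitivity of $\mbM$, take homogeneous decompositions of the splitting morphisms, compare degrees in $\sigma\iota=\id_N$, use indecomposability of $N$ (locality of its endomorphism ring) to extract an invertible component $\sigma^g\iota^{-g}$, and absorb the residual degree into the shifted $1$-morphism $\bar F\llbracket g\rrbracket$. The only differences are cosmetic, such as your explicit remark that the degree-zero splitting idempotent is the entire content of the statement.
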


\begin{psn} Let $\ceC$ be a restricted $G$-finitary category. Then $\ceC_0$ has weak kernels and weak cokernels.
	\begin{proof} Let $p:X\to Y$ be a morphism in $\ceC_0$. Consider the full subcategory $\ceC_{0,p}$ of $\ceC_0$ closed under isomorphisms and generated by $$\operatorname{add}\{X,Y,H\llbracket g\rrbracket\, |\, H\in\ceC_0\text{ indecomposable}, g\in G, \Hom_{\ceC_0}(Y,H\llbracket g\rrbracket)\neq 0\}.$$ Since the total dimension of hom-spaces in $\ceC$ is finite, given any indecomposable $H\in\ceC$ there are only finitely many $g\in G$ such that $\Hom_{\ceC}^g(Y,H)\neq 0$, and hence only finitely many $g\in G$ such that $\Hom_{\ceC_0}(Y,H\llbracket g\rrbracket)\neq 0$. Since $\ceC$ has only finitely many $G$-orbits of isomorphism classes of indecomposables and since $X$ and $Y$ each have only finitely many indecomposable summands, $\ceC_{0,p}$ contains only finitely many isomorphism classes of indecomposable 1-morphisms. It is additive and idempotent complete by construction, and as a subcategory of a category with finite dimensional hom-spaces it also has finite dimensional hom-spaces. Since it also inherits being $\bbk$-linear, $\ceC_{0,p}$ is actually a finitary category.\par
		
		There thus exists a weak cokernel $\operatorname{wcoker} p:Y\to L$ of $p$ in $\ceC_{0,p}$. We claim that $\operatorname{wcoker} p$ is a weak cokernel of $p$ in $\ceC_0$. For let $m:Y\to K$ be a morphism in $\ceC_0$ such that $mp=0$. If $m=0$, then we clearly have the zero morphism $L\to K$ satisfying the weak cokernel diagram. If $m\neq 0$, then by the definition of $\ceC_{0,p}$ we have $m\in\ceC_{0,p}$. Thus as $\operatorname{wcoker} p$ is a weak cokernel in $\ceC_{0,p}$, we have a morphism $q:L\to K$ satisfying the weak cokernel diagram, which also satisfies the diagram in $\ceC_0$. Hence $p$ does indeed have a weak cokernel and thus $\ceC_0$ has weak cokernels. The weak kernel case is precisely dual to the above argument, and the result follows.\end{proof}\end{psn}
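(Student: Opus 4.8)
The plan is to construct weak cokernels by localising around the given morphism to a genuinely finitary subcategory, computing a weak cokernel there, and then checking that it remains weak-universal in all of $\ceC_0$; weak kernels will follow by a dual argument. The point to keep in mind throughout is that $\ceC_0$ is \emph{not} itself finitary: because the canonical isomorphism $\id_{X,g}$ is homogeneous of non-zero degree for $g\neq 0$, the shifts $H\llbracket g\rrbracket$ of a single indecomposable $H$ are pairwise non-isomorphic in $\ceC_0$, so $\ceC_0$ has infinitely many isomorphism classes of indecomposables and one cannot simply invoke the existence of weak (co)kernels for finitary categories.

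Fix a morphism $p\colon X\to Y$ in $\ceC_0$. First I would pass to the full, additive, idempotent-complete subcategory $\ceC_{0,p}$ of $\ceC_0$ generated by $X$, $Y$ and by all shifts $H\llbracket g\rrbracket$ of indecomposables $H$ with $\Hom_{\ceC_0}(Y,H\llbracket g\rrbracket)\neq 0$. The crucial input here is the \emph{restricted} hypothesis: each total hom-space $\Hom_\ceC(Y,H)$ is finite dimensional, so for each indecomposable $H$ only finitely many shifts $g$ make $\Hom_{\ceC_0}(Y,H\llbracket g\rrbracket)$ non-zero. Combined with the fact that $\ceC$ has only finitely many isomorphism classes of indecomposables (so their shifts form finitely many $G$-orbits), and that $X,Y$ have only finitely many summands, this shows $\ceC_{0,p}$ has only finitely many isomorphism classes of indecomposables. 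As a full, additive, idempotent-complete, $\bbk$-linear subcategory of a category with finite-dimensional hom-spaces it is therefore finitary, and finitary categories have weak cokernels (being equivalent to $A\text{-}\proj$ for a finite-dimensional $\bbk$-algebra $A$, which admits weak cokernels by the duality with $\proj\text{-}A$, the latter having weak kernels). Let $c\colon Y\to L$ be a weak cokernel of $p$ in $\ceC_{0,p}$.

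The step I expect to carry the real content is verifying that $c$ remains a weak cokernel in the much larger category $\ceC_0$. Given any test morphism $m\colon Y\to K$ in $\ceC_0$ with $mp=0$, I would decompose $K=\bigoplus_i K_i$ into indecomposables and inspect the components $m_i\colon Y\to K_i$. Every non-zero $m_i$ forces $\Hom_{\ceC_0}(Y,K_i)\neq 0$, so $K_i$ lies in $\ceC_{0,p}$ by construction; hence $m$ factors through $K'=\bigoplus_{m_i\neq 0}K_i\in\ceC_{0,p}$ as $Y\xrightarrow{m'}K'\hookrightarrow K$ with $m'p=0$, and the weak-cokernel property of $c$ inside $\ceC_{0,p}$ then yields $q'\colon L\to K'$ with $q'c=m'$; post-composing with the inclusion gives the required factorisation in $\ceC_0$. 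This establishes weak cokernels. For weak kernels I would run the same argument in $\ceC_0^{\opp}=(\ceC^{\opp})_0$, using that $\ceC^{\opp}$ is again a restricted $G$-finitary category, so that weak cokernels there are exactly weak kernels in $\ceC_0$; concretely this just replaces the generating condition by $\Hom_{\ceC_0}(H\llbracket g\rrbracket,X)\neq 0$. The only subtlety to watch is keeping every factoring morphism homogeneous of degree zero throughout, so that all constructions genuinely take place in $\ceC_0$ and not merely in $\ceC$.
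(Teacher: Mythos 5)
Your proposal is correct and follows essentially the same route as the paper's proof: localise to the finitary subcategory $\ceC_{0,p}$ generated by $X$, $Y$ and the shifts $H\llbracket g\rrbracket$ with $\Hom_{\ceC_0}(Y,H\llbracket g\rrbracket)\neq 0$, use the restricted hypothesis to get finitely many indecomposables, take a weak cokernel there, show it persists in $\ceC_0$, and dualise for weak kernels. Your treatment of the test morphism $m\colon Y\to K$ (factoring through the sum $K'$ of indecomposable summands receiving non-zero components) is in fact slightly more careful than the paper's terse claim that $m$ lies in $\ceC_{0,p}$, and your justification that finitary categories admit weak cokernels via the duality $A\text{-}\proj\simeq(\proj\text{-}A)^{\opp}$ fills a step the paper leaves implicit.
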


\begin{cor}\label{GradAbelian} Let $\cC$ be a locally restricted $G$-finitary 2-category. Then $\cC_0$ has weak cokernel 2-morphisms and weak kernel 2-morphisms.\end{cor}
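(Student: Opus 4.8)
The plan is to deduce the corollary directly from the preceding proposition by working one hom-category at a time. Recall that a weak cokernel (respectively weak kernel) 2-morphism of a 2-morphism $p\colon F\to G$ in $\cC_0$ is precisely a weak cokernel (respectively weak kernel) of $p$ when $p$ is regarded as an ordinary morphism in the hom-category $\cC_0(\tti,\ttj)$, where $\tti$ and $\ttj$ are the source and target objects of the 1-morphisms $F$ and $G$. Thus it suffices to show that each such hom-category possesses all weak kernels and weak cokernels.

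First I would check that for each pair of objects $\tti,\ttj\in\cC$, the hom-category $\cC(\tti,\ttj)$ is a restricted $G$-finitary category. This is immediate from unwinding the definition of a locally restricted $G$-finitary 2-category: its hom-categories are $G$-finitary by the definition of a locally $G$-finitary 2-category, and the additional restrictedness hypothesis is exactly the requirement that the hom-spaces of 2-morphisms --- that is, the hom-spaces of these hom-categories --- be finite dimensional in totality, not merely graded-finite dimensional.

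Next I would observe that, by the very definition of the degree-zero sub-2-category, $\cC_0(\tti,\ttj) = (\cC(\tti,\ttj))_0$. Applying the preceding proposition with $\ceC = \cC(\tti,\ttj)$ then yields that $(\cC(\tti,\ttj))_0$, and hence $\cC_0(\tti,\ttj)$, has weak kernels and weak cokernels. Since this holds for every pair $\tti,\ttj$, the 2-category $\cC_0$ has weak kernel 2-morphisms and weak cokernel 2-morphisms, as required.

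There is no genuine obstacle here: the entire content of the result lives in the preceding proposition, and the corollary is obtained simply by noting that weak (co)kernels of 2-morphisms are computed within hom-categories and that each hom-category of a locally restricted $G$-finitary 2-category is a restricted $G$-finitary category. The only point demanding any care is the bookkeeping of the two layers of enrichment, ensuring that ``finite dimensional in totality'' at the 2-category level corresponds exactly to the restrictedness hypothesis required by the proposition at the hom-category level.
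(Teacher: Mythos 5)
Your proposal is correct and is exactly the argument the paper intends: the corollary is stated without proof precisely because, as you observe, weak (co)kernels of 2-morphisms are computed in hom-categories, each hom-category $\cC(\tti,\ttj)$ of a locally restricted $G$-finitary 2-category is a restricted $G$-finitary category, and $\cC_0(\tti,\ttj)=(\cC(\tti,\ttj))_0$, so the preceding proposition applies directly. Your extra bookkeeping about the two layers of enrichment is accurate and fills in the only detail the paper leaves implicit.
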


\begin{cor} Let $\cC$ be a locally restricted $G$-finitary 2-category with a $G$-finitary 2-representation $\mbM$. Let $\ceM=\coprod_{\tti\in\cC}\mbM(\tti)$. Then $\ul{\cC_0}$ is an abelian 2-category and $\ul{\ceM_0}$ is an abelian category.

\begin{proof} This is a direct consequence of applying the preproof to \cite[Theorem 4]{freyd1964abelian} to \autoref{GradAbelian}.\end{proof}\end{cor}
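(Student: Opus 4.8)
The plan is to reduce both assertions to the single general fact that underlies the cited references: the fan Freyd abelianisation $\ul{(-)}$ of an additive $\bbk$-linear category (as constructed in \autoref{fincelldef}) is abelian precisely when the original category admits weak cokernels, for the injective version, or weak kernels, for the projective version. This is exactly the content of the argument establishing \cite[Theorem 4]{freyd1964abelian}, recorded in the additive setting in \cite{freyd1966representations} and already quoted in the discussion opening \autoref{SeccC0}. Since $\ul{(-)}$ is applied hom-category by hom-category, the whole proof amounts to checking that the relevant degree-zero categories satisfy this hypothesis and that the construction is compatible with the ambient 2-structure.

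First I would treat $\ul{\cC_0}$. By \autoref{GradAbelian}, $\cC_0$ has weak kernel and weak cokernel 2-morphisms; that is, each hom-category $\cC_0(\tti,\ttj)$ is an additive $\bbk$-linear category possessing both weak kernels and weak cokernels. Feeding each such hom-category into Freyd's criterion shows that every $\ul{\cC_0(\tti,\ttj)}$ is abelian. It then remains only to observe that the horizontal 2-structure lifts: a $1$-morphism of $\cC$ induces, by horizontal composition, an additive functor between the appropriate $\cC_0(\tti,\ttj)$, and since $\ul{(-)}$ is functorial on additive categories this extends to the fan abelianisations while remaining $\bbk$-linear and additive in each variable. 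Hence $\ul{\cC_0}$ is a 2-category each of whose hom-categories is abelian, i.e. an abelian 2-category.

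For $\ul{\ceM_0}$ I would argue the same way, but with the input coming from the Proposition immediately preceding \autoref{GradAbelian} (that the degree-zero part of a restricted $G$-finitary category has weak kernels and weak cokernels), applied now to each component $\mbM(\tti)$ rather than to a hom-category. This endows $\ceM_0=\coprod_{\tti}\mbM(\tti)_0$ with weak kernels and weak cokernels, whereupon Freyd's criterion yields that $\ul{\ceM_0}$ is abelian.

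The step I expect to be the main obstacle is supplying this last hypothesis for $\ceM_0$. One must first confirm that each $\mbM(\tti)$ is genuinely a \emph{restricted} $G$-finitary category — that its hom-spaces are finite-dimensional in total, and not merely graded-finite — so that the Proposition is applicable; this is precisely the point at which the restrictedness of $\cC$ has to be transported across the $G$-finitary 2-representation $\mbM$. A secondary, essentially routine, point is that the weak (co)kernels produced in the proof of that Proposition are constructed inside a finitary subcategory $\ceC_{0,p}$, so one must check, exactly as there, that they persist as weak (co)kernels in all of $\ceM_0$. Granting these two verifications, both abelianness statements follow uniformly from the one Freyd criterion, which is why the result is properly a corollary rather than a fresh argument.
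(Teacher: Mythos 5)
Your proposal is correct and takes essentially the same route as the paper, whose entire proof is a one-line application of the ``preproof'' of \cite[Theorem 4]{freyd1964abelian} (i.e.\ Freyd's weak-(co)kernel criterion for the fan abelianisation) to \autoref{GradAbelian}. The extra care you take over $\ul{\ceM_0}$ --- invoking the proposition preceding \autoref{GradAbelian} componentwise on the $\mbM(\tti)$ and checking that restrictedness is transported across the 2-representation --- is a verification the paper leaves entirely implicit, so your version is, if anything, slightly more complete than the paper's own argument.
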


As a cautionary note, the author knows of no reasonable way to give $\ul{\cC}$ or $\ul{\mbM}$ a graded structure that is compatible with the gradings on $\cC$ and $\mbM$. This is why abelian 2-representations of locally $G$-finitary 2-categories are ungraded in \autoref{Gr2RepDefn}, and why various results below refer to component 2-morphisms of 2-morphisms in the abelianisation, rather than the 2-morphisms themselves.

\subsection{Grading Coalgebras}

For this section let $\cC$ be a locally restricted $G$-finitary 2-category and let $\mbM$ be a $G$-finitary 2-representation of $\cC$. Choose $T\in\mbM(\ttj)$ and $S\in\mbM(\tti)$. Following \cite{mackaay2016simple} and \autoref{MMMTGen} we construct a functor (which we denote $\Gamma$) from $\cC(\tti,\ttj)$ to $\bbk$-$\modd$ which takes $F$ to $\Hom_\ceM(T,FS)$, and a functor $\Gamma_0$ from $\cC_0(\tti,\ttj)$ to $\bbk$-$\modd$ which takes $F$ to $\Hom_{\ceM_0}(T,FS)$. We can extend these uniquely to left-exact functors $\ul{\Gamma}$ and $\ul{\Gamma_0}$ from $\ul{\cC}(\tti,\ttj)$ and $\ul{\cC_0}(\tti,\ttj)$ respectively to $\bbk$-$\modd$.\par

We now introduce the equivalent of the representative 1-morphisms in \cite[Section 4.1]{mackaay2016simple}. Since $\ul{\Gamma_0}$ is left exact, by \cite[Section 1.8]{grothendieck1971revetements}, it is pro-representable; that is, a small filtered colimit of representable functors. However by definition $\ul{\cC_0}(\tti,\ttj)$ has enough injectives and the functor category is closed under small filtered colimits, and thus the functor is in fact representable. We denote this representative by $[S,T]_0$.\par

We have the following analogy of \cite[Lemma 4.2]{mackaay2016simple} that we will find useful:

\begin{lem}\label{Repble0} For any $H\in\coprod_{\tti\in\cC_0}\ul{\cC_0}(\tti,\ttj)$,  $$\Hom_{\ul{\mbM_0}}(T,HS)\cong\Hom_{\ul{\cC_0}(\tti,\ttj)}([S,T]_0,H)$$ in $\bbk$-$\modd$.
	\begin{proof} Take $H\in\ul{\cC_0}(\tti,\ttj)$. Then $H$ has an exact sequence $H\hookrightarrow F_1\to F_2$ where $F_i\in\cC_0(\tti,\ttj)$ for both $i$. We have left exact functors $\Hom_{\ul{\mbM_0}}(T,-S)$ and $\Hom_{\ul{\cC}}([S,T]_0,-)$ and a diagram\newline
		\xymatrix{\Hom_{\ul{\mbM_0}}(T,HS) \ar@{^{(}->}[d] & \Hom_{\ul{\cC_0}}([S,T]_0,H) \ar@{^{(}->}[d] \\ \Hom_{\mbM_0}(T,F_1S)\ar[d] \ar[r]^-\sim & \Hom_{\ul{\cC_0}}([S,T]_0,F_1) \ar[d] \\ \Hom_{\mbM_0}(T,F_2S) \ar[r]^-\sim & \Hom_{\ul{\cC_0}}([S,T]_0,F_2) }\newline of vector spaces. The result then follows from an application of the five lemma.
\end{proof}\end{lem}

The functor $\ul{\Gamma}$ is representable by \cite{mackaay2016simple}, with representative 1-morphism $[S,T]$.

\begin{psn}\label{ACoalga} If $S=T$, then $\tti=\ttj$ and $A^S=[S,S]$ has the structure of a coalgebra 1-morphism in $\ul{\cC}(\tti,\tti)$ and $A_0^S=[S,S]_0$ has the structure of a coalgebra 1-morphism in $\ul{\cC_0}(\tti,\tti)$.
	\begin{proof} The first result is \autoref{LFMMMTL5} and the second result is mutatis mutandis the first.
		
\end{proof}\end{psn}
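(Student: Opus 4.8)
The plan is to treat the two assertions separately. The equality $\tti=\ttj$ is immediate: since $S\in\mbM(\tti)$ and $T\in\mbM(\ttj)$, the hypothesis $S=T$ forces $\tti=\ttj$, so that $[S,S]$ and $[S,S]_0$ genuinely live in the endomorphism categories $\ul{\cC}(\tti,\tti)$ and $\ul{\cC_0}(\tti,\tti)$ respectively. The first coalgebra claim is then exactly the content of \autoref{LFMMMTL5}, so no new work is needed there; the substance lies entirely in the degree-zero statement.

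For the degree-zero claim I would replay the construction in the proof of \autoref{LFMMMTL5} verbatim, replacing the finitary adjunction isomorphism used there by the isomorphism supplied by \autoref{Repble0} specialised to $T=S$, namely
$$\Hom_{\ul{\mbM_0}}(S,HS)\cong\Hom_{\ul{\cC_0}(\tti,\tti)}([S,S]_0,H)$$
natural in $H\in\ul{\cC_0}(\tti,\tti)$. First, taking $H=[S,S]_0$ and tracing $\id_{[S,S]_0}$ across this isomorphism produces a coevaluation $\coev_S^0:S\to[S,S]_0S$ in $\ul{\mbM_0}$. Whiskering on the left by $[S,S]_0$ and composing gives
$$S\xrightarrow{\coev_S^0}[S,S]_0S\xrightarrow{[S,S]_0\coev_S^0}[S,S]_0[S,S]_0S,$$
and applying \autoref{Repble0} with $H=[S,S]_0[S,S]_0$ transports this to a comultiplication $\Delta_S^0:[S,S]_0\to[S,S]_0[S,S]_0$. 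For the counit I would use $\Hom_{\ul{\mbM_0}}(S,S)=\Hom_{\ul{\mbM_0}}(S,\bbon_\tti S)\cong\Hom_{\ul{\cC_0}(\tti,\tti)}([S,S]_0,\bbon_\tti)$ and take $\epsilon_S^0$ to be the image of $\id_S$; here $\bbon_\tti$ is a legitimate object of $\ul{\cC_0}(\tti,\tti)$, since the identity 1-morphism survives intact in the degree-zero sub-2-category.

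It then remains to verify the coassociativity and counit axioms for $(\Delta_S^0,\epsilon_S^0)$. As in \autoref{LFMMMTL5}, these should follow formally from the naturality of the representing isomorphism of \autoref{Repble0}: both sides of each axiom are morphisms out of $[S,S]_0$ which, after transport along the adjunction, correspond to the same iterated composite of coevaluations out of $S$, and hence must agree. The one point that I expect to require genuine care — and the main obstacle in making the "mutatis mutandis" slogan rigorous — is confirming that \autoref{Repble0} is natural enough in $H$ to be compatible with left whiskering by 1-morphisms of $\ul{\cC_0}$, so that the transport used to build $\Delta_S^0$ commutes with the transports used in checking the axioms; this is precisely the ingredient that drives the finitary argument, and once it is in hand the axiom check is the same diagram chase as in \autoref{LFMMMTL5}, whence the result.
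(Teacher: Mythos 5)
Your proposal is correct and follows essentially the same route as the paper: the first claim is exactly \autoref{LFMMMTL5}, and the degree-zero claim is obtained by rerunning that construction with the representability isomorphism of \autoref{Repble0} in place of the ungraded adjunction, which is precisely what the paper's ``mutatis mutandis'' refers to. Your spelled-out version (coevaluation, comultiplication, counit, and the naturality check for the axioms) is simply the explicit form of the paper's one-line proof.
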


\subsection{The Main Results}
Let $\cC$, $\cC_0$, $A^S$ and $A_0^S$ be as in the previous section. The reason we wish to study $A_0^S$ is that we do not a priori know anything about the component degrees of the internal 2-morphisms for the counit and comultiplication 2-morphisms for $A^S$. However, we know by definition the corresponding component 2-morphisms for $A_0^S$ have to be homogeneous of degree zero. We will show that we can take $A^S$ to be precisely $A_0^S$ with its counit and comultiplication 2-morphisms. We use the natural inclusion of $\cC_0$ into $\cC$ to state the following lemma:

\begin{lem}\label{cC0Sum} For any $F\in\cC(\tti,\tti)$ and $H\in\ul{\cC_0}(\tti,\tti)$, in $\bbk$-$\modd$ $$\Hom_{\ul{\cC}}(H,F)\cong\bigoplus_{g\in G}\Hom_{\ul{\cC_0}}(H,F\llbracket g\rrbracket).$$
	\begin{proof} Let $H=(X,k,Y_i,\alpha_i)$, using the injective fan Freyd abelianisation notation from \autoref{fincelldef}. Then as $F=(F,0,0,0)$, a morphism from $H$ to $F$ in $\ul{\cC}$ is of the form $[(p,0)]$ with $p:X\to F$ a morphism in $\cC$ and the equivalence relation is spanned by those $p$ such that there exist $q_i:Y_i\to F$ with $\sum q_i\alpha_i=p$. We let $p=\sum_{g\in G} p_g$ for some finite sum. Assume that $[(\tilde{p},0)]$ is equivalent to $[0]$ with morphisms $q_i$ as specified. Since $H\in\ul{\cC_0}(\tti,\tti)$, the $\alpha_i$ are homogeneous of degree zero. In particular, if we decompose each $q_i$ as $q_i=\sum_{h\in G} q_{i,h}$, then by comparison of degree if follows that $\tilde{p}_g=\sum_i q_{i,g}\alpha_i$. Therefore if $[(p,0)]$ is a general morphism from $H$ to $F$, then $[(p,0)]=\sum_{g\in G} [(p_g,0)]$ where the equivalence relation is spanned by those $p_g$ such that there exist $q_{g,i}:Y_i\to F$ homogeneous of degree $g$ such that $p_g=\sum_i q_{i,g}\alpha_i$.\par
		
		We thus define the map $$\bigoplus_{g\in G}\Hom_{\ul{\cC_0}}(H,F\llbracket g\rrbracket)\to\Hom_{\ul{\cC}}(H,F)$$ by taking $\sum_{g\in G} [(p_g,0)]$ to $[(\sum_{g\in G} p_g,0)]$. This is clearly a vector space homomorphism, and the above working shows that this map is well-defined and injective. If we have some $[(p,0)]\in\Hom_{\ul{\cC}}(H,F)$, then as $[(p,0)]=\sum_{g\in G}[(p_g,0)]$, and as any $p_g:H\to F$ corresponds to a degree zero 2-morphism $p_g:H\to F\llbracket g\rrbracket$, the assocaited 2-morphism $\sum_{g\in G}[(p_g,0)]\in\bigoplus_{g\in G}\Hom_{\ul{\cC_0}}(H,F\llbracket g\rrbracket)$ maps to $[(p,0)]$, the map is surjective and we have the required isomorphism.\end{proof}\end{lem}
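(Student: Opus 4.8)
The plan is to compute both sides directly from the explicit description of the injective fan Freyd abelianisation (\autoref{fincelldef}), exploiting that $H$, lying in $\ul{\cC_0}$, has structure maps that are homogeneous of degree zero. Write $H=(X,k,Y_i,\alpha_i)$ with each $\alpha_i:X\to Y_i$ a degree-zero morphism of $\cC_0$, and regard $F$ as the pure fan $(F,0,0,0)$. Since $F$ has no leaves, the matrix part of any morphism $H\to F$ is forced to vanish, so a morphism in $\ul{\cC}$ is represented by a pair $(p,0)$ with $p:X\to F$ an \emph{arbitrary} morphism of $\cC$, subject only to the relation that $(p,0)\sim 0$ precisely when $p=\sum_i q_i\alpha_i$ for some $q_i:Y_i\to F$. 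The identical description holds for $\Hom_{\ul{\cC_0}}(H,F\llbracket g\rrbracket)$, except that there the head map and the $q_i$ are required to be degree-zero morphisms into $F\llbracket g\rrbracket$, equivalently homogeneous morphisms into $F$.

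First I would fix, for each $g\in G$, the canonical identification between homogeneous morphisms $X\to F$ and degree-zero morphisms $X\to F\llbracket g\rrbracket$ in $\cC_0$. Because $\cC$ is restricted, $\Hom_\cC(X,F)$ is finite dimensional, so every $p:X\to F$ has a finite homogeneous decomposition $p=\sum_{g\in G}p_g$. I would then define the candidate comparison map
$$\bigoplus_{g\in G}\Hom_{\ul{\cC_0}}(H,F\llbracket g\rrbracket)\longrightarrow\Hom_{\ul{\cC}}(H,F),\qquad \sum_{g}[(p_g,0)]\longmapsto\Big[\Big(\textstyle\sum_g p_g,0\Big)\Big],$$
which is visibly $\bbk$-linear, and establish that it is an isomorphism.

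The crux — and the only place the hypotheses really bite — is that the defining equivalence relation splits across the grading. Here I would use that the $\alpha_i$ are homogeneous of degree zero, which holds precisely because $H\in\ul{\cC_0}$: for any $q_i:Y_i\to F$ with homogeneous decomposition $q_i=\sum_g q_{i,g}$, composition with $\alpha_i$ preserves the homogeneous grading, so $q_i\alpha_i=\sum_g q_{i,g}\alpha_i$ is again a homogeneous decomposition. Comparing components then shows that a relation $p=\sum_i q_i\alpha_i$ holds in $\cC$ if and only if the corresponding relation $p_g=\sum_i q_{i,g}\alpha_i$ holds separately in each graded piece. This single observation does all the work at once: it makes the comparison map well defined (relations on the summands assemble to a relation on the total), injective (if $\sum_g p_g$ factors through the $\alpha_i$ then so does each homogeneous piece, so each summand class is already zero), and surjective (a general representative $(p,0)$ is the image of $\sum_g[(p_g,0)]$).

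I expect the remaining work to be purely bookkeeping: checking that each homogeneous component, read as a degree-zero map into the appropriate shift $F\llbracket g\rrbracket$, lands in exactly the slot of the direct sum the construction demands, and that the equivalence imposed on each summand is the one inherited from $\ul{\cC_0}$. No genuine difficulty should arise beyond this, precisely because both hypotheses are essential and each is used exactly once: $F$ being a pure $1$-morphism kills the leaf-matrix data, and $H\in\ul{\cC_0}$ forces the $\alpha_i$ to have degree zero so that multiplying by them preserves homogeneous degree. Were either relaxed, the degreewise splitting of the relation would break down.
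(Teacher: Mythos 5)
Your proposal is correct and follows essentially the same route as the paper's own proof: both describe morphisms $H\to F$ in the Freyd abelianisation as classes $[(p,0)]$ modulo factorisation through the degree-zero structure maps $\alpha_i$, observe that this relation splits degreewise precisely because the $\alpha_i$ are homogeneous of degree zero, and use the same comparison map $\sum_g[(p_g,0)]\mapsto[(\sum_g p_g,0)]$ to conclude. Your explicit appeal to restrictedness to justify finiteness of the homogeneous decomposition is a welcome clarification of a point the paper leaves implicit.
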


We can now give one of the main results of the paper.

\begin{thm}\label{AN0EqAN} $A^S\cong A^S_0$ in $\ul{\cC}$. In addition, we can choose a representative of the isomorphism class of $A^S$ in $\ul{\cC}$ such that, when considered as a coalgebra 1-morphism, its coalgebra and comultiplication 2-morphisms have components homogeneous of degree zero.
	\begin{proof} For $F\in\cC(\tti,\tti)$ or $\cC_0(\tti,\tti)$, \newline
		$$\Hom_{\ul{\mbM}}(S,FS)\cong\Hom_{\mbM}(S,FS)$$ and similarly $$\Hom_{\ul{\mbM_0}}(S,FS)\cong\Hom_{\mbM_0}(S,FS).$$ By the definition of the grading on $\mbM$, $$\Hom_{\mbM}(S,FS)\cong\bigoplus_{g\in G}\Hom_{\mbM_0}(S,F\llbracket g\rrbracket S).$$ But by the definition of $A^S_0$, $$\bigoplus_{g\in G}\Hom_{\mbM_0}(S,F\llbracket g\rrbracket S)\cong\bigoplus_{g\in G} \Hom_{\ul{\cC_0}}(A^S_0,F\llbracket g\rrbracket).$$ Applying \autoref{cC0Sum} for $H=A^S_0$, we have that $\Hom_{\mbM}(S,FS)\cong\Hom_{\ul{\cC}}(A^S_0,F)$ for all $F\in\cC(\tti,\tti)$. But by the definition of the representative, $$\Hom_{\mbM}(S,FS)\cong\Hom_{\ul{\cC}}(A^S,F)$$ and $A^S$ is unique with this property up to isomorphism, hence $A^S\cong A^S_0$ as required.\par Taking $A^S_0$ of the isomorphism class of $A^S$, we have the following diagram of vector spaces: \newline \xymatrix{\Hom_{\mbM_0}(S,S)\ar[rr]^{\sim} \ar[d] & & \Hom_{\ul{\cC_0}}(A^S_0,\bbon_\tti) \ar[d] \\ \Hom_{\mbM}(S,S) \ar[rr]^{\sim} & & \Hom_{\ul{\cC}}(A^S_0,\bbon_\tti)}\newline where the vertical arrows are the natural inclusions and the horizontal arrows are the representation isomorphisms. By choice of $A^S_0$, this diagram is strictly commutative. Taking $\id_S$ in $\Hom_{\mbM_0}(S,S)$, its image along the top path is the image of $\epsilon_0$ under the natural injection (i.e. $\epsilon_0$ considered as a 2-morphism in $\ul{\cC}$) while the image under the lower path is the counit of $A^S_0$ as a coalgebra in $\ul{\cC}$. It follows that this counit is equal to $\epsilon_0$, and thus has components (or more accurately, non-zero component) homogeneous of degree zero. By constructing similar diagrams for the coevaluation and hence comultiplication 2-morphisms, the second claim follows.\end{proof}\end{thm}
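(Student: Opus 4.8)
The plan is to prove the isomorphism by a representability argument and then read off the grading of the structure 2-morphisms from the construction of the representative. For the first claim I would show that $A^S_0$, regarded as an object of $\ul{\cC}(\tti,\tti)$ through the inclusion $\cC_0\hookrightarrow\cC$, represents the same left-exact functor $\ul{\Gamma}$ that $A^S=[S,S]$ represents. Fixing $F\in\cC(\tti,\tti)$, I would assemble the chain of isomorphisms $\Hom_{\mbM}(S,FS)\cong\bigoplus_{g\in G}\Hom_{\mbM_0}(S,F\llbracket g\rrbracket S)\cong\bigoplus_{g\in G}\Hom_{\ul{\cC_0}}(A^S_0,F\llbracket g\rrbracket)\cong\Hom_{\ul{\cC}}(A^S_0,F)$, where the first isomorphism is the homogeneous decomposition supplied by the $G$-grading on $\mbM$, the second is \autoref{Repble0} applied with $T=S$ to each homogeneous summand, and the third is \autoref{cC0Sum} with $H=A^S_0$. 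Since $\Hom_{\ul{\cC}}(A^S,F)\cong\Hom_{\ul{\mbM}}(S,FS)\cong\Hom_{\mbM}(S,FS)$ by the defining property of the representative $A^S$, this produces an isomorphism $\Hom_{\ul{\cC}}(A^S,F)\cong\Hom_{\ul{\cC}}(A^S_0,F)$ for every $F\in\cC(\tti,\tti)$.

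I would then check that this chain is natural in $F$ (a degree-$d$ 2-morphism simply shifts the grading index of the summands compatibly on both sides), so that $A^S_0$ represents $\ul{\Gamma}$ on the full subcategory $\cC(\tti,\tti)$ of $\ul{\cC}(\tti,\tti)$. Because $\Hom_{\ul{\cC}}(A^S_0,-)$ and $\ul{\Gamma}$ are both left exact and every object of $\ul{\cC}(\tti,\tti)$ admits an injective copresentation by objects of $\cC(\tti,\tti)$, the isomorphism extends to all of $\ul{\cC}(\tti,\tti)$ exactly as in the five-lemma argument of \autoref{Repble0}. Hence $A^S$ and $A^S_0$ represent the same functor, and Yoneda gives $A^S\cong A^S_0$.

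For the second claim I would transport the coalgebra structure of $A^S$ along this isomorphism so that $A^S_0$ is the chosen representative, and then identify the transported counit and comultiplication with the intrinsic $\ul{\cC_0}$-coalgebra structure of $A^S_0$ from \autoref{ACoalga}. Recall from the construction in \autoref{LFMMMTL5} that the counit of $A^S$ is the image of $\id_S$ under $\Hom_{\mbM}(S,S)\cong\Hom_{\ul{\cC}}(A^S,\bbon_\tti)$, so after transport it is the image of $\id_S$ under $\Hom_{\mbM}(S,S)\cong\Hom_{\ul{\cC}}(A^S_0,\bbon_\tti)$; meanwhile the counit $\epsilon_0$ of $A^S_0$ in $\ul{\cC_0}$ is the image of $\id_S$ under $\Hom_{\mbM_0}(S,S)\cong\Hom_{\ul{\cC_0}}(A^S_0,\bbon_\tti)$ and is a morphism of $\ul{\cC_0}$, hence of degree zero. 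I would place these in the square\newline
\xymatrix{\Hom_{\mbM_0}(S,S)\ar[rr]^{\sim} \ar[d] & & \Hom_{\ul{\cC_0}}(A^S_0,\bbon_\tti) \ar[d] \\ \Hom_{\mbM}(S,S) \ar[rr]^{\sim} & & \Hom_{\ul{\cC}}(A^S_0,\bbon_\tti)}\newline
whose verticals are the canonical inclusions (the right-hand one being the $g=0$ summand of \autoref{cC0Sum}); chasing $\id_S$ then identifies the transported counit with $\epsilon_0$ viewed in $\ul{\cC}$, whose unique non-zero component is homogeneous of degree zero.

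The comultiplication is treated in parallel, its building block being the coevaluation $\coev_S\colon S\to A^S_0 S$ obtained as the image of $\id_{A^S_0}$ under the representation isomorphism; setting up the analogous square relating the $\ul{\cC_0}$- and $\ul{\cC}$-coevaluation (and thus comultiplication) morphisms and chasing the relevant identity gives the degree-zero statement for $\Delta$. I expect the main obstacle to be the strict commutativity of these squares, i.e. verifying that the representation isomorphism for $A^S_0$ in $\ul{\cC}$ restricts on its degree-zero summand to the representation isomorphism for $A^S_0$ in $\ul{\cC_0}$; this reduces to checking that the isomorphism $A^S\cong A^S_0$ constructed above is compatible with the inclusion $\cC_0\hookrightarrow\cC$, which follows from the naturality of \autoref{cC0Sum} and \autoref{Repble0}. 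Everything else is a routine diagram chase.
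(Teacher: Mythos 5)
Your proposal is correct and follows essentially the same route as the paper: the same chain of isomorphisms (grading decomposition of $\Hom_{\mbM}(S,FS)$, representability of $A^S_0$ via \autoref{Repble0}, and \autoref{cC0Sum} with $H=A^S_0$) combined with uniqueness of representing objects to get $A^S\cong A^S_0$, followed by the same commutative square and chase of $\id_S$ to identify the counit with $\epsilon_0$, and the parallel argument for the coevaluation/comultiplication. Your added care about naturality in $F$ and the extension from $\cC(\tti,\tti)$ to $\ul{\cC}(\tti,\tti)$ merely makes explicit what the paper compresses into ``$A^S$ is unique with this property up to isomorphism.''
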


\subsection{An Application: 2-Kac--Moody Algebras}\label{gr2KM}

We return to considering the locally fiat 2-category $\cU_\Lambda$ found in \autoref{2KMADefn}. The following is an immediate consequence of the definition of a cyclotomic KLR algebra:

\begin{psn} $\cU_\Lambda$ is a locally (restricted) $\bbZ$-finitary 2-category.\end{psn}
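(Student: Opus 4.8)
The plan is to deduce everything from \autoref{2KMFin} together with the $\bbZ$-grading on the cyclotomic KLR algebras recalled in \autoref{KLRAlg}, so that essentially nothing beyond careful bookkeeping remains. First I would invoke \autoref{2KMFin}, which already shows that $\cU_\Lambda$ has countably many objects, that each $\bbon_\lambda$ is indecomposable, that each hom-category $\cU_\Lambda(\lambda,\mu)$ has only finitely many isomorphism classes of indecomposable $1$-morphisms, and that the spaces of $2$-morphisms are finite dimensional. This last fact is exactly the \emph{restricted} condition in the sense of \autoref{SeccC0}, so it only remains to produce the compatible $\bbZ$-grading and to exhibit each hom-category as a $\bbZ$-envelope.

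Next I would equip the $2$-morphism spaces with their grading. Since each $R^\Lambda(\beta)$ is $\bbZ$-graded (via $\deg e(\nu)=0$, $\deg x_k e(\nu)=(\alpha_{\nu_k}|\alpha_{\nu_k})$ and $\deg \tau_l e(\nu)=-(\alpha_{\nu_l}|\alpha_{\nu_{l+1}})$), the bimodules $\mathfrak{e}_i$ and $\mathfrak{f}_i$ of \autoref{EFDefns}, together with their tensor products and grade shifts, are graded bimodules; hence each $2$-morphism space, being a space of bimodule homomorphisms, is $\bbZ$-graded. Because the total dimension is finite each graded component is finite dimensional, and because homogeneous bimodule maps compose to homogeneous maps of added degree, both horizontal and vertical composition are degree-additive. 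Thus each $\cU_\Lambda(\lambda,\mu)$ is additive and idempotent complete, with $\bbZ$-graded $2$-morphism spaces whose graded components are finite dimensional and whose composition is degree-additive.

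The key structural step is to recognise each hom-category as a $\bbZ$-envelope. By construction the $1$-morphisms of $\cU_\Lambda$ are closed under grade shifts $\llbracket g\rrbracket$, and for every $1$-morphism $F$ the canonical map $\id_{F,g}\colon F\to F\llbracket g\rrbracket$ is an isomorphism homogeneous of degree $-g$. I would therefore let $\cU_\Lambda^{\mathrm{gr}}(\lambda,\mu)$ be the $\bbZ$-graded finitary category obtained by choosing one representative from each grade-shift orbit of indecomposables, with full graded bimodule-hom spaces and enriched over $\bbk$-$\Mod_{\bbZ,0}^{\operatorname{gf}}$, and check directly that its $\bbZ$-envelope is $\cU_\Lambda(\lambda,\mu)$: the formula $\Hom_{\hat{\ceC}}(X\llbracket g\rrbracket,Y\llbracket h\rrbracket)=\Hom_{\ceC}(X,Y)\llbracket h-g\rrbracket$ matches the grade-shift behaviour of bimodule homs exactly. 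Assembling these hom-categories with the evident horizontal composition realises $\cU_\Lambda$ as the $G$-envelope of a locally $\bbZ$-graded finitary $2$-category, and the restricted condition has already been noted.

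The main obstacle will be the finiteness bookkeeping in this last step: I must ensure that there are only finitely many \emph{grade-shift orbits} of indecomposable $1$-morphisms, not merely finitely many ungraded isomorphism classes. This follows from \autoref{2KMFin} together with standard graded Krull--Schmidt theory: since the endomorphism ring of an indecomposable $1$-morphism is finite dimensional and local, its degree-zero part is local, and any ungraded isomorphism between indecomposables is, up to a degree-zero isomorphism, a grade shift. Hence ungraded isomorphism classes of indecomposables coincide with grade-shift orbits of their graded refinements, and the finiteness of the former from \autoref{2KMFin} transfers to the latter, completing the verification.
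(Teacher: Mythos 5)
Your first two steps are exactly the argument the paper leaves implicit (the paper offers no proof at all, calling the proposition an immediate consequence of the definition of a cyclotomic KLR algebra): \autoref{2KMFin} supplies countably many objects, indecomposability of the $\bbon_\lambda$, finitely many isomorphism classes of indecomposable $1$-morphisms in each hom-category, and finite-dimensional $2$-morphism spaces (which is precisely the \emph{restricted} condition), while the $\bbZ$-grading on the $R^\Lambda(\beta)$ and on the bimodules $\mathfrak{e}_i$, $\mathfrak{f}_i$ makes every $2$-morphism space a graded vector space with degree-additive horizontal and vertical composition. Up to that point your proposal is correct and is the same route the paper intends.

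The gap is in your ``key structural step''. By the paper's definitions, exhibiting $\cU_\Lambda$ as a locally $\bbZ$-finitary $2$-category means exhibiting it as the $\bbZ$-envelope of a locally $\bbZ$-graded finitary $2$-category, and the underlying graded $2$-category must itself be closed under horizontal composition, since the envelope's composition is forced by $X\llbracket g\rrbracket\circ Y\llbracket h\rrbracket=(X\circ Y)\llbracket g+h\rrbracket$ with $X\circ Y$ again an object of the underlying $2$-category. Your candidate hom-categories $\cU_\Lambda^{\mathrm{gr}}(\lambda,\mu)$, built from one representative of each grade-shift orbit, are not closed under horizontal composition, so there is no ``evident horizontal composition'' to assemble: by \autoref{FEtoEF}, when $\langle h_i,\lambda\rangle\geq 2$ the composition $E^\Lambda_iF^\Lambda_i$ contains the summands $\bbon_\lambda\llbracket k(\alpha_i|\alpha_i)\rrbracket$ for $0\leq k\leq\langle h_i,\lambda\rangle-1$, i.e.\ several distinct nonzero-relative shifts of a single indecomposable, and at most one of these can lie in the additive closure of any system of representatives; the same defect persists after shifting the representatives, so no choice whatsoever is closed under even this one composition. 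The repair is straightforward and also renders your final Krull--Schmidt paragraph unnecessary: in the paper's conventions, isomorphism classes in a graded ($2$-)category are taken with respect to isomorphisms of arbitrary degree --- this is exactly why the paper remarks that $F\cong F\llbracket g\rrbracket$ in $\cC$ via $\id_{F,g}$ but not in $\cC_0$ --- so each \emph{full} hom-category $\cU_\Lambda(\lambda,\mu)$, with all grade shifts of objects already present, is itself a $\bbZ$-graded finitary category, its finiteness being literally \autoref{2KMFin}. These full hom-categories are trivially closed under composition, and the assignment sending formal shifts to actual shifts of bimodules, which is fully faithful by the hom-space computation you already performed and essentially surjective because actual shifts exist in $\cU_\Lambda$, identifies the resulting $\bbZ$-envelope with $\cU_\Lambda$ up to graded biequivalence.
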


We set $R=\bigoplus_{n\geq 0} R^\Lambda(n)$, where $R^\Lambda(n)$ is the cyclotomic KLR algebra of degree $n$.

\begin{psn}\label{2KMResTopJ} The indecomposable 1-morphisms of the form $Q_1\bbon_\Lambda Q_2$, where the $Q_i$ are 1-morphisms in $\cU_\Lambda$, form a maximal $\cJ$-cell in $\cU_\Lambda$.
	\begin{proof} Since the hom-categories of $\cU_\Lambda$ are idempotent complete, an indecomposable 1-morphism of the form $Q_1\bbon_\Lambda Q_2$ is isomorphic to a direct summand of a functor of the form $M_1 e_1\otimes_\bbk e_2 M_2\otimes_ {R^\Lambda}-$, where the $e_i$ are primitive idempotents in $R^\Lambda$ and $M_1$ and $M_2$ are products of the $\mathfrak{e}_i$ and $\mathfrak{f}_i$. In particular, we have that $M_1 e_1=R^\Lambda e_1$ and $e_2 M_2=e_2 R^\Lambda$. It follows that such a bimodule is a projective $R^\Lambda$-$R^\Lambda$-bimodule.\par
		
		Similarly, given some 1-morphism $Q$ that corresponds to a functor $M\otimes_{R^\Lambda}-$, we can choose primitive idempotents $e$ and $f$ of $R^\Lambda$ such that $eMf\neq 0$. Then for any $R^\Lambda e'\otimes_\bbk f'R^\Lambda$ ($e'$ and $f'$ primitive), $$R^\Lambda e'\otimes_\bbk eR^\Lambda\otimes_{R^\Lambda} M\otimes_{R^\Lambda} R^\Lambda f\otimes_\bbk f'R^\Lambda\cong (R^\Lambda e'\otimes_\bbk f'R^\Lambda)^{\oplus m},$$ where $m=\dim eMf$. Thus $M\otimes_{R^\Lambda}-\leq_\cJ R^\Lambda e'\otimes_\bbk f'R^\Lambda\otimes_{R^\Lambda} -$. This shows in particular that any indecomposable 1-morphism isomorphic to a summand of a functor of the form $R^\Lambda e\otimes_\bbk fR^\Lambda\otimes_{R^\Lambda}-$ (for $e$ and $f$ primitive) is $\cJ$-equivalent to any other 1-morphism isomorphic to a functor of the same form.
		
		It is immediate from the previous paragraph that the $\cJ$-cell containing (the indecomposable summands of) functors of the form $R^\Lambda e\otimes_\bbk fR^\Lambda\otimes_{R^\Lambda}-$, for $e$ and $f$ primitive, is maximal, and it remains to show that these functors exhaust the isomorphism classes of members of the $\cJ$-cell. By construction $e(\beta, i) R^\Lambda(\beta+\alpha_i)$ is a projective right $R^\Lambda(\beta+\alpha_i)$-module (and hence a projective right $R^\Lambda$-module), while by \cite[Theorem 4.5]{kang2012categorification} it is a projective left $R^\Lambda(\beta)$-module (and hence a projective left $R^\Lambda$-module). A similar argument gives that $R^\Lambda(\beta+\alpha_i)e(\beta,i)$ is a projective left and projective right $R^\Lambda$-module. As a consequence, every 1-morphism in $\cU_\Lambda$ is both left projective and right projective.\par
		
		Let $M$ be some $(R^\Lambda$-$R^\Lambda)$-bimodule with $Q=M\otimes_{R^\Lambda}-$ such that there exist primitive idempotents $e$ and $f$ with $Q\geq_\cJ R^\Lambda e\otimes_\bbk fR^\Lambda\otimes_{R^\Lambda}-$ in $\cU_\Lambda$. This means that there are some 1-morphisms $T\otimes_{R^\Lambda}-$ and $S\otimes_{R^\Lambda}-$ in $\cU_\Lambda$ such that $M$ is a direct summand of $$T\otimes_{R^\Lambda} R^\Lambda e\otimes_\bbk fR^\Lambda\otimes_{R^\Lambda} S\cong Te\otimes_\bbk fS.$$ Since $T$ is left projective and $S$ is right projective, $M$ thus decomposes over $\bbk$ and is a summand of a bimodule of the form $R^\Lambda e'\otimes_\bbk f'R^\Lambda$ for some primitive idempotents $e'$ and $f'$. Both of the remaining claims follow immediately, and the result is proved.\end{proof}\end{psn}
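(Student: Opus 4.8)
The plan is to identify the indecomposable 1-morphisms factoring through $\bbon_\Lambda$ with tensoring by \emph{projective} $R^\Lambda$-$R^\Lambda$-bimodules, then to show (i) any two such functors are $\cJ$-equivalent, (ii) every 1-morphism lies below them in $\leq_\cJ$, so their class is $\cJ$-maximal, and (iii) nothing outside this collection belongs to the cell.

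First I would pin down the bimodule description. Since $\Lambda$ is a highest weight we have $R^\Lambda(0)\cong\bbk$, so any indecomposable summand of $Q_1\bbon_\Lambda Q_2$ is a summand of a functor $M_1 e_1\otimes_\bbk e_2 M_2\otimes_{R^\Lambda}-$ for primitive idempotents $e_1,e_2$, where $M_1,M_2$ are products of the $\mathfrak{e}_i$ and $\mathfrak{f}_i$. The factorisation through $\Lambda$ forces $M_1 e_1=R^\Lambda e_1$ and $e_2 M_2=e_2 R^\Lambda$, so the underlying bimodule is of the projective form $R^\Lambda e\otimes_\bbk f R^\Lambda$; these are exactly the 1-morphisms to study.

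Next I would prove that any two such projective functors are $\cJ$-equivalent, and simultaneously establish maximality, by a single direct computation. Sandwiching $R^\Lambda e\otimes_\bbk f R^\Lambda$ between $R^\Lambda e'\otimes_\bbk e R^\Lambda$ and $R^\Lambda f\otimes_\bbk f' R^\Lambda$ and tensoring over $R^\Lambda$ yields a nonzero multiple of $R^\Lambda e'\otimes_\bbk f' R^\Lambda$; varying the outer factors shows the $\cJ$-equivalence of all projective functors. The same multiplication, applied to an arbitrary $Q=M\otimes_{R^\Lambda}-$ with $e,f$ chosen so that $eMf\neq 0$, gives $Q\leq_\cJ R^\Lambda e\otimes_\bbk f R^\Lambda\otimes_{R^\Lambda}-$, so this class sits at the top of the $\cJ$-order and is therefore maximal.

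The main obstacle is the exhaustiveness step (iii): showing that $Q\geq_\cJ$ a projective functor already forces $Q$ to be a projective functor, so that no extraneous 1-morphism sneaks into the cell. For this I would use that \emph{every} 1-morphism of $\cU_\Lambda$ is both left and right projective over $R^\Lambda$: the bimodule $e(\beta,i)R^\Lambda(\beta+\alpha_i)$ is projective as a right module by construction and projective as a left $R^\Lambda(\beta)$-module by \cite[Theorem 4.5]{kang2012categorification}, with the symmetric statement for $R^\Lambda(\beta+\alpha_i)e(\beta,i)$, and biprojectivity then propagates through arbitrary products and direct summands. Granting this, if $Q=M\otimes_{R^\Lambda}-$ with $Q\geq_\cJ$ a projective functor, then $M$ is a summand of $Te\otimes_\bbk f S$ for some left-projective $T$ and right-projective $S$; biprojectivity forces this module to split as a tensor product over $\bbk$, exhibiting $M$ as a summand of some $R^\Lambda e'\otimes_\bbk f' R^\Lambda$. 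This identifies the cell precisely with the projective functors and completes the argument.
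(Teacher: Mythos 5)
Your proposal is correct and takes essentially the same route as the paper's own proof: the same identification of indecomposables factoring through $\bbon_\Lambda$ with functors of the form $R^\Lambda e\otimes_\bbk fR^\Lambda\otimes_{R^\Lambda}-$, the same sandwiching computation giving both mutual $\cJ$-equivalence and maximality, and the same exhaustiveness argument via biprojectivity of all 1-morphisms (using \cite[Theorem 4.5]{kang2012categorification}) to split any summand of $Te\otimes_\bbk fS$ as a summand of some $R^\Lambda e'\otimes_\bbk f'R^\Lambda$. No gaps to report.
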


	\begin{lem}\label{2KMGrCell} Every cell 2-representation of $\cU_\Lambda$ is a graded simple transitive 2-representation.
		\begin{proof} Let $\ceJ$ be a $\cJ$-cell in $\cU_\Lambda$. By considering the 2-category $\cU_{\Lambda,\cJ}$ as defined in \autoref{JSimpSuff}, without loss of generality $\ceJ=:\ceJ_\Lambda$ is the highest $\cJ$-cell of $\cU_\Lambda$, which by \autoref{2KMResTopJ} is the indecomposable 1-morphisms that factor over $\Lambda$. Since these all correspond to tensoring with a projective $(R^\Lambda$-$R^\Lambda)$-bimodule, we can embed $\ceJ_\Lambda$ into the 2-category $\cC_R$ associated to $R^\Lambda$ (c.f. \autoref{cCA}; since the $R^\Lambda(\beta)$ are not necessarily basic, see specifically the definition at the end of that section). In fact, we claim that this embedding is an equivalence between $\cC_R$ and $\cU_{\Lambda,\ceJ}$.\par
			
			To see this, by \cite[Theorem 4.4]{varagnolo2011canonical} $\ceJ$ contains $R^\Lambda e\otimes_\bbk\bbk$ for all primitive idempotents $e$ of $R^\Lambda$. But then as $\ceJ$ is strongly regular, it is closed under adjunctions and hence $\bbk\otimes_\bbk fR^\Lambda$ for all primitive idempotents $f$. But then as $\ceJ$ is closed under direct summands of compositions, it also contains $R^\Lambda e\otimes_\bbk fR^\Lambda$ for all primitive idempotents $e$ and $f$. Therefore $\ceJ$ not only embeds into $\cC_R$, but also essentially surjects, giving the required equivalence.\par
			
			This means that any $\cL$-cell of $\ceJ_\Lambda$ will give an equivalent cell 2-representation by \autoref{celldef}, and we choose a particularly useful one. Consider the $\cL$-cell $\cL_\bbk$ which embeds into $\cC_R$ as (the finite dimensional elements of) $\add\{R^\Lambda\otimes_\bbk\bbk\}$. To construct the cell 2-representation, we first construct the transitive (but not necessarily simple transitive) 2-representation $\mbN_{\cL_\bbk}$ (c.f. \autoref{cCA}). This is a graded 2-representation by construction, and thus to show the cell 2-representation is graded it suffices to show that the ideal $\cI$ of the 2-representation we quotient by to form the simple transitive quotient is homogeneous. But given some indecomposable $R^\Lambda e\otimes_\bbk \bbk$ for some idempotent $e$, we recall from \autoref{CAXCellStr} that $\cI$ is generated by morphisms of the form $\varphi_{a,b}:R^\Lambda e\otimes_\bbk\bbk\to R^\Lambda e\otimes_\bbk \bbk$ where $\varphi_{a,b}(e\otimes 1)=eae\otimes b$, with $b\in\rad \bbk $. But $\rad \bbk =0$, and hence $\cI=0$, which is trivially a homogeneous ideal. The result follows.\end{proof}\end{lem}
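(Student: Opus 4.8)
The plan is to reduce to the highest $\cJ$-cell, identify the associated $\ceJ$-simple sub-2-category with one of the form $\cC_R$, and then exploit the fact that all non-trivial cell 2-representations of $\cC_R$ are equivalent in order to compute the cell 2-representation using a particularly convenient left cell on which the grading is transparent. First I would fix a $\cJ$-cell $\ceJ$ of $\cU_\Lambda$ and pass to the $\ceJ$-simple quotient $\cU_{\Lambda,\ceJ}$ constructed in \autoref{JSimpSuff}; by \autoref{2KMResTopJ} this allows me to assume without loss of generality that $\ceJ$ is the highest $\cJ$-cell, consisting of the indecomposable summands of functors $R^\Lambda e\otimes_\bbk fR^\Lambda\otimes_{R^\Lambda}-$ for primitive idempotents $e,f$ of $R=\bigoplus_{n\geq 0}R^\Lambda(n)$. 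Since every such summand is given by tensoring with a projective $(R^\Lambda$-$R^\Lambda)$-bimodule, there is a natural embedding of this data into the 2-category $\cC_R$ of \autoref{cCA}.

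Next I would upgrade this embedding to a biequivalence $\cU_{\Lambda,\ceJ}\simeq\cC_R$. The substantive point is essential surjectivity on isomorphism classes of indecomposable 1-morphisms. To get this I would invoke the projective/cellular-basis result of Varagnolo--Vasserot (\cite{varagnolo2011canonical}) to produce $R^\Lambda e\otimes_\bbk\bbk$ in $\ceJ$ for every primitive idempotent $e$, then use strong regularity of $\ceJ$ (so that $\ceJ$ is closed under $-^*$) to obtain $\bbk\otimes_\bbk fR^\Lambda$, and finally closure of $\ceJ$ under indecomposable summands of horizontal composition to obtain $R^\Lambda e\otimes_\bbk fR^\Lambda$ for all primitive $e,f$. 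Together with the projectivity of the generating bimodules from \autoref{2KMResTopJ}, this exhausts the indecomposables of $\cC_R$ and yields the equivalence.

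With the equivalence in hand, \autoref{celldef} tells me all non-trivial cell 2-representations of $\cC_R$ are equivalent, so I am free to pick the left cell that makes the grading obvious. I would choose the left cell $\ceL$ corresponding to $\add\{R^\Lambda\otimes_\bbk\bbk\}$, form the transitive 2-representation $\mbN_\ceL$ as in \autoref{cCA}, and observe that $\mbN_\ceL$ is graded by construction. It then remains only to check that the ideal $\ceI$ by which one quotients $\mbN_\ceL$ to reach its simple transitive quotient is homogeneous. Here \autoref{CAXCellStr} identifies $\ceI$ as generated by morphisms $\varphi_{a,b}\colon R^\Lambda e\otimes_\bbk\bbk\to R^\Lambda e\otimes_\bbk\bbk$ with $b\in\rad\bbk$, the relevant corner algebra for this cell being $R^\Lambda(0)\cong\bbk$. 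Since $\rad\bbk=0$ we get $\ceI=0$, which is trivially a homogeneous ideal, so the cell 2-representation coincides with the graded 2-representation $\mbN_\ceL$ and the result follows.

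The main obstacle I anticipate is the middle step: establishing that the embedding into $\cC_R$ is genuinely essentially surjective, since this requires combining the projectivity of the generating bimodules with the cellular-basis input and with strong regularity to account for all primitive idempotents at once. Once that equivalence is secured the grading argument is essentially formal, the one genuinely clever point being the choice of the left cell over $R^\Lambda(0)\cong\bbk$, which forces the quotient ideal to vanish and so sidesteps any need to track homogeneity of nonzero radical morphisms.
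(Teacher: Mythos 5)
Your proposal is correct and follows essentially the same route as the paper's proof: reduction to the highest $\cJ$-cell via \autoref{2KMResTopJ} and the $\ceJ$-simple quotient, identification of that cell's 2-category with $\cC_R$ using \cite{varagnolo2011canonical}, strong regularity, and closure under composition, then the choice of the left cell $\add\{R^\Lambda\otimes_\bbk\bbk\}$ so that \autoref{CAXCellStr} forces the quotient ideal to vanish since $\rad\bbk=0$. The only cosmetic difference is that you phrase the closure step as closure under $-^*$ rather than under adjunctions, which amounts to the same thing here.
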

	
	This gives us the following result:
	
	\begin{thm}\label{Gr2KMClass} Any simple transitive 2-representation of $\cU_\Lambda$ is in fact a graded 2-representation, and is equivalent to a cell 2-representation.
		\begin{proof} This is a direct consequence of combining \autoref{2KMGrCell} and \autoref{2KMMain}.\end{proof}\end{thm}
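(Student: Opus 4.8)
The plan is to deduce the theorem by combining the classification result \autoref{2KMMain} with the grading result \autoref{2KMGrCell}, both already in hand, so essentially no new machinery is required. First I would note that $\cU_\Lambda$ is a special case of a truncated cyclotomic 2-Kac--Moody algebra, namely $\cU_{\ul{\Lambda}}$ for the singleton $\ul{\Lambda}=\{\Lambda\}$, as recorded in \autoref{2KMADefn}. Hence \autoref{2KMMain} applies directly: regarding $\cU_\Lambda$ as a (strongly regular, locally weakly fiat) locally finitary 2-category and forgetting the grading, any simple transitive 2-representation $\mbM$ of $\cU_\Lambda$ is equivalent to a cell 2-representation $\mbC_\ceL$ for some $\cL$-cell $\ceL$ of $\cU_\Lambda$. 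This already establishes the second assertion of the theorem.

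It remains to prove the first assertion, that $\mbM$ is in fact a graded 2-representation. Here I would invoke \autoref{2KMGrCell}, which states that every cell 2-representation of $\cU_\Lambda$ is a graded simple transitive 2-representation; in particular the cell 2-representation $\mbC_\ceL$ obtained above carries a $\bbZ$-finitary structure compatible with the $\bbZ$-grading of $\cU_\Lambda$ (which is locally restricted $\bbZ$-finitary by the preceding proposition). Since $\mbM$ is equivalent to $\mbC_\ceL$ as 2-representations of the underlying 2-category, transporting the graded structure of $\mbC_\ceL$ along this equivalence equips $\mbM$ with a graded structure, so $\mbM$ is indeed a graded 2-representation and the theorem follows.

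The only point that genuinely needs checking — and the step I would flag as the main (though modest) obstacle — is that the property of \emph{being a graded 2-representation} is stable under the 2-representation equivalence produced by \autoref{2KMMain}. Since $\cU_\Lambda$ is locally restricted $\bbZ$-finitary, the hom-spaces of 2-morphisms are finite dimensional and carry homogeneous decompositions, and both $\mbM$ and $\mbC_\ceL$ live over the same underlying locally finitary 2-category. The equivalence is realised by a pair of $\cU_\Lambda$-equivariant functors, so pulling the homogeneous decompositions of the hom-spaces of $\mbC_\ceL$ back through this equivalence yields the required compatible $\bbZ$-grading on $\mbM$. With this observation in place, the combination of \autoref{2KMMain} and \autoref{2KMGrCell} gives both assertions at once, exactly as claimed.
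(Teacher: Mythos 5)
Your proposal is correct and follows exactly the paper's own argument: the paper also proves \autoref{Gr2KMClass} simply by combining \autoref{2KMMain} (equivalence to a cell 2-representation) with \autoref{2KMGrCell} (cell 2-representations of $\cU_\Lambda$ are graded simple transitive). Your additional remark about transporting the graded structure along the equivalence is a reasonable spelling-out of what the paper leaves implicit, not a deviation from its route.
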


\bibliography{C:/Users/jamac/Documents/Maths/Thesis/MasterBib}

\newcommand{\etalchar}[1]{$^{#1}$}
\begin{thebibliography}{MMM{\etalchar{+}}19}

\bibitem[ASS06]{assem2006elements}
Ibrahim Assem, Andrzej Skowronski, and Daniel Simson.
\newblock {\em {Elements of the Representation Theory of Associative Algebras:
  Volume 1: Techniques of Representation Theory}}.
\newblock Cambridge University Press, 2006.

\bibitem[BD17]{brundan2017categorical}
Jonathan Brundan and Nicholas Davidson.
\newblock {Categorical actions and crystals}.
\newblock {\em Categorification and Higher Representation Theory}, 684:105,
  2017.

\bibitem[CM19]{chan2016diagrams}
Aaron Chan and Volodymyr Mazorchuk.
\newblock Diagrams and discrete extensions for finitary 2-representations.
\newblock {\em Mathematical Proceedings of the Cambridge Philosophical
  Society}, 166(2):325–352, 2019.

\bibitem[DG17]{doty2017cellular}
Stephen Doty and Anthony Giaquinto.
\newblock Cellular bases of generalised q-{S}chur algebras.
\newblock {\em Mathematical Proceedings of the Cambridge Philosophical
  Society}, 162(3):533--560, 2017.

\bibitem[Fre64]{freyd1964abelian}
Peter~J Freyd.
\newblock {\em {Abelian categories}}, volume 1964.
\newblock Harper \& Row New York, 1964.

\bibitem[Fre66]{freyd1966representations}
Peter Freyd.
\newblock {Representations in abelian categories}.
\newblock In {\em {Proceedings of the Conference on Categorical Algebra}},
  pages 95--120. Springer, 1966.

\bibitem[GD71]{grothendieck1971revetements}
Alexandre Grothendieck and Seminaire de Geometrie~Algebrique {Du Bois}.
\newblock {\em {Revetements etales et groupe fondamental: exposes du Seminaire
  de Geometrie Algebrique du Bois Marie, 1960-61}}.
\newblock Springer, 1971.

\bibitem[Gre51]{green1951structure}
James~A Green.
\newblock {On the structure of semigroups}.
\newblock {\em Annals of Mathematics}, pages 163--172, 1951.

\bibitem[HK02]{hong2002introduction}
Jin Hong and Seok-Jin Kang.
\newblock {\em {Introduction to quantum groups and crystal bases}}, volume~42.
\newblock American Mathematical Soc., 2002.

\bibitem[JY21]{johnson20212}
Niles Johnson and Donald Yau.
\newblock {\em 2-dimensional categories}.
\newblock Oxford University Press, USA, 2021.

\bibitem[KK12]{kang2012categorification}
Seok-Jin Kang and Masaki Kashiwara.
\newblock {Categorification of highest weight modules via
  Khovanov-Lauda-Rouquier algebras}.
\newblock {\em Inventiones mathematicae}, 190(3):699--742, 2012.

\bibitem[KL09]{khovanov2009diagrammatic}
Mikhail Khovanov and Aaron~D Lauda.
\newblock A diagrammatic approach to categorification of quantum groups i.
\newblock {\em Representation Theory}, (13):309--347, 2009.

\bibitem[KM12]{kudryavtseva2012multisemigroups}
Ganna Kudryavtseva and Volodymyr Mazorchuk.
\newblock {On multisemigroups}.
\newblock {\em Portugaliae Mathematica}, 72(1):47--80, 2012.

\bibitem[Law03]{lawson2003finite}
Mark~V Lawson.
\newblock {\em Finite automata}.
\newblock CRC Press, 2003.

\bibitem[Mac21]{macpherson20212representations}
James Macpherson.
\newblock 2-representations and associated coalgebra 1-morphisms for locally
  wide finitary 2-categories.
\newblock arXiv:2106.12490, 2021.

\bibitem[MM11]{mazorchuk2011cell}
Volodymyr Mazorchuk and Vanessa Miemietz.
\newblock {Cell 2-representations of finitary 2-categories}.
\newblock {\em Compositio Mathematica}, 147(05):1519--1545, 2011.

\bibitem[MM14]{mazorchuk2011additive}
Volodymyr Mazorchuk and Vanessa Miemietz.
\newblock {Additive versus abelian 2-representations of fiat 2-categories}.
\newblock {\em Moscow Mathematical Journal}, 14(3):595--615, 2014.

\bibitem[MM16a]{mazorchuk2016endomorphisms}
Volodymyr Mazorchuk and Vanessa Miemietz.
\newblock {Endomorphisms of cell 2-representations}.
\newblock {\em International Mathematics Research Notices},
  2016(24):7471--7498, 2016.

\bibitem[MM16b]{mazorchuk2016isotypic}
Volodymyr Mazorchuk and Vanessa Miemietz.
\newblock {Isotypic faithful 2-representations of $\mathcal{J}$-simple fiat
  2-categories}.
\newblock {\em Mathematische Zeitschrift}, 282(1-2):411--434, 2016.

\bibitem[MM16c]{mazorchuk2015transitive}
Volodymyr Mazorchuk and Vanessa Miemietz.
\newblock {Transitive 2-representations of finitary 2-categories}.
\newblock {\em Transactions of the American Mathematical Society},
  368(11):7623--7644, 2016.

\bibitem[MMM{\etalchar{+}}19]{mackaay20192}
Marco Mackaay, Volodymyr Mazorchuk, Vanessa Miemietz, Daniel Tubbenhauer, and
  Xiaoting Zhang.
\newblock Simple transitive 2-representations of {S}oergel bimodules for finite
  {C}oxeter types, 2019.
\newblock arXiv:1906.11468.

\bibitem[MMMT16]{mackaay2016simple}
M~Mackaay, V~Mazorchuk, V~Miemietz, and D~Tubbenhauer.
\newblock {Simple transitive 2-representations via (co) algebra 1-morphisms}.
\newblock {\em Indiana University Mathematics Journal}, 68(1):1--33, 2016.

\bibitem[Ost]{ostrik2003module}
Victor Ostrik.
\newblock {Module categories, weak Hopf algebras and modular invariants}.
\newblock {\em Transformation Groups}, (2):177--206.

\bibitem[Rou08]{rouquier20082}
Rapha{\"e}l Rouquier.
\newblock {2-Kac-Moody algebras}, 2008.
\newblock arXiv:0812.5023.

\bibitem[VV11]{varagnolo2011canonical}
Michela Varagnolo and Eric Vasserot.
\newblock {Canonical bases and KLR-algebras}.
\newblock {\em Journal f{\"u}r die reine und angewandte Mathematik (Crelles
  Journal)}, 2011(659):67--100, 2011.

\bibitem[Web17]{websterknot}
Ben Webster.
\newblock {\em {Knot invariants and higher representation theory}}.
\newblock Number 1191. American Mathematical Society, 2017.

\end{thebibliography}
\end{document}